\newcommand{\Z}{\mathbb{Z}}
\newcommand{\R}{\mathbb{R}}
\newcommand{\C}{\mathbb{C}}
\newcommand{\K}{\mathbb{K}}
\newcommand{\Fbar}{\overline{F}}
\newcommand{\BB}{\mathcal{B}}
\newcommand{\CC}{\mathcal{C}}
\newcommand{\FF}{\mathcal{F}}
\newcommand{\HH}{\mathcal{H}}
\newcommand{\LL}{\mathcal{L}}
\newcommand{\TT}{\mathcal{T}}
\newcommand{\NN}{\mathcal{N}}
\newcommand{\WW}{\mathcal{W}}
\newcommand{\bn}{\mathbf{n}}
\newcommand{\GL}{\mathrm{GL}}
\newcommand{\SL}{\mathrm{SL}}
\newcommand{\Sp}{\mathrm{Sp}}
\newcommand{\Irr}{\mathrm{Irr}}
\newcommand{\unit}{\mathrm{unit}}
\newcommand{\temp}{\mathrm{temp}}
\newcommand{\Ind}{\mathrm{Ind}}
\newcommand{\ind}{\mathrm{ind}}
\newcommand{\Jac}{\mathrm{Jac}}
\newcommand{\Gal}{\mathrm{Gal}}
\newcommand{\Supp}{\mathrm{Supp}}
\newcommand{\Hom}{\mathrm{Hom}}
\newcommand{\diag}{\mathrm{diag}}
\renewcommand{\Re}{\mathrm{Re}}
\newcommand{\ess}{\mathrm{ess}}
\newcommand{\ze}{\mathrm{Ze}}
\newcommand{\sh}{\mathrm{Sh}}
\newcommand{\sgn}{\mathrm{sgn}}
\newcommand{\maxi}{\mathrm{max}}
\newcommand{\rest}{\mathrm{max}}
\newcommand{\Card}{\mathrm{Card}}
\newcommand{\ram}{\mathrm{ram}}
\newcommand{\unip}{\mathrm{unip}}
\newcommand{\length}{\mathrm{length}}
\newcommand{\End}{\mathrm{End}}
\newcommand{\path}{path}
\newcommand{\subq}{\dashv}
\newcommand{\Image}{\mathrm{Image}\,}
\newcommand{\VNpair}{$VN$-pair}
\newcommand{\WLpair}{$WL$-pair}
\newcommand{\Ker}{\mathrm{Ker}}
\newcommand{\seq}{\mathrm{seq}}
\newcommand{\Fil}{\mathrm{F}}
\newcommand{\Gr}{\mathrm{Gr}}
\newcommand{\st}{\mathrm{st}}
\newcommand{\ur}{\mathrm{ur}}
\newcommand{\ab}{\mathrm{ab}}
\newcommand{\af}{\mathfrak{a}}
\newcommand{\oo}{\mathfrak{o}}
\newcommand{\pp}{\mathfrak{p}}
\newcommand{\mm}{\mathfrak{m}}
\newcommand{\iif}{&\quad&\text{if }}
\newcommand{\resp}{resp.~}
\renewcommand{\1}{\mathbf{1}}
\newcommand{\ep}{\varepsilon}
\newcommand{\bs}{\backslash}
\newcommand{\half}[1]{\frac{#1}{2}}
\newcommand{\ub}[1]{\underline{#1}}
\newcommand{\wh}[1]{\widehat{#1}}
\newcommand{\wt}[1]{\widetilde{#1}}
\newcommand{\xto}[1]{\xrightarrow{#1}}
\newtheorem{thm}{Theorem}[section]
\newtheorem{lem}[thm]{Lemma}
\newtheorem{prop}[thm]{Proposition}
\newtheorem{cor}[thm]{Corollary}
\newtheorem{rem}[thm]{Remark}
\newtheorem{defi}[thm]{Definition}
\newtheorem{ex}[thm]{Example}
\numberwithin{equation}{section}
\title[
Local newforms for the general linear groups 
]{
Local newforms for the general linear groups 
\\
over a non-archimedean local field
}
\author{
Hiraku Atobe, 
Satoshi Kondo, \and 
Seidai Yasuda}
\date{\today}
\subjclass[2010]{Primary 11F70; Secondary 22E50.}
\keywords{
Local newform; 
Mackey decomposition; 
Tadi\'c's determinantal formula; 
Rankin--Selberg integral for Speh representations}
\address{
Department of Mathematics, Hokkaido University,
Kita 10, Nishi 8, Kita-Ku, Sapporo, Hokkaido, 060-0810, Japan
}
\address{
Middle East Technical University \\
Northern Cyprus Campus, Kalkanli \\
Guzelyurt, Mersin 10, Turkey;
Kavli Institute for the Physics and Mathematics of the Universe\\
University of Tokyo\\
5-1-5 Kashiwanoha\\
Kashiwa 277-8583\\ Japan
}
\address{
Department of Mathematics, Hokkaido University,
Kita 10, Nishi 8, Kita-Ku, Sapporo, Hokkaido, 060-0810, Japan 
}
\email{
atobe@math.sci.hokudai.ac.jp
}
\email{
satoshi.kondo@gmail.com
}
\email{
sese@math.sci.hokudai.ac.jp
}
\begin{document}
\maketitle

\begin{abstract}
In \cite{JPSS2}, Jacquet--Piatetskii-Shapiro--Shalika defined a family of compact open subgroups 
of $p$-adic general linear groups indexed by non-negative integers,
and established the theory of local newforms for irreducible generic representations. 
In this paper, 
we extend their results to all irreducible representations. 
To do this, 
we define a new family of compact open subgroups indexed by certain tuples of non-negative integers.
For the proof, we introduce the Rankin--Selberg integrals for Speh representations. 
\end{abstract}

\tableofcontents

\section{Introduction}

\subsection{Background}
The theory of newforms is fascinating and plays an important role in the theory of automorphic forms. 
It was first studied in the early 70s by Atkin--Lehner \cite{AL} and Li \cite{Li} in terms of classical modular forms, 
and by Casselman \cite{C} in terms of local newforms on $\GL_2$. 
Their results become a bridge between classical modular forms and automorphic representations of $\GL_2$. 
Casselman's result was generalized to $\GL_n$ by Jacquet--Piatetskii-Shapiro--Shalika \cite{JPSS2} (see also Jacquet's erratum \cite{J}) in the 80s. 
Another proof was given by Matringe \cite{Mat} in 2013.
\par

After their works, the theory of local newforms was established  
\begin{itemize}
\item
for $\mathrm{PGSp}_4$ and for $\widetilde{\SL}_2$, which is the double cover of $\SL_2$, 
by Roberts--Schmidt \cite{RS1,RS2};
\item
for $\mathrm{GSp}_4$ by Okazaki \cite{Ok};
\item
for $\mathrm{U}(1,1)$ by Lansky--Raghuram \cite{LR}; 
\item
for unramified $\mathrm{U}(2,1)$ by Miyauchi \cite{M1,M2,M3,M4}.
\end{itemize}
In 2010, Gross gave a conjecture on the local newforms for $\mathrm{SO}_{2n+1}$ in a letter to Serre 
(see an expansion \cite{G} of this letter). 
It is a natural extension of the $\GL_2$ case \cite{C} and the $\mathrm{PGSp}_4$ case \cite{RS1}.
This conjecture was proven for generic supercuspidal representations by Tsai \cite{Ts}.
\vskip 10pt

One has to notice that 
in all previous works, representations are assumed to be \emph{generic}.
For $\GL_n$, this assumption might be reasonable 
since all local components of an arbitrary irreducible cuspidal automorphic representation of $\GL_n$ are generic. 
However, for other groups, this assumption is too strong 
because there are many irreducible cuspidal automorphic representations
whose local components are non-generic (and non-tempered), 
such as the Saito--Kurokawa lifting of $\mathrm{PGSp}_4$. 
\vskip 10pt

In this paper, we generalize the results in \cite{JPSS2} to all the irreducible representations. 
Namely, we extend the theory of local newforms to non-generic representations in the case of $\GL_n$. 
By considering the endoscopic classification, 
our results would be useful for the study of local newforms for classical groups in the future.  

\subsection{Main results}\label{sec.intro.main}
Let us describe our results.
Let $F$ be a non-archimedean local field of characteristic zero
with the ring of integers $\oo$ and the maximal ideal $\pp$. 
Fix a non-trivial additive character $\psi$ of $F$ which is trivial on $\oo$ but not on $\pp^{-1}$. 
We denote by $q$ the order of $\oo/\pp$. 
\par

For an integer $n \geq 1$, 
set 
$\Lambda_n 
= \{(\lambda_1, \dots, \lambda_n) \in \Z^n \;|\; 0 \leq \lambda_1 \leq \dots \leq \lambda_n\}$. 
We regard $\Lambda_n$ as a totally ordered monoid 
with respect to the lexicographic order. 
For $\lambda = (\lambda_1, \dots, \lambda_n)$, we set $|\lambda| = \lambda_1+\dots+\lambda_n$.
\par

We set $G_n = \GL_n(F)$. 
For $\lambda = (\lambda_1,\dots,\lambda_n) \in \Lambda_n$, 
we define a subgroup $\K_{n,\lambda}$ of $\GL_n(\oo)$ 
by 
\[
\K_{n,\lambda} = \{(k_{i,j}) \in \GL_n(\oo)\;|\; 
k_{i,j} \equiv \delta_{i,j} \bmod \pp^{\lambda_i},\; 
1 \leq i,j \leq n
\},
\]
where $\delta_{i,j}$ is the Kronecker delta.
\par

Let $\pi$ be an irreducible smooth complex representation of $G_n$.  
Godement--Jacquet \cite{GJ} associated  two local factors $L(s,\pi)$ and $\ep(s,\pi,\psi)$ with $\pi$. 
By \cite[(5.1) Th{\'e}or{\`e}me (i)]{JPSS2} and 
\cite[Corollary 3.6]{GJ}
(or by the local Langlands correspondence \cite{HT}, \cite{H}), 
we have $\ep(s,\pi,\psi) = \ep(0,\pi,\psi) q^{-c_\pi s}$
for some non-negative integer $c_\pi$. 
We call $c_\pi$ the \emph{conductor of $\pi$}.
\par

Set $\pi^{(0)} = \pi$, 
and $\pi^{(i)}$ to be the highest derivative of $\pi^{(i-1)}$ 
in the sense of Bernstein--Zelevinsky \cite{BZ} for $i = 1,\dots, n$. 
(Note that our notation is different from the original one in \cite{BZ}.)
It is known that $\pi^{(i)}$ is irreducible so that one can consider its conductor $c_{\pi^{(i)}}$.
We then define $\lambda_\pi = (\lambda_1, \dots, \lambda_n)$ by
\[
\lambda_k = c_{\pi^{(n-k)}} - c_{\pi^{(n-k+1)}}
\]
for $1 \leq k \leq n$.
In Section \ref{sec.def_lambda} (especially, in Proposition \ref{prop_lambda}) below, 
we will see that $\lambda_\pi \in \Lambda_n$.
We note that $|\lambda_\pi| = c_{\pi}$.
\par

We denote by $\pi^{\K_{n,\lambda}}$ the $\K_{n,\lambda}$-invariant subspace of $\pi$, 
which is finite dimensional. 
Our main theorem is stated as follows: 

\begin{thm}[Theorems \ref{main1}, \ref{conj1-2}]\label{main}
Let $\pi$ be an irreducible representation of $G_n$. 
\begin{enumerate}
\item
For $\lambda \in \Lambda_n$, we have
\[
\dim(\pi^{\K_{n,\lambda}}) = 
\left\{
\begin{aligned}
&1 \iif \lambda = \lambda_\pi, \\
&0 \iif \lambda < \lambda_\pi. 
\end{aligned}
\right. 
\]

\item
If $\lambda \in \Lambda_n$ satisfies that $|\lambda| < |\lambda_\pi|$, 
then $\pi^{\K_{n,\lambda}} = 0$.
\end{enumerate}
\end{thm}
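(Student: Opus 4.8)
The plan is to compute $\dim\pi^{\K_{n,\lambda}}$ through the Zelevinsky classification: pass to the Grothendieck group, reduce the general case to products of Speh representations, resolve products by Mackey theory, and finally pin down the behaviour at $\lambda=\lambda_\pi$ by a Rankin--Selberg integral. First, for any compact open subgroup $K$ of $G_n$ the functor $V\mapsto V^{K}$ is exact on smooth $\C[G_n]$-modules (it is multiplication by the idempotent $e_K$ in the Hecke algebra), so $\pi\mapsto\dim\pi^{\K_{n,\lambda}}$ descends to the Grothendieck group $\mathcal R_n$ of finite-length representations. I would record that the tuple $\lambda_\pi$, built from the conductors $c_{\pi^{(i)}}$ of the Bernstein--Zelevinsky derivatives, can be read off from a standard-module resolution of $\pi$, using that derivation is compatible with the geometric lemma. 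Writing $\pi=\soc(\zeta(\mathfrak m))$ for the appropriate multisegment $\mathfrak m$ and inverting the unitriangular transition matrix between irreducible and standard-module classes, one gets $[\pi]=\sum_{\mathfrak m'}\pm[\zeta(\mathfrak m')]$ as an alternating sum of classes of products of Speh representations. This reduces the vanishing part of (1) for $\lambda<\lambda_\pi$ and the whole of (2) to an upper bound of the form ``$\dim\tau^{\K_{n,\lambda}}=0$ when $\tau$ is such a product and $\lambda<\lambda_\tau$, resp.\ $|\lambda|<|\lambda_\tau|$,'' provided the induction on the lexicographic order is set up so that the error terms $\mathfrak m'$ are strictly smaller.

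Second, for a product $\tau=\tau_1\times\cdots\times\tau_r$ parabolically induced from a proper standard Levi $M=\GL_{n_1}(F)\times\cdots\times\GL_{n_r}(F)$, I would decompose the double coset space $P\backslash G_n/\K_{n,\lambda}$ via the Iwasawa decomposition $G_n=P\,\GL_n(\oo)$ and the orbits of $P(\oo)$ on $\GL_n(\oo)/\K_{n,\lambda}$; by Mackey/Frobenius reciprocity each double coset then contributes the fixed space of a twist of $\tau_1\otimes\cdots\otimes\tau_r$ under a compact open subgroup of $M$ built from members $\K_{n_i,\lambda^{(i)}}$ of the same family with $n_i<n$. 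An induction on $n$ — with base case $n=1$, where $\K_{1,\lambda}$-invariants of a character are $0$- or $1$-dimensional and $|\lambda|$ equals the classical conductor — then yields the required upper bounds. Here Tadi\'{c}'s determinantal formula enters twice: it computes $\dim u(\delta,m)^{\K_{n,\lambda}}$ for a Speh representation $u(\delta,m)$ by expressing $u(\delta,m)$ as an alternating sum of products of twists of the discrete series $\delta$, and it feeds these smaller pieces back into the Mackey induction. Running the same analysis in reverse produces explicit oldform and newform candidates inside $\tau$, which one transports into $\pi=\soc(\zeta(\mathfrak m))$.

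Third, to obtain $\dim\pi^{\K_{n,\lambda_\pi}}=1$ one needs both a nonzero fixed vector at the exact level and a multiplicity-one mechanism. For generic $\pi$ this is the essential Whittaker vector of \cite{JPSS2}, characterised by a nonvanishing Rankin--Selberg period; since $\pi$ need not be generic here, I would develop a Rankin--Selberg integral $Z(s,\varphi,\varphi')$ pairing $\pi$ against an auxiliary Speh representation (or unramified principal series), arrange that it represents the Godement--Jacquet $L$-factor of $\pi$ up to the relevant $\gamma$-factor, and show that it is nonzero on the line $\pi^{\K_{n,\lambda_\pi}}$; this simultaneously forces $|\lambda_\pi|=c_\pi$, matching the stated formula, and gives the lower bound $\dim\pi^{\K_{n,\lambda_\pi}}\ge 1$. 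Combined with the upper bound $\dim\pi^{\K_{n,\lambda_\pi}}\le 1$ coming from the first two steps, this yields the claim.

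The main obstacle is the interface between the Grothendieck-group reduction and the Rankin--Selberg step: since $[\pi]$ is only an \emph{alternating} sum of products of Speh representations, the contributions to $\dim\pi^{\K_{n,\lambda_\pi}}$ could cancel unpredictably, and one must prove that exactly one standard module $\zeta(\mathfrak m')$ in the resolution is ``in general position'' at level $\lambda_\pi$ and contributes $1$, while every other term either vanishes there or cancels in pairs. Making this bookkeeping precise — tracking conductors of derivatives under the geometric lemma and along Tadi\'{c}'s formula, and matching the Zelevinsky order with the lexicographic order on $\Lambda_n$ — together with setting up the Rankin--Selberg theory for Speh representations (degenerate Whittaker/Zelevinsky models, convergence, the functional equation, and the unramified computation) precisely enough to evaluate the integral on a single fixed vector, is where the real work lies.
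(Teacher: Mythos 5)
Your proposal correctly identifies the three main technical layers of the argument --- passing to the Grothendieck group and using exactness of $K$-invariants, the Mackey decomposition of $(\pi_1 \times \cdots \times \pi_r)^{\K_{n,\lambda}}$ into pieces indexed by filtrations, and a Rankin--Selberg/essential-vector input at the exact level $\lambda_\pi$. You also correctly single out the central danger: since $[\pi]$ is only an \emph{alternating} sum of standard modules, the contributions to $\dim\pi^{\K_{n,\lambda_\pi}}$ can a priori cancel, and one must prove that precisely one term survives. But you stop there, and that unresolved bookkeeping is the core of the proof, not a loose end; moreover your plan of attacking it uniformly by a single Rankin--Selberg mechanism cannot work.

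The missing idea is the bifurcation into two regimes with genuinely different methods. After the Mackey reduction one lands in two base cases: $\pi = Z(\mm)$ with $\mm$ unipotent of some fixed type $\chi$, and $\pi$ with $L(s,\pi)=1$. They are handled completely differently, and your proposal implicitly tries to push the integral-representation argument through both. In the unipotent case there is no Rankin--Selberg integral to speak of for a general $Z(\mm)$ (only Speh representations carry the Zelevinsky/Shalika models used later); instead, the point is that the standard modules in Tadi\'c's determinantal formula are products of \emph{one-dimensional} characters, so the Mackey decomposition collapses $\dim\pi^{\K_{n,[M]}}$ into a signed count of $\bn$-admissible filtrations of the $\oo$-module $M$. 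Controlling that alternating sum requires concrete structure theory for $\oo$-modules of finite length --- convexity ($[M]\ge[M']\vee[M'']$ for a short exact sequence) and uniqueness of the filtration realizing $[M]=[M_1]\vee\cdots\vee[M_r]$ --- plus explicit pairing-up cancellations in the symmetric group. None of this appears in your sketch. Passing from ladder representations to general unipotent $Z(\mm)$ then hinges on the Knight--Zelevinsky description of the Zelevinsky dual and the compatibility $\mm^\ram = (\mm_\maxi)^\ram + (\mm^\rest)^\ram$, which is needed because $\lambda_\pi$ in this regime is defined through $\mm^\ram$ rather than through $L$-factors. Your proposal doesn't address how $\lambda_\pi$ should be computed when $L(s,\pi)\neq 1$. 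On the other side, when $L(s,\pi)=1$ the standard modules involve higher-dimensional cuspidals, so the Mackey/determinantal combinatorics alone no longer determines the invariants, and one must go through Rankin--Selberg integrals for Speh representations (and, for the inequality $|\lambda|\ge|\lambda_\pi|$, a Hecke-operator nilpotence argument paired with the Godement--Jacquet functional equation that you do not mention). In short: the ingredients you list are present in the paper, but the load-bearing step you flag as ``where the real work lies'' --- the cancellation bookkeeping --- has a purely combinatorial $\oo$-module solution in one regime and an analytic Rankin--Selberg solution in the other, and conflating them leaves a gap that your outline does not close.
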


We call any nonzero element in $\pi^{\K_{n,\lambda_\pi}}$ a \emph{local newform} of $\pi$.
Using Theorem \ref{main}, 
we can give a characterization of the conductor in terms of the dimensions of fixed parts, that is,
\[
c_\pi = \min\left\{ |\lambda| \;\middle|\; \pi^{\K_{n,\lambda}} \not= 0\right\}.
\]
\vskip 10pt

Note that when $\pi$ is generic, 
since $\pi^{(i)}$ is the trivial representation $\1_{G_0}$ for any $i \geq 1$, 
we have $\lambda_\pi = (0,\dots,0,c_\pi)$. 
In this case, $\K_{n,\lambda_\pi}$ is nothing but the compact group introduced 
by Jacquet--Piatetskii-Shapiro--Shalika \cite{JPSS2}. 
Hence Theorem \ref{main} (1) is an extension of a result in loc.~cit. 
\vskip 10pt

According to the Zelevinsky classification, 
the set of isomorphism classes of irreducible representations of $G_n$ 
is classified by multisegments. 
We recall it in Section \ref{sec.zel}. 
When $\pi = Z(\mm)$ is the irreducible representation associated with a multisegment $\mm$, 
we have another description of $\lambda_\pi$ in terms of $\mm$ (Proposition \ref{prop_lambda}),
which allows us to compute $\lambda_\pi$ in many important cases (Example \ref{ex_lambda}). 
Moreover, Corollary \ref{cor:fund4} tells us how to compute $\lambda_\pi$ inductively in general. 
\vskip 10pt

The proof of Theorem \ref{main} takes following three steps:
\begin{description}
\item[Step 1]
Reduce to two cases; 
the case where $\pi$ is of type $\chi$ with an unramified character $\chi$ of $F^\times$,  
and the case where $L(s,\pi) = 1$. 
Here, we say that an irreducible representation $\pi$ is \emph{of type $\chi$}
if $\pi = Z(\Delta_1+\dots+\Delta_r)$ such that 
for $i=1,\ldots,r$, the segment 
$\Delta_i$ is of the form $[a_i,b_i]_\chi$ for some integers $a_i,b_i$ satisfying $a_i \le b_i$.

\item[Step 2]
Prove Theorem \ref{main} for $\pi$ of type $\chi$ with an unramified character $\chi$ of $F^\times$. 

\item[Step 3]
Prove Theorem \ref{main} for $\pi$ such that $L(s,\pi) = 1$. 
\end{description}

Let us give the detail of each step.

\subsection{Reduction}
Using the Mackey theory, 
we study the $\K_{n,\lambda}$-invariant subspaces of 
parabolically induced representations in Section \ref{sec.inv}. 
To do this, in Section \ref{sec.o-modules}, 
we relate $\Lambda_n$ with 
the set $|\CC^n|$ of isomorphism classes $[M]$ of $\oo$-modules 
such that $M$ is generated by at most $n$ elements.
In Section \ref{sec.inv}, we associate 
a compact open subgroup $\K_{n,[M]}$ of $G_n$ with $[M] \in |\CC^n|$. 
If $\lambda = (\lambda_1,\dots,\lambda_n) \in \Lambda_n$ and 
$M = \oplus_{i=1}^n \oo/\pp^{\lambda_i}$, then $\K_{n,[M]} = \K_{n,\lambda}$. 
Proposition \ref{mackey} says that 
the $\K_{n,[M]}$-invariant subspace of 
a parabolically induced representation
decomposes into a direct sum indexed by certain filtrations on $M$ by $\oo$-modules. 
In particular, this proposition together with Corollary \ref{fil0} reduces Theorem \ref{main}
to the following two types of irreducible representations: 
\begin{itemize}
\item
$\pi \in \Irr(G_n)$ of type $\chi$ 
with a fixed unramified character $\chi$ of $F^\times$. 

\item
$\pi \in \Irr(G_n)$ such that $L(s,\pi) = 1$.
\end{itemize}

\subsection{The case where $\pi$ is of type $\chi$}
In Section \ref{sec.proofs1}, we prove Theorem \ref{main} for 
irreducible representations $\pi \in \Irr(G_n)$ of type $\chi$ 
with a fixed unramified character $\chi$ of $F^\times$. 
\vskip 10pt

In the proof of Theorem \ref{main} (1), 
we first consider the case where $\pi$ is a ladder representation. 
The main ingredient in this case is 
Tadi\'c's determinantal formula established by Lapid--M{\'i}nguez \cite{LMi}.
This formula describes $\pi$ explicitly as an alternating sum of standard modules. 
The key point is that 
the standard modules appearing here are 
parabolically induced representations from one-dimensional representations. 
In particular, for $[M] \in |\CC^n|$, 
the determinantal formula together with Proposition \ref{mackey}
expresses the dimension of $\pi^{\K_{n,[M]}}$ explicitly as an alternating sum 
of the numbers of certain filtrations on $M$ by $\oo$-modules (Proposition \ref{alt}). 
Surprisingly, there are many cancellations in this alternating sum (Lemma \ref{vanish}). 
From this lemma, we can deduce Theorem \ref{main} (1) 
for a ladder representation $\pi$ of type $\chi$.
For these miraculous cancellations, see Example \ref{ex_ladder}. 
\vskip 10pt

The proof of Theorem \ref{main} (1) for general $\pi$ of type $\chi$ 
is by induction on a certain totally ordered set. 
The key is Proposition \ref{lem:fund3}, whose proof
relies on 
a highly non-trivial result of Knight--Zelevinsky \cite{KZ}
which describes the Zelevinsky dual of $\pi$
(see also Proposition \ref{prop:KZ}). 
\vskip 10pt

We reduce the proof of Theorem \ref{main} (2) to the case 
where $\pi$ is a Steinberg representation. 
In this case, by Tadi\'c's determinantal formula
(or by the definition of the Steinberg representations in Harish-Chandra \cite{HC}), 
we can express $\pi^{\K_{n,\lambda}}$ 
explicitly as an alternating sum of the numbers of certain filtrations on 
the $\oo$-module corresponding to $\lambda$. 
We realize this alternating sum as a coefficient of 
certain formal power series in one variable whose coefficients are in  
a graded ring.
By giving another description of this formal power series, 
we deduce that $\pi^{\K_{n,\lambda}} = 0$.  

\subsection{The case where $L(s,\pi) = 1$}
In Section \ref{sec.pf_L=1}, we firstly prove Theorem \ref{main} (2) for $\pi$ with $L(s,\pi) = 1$. 
We reduce the proof to the case where $\pi$ is cuspidal. 
In this case, Lemma \ref{lem:nilp} says that 
certain Hecke operators depending on $\lambda \in \Lambda_n$ 
act on $\pi$ as nilpotent endomorphisms. 
We consider the Godement--Jacquet integral $Z(\Phi,s,f)$ defined in \cite{GJ}. 
From this lemma, it follows that if $\pi^{\K_{n,\lambda}} \not= 0$, 
then we can obtain data $\Phi$ and $f$ such that 
$Z(\Phi,s,f)$ is a nonzero constant whereas $Z(\hat{\Phi},s,\check{f}) \in q^{|\lambda| s} \C[[q^{-s}]]$.
Since $\ep(s,\pi,\psi) = \ep(0,\pi,\psi)q^{-|\lambda_\pi|s}$, 
by the functional equation, we conclude that $|\lambda| \geq |\lambda_\pi|$.
\vskip 10pt

By Proposition \ref{mackey}, 
we can reduce Theorem \ref{main} (1) for $\pi$ with $L(s,\pi) = 1$
to the case where $\pi = Z(\Delta)$ for a segment $\Delta$ (Lemma \ref{thm.reduction}). 
The key point here is that the matrices defined by the multiplicities of irreducible representations appearing in standard modules are ``triangular'' and unipotent
(\cite[7.1 Theorem]{Z}). 
\vskip 10pt

Finally, we prove Theorem \ref{main} (1) for $\pi = Z(\Delta)$ with $L(s,\pi) = 1$. 
Slightly generally, we do it in Section \ref{s.ess_speh}
for Speh representations $\Sp(\pi_\temp, m)$ 
with an irreducible tempered representation $\pi_\temp$ of $G_n$. 
For the notation of Speh representations, see Example \ref{ex_lambda} (4). 
The proof of this case is rather an analogue of the generic case in \cite{JPSS2}.
Namely, it is an application of the theory of Rankin--Selberg integrals. 
To carry out the proof, we establish this theory for Speh representations in Section \ref{s.RS}. 

\subsection{Rankin--Selberg integrals for Speh representations}
The theory of Rankin--Selberg integrals was developed by 
Jacquet--Piatetskii-Shapiro--Shalika \cite{JPSS}. 
These integrals are integrations of 
products of Whittaker functions of two irreducible representations of $G_n$ and $G_m$, 
and they represent the Rankin--Selberg $L$-functions.
Since representations are required to admit non-trivial Whittaker functions, they must be generic. 
As an application of Rankin--Selberg integrals for $G_n \times G_{n-1}$, 
the theory of local newforms for generic representations of $G_n$
was established in \cite{JPSS2}. 
\vskip 10pt

To prove Theorem \ref{main} (1) for Speh representations, 
we need to extend the theory of Rankin--Selberg integrals to the case of Speh representations. 
In the equal rank case, this extension was done by Lapid--Mao \cite{LM}.
In their paper, instead of Whittaker models, 
they used two models of a Speh representation that are called 
the \emph{Zelevinsky model} and the \emph{Shalika model}\footnote{
As mentioned in \cite{LM}, 
this terminology does not coincide with the standard notion of the Shalika model in the literature.
This model was also used in the theory of twisted doubling \cite{CFGK}
established by Cai--Friedberg--Ginzburg--Kaplan, 
in which it is called the \emph{$(k,c)$ model}.
}. 
\vskip 10pt

For our purpose, we need the Rankin--Selberg integrals in the ``almost equal rank case'', 
which are easier than the equal rank case.
The Zelevinsky model is a direct generalization of Whittaker model  
so that we can easily extend the theory of Rankin--Selberg integrals using this model 
(Theorem \ref{RS}). 
On the other hand, the Shalika model has
an important property of the Whittaker model (Theorem \ref{inner}), 
which we need for the proof of Theorem \ref{main} (1) for Speh representations.
To transfer the Rankin--Selberg integrals in the Zelevinsky models 
to the ones in the Shalika models, 
we use the model transition established by Lapid--Mao (see Proposition \ref{transition}).
\vskip 10pt

After establishing the Rankin--Selberg integrals in the Shalika models, 
the proof of Theorem \ref{main} (1) for Speh representations $\pi$ with $L(s,\pi) = 1$
is exactly the same as in the generic case \cite{JPSS2}. 
We do not compute the greatest common divisors of the Rankin--Selberg integrals in general
(see Proposition \ref{gcd}). 
This is a main reason why this method cannot be applied 
to Speh representations $\pi$ with $L(s,\pi) \not= 1$. 
However, as an application of Theorem \ref{main} (1) for all cases, 
we can specify the greatest common divisor
when the Speh representation of the group of smaller rank is unramified 
(see Theorem \ref{essential}). 

\subsection{Organization}
This paper is organized as follows.
In Section \ref{sec.statements}, we state the main results (Theorems \ref{main1} and \ref{conj1-2}). 
We give two definitions of $\lambda_\pi$ (Proposition \ref{prop_lambda}) 
and explain how to compute it (Corollary \ref{cor:fund4}). 
Some important examples of $\lambda_\pi$ are given in Example \ref{ex_lambda}.
Propositions \ref{prop_lambda} and \ref{lem:fund3} are proven in Section \ref{sec.lambda}. 
After preparing several facts on $\oo$-modules in Section \ref{sec.modules}, 
we prove the Mackey decomposition of the $\K_{n,[M]}$-invariant subspace of 
a parabolically induced representation (Proposition \ref{mackey}) in Section \ref{s.mackey}. 
It reduces the proofs of main results to two cases: 
$\pi \in \Irr(G_n)$ of type $\chi$ 
with a fixed unramified character $\chi$ of $F^\times$, 
and 
$\pi \in \Irr(G_n)$ such that $L(s,\pi) = 1$.
For the former case, Theorems \ref{main1} and \ref{conj1-2} are proven in Section \ref{sec.proofs1}. 
In Section \ref{sec.pf_L=1}, we treat the latter case. 
More precisely, for the latter case, 
we prove Theorem \ref{conj1-2}, 
but we reduce Theorem \ref{main1} to the case where $\pi$ is a Speh representation. 
Theorem \ref{main1} for Speh representations $\pi$ with $L(s,\pi) = 1$ is proven in Section \ref{s.ess_speh} 
after establishing the theory of Rankin--Selberg integrals for Speh representations in Section \ref{s.RS}. 

\subsection*{Acknowledgement}
The first author was supported by JSPS KAKENHI Grant Number 19K14494.
The second author was supported by WPI Initiative, MEXT, Japan.
The third author was supported by JSPS KAKENHI Grant Number 21H00969.
The authors thank the referee for careful reading of the  long manuscript and for giving valuable comments.

\subsection*{Notation}
Let $F$ be a non-archimedean local field of characteristic zero.
Denote the ring of integers and its maximal ideal by $\oo$ and $\pp$, respectively. 
Fix a uniformizer $\varpi$ of $\oo$, 
and normalize the absolute value $|\cdot|$ on $F$ so that $|\varpi| = q^{-1}$, 
where $q = \# (\oo/\pp)$.
We fix a non-trivial additive character $\psi$ of $F$ 
such that $\psi$ is trivial on $\oo$ but non-trivial on $\pp^{-1}$.
\par

For an integer $n \ge 1$ and for a commutative ring $R$, 
we let $M_n(R)$ denote the $R$-module of
$n$-by-$n$ matrices with entries in $R$.
\par

In this paper, all representations are assumed to be smooth. 
For a representation $\pi$ of $\GL_n(F)$, 
its contragredient representation is denoted by $\wt{\pi}$.

\section{Statements of the main results}\label{sec.statements}
In this section, we fix notations and state the main results. 

\subsection{The Zelevinsky classification}\label{sec.zel}
We recall the Zelevinsky classification \cite{Z} 
of irreducible representations of $G_n = \GL_n(F)$. 
For a smooth representation $\pi$ of $G_n$ and a character $\chi$ of $F^\times$, 
the twisted representation $g \mapsto \pi(g)\chi(\det g)$ is denoted by $\pi \chi$. 
The set of equivalence classes of irreducible representations of $G_n$ is denoted by $\Irr(G_n)$.
\par

When $\pi_1, \dots, \pi_r$ are smooth representations of $G_{n_1}, \dots, G_{n_r}$, respectively,
with $n_1+\dots+n_r = n$, 
we write $\pi_1 \times \dots \times \pi_r$ for the parabolically induced representation of $G_n$
from $\pi_1 \otimes \dots \otimes \pi_r$ 
via the standard parabolic subgroup whose Levi subgroup is 
$G_{n_1} \times \dots \times G_{n_r}$. 
\par

A \emph{segment} $\Delta$ is a finite set of representations of the form 
\[
[x,y]_\rho = \{\rho|\cdot|^x, \rho|\cdot|^{x+1}, \dots, \rho|\cdot|^y\}, 
\]
where $\rho$ is an irreducible cuspidal representation of $G_d$ for some $d \ge 1$, 
and $x,y \in \R$ with $x \equiv y \bmod \Z$ and $x \leq y$. 
We write $l(\Delta) = y-x+1$ and call it the \emph{length} of $\Delta$.
\par

Let $\Delta = [x,y]_\rho$ be a segment. 
Then the parabolically induced representation
\[
\rho|\cdot|^x \times \rho|\cdot|^{x+1} \times \dots \times \rho|\cdot|^y
\]
of $G_{dl(\Delta)}$ has a unique irreducible subrepresentation.
We denote it by $Z(\Delta)$. 
For example, if $\rho = \chi$ is a character of $F^\times$, 
then $Z([x,y]_\chi) = |\det|^{\half{x+y}}\chi(\det)$ is a 
one-dimensional representation of $G_{y-x+1}$.
\par

Let $r \geq 1$.
For $i=1,\ldots,r$,
let $\Delta_i = [x_i,y_i]_{\rho_i}$ be a segment,
$n_i \geq 1$ an integer such that
$\rho_i$ is a cuspidal representation of $G_{n_i}$. 
When $\rho_i$ is unitary and the inequalities
\[
x_1+y_1 \geq \dots \geq x_r+y_r
\]
hold, the parabolically induced representation
\[
Z(\Delta_1) \times \dots \times Z(\Delta_r)
\]
has a unique irreducible subrepresentation. 
We denote it by $Z(\mm)$, 
where $\mm$ denotes the multisegment
$\mm = \Delta_1 + \dots + \Delta_r$. 
The Zelevinsky classification says that 
for any irreducible representation $\pi$ of $G_n$, 
there exists a unique multisegment $\mm = \Delta_1 + \dots + \Delta_r$ 
such that $\pi \cong Z(\mm)$.
\par

When $x_1 > \dots > x_t$ and $y_1 > \dots > y_t$, 
the irreducible representation 
\[
Z([x_1,y_1]_\rho, \dots, [x_t,y_t]_\rho)
= Z([x_1,y_1]_\rho + \dots + [x_t,y_t]_\rho)
\]
is called a \emph{ladder representation}. 
A ladder representation of the form 
$Z([x,y]_\rho, [x-1,y-1]_\rho, \dots, [x-t+1,y-t+1]_\rho)$
for some positive integer $t$ 
is called a \emph{Speh representation}.

\subsection{Main results}
Fix $n \geq 1$.
Let $\Lambda_n$ be the subset of $\Z^n$ consisting of  
$\lambda = (\lambda_1, \dots, \lambda_n) \in \Z^n$ such that $0 \leq \lambda_1 \leq \dots \leq \lambda_n$. 
Note that $\Lambda_n$ is a submonoid of $\Z^n$. 
We endow $\Lambda_n$ with the total order induced by the lexicographic order, i.e., 
for $\lambda = (\lambda_1, \dots, \lambda_n), \lambda' = (\lambda'_1, \dots, \lambda'_n) \in \Lambda_n$, 
we write $\lambda < \lambda'$ if and only if 
there exists $1 \leq i \leq n$ such that $\lambda_j = \lambda'_j$ for $j < i$ and $\lambda_i < \lambda'_i$. 
\par

For $\lambda = (\lambda_1, \dots, \lambda_n) \in \Lambda_n$, 
define $\K_{n,\lambda}$ to be 
the subgroup of $G_n(\oo) = \GL_n(\oo)$ consisting of matrices 
$k = (k_{i,j})_{1 \le i,j \le n}$ 
such that 
\[
k_{i,j} \equiv \delta_{i,j} \bmod \pp^{\lambda_i}
\]
for any $1\leq i,j \leq n$. 
For example, if $n = 4$ and $\lambda = (0,0,1,2)$, then
\[
\K_{4, (0,0,1,2)} = \begin{pmatrix}
\oo & \oo & \oo & \oo \\
\oo & \oo & \oo & \oo \\
\pp & \pp & 1+\pp & \pp \\
\pp^2 & \pp^2 & \pp^2 & 1+\pp^2 
\end{pmatrix}
\cap \GL_4(\oo).
\]
\par

In Section \ref{sec.intro.main}, 
we defined $\lambda_\pi \in \Lambda_n$ for any $\pi \in \Irr(G_n)$. 
The main results are as follows.

\begin{thm}\label{main1}
Let $\pi \in \Irr(G_n)$.
Then the $\K_{n,\lambda_\pi}$-invariant subspace $\pi^{\K_{n,\lambda_\pi}}$ is one-dimensional. 
Moreover, if $\lambda \in \Lambda_n$ satisfies 
$\lambda < \lambda_\pi$, then $\pi^{\K_{n,\lambda}} = 0$. 
\end{thm}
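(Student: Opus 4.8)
\textbf{Proof proposal for Theorem \ref{main1}.}

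The plan is to follow the three-step strategy outlined in the introduction. First I would use the Mackey-theoretic machinery of Section \ref{sec.inv}: Proposition \ref{mackey} expresses $\pi^{\K_{n,[M]}}$ for a parabolically induced representation as a direct sum indexed by certain filtrations of the $\oo$-module $M$, and Corollary \ref{fil0} packages this into a reduction statement. Writing $\pi = Z(\mm)$ with $\mm$ a multisegment, I would group the segments of $\mm$ according to the cuspidal support, separating the part supported on unramified twists of a fixed unramified character $\chi$ of $F^\times$ from the rest. The factors with $L(s,\pi)=1$ and the "type $\chi$" factors are handled separately, and the Mackey decomposition lets one combine the local statements on each factor into the global statement for $\pi$; in particular it reduces both the one-dimensionality of $\pi^{\K_{n,\lambda_\pi}}$ and the vanishing $\pi^{\K_{n,\lambda}} = 0$ for $\lambda < \lambda_\pi$ to the two special classes.

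Second, for $\pi$ of type $\chi$ with $\chi$ unramified, I would argue by induction on the totally ordered set of multisegments of this type. The base case is that of a ladder representation, where Tadi\'c's determinantal formula (in the form established by Lapid--M\'inguez \cite{LMi}) writes $Z(\mm)$ as an alternating sum of standard modules, each of which is induced from one-dimensional representations; combining this with Proposition \ref{mackey} yields the explicit alternating-sum formula of Proposition \ref{alt} for $\dim \pi^{\K_{n,[M]}}$, and the combinatorial cancellation of Lemma \ref{vanish} collapses it to the desired value ($1$ at $\lambda = \lambda_\pi$, $0$ below). For general $\pi$ of type $\chi$, the inductive step is Proposition \ref{lem:fund3}, which in turn rests on the Knight--Zelevinsky description of the Zelevinsky dual (Proposition \ref{prop:KZ}); I would invoke these results directly. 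For the vanishing part of the theorem when $|\lambda| < |\lambda_\pi|$ (needed only in the companion Theorem \ref{conj1-2}, but the strict-inequality part $\lambda < \lambda_\pi$ is subsumed here), I would reduce to $\pi$ a Steinberg representation and realize the relevant alternating sum as a coefficient of a one-variable formal power series over a graded ring, then identify that series to conclude vanishing.

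Third, for $\pi$ with $L(s,\pi)=1$: by the triangular-unipotent structure of the change-of-basis matrix between standard and irreducible modules (\cite[7.1 Theorem]{Z}) together with Proposition \ref{mackey}, I would reduce the one-dimensionality statement to $\pi = Z(\Delta)$ for a single segment $\Delta$, hence to a Speh representation $\Sp(\pi_\temp, m)$ (Lemma \ref{thm.reduction}). There the argument mirrors the generic case of Jacquet--Piatetskii-Shapiro--Shalika \cite{JPSS2}: using the Rankin--Selberg integrals for Speh representations developed in Section \ref{s.RS} — in particular the Zelevinsky model to set up the integrals (Theorem \ref{RS}), the Shalika model for the nonvanishing of the "inner" integral (Theorem \ref{inner}), and the Lapid--Mao model transition (Proposition \ref{transition}) to pass between them — one shows that a $\K_{n,\lambda}$-fixed vector producing a nonzero integral forces $\lambda \geq \lambda_\pi$, and exhibits an explicit fixed vector at $\lambda = \lambda_\pi$. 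For the vanishing $\pi^{\K_{n,\lambda}} = 0$ when $L(s,\pi)=1$ and $\lambda < \lambda_\pi$, I would reduce to $\pi$ cuspidal, use Lemma \ref{lem:nilp} to see that certain Hecke operators act nilpotently, and run the Godement--Jacquet functional equation argument: a nonzero fixed vector gives data with $Z(\Phi,s,f)$ a nonzero constant while $Z(\hat\Phi, s, \check f) \in q^{|\lambda|s}\C[[q^{-s}]]$, which is incompatible with $\ep(s,\pi,\psi) = \ep(0,\pi,\psi)q^{-|\lambda_\pi|s}$ unless $|\lambda| \geq |\lambda_\pi|$.

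The main obstacle I anticipate is Step 2 for general (non-ladder) representations of type $\chi$: the cancellations in the determinantal-formula alternating sum are delicate, and controlling them for arbitrary multisegments via the Knight--Zelevinsky dual (Proposition \ref{lem:fund3}) is where the real combinatorial work lies. The Rankin--Selberg part of Step 3 is technically substantial but, as the introduction notes, runs closely parallel to the classical generic case once the Speh-representation integrals are in hand, so the novelty there is front-loaded into Section \ref{s.RS} rather than into the deduction of the theorem.
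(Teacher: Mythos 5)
Your overall skeleton matches the paper's: Mackey reduction to the type-$\chi$ and $L(s,\pi)=1$ cases, the Tadi\'c determinantal formula plus the Knight--Zelevinsky-based induction (Propositions \ref{prop:KZ}, \ref{lem:fund3}) for type $\chi$, and reduction to Speh representations (Lemma \ref{thm.reduction}) followed by the Rankin--Selberg theory for $L(s,\pi)=1$. But there is a genuine confusion in how you handle the vanishing half of Theorem \ref{main1}, and as written it leaves a gap. The lexicographic condition $\lambda < \lambda_\pi$ and the length condition $|\lambda| < |\lambda_\pi|$ are \emph{incomparable}: neither implies the other (e.g.\ $(0,3) < (1,1)$ lexicographically although $|(0,3)| = 3 > 2 = |(1,1)|$, and $(2,2) > (1,4)$ lexicographically although $|(2,2)| = 4 < 5 = |(1,4)|$). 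So the parenthetical claim that ``the strict-inequality part $\lambda < \lambda_\pi$ is subsumed'' by the $|\lambda|$-vanishing is false, and the Godement--Jacquet functional-equation argument you assign to ``the vanishing $\pi^{\K_{n,\lambda}}=0$ when $L(s,\pi)=1$ and $\lambda < \lambda_\pi$'' does not close that case: it only yields $|\lambda| \geq |\lambda_\pi|$, which is the content of Theorem \ref{conj1-2} and does not force $\lambda \geq \lambda_\pi$ in the lexicographic order.

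The paper's actual route for the lex-order vanishing when $L(s,\pi)=1$ is different, and you should make it explicit. After Lemma \ref{thm.reduction} reduces you to a Speh representation $\sigma = \Sp(\pi_\temp,m)$, Proposition \ref{dim=0} observes that $\lambda < \lambda_\sigma$ forces the first $(n-1)m$ entries of $\lambda$ to vanish; the conjugate $K_\lambda$ then contains $\iota(K')$, while $K_\lambda \cap L$ contains a product of JPSS congruence subgroups with at least one factor at conductor level strictly below $c_{\pi_\temp}$. The generic newform theorem of \cite{JPSS2} applied to the tempered $\pi_\temp$ kills the restriction $W|_L$ of any $K_\lambda$-fixed Zelevinsky-model vector, and Corollary \ref{W|L=0} (which is where the model-transition, the direct-integral argument of Lemma \ref{varphi=0}, and Theorem \ref{inner} actually do their work) upgrades $W|_L = 0$ to $W = 0$. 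Your phrase ``a $\K_{n,\lambda}$-fixed vector producing a nonzero integral forces $\lambda \geq \lambda_\pi$'' gestures in the right direction, but you should route the vanishing through the known generic case and the injectivity of restriction-to-$L$, not through the Godement--Jacquet zeta-integral estimate, which belongs to the proof of Theorem \ref{conj1-2}.
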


For $\lambda=(\lambda_1,\ldots,\lambda_n) \in \Lambda_n$, we write
$|\lambda|$ for $\lambda_1 + \cdots + \lambda_n$.

\begin{thm}\label{conj1-2}
Let $\pi \in \Irr(G_n)$.
If $\lambda \in \Lambda_n$ satisfies 
$|\lambda| < |\lambda_\pi|$, then $\pi^{\K_{n,\lambda}} = 0$. 
\end{thm}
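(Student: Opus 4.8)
\textbf{Proof proposal for Theorem \ref{conj1-2}.}
The plan is to mirror the structure already laid out for Theorem \ref{main1}, reducing the bound $|\lambda_\pi| = c_\pi$ on the conductor to the two special families of representations handled in the body of the paper, and then to exploit the extra flexibility we have here: we only need a statement about $|\lambda|$, not about the lexicographic order, so several of the delicate combinatorial cancellation arguments can be replaced by cruder estimates. First I would invoke Proposition \ref{mackey}, the Mackey decomposition of $\pi^{\K_{n,[M]}}$ for a parabolically induced representation, together with Corollary \ref{fil0}: if $\pi = Z(\mm)$ and we write $\mm$ as a sum of submultisegments supported on distinct cuspidal lines, then $\pi$ embeds in a product $\pi_1 \times \cdots \times \pi_t$ where each $\pi_i$ is supported on a single line, and the $\K_{n,[M]}$-invariants of the product decompose as a sum over filtrations $0 = M_0 \subseteq M_1 \subseteq \cdots \subseteq M_t = M$ of $\pi_1^{\K_{n_1,[M_1/M_0]}} \otimes \cdots \otimes \pi_t^{\K_{n_t,[M_t/M_{t-1}]}}$. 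Since $c_\pi = \sum_i c_{\pi_i}$ and $|\lambda_\pi| = \sum_i |\lambda_{\pi_i}|$, and since for any filtration the lengths of the successive quotients $M_i/M_{i-1}$ sum to $|\lambda|$, the inequality $|\lambda| < |\lambda_\pi|$ forces $|\lambda_i| < |\lambda_{\pi_i}|$ for at least one $i$, where $\lambda_i$ is the tuple attached to $M_i/M_{i-1}$; so the vanishing for each line-supported factor gives the vanishing for $\pi$. This reduces us to the case where $\pi$ is supported on a single cuspidal line, hence (after an unramified twist, which does not change any conductor or any $\K_{n,\lambda}$) to $\pi$ of type $\chi$ with $\chi$ unramified, or to $\pi$ with $L(s,\pi)=1$ — exactly the two cases of Step 2 and Step 3.

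For $\pi$ of type $\chi$ with $\chi$ unramified, I would follow the Steinberg reduction sketched in the paper: using Proposition \ref{mackey} once more one reduces Theorem \ref{main1}(2), i.e.\ the $|\lambda| < |\lambda_\pi|$ statement, to the case where $\pi$ is a Steinberg representation (a Speh representation with $\chi$ unramified, the ``most ramified'' building block on the line). For the Steinberg case, Tadi\'c's determinantal formula (Lapid--M\'inguez \cite{LMi}) writes $\pi$ as an alternating sum of standard modules that are parabolically induced from one-dimensional representations, so Proposition \ref{mackey} expresses $\dim \pi^{\K_{n,\lambda}}$ as an alternating sum of numbers of flags on the $\oo$-module $M_\lambda = \bigoplus \oo/\pp^{\lambda_i}$. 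The idea is to package this alternating sum as a coefficient in a one-variable formal power series with coefficients in a graded ring (grading by $|\lambda|$, or by length of $\oo$-modules), and then to re-express that power series in a manifestly ``$q$-adically small'' form so that the coefficient in total degree $<|\lambda_\pi|$ vanishes. Concretely I expect the generating function to factor, after the determinantal identity, as something like a product $\prod (1 - q^{?} X^{?})$ whose lowest nonzero term beyond the constant already has degree $|\lambda_\pi| = c_{\mathrm{St}}$; this is the power-series computation alluded to right before Section 1.4.

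For $\pi$ with $L(s,\pi)=1$ I would use the Godement--Jacquet functional equation argument sketched in Section 1.4. One reduces first to $\pi$ cuspidal: if $\pi = Z(\Delta)$ or more generally has $L(s,\pi)=1$, the standard-module machinery (\cite[7.1]{Z}) together with Proposition \ref{mackey} lets one bootstrap from the cuspidal case, and for cuspidal $\pi$ one has $L(s,\pi)=1$ automatically and $\lambda_\pi = (0,\dots,0,c_\pi)$. For cuspidal $\pi$, Lemma \ref{lem:nilp} gives that the Hecke operators attached to $\lambda$ act nilpotently; so if $\pi^{\K_{n,\lambda}} \neq 0$ one manufactures a Godement--Jacquet section $(\Phi, f)$ with $Z(\Phi, s, f)$ a nonzero constant while $Z(\widehat\Phi, s, \check f) \in q^{|\lambda| s}\,\C[[q^{-s}]]$, and the functional equation $Z(\widehat\Phi, s, \check f) = \gamma(s,\pi,\psi) Z(\Phi, 1-s, f)$ with $\gamma(s,\pi,\psi) = \epsilon(0,\pi,\psi) q^{-c_\pi s} L(1-s,\widetilde\pi)/L(s,\pi) = \epsilon(0,\pi,\psi) q^{-|\lambda_\pi| s}$ forces $|\lambda| \geq |\lambda_\pi|$, contradicting $|\lambda| < |\lambda_\pi|$. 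The main obstacle is the Steinberg generating-function step: one must identify the precise graded generating function, verify that the determinantal cancellations kill everything of degree strictly below $c_{\mathrm{St}}$ (the analogue of Lemma \ref{vanish} but now only needing the total-degree statement), and make sure the reduction from a general type-$\chi$ representation to Steinberg via Proposition \ref{mackey} is genuinely degree-monotone; the other two steps are essentially bookkeeping on top of results already quoted.
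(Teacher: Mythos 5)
Your plan matches the paper's proof of Theorem \ref{conj1-2} in overall structure: Mackey decomposition (Proposition \ref{mackey}) with Corollary \ref{fil0} to reduce to the type-$\chi$ and $L(s,\pi)=1$ cases, the cuspidal Godement--Jacquet argument via Lemma \ref{lem:nilp} for the latter, and for the former a reduction to the Steinberg representation followed by a generating-function argument over a graded ring. The route is the right one, but the two pieces you flag as the remaining obstacle diverge from what the paper actually does, and they are more than bookkeeping.

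First, the reduction from general $Z(\mm)$ of type $\chi$ to Steinberg is not another bare application of Mackey: the paper writes $\mm^\sharp = \Delta_1 + \cdots + \Delta_s$, sets $\pi_i = Z(\Delta_i^\sharp)$ (each a twist of Steinberg), shows that $\pi$ is a subquotient of $\pi_1 \times \cdots \times \pi_s$ with $|\lambda_\pi| = \sum_i |\lambda_{\pi_i}|$, and only then runs your length-counting reduction; without the Zelevinsky-dual decomposition there is no product of Steinbergs to reduce to. Second, the Steinberg generating function is neither a product of the form $\prod(1-q^{?}X^{?})$ nor graded by module length. The paper works in $R = \Z[x_1,x_2,\ldots]$, graded by $\deg x_m = m$, where $x_m$ records a graded piece of a filtration requiring exactly $m$ generators, and $\NN_{(m_1,\ldots,m_s)}(M) = \xi_M(y_{m_1}\cdots y_{m_s})$ with $y_m = 1 + x_1 + \cdots + x_m$. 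The determinantal alternating sum $f_n$ is then the coefficient of $t^n$ in the rational function $(-1)^n(1-t)/\bigl(1 + \sum_{i\ge 1} x_i t^i\bigr)$; since the coefficient of $t^i$ in the geometric inverse of the denominator is homogeneous of degree $i$, one gets $f_n \in I_{n-1}$, and Lemma \ref{I_m} translates this degree estimate (in numbers of generators) into the length bound $\xi_M(f_n)=0$ for $\length_\oo M \le n-2 = |\lambda_\pi|-1$. So the vanishing comes from a degree bound on a rational generating function plus a separate lemma converting degree into length, not a direct factorization in the length grading you suggest.
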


\subsection{Definition of $\lambda_\mm$}\label{sec.def_lambda}
For an irreducible representation $\pi$ of $G_n$, 
we defined $\lambda_\pi \in \Z^n$ in Section \ref{sec.intro.main}. 
Here we describe it in terms of multisegments, 
which then implies that $\lambda_\pi \in \Lambda_n$. 
\par

A segment $\Delta$ is written as $\Delta=[a,b]_\rho$ 
where $a,b \in \Z$ with $a \leq b$, 
and $\rho$ is a cuspidal representation of $G_d$ for some $d \geq 0$. 
We write a multisegment as a sum $\mm = \Delta_1+\dots+\Delta_r$ of segments 
where $r$ is a non-negative integer.
We call the integer $r$ the \emph{cardinality} of $\mm$
and denote it by $\Card(\mm)$. 
Recall that we set $l(\Delta) = b-a+1$. 
We write $l(\mm)$ for the sum 
$l(\Delta_1) + \cdots + l(\Delta_r)$ and call 
$l(\mm)$ the \emph{length} of $\mm$.
\par

If $a < b$, we write $\Delta^{-}$ for the segment $[a,b-1]_\rho$.
When $a=b$, we understand $\Delta^{-}$ to be the empty multisegment. 
We set $\mm^{-} = \Delta_1^{-} + \cdots + \Delta_r^{-}$.
By the fundamental result of Zelevinsky \cite[8.1 Theorem]{Z},
the highest derivative of $Z(\mm)$ is equivalent to
$Z(\mm^{-})$.
\par

We call $\Delta = [a,b]_\rho$ \emph{unipotent}
if $\rho$ is an unramified character of $F^\times$.
Similarly, we say that $\mm = \Delta_1 + \cdots + \Delta_r$ is \emph{unipotent}
if $\Delta_i$ is unipotent for $i=1,\ldots,r$.
Fix an unramified character $\chi$ of $F^\times$.
We say that a multisegment $\mm = \Delta_1 + \cdots + \Delta_r$
is \emph{of type $\chi$} if for $i=1,\ldots,r$, the segment 
$\Delta_i$ is of the form $[a_i,b_i]_\chi$ for some
integers $a_i,b_i$ satisfying $a_i \le b_i$.
\par

We denote by $\mm^\sharp$ the unique multisegment
such that $Z(\mm^\sharp)$ is equivalent to the Zelevinsky dual of $Z(\mm)$ 
(see, e.g., \cite[Section 7]{P}). 
We denote by $\mm^\ram$ the multisegment $((\mm^\sharp)^{-})^\sharp$.
When $\pi = Z(\mm)$, we set $\pi^\ram = Z(\mm^\ram)$.
We use ``ram'' only for unipotent multisegments. 
For an example of $\mm^\ram$, see Section \ref{sec:ZD} below. 
\par

When $n' < n$, we regard $\Lambda_{n'}$ as a submonoid of $\Lambda_n$ 
via the inclusion $\Lambda_{n'} \hookrightarrow \Lambda_n$ 
given by $(\lambda_1, \dots, \lambda_{n'}) \mapsto (0,\dots,0,\lambda_1,\dots,\lambda_{n'})$. 

\begin{defi}
Let $\mm$ be a multisegment. 

\begin{enumerate}
\item
If $\mm = \Delta_1+\dots+\Delta_r$ with 
$\Delta_i = [a_i,b_i]_{\rho_i}$ being not unipotent for all $i = 1, \dots, r$, 
then we set 
\[
\lambda_{\mm} = \sum_{i=1}^r (0,\dots,0,\underbrace{c_{\rho_i}, \dots, c_{\rho_i}}_{l(\Delta_i)}), 
\]
where $c_{\rho_i}$ is the conductor of $\rho_i$. 
Note that $c_{\rho_i} > 0$ for $1 \leq i \leq r$ by \cite[(5.1) Th{\'e}or{\`e}me]{JPSS2}. 

\item
If $\mm$ is unipotent, and if we write $\mm^{\ram} = \Delta_1+\dots+\Delta_r$, 
then we set 
\[
\lambda_{\mm} = \sum_{i=1}^r (0,\dots,0,\underbrace{1, \dots, 1}_{l(\Delta_i)}). 
\]

\item
In general, we decompose $\mm$ as  $\mm = \mm' + \mm_\unip$, 
where $\mm_\unip$ is unipotent, 
and each segment in $\mm'$ is not unipotent. 
Then we set 
\[
\lambda_{\mm} = \lambda_{\mm'} + \lambda_{\mm_\unip}.
\]
\end{enumerate}
\end{defi}

As seen in the next proposition, 
this is an alternative definition of $\lambda_\pi$. 
\begin{prop}\label{prop_lambda}
Let $n \geq 1$ and let $\pi = Z(\mm)$ be an irreducible representation of $G_n$
corresponding to a multisegment $\mm$. 
Then we have $\lambda_\pi = \lambda_\mm$.
\end{prop}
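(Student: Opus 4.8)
\textbf{Proof plan for Proposition \ref{prop_lambda}.}
The plan is to compare the two definitions of $\lambda_\pi$: the original one from Section \ref{sec.intro.main} via conductors of the derivatives $\pi^{(i)}$, and the combinatorial one from the multisegment $\mm$. The first observation I would record is that the original definition can be repackaged in the language of $\Lambda_n$ as a telescoping sum. Indeed, writing $\mu^{(i)} = \lambda_{\pi^{(i)}}$ for the tuple associated (in either sense, once we know they agree for smaller $n$) to the highest derivative $\pi^{(i)} = Z((\mm^{-\cdots-})$, the definition $\lambda_k = c_{\pi^{(n-k)}} - c_{\pi^{(n-k+1)}}$ together with $|\lambda_{\pi^{(i)}}| = c_{\pi^{(i)}}$ says precisely that $\lambda_\pi$ is built from the conductors $c_{\pi^{(i)}}$ by a first-difference construction. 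So I would prove the statement by induction on $n$ (equivalently on $l(\mm)$), reducing the claim $\lambda_{Z(\mm)} = \lambda_\mm$ to two ingredients: (a) the relation $\lambda_\mm = \lambda_{\mm^-} + (\text{the length profile of the top layer of } \mm)$, i.e. that passing from $\mm$ to $\mm^-$ removes exactly the ``$(0,\dots,0,1)$-shift'' worth of each segment, matching the recursion $\lambda_k = c_{\pi^{(n-k)}} - c_{\pi^{(n-k+1)}}$; and (b) the base/analytic input that $c_{Z(\mm)} = |\lambda_\mm|$, i.e. the conductor of $Z(\mm)$ equals $\sum_i c_{\rho_i} l(\Delta_i)$ in the non-unipotent case and equals $l(\mm^\ram)$ in the unipotent case.

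For ingredient (b), I would invoke the known formula for the conductor of $Z(\mm)$ in terms of its local $L$- and $\varepsilon$-factors. The Godement--Jacquet (equivalently Langlands--Shahidi / Jacquet--Piatetskii-Shapiro--Shalika) local factors of $Z(\mm)$ are those of the corresponding Weil--Deligne representation; writing $Z(\mm)$ via its Langlands/Zelevinsky parameter, the conductor is additive over the segments and for a single segment $\Delta = [a,b]_\rho$ the associated Weil--Deligne representation is $\mathrm{rec}(\rho) \otimes \mathrm{sp}(l(\Delta))$ up to twist, whose Artin conductor is $c_\rho \cdot l(\Delta)$ when $\rho$ is ramified and is $l(\Delta) - 1$ (the dimension of the invariants of the monodromy filtration complement) when $\rho$ is unramified — and in the unipotent case the bookkeeping of these $(l(\Delta)-1)$'s over all segments of $\mm$ is exactly what the operation $\mm \mapsto \mm^\ram = ((\mm^\sharp)^-)^\sharp$ computes, since Zelevinsky duality swaps the roles that make $\mathrm{sp}$ versus trivial monodromy, and $(\cdot)^-$ shortens each segment by one. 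This is the step I expect to be the main obstacle: carefully matching the Galois-side conductor computation for unipotent $\mm$ with the purely combinatorial definition $\lambda_\mm = \sum_i (0,\dots,0,1,\dots,1)$ over the segments of $\mm^\ram$, and in particular checking that $|\lambda_\mm| = \mathrm{Card}(\mm^\ram) \cdot (\text{length contributions})$ — wait, more precisely $|\lambda_\mm| = l(\mm^\ram)$ — coincides with $c_{Z(\mm)}$. I would isolate this as a lemma about unipotent multisegments and prove it by reducing to Steinberg/trivial building blocks via Tadi\'c's determinantal formula or directly from the recursion $c_{Z(\mm)} = c_{Z(\mm^-)} + (\text{number of segments of } \mm \text{ of length} \ge 2)$, noting $\mm \mapsto \mm^-$ interacts predictably with $\mm \mapsto \mm^\ram$.

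With (b) in hand, ingredient (a) is combinatorial. For non-unipotent $\mm = \sum_i \Delta_i$ with $\Delta_i = [a_i,b_i]_{\rho_i}$, the definition gives $\lambda_\mm = \sum_i (0,\dots,0,c_{\rho_i},\dots,c_{\rho_i})$ with $l(\Delta_i)$ entries, and $\mm^- = \sum_i \Delta_i^-$ drops one entry $c_{\rho_i}$ from each block (with empty segments disappearing), so $|\lambda_\mm| - |\lambda_{\mm^-}| = \sum_i c_{\rho_i} = c_{Z(\mm)} - c_{Z(\mm^-)}$; matching the full tuples (not just the sizes) then follows because the lexicographically-ordered ``staircase'' shape of $\lambda_\mm$ is determined by the multiset of pairs $(l(\Delta_i), c_{\rho_i})$, which is exactly the data recovered inductively from the conductors $c_{\pi^{(i)}}$ via first differences. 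The mixed case reduces to the two pure cases by the direct sum decomposition $\mm = \mm' + \mm_\unip$, since $(\cdot)^-$, $(\cdot)^\sharp$, parabolic induction, and hence conductors all respect the splitting into the unipotent part and its complement (segments supported on $\rho$ with $\rho$ ramified never interact, in the relevant sense, with the unramified block). Finally I would feed the inductive hypothesis $\lambda_{Z(\mm^-)} = \lambda_{\mm^-}$ and the identity $\pi^{(1)} = Z(\mm^-)$ into the telescoping formula defining $\lambda_\pi$ to conclude $\lambda_{Z(\mm)} = \lambda_\mm$, closing the induction.
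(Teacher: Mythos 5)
Your global strategy---telescoping the conductors $c_{\pi^{(i)}}$ of the iterated highest derivatives against the combinatorial structure of $\lambda_\mm$, and splitting the mixed case into a non-unipotent block and unipotent blocks---is sound, and the non-unipotent case goes through essentially as you describe. The difficulty is confined to the unipotent case, where two of your stated ingredients are false as written. The Weil--Deligne representation attached to $Z(\Delta)$ is \emph{not} $\mathrm{rec}(\rho)\otimes\mathrm{sp}(l(\Delta))$; that is the parameter of the Langlands quotient $L(\Delta)$, while $Z(\Delta)$ has trivial monodromy. For unramified $\rho=\chi$ the representation $Z([a,b]_\chi)$ is a one-dimensional unramified character, so $c_{Z(\Delta)}=0$, not $l(\Delta)-1$; the quantity $l(\Delta)-1$ is $c_{L(\Delta)}$. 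Since $Z(\mm)=L(\mm^\sharp)$ on the Galois side, the conductor is additive over the segments of $\mm^\sharp$, not of $\mm$. Consequently the recursion $c_{Z(\mm)}=c_{Z(\mm^-)}+\#\{\Delta\in\mm : l(\Delta)\ge 2\}$ is also false: for $\mm=[1,1]_\chi+\cdots+[n,n]_\chi$ all segments have length $1$ and $\mm^-=0$, so your recursion gives $0$, yet $Z(\mm)$ is an unramified twist of the Steinberg representation, whose conductor is $n-1$. The correct increment is $\Card(\mm^\ram)$, i.e.\ the number of segments of $\mm^\sharp$ of length at least $2$.

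Once those slips are fixed, the actual content of your remark that ``$\mm\mapsto\mm^-$ interacts predictably with $\mm\mapsto\mm^\ram$'' is the identity $(\mm^-)^\ram=(\mm^\ram)^-$, and this is not bookkeeping: passing to $\mm^\sharp$ scrambles the segments globally, so a real argument is needed. The paper does not run a recursion in $\mm^-$ at all. It encodes a unipotent $\mm$ as a graded vector space $V$ with a degree-$1$ nilpotent $N$ (Section \ref{sec:pairs}), chooses a sufficiently generic degree-$(-1)$ operator $L$ commuting with $N$, and observes that $c_{\pi^{(i)}}=\dim\Image(L\circ N^i)$, while $\lambda_\mm$ is read off from the pair $(\Image L, N|_{\Image L})$; the commutativity $L\circ N^i=N^i\circ L$ then makes the two tuples coincide directly. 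Your plan could be completed by first establishing $(\mm^-)^\ram=(\mm^\ram)^-$ (the paper's Proposition \ref{lem:fund1}) together with $c_{Z(\mm)}=l(\mm^\ram)$ for unipotent $\mm$, but both of those in turn require essentially the same {\VNpair} formalism, so the unipotent step of your proposal is genuinely incomplete as it stands rather than a shortcut around the paper's machinery.
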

This proposition will be proven in Section \ref{sec.proof_lambda} below. 
We now give some examples. 

\begin{ex}\label{ex_lambda}
Let $\pi$ be an irreducible representation of $G_n$. 

\begin{enumerate}
\item
When $L(s,\pi) = 1$, then $\pi = Z(\Delta_1 + \dots + \Delta_r)$ with $\Delta_i$ not unipotent. 
If $\pi = Z(\Delta)$ with a segment $\Delta = [x,y]_\rho$, then we have
\[
\lambda_\pi = \lambda_\Delta 
= (\underbrace{0, \dots, 0}_{n-l(\Delta)}, \underbrace{c_\rho, \dots, c_\rho}_{l(\Delta)}) \in \Lambda_n. 
\]
Here, we note that $c_\rho > 0$.
In general, if $\pi = Z(\Delta_1 + \dots + \Delta_r)$, 
we have
\[
\lambda_\pi = \lambda_{\Delta_1} + \dots + \lambda_{\Delta_{r}} \in \Lambda_n.
\]

\item
When $\pi = Z([x_1,y_1]_\chi, \dots, [x_t,y_t]_\chi) \in \Irr(G_n)$ 
is a ladder representation 
of type $\chi$, 
where $\chi$ is an unramified character of $F^\times$,  
we have
\[
\lambda_\pi = \sum_{i=2}^t (0, \dots, 0, \underbrace{1, \dots, 1}_{\max\{y_i-x_{i-1}+2,0\}}) \in \Lambda_n.
\]
Indeed, by the description of the Zelevinsky duals of ladder representations in \cite[Section 3]{LMi}
(see also Section \ref{sec:ZD} below), 
we have
\[
\pi^{\ram} = Z([x_1-1,y_2]_\chi, [x_2-1,y_3]_\chi, \dots, [x_{t-1}-1,y_t]_\chi).
\]
Here, if $x_{i-1}-1 > y_i$, we ignore $[x_{i-1}-1,y_i]_\chi$. 

\item
Let $t \ge 1$, 
and let $\pi_i \in \Irr(G_{n_i})$ be as in either (1) or (2) above for $1 \le i \le t$.   
Assume $\pi=\pi_1 \times \cdots \times \pi_t$ is irreducible.
Then we have
$\lambda_\pi=\lambda_{\pi_1}+\cdots+\lambda_{\pi_t}$.

\item
Let $\pi$ be an irreducible tempered representation of $G_n$. 
Then the parabolically induced representation
\[
\pi|\cdot|^{-\half{m-1}} \times \pi|\cdot|^{-\half{m-3}} \times \dots \times \pi|\cdot|^{\half{m-1}}
\]
of $G_{nm}$ has a unique irreducible subrepresentation $\sigma$, 
which is denoted by $\Sp(\pi, m)$. 
Note that $\sigma$ is a (unitary) Speh representation. 
Combining the cases above, we obtain
\[
\lambda_\sigma = 
(\underbrace{0,\dots,0}_{(n-1)m}, \underbrace{c_\pi, \dots, c_\pi}_{m}) \in \Lambda_{nm}.
\]
\end{enumerate}
\end{ex}

\begin{rem}
In the appendix of the paper \cite{KY0} by the second and third
authors, they introduce a notion of mirahoric representations
(see A.1.6 loc.cit.).
Let us recall the definition.
Two segments $\Delta$ and $\Delta'$ are said to be {\em tightly linked} if
and only if
they are linked and either $\Delta$ is not unipotent or $\Delta \cap
\Delta'$ is non-empty. Let $\pi=L(\mm)$ be an irreducible representation
associated with a multisegment $\mm$ in the Langlands classification, i.e.,
$\pi$ is the Zelevinsky dual of $Z(\mm)$. 
They defined $\pi$ to be {\em mirahoric} if any two segments in $\mm$ are not
tightly linked.
In terms of the setup in this paper, the class of mirahoric representations
is equal to the class of irreducible representations $\pi$ such that
$\lambda_\pi=(0,...,0,c)$ for some $c$.   This can be seen from 
their proposition \cite[Proposition A.15]{KY0}, 
which says that a representation $\pi$ is mirahoric if and only if
the conductor of the highest derivative of $\pi$ is zero.
Hence, a main result \cite[Proposition A.3]{KY0} in the appendix 
can be interpreted as a special case of Theorem \ref{main1} restricted
to the mirahoric representations.

An irreducible representation $\pi=L(\mm)$
is generic if and only if any two segments of
$\mm$ are not linked. 
Therefore a generic representation is mirahoric.
However, a simple multisegment such as $\mm=[0,1]_\rho + [2,3]_\rho$,
where $\rho$ is an unramified character, gives a mirahoric representation
$L(\mm)$ which is not generic. (This is one of the reasons 
for treating the
unipotent case and the case $L(s, \pi)=1$ separately.)
\end{rem}

\subsection{Computation of $\lambda_{\mm}$}
When $\mm$ is a general unipotent multisegment, 
it is difficult to compute $\lambda_{\mm}$ directly from the definition.
In this subsection, we explain how to compute $\lambda_\mm$ efficiently. 
\par

Let $\mm$ be a unipotent multisegment. 
We may assume that $\mm$ is of type $\chi$ for some unramified character $\chi$ of $F^\times$.
We denote by $\mm_\maxi$ the set of
segments $\Delta$ in $\mm$ such that $\Delta$ is
maximal with respect to the inclusion among the segments
in $\mm$. We regard $\mm_\maxi$ as a multisegment
in which each segment has multiplicity at most $1$.
We set $\mm^\rest = \mm - \mm_\maxi$.
For example, if
\[
\mm = [0,0]_\chi + [1,2]_\chi
+ [1,2]_\chi + [2,2]_\chi,
\]
then we have
\[
\mm_\maxi = [0,0]_\chi + [1,2]_\chi
\]
and
\[
\mm^\rest = [1,2]_\chi + [2,2]_\chi.
\]
\par

\begin{prop} \label{lem:fund3}
We have
$\mm^\ram = (\mm_\maxi)^\ram
+ (\mm^\rest)^\ram$.
\end{prop}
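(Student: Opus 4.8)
The plan is to reduce the identity to a statement purely about Zelevinsky duals and the operation $\mm \mapsto \mm^{-}$, and then to invoke the Knight--Zelevinsky description of the dual referenced in the introduction (Proposition \ref{prop:KZ}). Recall $\mm^\ram = ((\mm^\sharp)^{-})^\sharp$, so the claim $\mm^\ram = (\mm_\maxi)^\ram + (\mm^\rest)^\ram$ is equivalent, after applying $\sharp$ (which is an involution on multisegments and turns $+$ into $+$), to
\[
(\mm^\sharp)^{-} = ((\mm_\maxi)^\sharp)^{-} + ((\mm^\rest)^\sharp)^{-}.
\]
So first I would try to understand how $\sharp$ interacts with the decomposition $\mm = \mm_\maxi + \mm^\rest$ into maximal segments and the rest. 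The key structural fact I expect to use is that the maximal segments of $\mm$ correspond, under the Zelevinsky dual, to exactly those segments in $\mm^\sharp$ that ``reach the top'' in the relevant sense; more precisely, in the combinatorial model of Knight--Zelevinsky the dual multisegment $\mm^\sharp$ is built from the same set of cuspidal lines (``centers'') as $\mm$, and passing to $\mm^{-}$ on the dual side amounts to deleting the right endpoint from each segment of $\mm^\sharp$.

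Concretely, I would argue as follows. Since $\mm_\maxi$ consists of the inclusion-maximal segments of $\mm$ (each with multiplicity one) and $\mm^\rest = \mm - \mm_\maxi$, the multiset of segments of $\mm$ is the disjoint union of those of $\mm_\maxi$ and $\mm^\rest$. I would show that on the dual side the operation $(-)^{-}$ "sees" $\mm_\maxi$ and $\mm^\rest$ independently: the segments of $(\mm^\sharp)^{-}$ obtained by shortening segments that trace back to $\mm_\maxi$ are precisely those of $((\mm_\maxi)^\sharp)^{-}$, and similarly for $\mm^\rest$. This should follow from the observation that a segment $\Delta$ of $\mm$ is maximal if and only if, in the Zelevinsky-dual picture, deleting it from $\mm$ and dualizing corresponds to deleting one ``layer'' cleanly --- i.e., the maximality of $\Delta$ is exactly the condition that guarantees the duals of $\mm_\maxi$ and $\mm^\rest$ do not interfere when we apply $(-)^{-}$. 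I would make this precise using the recursive/inductive description of $(-)^\sharp$: one can peel off a maximal segment and track what happens to the dual under the Mœglin--Waldspurger or Knight--Zelevinsky algorithm.

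The main obstacle will be the combinatorial bookkeeping needed to justify that the dual ``respects'' the $\mm_\maxi$/$\mm^\rest$ splitting in the precise way required --- in general $(\mm_1 + \mm_2)^\sharp \ne \mm_1^\sharp + \mm_2^\sharp$, so additivity of $\sharp$ genuinely fails and the identity is special to this decomposition. I expect the real content to be a lemma of the form: if $\Delta$ is a maximal segment of $\mm$ then $\mm^\sharp$ has a distinguished segment $\Delta^\star$ such that $(\mm^\sharp)^{-}$ restricted to ``the $\Delta^\star$-part'' agrees with $((\{\Delta\})^\sharp)^{-}$, compatibly with multiplicities. Once such a lemma is in place, an induction on $\Card(\mm_\maxi)$ (peeling off one maximal segment at a time) reduces the statement to the single-maximal-segment case, and the general identity follows by adding up. Throughout I would lean on Proposition \ref{prop:KZ} and on the basic fact that $\sharp$ is an order-two bijection of multisegments preserving the supercuspidal support, together with the description of $\mm^\ram$ for ladder representations recalled in Example \ref{ex_lambda}(2) as a sanity check on small cases.
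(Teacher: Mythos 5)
Your first step already contains an error: you reduce the claim to
\[
(\mm^\sharp)^{-} = ((\mm_\maxi)^\sharp)^{-} + ((\mm^\rest)^\sharp)^{-}
\]
by "applying $\sharp$, which turns $+$ into $+$". But the Zelevinsky duality $\sharp$ is \emph{not} additive on multisegments, as you yourself concede two paragraphs later ("in general $(\mm_1 + \mm_2)^\sharp \ne \mm_1^\sharp + \mm_2^\sharp$"). Without additivity of $\sharp$, applying $\sharp$ to both sides of $\mm^\ram = (\mm_\maxi)^\ram + (\mm^\rest)^\ram$ does not produce a $+$ on the right, so the claimed equivalence fails and the reduction is invalid. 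The two displayed identities are not equivalent; one cannot simply pass between them.

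Beyond that, the remainder of the proposal stays at the level of a plan ("I expect the real content to be a lemma of the form\ldots", "an induction on $\Card(\mm_\maxi)$ should reduce\ldots") and does not identify the mechanism that actually makes the proof work. The paper's argument does not peel off one maximal segment at a time; it proceeds by induction on the total length $l(\mm)$, working entirely inside the category of graded vector spaces with a nilpotent endomorphism ({\VNpair}s). The two key ingredients you are missing are: (i) Lemma~\ref{lem:isom}, which says that two {\VNpair}s $(V,N)$, $(V',N')$ are isomorphic as soon as $V \cong V'$ as graded vector spaces and $(\Image N, N|_{\Image N}) \cong (\Image N', N'|_{\Image N'})$ — this is what permits the reduction to a pure dimension count after the inductive step supplies the isomorphism on the images; and (ii) the use of Proposition~\ref{prop:KZ} not as a black box but specifically to compute, degree by degree, $\dim (\Image L)_a = \Card(\mm^\sharp_{(a)})$ via the maximal totally ordered full-sub-multisegments of $\mm_a$, where the fact that the maximal segment of any such totally ordered sub-multisegment lies in $\mm_\maxi$ is precisely what splits the count between $\mm_\maxi$ and $\mm^\rest$. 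Your sketch gestures at a ``distinguished segment $\Delta^\star$'' combinatorics, but without the passage to {\VNpair}s and Lemma~\ref{lem:isom} there is no visible way to turn the Knight--Zelevinsky count into the required multisegment identity, and the incorrect first step means the plan is aiming at the wrong target statement anyway.
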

We will prove this proposition in Section \ref{sec.proof_max} below. 

\begin{cor} \label{cor:fund4}
We have
$\lambda_{\mm} = \lambda_{\mm_\maxi}
+ \lambda_{\mm^\rest}$.
\end{cor}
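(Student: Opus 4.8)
The plan is to deduce Corollary \ref{cor:fund4} directly from Proposition \ref{lem:fund3} by unwinding the definition of $\lambda_{(-)}$ for unipotent multisegments. First I would recall that for a unipotent multisegment $\nn$ we have, by part (2) of the definition of $\lambda_\nn$, that $\lambda_\nn = \sum_{i} (0,\dots,0,\underbrace{1,\dots,1}_{l(\Delta_i)})$ where $\nn^\ram = \Delta_1 + \dots + \Delta_r$; equivalently, $\lambda_\nn$ depends only on the \emph{multiset of lengths} of the segments occurring in $\nn^\ram$, and in fact the assignment $\nn^\ram \mapsto \lambda_\nn$ is \emph{additive} in the following sense: if $\nn^\ram = \mathfrak{a} + \mathfrak{b}$ as multisegments, then $\lambda_\nn = \lambda_{\mathfrak{a}'} + \lambda_{\mathfrak{b}'}$ for any unipotent multisegments $\mathfrak{a}', \mathfrak{b}'$ with $(\mathfrak{a}')^\ram = \mathfrak{a}$ and $(\mathfrak{b}')^\ram = \mathfrak{b}$. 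So the entire content to extract is: the map sending a multisegment $\ell = \Delta_1 + \dots + \Delta_r$ to the element $\sum_i (0,\dots,0,\underbrace{1,\dots,1}_{l(\Delta_i)}) \in \Lambda_n$ is a monoid homomorphism from multisegments (under $+$) to $\Lambda_n$.

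Next I would apply Proposition \ref{lem:fund3}, which gives $\mm^\ram = (\mm_\maxi)^\ram + (\mm^\rest)^\ram$ as an identity of multisegments. Writing $(\mm_\maxi)^\ram = \Delta_1 + \dots + \Delta_p$ and $(\mm^\rest)^\ram = \Delta_{p+1} + \dots + \Delta_q$, the right-hand side is the multisegment $\Delta_1 + \dots + \Delta_q$, and hence by the definition of $\lambda$ for unipotent multisegments,
\[
\lambda_\mm = \sum_{i=1}^q (0,\dots,0,\underbrace{1,\dots,1}_{l(\Delta_i)})
= \sum_{i=1}^p (0,\dots,0,\underbrace{1,\dots,1}_{l(\Delta_i)}) + \sum_{i=p+1}^q (0,\dots,0,\underbrace{1,\dots,1}_{l(\Delta_i)})
= \lambda_{\mm_\maxi} + \lambda_{\mm^\rest},
\]
where the last equality is again the definition of $\lambda$ applied to the unipotent multisegments $\mm_\maxi$ and $\mm^\rest$ (note both $\mm_\maxi$ and $\mm^\rest$ are unipotent, being sub-multisegments of the unipotent $\mm$). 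One small point to check: addition in $\Lambda_n$ is the coordinatewise addition inherited from $\Z^n$, and the displayed splitting of the sum is just associativity and commutativity of that addition, so it is legitimate. There is also the reduction at the start — "we may assume $\mm$ is of type $\chi$" — which is harmless since $\lambda_\mm$, $\mm_\maxi$, $\mm^\rest$ and the $\ram$ operation are all computed segment-by-segment on the cuspidal support, so a general unipotent $\mm$ decomposes into pieces of type $\chi$ for distinct unramified $\chi$ and everything is additive over that decomposition as well (via part (3) of the definition, or simply by the same additivity argument).

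I do not expect any real obstacle here: the corollary is essentially a formal consequence of Proposition \ref{lem:fund3} together with the observation that $\lambda_{(-)}$ is built additively from segment lengths. The only thing requiring the slightest care is making sure the identity $\mm^\ram = (\mm_\maxi)^\ram + (\mm^\rest)^\ram$ is being used as an equality of multisegments (with multiplicities) and not merely of underlying sets — but that is exactly the form in which Proposition \ref{lem:fund3} is stated. If one wanted to be fully scrupulous, one would also remark that $\lambda$ is well-defined, i.e. independent of the chosen ordering of the segments in $\mm^\ram$, which is immediate from commutativity of addition in $\Lambda_n$.
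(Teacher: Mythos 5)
Your proof is correct and follows essentially the same route as the paper: invoke Proposition \ref{lem:fund3} to write $\mm^\ram = (\mm_\maxi)^\ram + (\mm^\rest)^\ram$, then split the defining sum $\lambda_\mm = \sum_i (0,\dots,0,\underbrace{1,\dots,1}_{l(\Delta_i)})$ over the two groups of segments. The extra remarks about well-definedness under reordering and the reduction to type $\chi$ are harmless elaborations of points the paper treats implicitly.
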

\begin{proof}
Write $(\mm_\maxi)^\ram = \Delta_1+\dots+\Delta_r$ 
and $(\mm^\rest)^\ram = \Delta_{r+1} + \dots + \Delta_t$.
Then $\mm^\ram =  \Delta_1+\dots+\Delta_t$ by Proposition \ref{lem:fund3}.
Hence we have
\begin{align*}
\lambda_{\mm} 
&= 
\sum_{i=1}^t (0,\dots,0,\underbrace{1, \dots, 1}_{l(\Delta_i)})
\\&=
\sum_{i=1}^r (0,\dots,0,\underbrace{1, \dots, 1}_{l(\Delta_i)})
+
\sum_{i=r+1}^t (0,\dots,0,\underbrace{1, \dots, 1}_{l(\Delta_i)})
\\&= 
\lambda_{\mm_\maxi} + \lambda_{\mm^\rest}.
\end{align*}
This completes the proof.
\end{proof}
Since $\mm_\maxi$ is a ladder multisegment 
(i.e., the multisegment corresponding to a ladder representation),
we can compute $\lambda_{\mm_\maxi}$ 
as in Example \ref{ex_lambda} (2).
Hence, 
using this corollary, we can compute $\lambda_\mm$ inductively. 

\subsection{An example of computation of $\mm^\ram$}\label{sec:ZD}
By using Proposition \ref{lem:fund3}, 
one can compute $\mm^\ram$ for an arbitrary multisegment $\mm$
in a systematic way.   Let us give an example.
\par

Let 
$\mm=\sum_{i=1}^7 \Delta_i$
be a multisegment, where 
$\Delta_1=[5,6]_\chi$,
$\Delta_2=[3,7]_\chi$,
$\Delta_3=[3,4]_\chi$,
$\Delta_4=[2,5]_\chi$,
$\Delta_5=[3,3]_\chi$,
$\Delta_6=[1,2]_\chi$,
$\Delta_7=[0,0]_\chi$.
Then $\mm_\maxi=\Delta_2+\Delta_4+\Delta_6+\Delta_7$
and 
$\mm^\rest=\Delta_1+\Delta_3+\Delta_5$.
We also have 
$(\mm^\rest)_\maxi=\Delta_1+\Delta_3$
and 
$(\mm^\rest)^\rest=\Delta_5$.
By Proposition \ref{lem:fund3}, 
we are reduced to computing ``$\ram$'' of the three ladder
multisegments.

As explained in Section 3 of \cite{LMi},
the Zelevinsky dual of a ladder multisegment can be calculated fairly easily.
Let us compute the Zelevinsky of $\mm_\maxi$ by drawing pictures.
In the $xy$-plane, we draw each segment of $\mm_\maxi$
so that each lies on the line $y=i$ for $i=1,\dots, 4$.
(See the following figure.)
Whenever there exist points $(e,f)$ and $(e+1, f-1)$ with $e,f \in \Z$,
we draw a dotted line connecting them.   
Then the dotted lines form the multisegment of the Zelevinsky dual $(\mm_\maxi)^\sharp$.
One can use the algorithm of M{\oe}glin--Waldspurger \cite{MW} 
to verify that the procedure above actually
gives the Zelevinsky dual.
We obtain 
$(\mm_\maxi)^\sharp
=\Delta'_1+\Delta'_2+\Delta'_3+\Delta'_4+\Delta'_5$,
where 
$\Delta'_1=[7,7]_\chi$,
$\Delta'_2=[5,6]_\chi$,
$\Delta'_3=[4,5]_\chi$,
$\Delta'_4=[2,4]_\chi$,
$\Delta'_5=[0,3]_\chi$.
\begin{figure}[H]
\centering
\begin{tikzpicture}
\draw (-1,1) node {$\mm_\maxi$};

\draw (0,0);
\draw (1,-1) -- (2,-1);
\draw (2,-2) -- (3,-2) -- (4,-2) -- (5,-2);
\draw (3,-3) -- (4,-3) -- (5,-3) -- (6,-3) -- (7,-3);

\draw (0, -.1) -- (0,.1);
\draw (1, -1.1) -- (1,-.9);
\draw (2, -1.1) -- (2,-.9);
\draw (2, -2.1) -- (2,-1.9);
\draw (3, -2.1) -- (3,-1.9);
\draw (4, -2.1) -- (4,-1.9);
\draw (5, -2.1) -- (5,-1.9);
\draw (3, -3.1) -- (3,-2.9);
\draw (4, -3.1) -- (4,-2.9);
\draw (5, -3.1) -- (5,-2.9);
\draw (6, -3.1) -- (6,-2.9);
\draw (7, -3.1) -- (7,-2.9);

\draw (-1,0) node {$\Delta_7$};
\draw (-1,-1) node {$\Delta_6$};
\draw (-1,-2) node {$\Delta_4$};
\draw (-1,-3) node {$\Delta_2$};

\draw (0,1) node {0};
\draw (1,1) node {1};
\draw (2,1) node {2};
\draw (3,1) node {3};
\draw (4,1) node {4};
\draw (5,1) node {5};
\draw (6,1) node {6};
\draw (7,1) node {7};

\draw[densely dashed] (0,0) -- (3,-3);
\draw[densely dashed] (2,-1) -- (4,-3);
\draw[densely dashed] (4,-2) -- (5, -3);
\draw[densely dashed] (5,-2) -- (6, -3);

\draw (3.5, -4) node {$\Delta'_5$};
\draw (4.5, -4) node {$\Delta'_4$};
\draw (5.5, -4) node {$\Delta'_3$};
\draw (6.5, -4) node {$\Delta'_2$};
\draw (7.5, -4) node {$\Delta'_1$};
\end{tikzpicture}
\end{figure}

\begin{figure}[H]
\centering
\begin{tikzpicture}
\draw (-1.3,1) node {$(\mm_\maxi)^\sharp$};

\draw (0,1) node {0};
\draw (1,1) node {1};
\draw (2,1) node {2};
\draw (3,1) node {3};
\draw (4,1) node {4};
\draw (5,1) node {5};
\draw (6,1) node {6};
\draw (7,1) node {7};

\draw (0,0) -- (3,0);
\draw (2,-1) -- (4,-1);
\draw (4,-2) -- (5,-2);
\draw (5,-3) -- (6,-3); 
\draw (7,7);

\draw (-1,0) node {$\Delta'_5$};
\draw (-1,-1) node {$\Delta'_4$};
\draw (-1,-2) node {$\Delta'_3$};
\draw (-1,-3) node {$\Delta'_2$};
\draw (-1,-4) node {$\Delta'_1$};

\draw (0, -.1) -- (0,.1); 
\draw (1, -.1) -- (1,.1); 
\draw (2, -.1) -- (2,.1); 
\draw (3, -.1) -- (3,.1); 
\draw (2, -1.1) -- (2,-.9);
\draw (3, -1.1) -- (3,-.9);
\draw (4, -1.1) -- (4,-.9);
\draw (4, -2.1) -- (4,-1.9);
\draw (5, -2.1) -- (5,-1.9);
\draw (5, -3.1) -- (5,-2.9);
\draw (6, -3.1) -- (6,-2.9);
\draw (7, -4.1) -- (7,-3.9);
\end{tikzpicture}
\end{figure}

The multisegment of the highest derivative is obtained by shortening each segment by 1.   Hence, we have $((\mm_\maxi)^\sharp)^-$ as in the following figure.   
We obtain
$((\mm_\maxi)^\sharp)^-
=(\Delta'_1)^-+
(\Delta'_2)^-+
(\Delta'_3)^-+
(\Delta'_4)^-$
where 
$\Delta'_1=[5,5]_\chi$,
$\Delta'_2=[4,4]_\chi$,
$\Delta'_3=[2,3]_\chi$,
$\Delta'_4=[0,2]_\chi$.
\begin{figure}[H]
\centering
\begin{tikzpicture}
\draw (-1.5,1) node {$((\mm_\maxi)^\sharp)^-$};

\draw (0,1) node {0};
\draw (1,1) node {1};
\draw (2,1) node {2};
\draw (3,1) node {3};
\draw (4,1) node {4};
\draw (5,1) node {5};
\draw (6,1) node {6};
\draw (7,1) node {7};

\draw (0,0) -- (2,0);
\draw (2,-1) -- (3,-1);
\draw (4,-2) -- (4,-2);
\draw (5,-3) -- (5,-3); 

\draw (-1,0) node {$(\Delta'_5)^-$};
\draw (-1,-1) node {$(\Delta'_4)^-$};
\draw (-1,-2) node {$(\Delta'_3)^-$};
\draw (-1,-3) node {$(\Delta'_2)^-$};

\draw (0, -.1) -- (0,.1); 
\draw (1, -.1) -- (1,.1); 
\draw (2, -.1) -- (2,.1); 
\draw (2, -1.1) -- (2,-.9);
\draw (3, -1.1) -- (3,-.9);
\draw (4, -2.1) -- (4,-1.9);
\draw (5, -3.1) -- (5,-2.9);

\draw[densely dashed] (1,0) -- (2,-1);
\draw[densely dashed] (2,0) -- (5,-3);

\draw (3.5, -4) node {$\Delta''_3$};
\draw (4.5, -4) node {$\Delta''_2$};
\draw (5.5, -4) node {$\Delta''_1$};

\end{tikzpicture}

Taking the Zelevinsky dual again, we arrive at $(\mm_\maxi)^\ram$ as in the following figure.
We obtain $(\mm_\maxi)^\ram=\Delta''_1+\Delta''_2+\Delta''_3$
where 
$\Delta''_1=[2,5]_\chi,
\Delta''_2=[1,2]_\chi,
\Delta''_3=[0,0]_\chi$.

\begin{tikzpicture}
\draw (-1.8,1) node {$(\mm_\maxi)^\ram$};

\draw (0,1) node {0};
\draw (1,1) node {1};
\draw (2,1) node {2};
\draw (3,1) node {3};
\draw (4,1) node {4};
\draw (5,1) node {5};
\draw (6,1) node {6};
\draw (7,1) node {7};

\draw (1,-1) -- (2,-1);
\draw (2,-2) -- (5,-2);

\draw (0, -.1) -- (0,.1); 
\draw (1, -1.1) -- (1,-.9);
\draw (2, -1.1) -- (2,-.9);
\draw (2, -2.1) -- (2,-1.9);
\draw (3, -2.1) -- (3,-1.9);
\draw (4, -2.1) -- (4,-1.9);
\draw (5, -2.1) -- (5,-1.9);

\draw (-1,0) node {$\Delta''_3$};
\draw (-1,-1) node {$\Delta''_2$};
\draw (-1,-2) node {$\Delta''_1$};
\end{tikzpicture}
\end{figure}

Similarly, we have $(\Delta_1+\Delta_3)^\ram=[4,4]_\chi$
and $(\Delta_5)^\ram=\emptyset$.
Thus
$\mm^\ram=[4,4]_\chi+[2,5]_\chi+[1,2]_\chi+[0,0]_\chi$.

\subsection{The Weil--Deligne representations}
In this subsection, we give some justification for the use of the term ``ram'' in the notation $\pi^\ram$.    
This comes from the Galois side of the local Langlands correspondence (\cite{HT}, \cite{H}).
For several materials in this subsection, 
see \cite{Tate}.
\par

Let us fix an algebraic closure $\Fbar$ of $F$.
Let $W_F \subset \Gal(\Fbar/F)$ denote the Weil group of $F$.
By definition, $W_F$ is a locally profinite topological group.
If we denote by $W_F^\ab$ the quotient of $W_F$ by the closure
of $[W_F,W_F]$, then there exists an isomorphism 
$r_F \colon W_F^\ab \xto{\cong} F^\times$ that sends any lift of
geometric Frobenius to a uniformizer of $F$.
\par

A \emph{Weil--Deligne representation} is a triple $(\tau,V,N)$ where
$(\tau,V)$ is a finite dimensional complex representation of
$W_F$, and $N$ is a linear endomorphism of $V$ such that
the kernel of $\tau$ is open in $W_F$ and we have
$N \tau(\sigma)= |r_F(g)| \tau(\sigma) N$
for any $\sigma \in W_F$.
Let $I_F \subset W_F$ denote the inertia subgroup.
A Weil--Deligne representation $(\tau,V,N)$ is called \emph{unramified}
if $I_F$ acts trivially and $N$ acts as $0$ on $V$.
Any Weil--Deligne representation $V=(\tau,N,V)$ has
a unique maximal unramified Weil--Deligne subrepresentation $V_\ur$.
Explicitly, we have $V_\ur = V^{I_F} \cap \Ker\, N$.
We denote by $V^\ram$ the quotient $V/V_\ur$, 
and we call it the \emph{ramified quotient of $V$}.
\par

The local Langlands correspondence gives a one-to-one
correspondence between the isomorphism classes of
irreducible complex representations of $G_n$ and the isomorphism classes
of Frobenius semisimple $n$-dimensional Weil--Deligne representations
over the complex numbers.

\begin{lem} \label{lem:LLC}
Let $\pi$ be a unipotent irreducible admissible representation of $G_n$
and let $V$ denote the Weil--Deligne representation
corresponding to $\pi$ via the local Langlands correspondence.
Then $V^\ram$ corresponds to $\pi^\ram$.
\end{lem}
\begin{proof}
For a segment $[a,b]_\rho$, we denote by $\Delta[a,b]_\rho$ 
the generalized Steinberg representation, i.e., 
the unique irreducible quotient of 
\[
\rho|\cdot|^a \times \rho|\cdot|^{a+1} \times \dots \times \rho|\cdot|^b. 
\]
As in the Langlands classification, 
we write $\pi = L([a_1,b_1]_{\rho_1} + \dots + [a_r,b_r]_{\rho_r})$
if $\pi$ is the unique irreducible subrepresentation of 
\[
\Delta[a_1,b_1]_{\rho_1} \times \dots \times \Delta[a_r,b_r]_{\rho_r}
\]
with $\rho_i$ unitary and $a_1+b_1 \leq \dots \leq a_r+b_r$. 
Then the Zelevinsky dual $\pi^\sharp$ of $\pi$ is given by 
\[
\pi^\sharp = Z([a_1,b_1]_{\rho_1} + \dots + [a_r,b_r]_{\rho_r}).
\]
By \cite[8.1 Theorem]{Z}, 
the highest derivative $(\pi^\sharp)^-$ of $\pi^\sharp$ is 
\[
(\pi^\sharp)^- = Z([a_1,b_1-1]_{\rho_1} + \dots + [a_r,b_r-1]_{\rho_r}).
\]
Here, if $a_i = b_i$, we ignore $[a_i,b_i-1]_{\rho_i}$.
Hence 
\[
\pi^{\ram} = ((\pi^\sharp)^-)^\sharp
= L([a_1,b_1-1]_{\rho_1} + \dots + [a_r,b_r-1]_{\rho_r}).
\]
Therefore, the map $\pi \mapsto \pi^\ram$
corresponds to $V \mapsto V/\Ker\, N$ 
(see, e.g., \cite{Rodier}).
Since $\pi$ is unipotent, 
the corresponding $V$ satisfies that $V = V^{I_F}$ so that $V^\ram = V/\Ker\, N$.
\end{proof}

\section{Proofs of Propositions \ref{prop_lambda} and \ref{lem:fund3}}\label{sec.lambda}
The purpose of this section is to prove Propositions \ref{prop_lambda} and \ref{lem:fund3}. 
To do these, we introduce the notions of \emph{{\VNpair}s} and \emph{{\WLpair}s}.

\subsection{{\VNpair}s and {\WLpair}s}\label{sec:pairs}
A \emph{\VNpair} (over $\C$) is a pair $(V,N)$ of a finite dimensional
$\Z$-graded vector space $V$ over $\C$ and a $\C$-linear
endomorphism $N\colon V \to V$ of degree $1$.
Similarly, a \emph{\WLpair} (over $\C$) is a pair $(W,L)$ of a finite dimensional
$\Z$-graded vector space $W$ over $\C$ and a $\C$-linear
endomorphism $L \colon W \to W$ of degree $-1$.
\par

Let $(V,N)$ and $(V',N')$ be two {\VNpair}s. 
A morphism $f \colon (V,N) \rightarrow (V',N')$ is a $\C$-linear map $V \rightarrow V'$ preserving the degrees
such that $f \circ N = N' \circ f$.
\begin{lem}\label{lem:isom}
Let $(V,N)$ and $(V',N')$ be two {\VNpair}s. 
Then $(V,N) \cong (V',N')$ if and only if 
$V \cong V'$ as graded vector spaces, and 
$(\Image N, N|_{\Image N}) \cong (\Image N', N'|_{\Image N'})$. 
\end{lem}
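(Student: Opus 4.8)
The plan is to reduce the classification of $VN$-pairs to a normal-form statement: every $VN$-pair decomposes as a direct sum of ``Jordan strings'', i.e., pairs of the form $(C_d^{(j)}, N)$ where $C_d^{(j)}$ is a $d$-dimensional graded vector space concentrated in consecutive degrees $j, j+1, \dots, j+d-1$ (one-dimensional in each), and $N$ is the shift sending the degree-$k$ basis vector to the degree-$(k+1)$ one, killing the top. This is just the graded version of Jordan normal form for a nilpotent operator: $N$ is automatically nilpotent since $V$ is finite dimensional and $N$ raises degree by $1$, and one runs the usual argument (choose a basis of $V / \Image N$ adapted to the grading, lift, and pull back along powers of $N$) while tracking degrees. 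Thus the isomorphism class of $(V,N)$ is completely recorded by the multiset of pairs $(d, j)$ indexing the strings, equivalently by the function $(d,j) \mapsto (\text{number of strings of length } d \text{ starting in degree } j)$.

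Next I would note what the two invariants on the right-hand side of the statement see. The graded vector space $V$ records $\dim V_k$ for each $k$, which is $\sum_{d, j} [\![ j \le k \le j+d-1 ]\!] \, m_{d,j}$ where $m_{d,j}$ is the number of strings of type $(d,j)$. The pair $(\Image N, N|_{\Image N})$ is itself a $VN$-pair, and under the string decomposition it is visibly $\bigoplus_{d,j} (C_{d-1}^{(j+1)})^{m_{d,j}}$ (strings of length $1$ contribute nothing to the image), so it records the multiplicities $m_{d,j}$ for all $d \ge 2$. The point is then purely combinatorial: knowing all $\dim V_k$ together with all $m_{d,j}$ for $d \ge 2$ determines the remaining unknowns $m_{1,j}$, since $m_{1,j} = \dim V_j - \sum_{d \ge 2,\, j'} [\![ j' \le j \le j'+d-1 ]\!] \, m_{d,j'}$. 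Hence the full multiset of string types is recovered, and isomorphism of $VN$-pairs follows.

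Concretely, the forward implication is trivial: an isomorphism $(V,N) \cong (V',N')$ restricts to an isomorphism on images and induces one on the underlying graded spaces. For the converse, given isomorphisms of graded spaces $V \cong V'$ and of $VN$-pairs $(\Image N, N|_{\Image N}) \cong (\Image N', N'|_{\Image N'})$, apply the string decomposition to all four objects; the displayed formula for $m_{1,j}$ in terms of the $\dim V_k$ and the $m_{d,j}$ with $d \ge 2$ forces the string multiplicities of $(V,N)$ and $(V',N')$ to agree, and a compatible choice of string bases then assembles an explicit isomorphism $(V,N) \cong (V',N')$.

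The main obstacle is establishing the graded Jordan normal form cleanly — one must be careful that the standard lifting argument (choosing generators of $\Ker N^{k+1} / (\Ker N^k + \Image N)$, etc.) can be carried out degree by degree, so that all chosen basis vectors are homogeneous; everything downstream is then elementary bookkeeping. Since $N$ has degree $1$ and $V$ is finite dimensional, nilpotency is free, so there is no subtlety there; the only real content is that Jordan blocks can be chosen homogeneous, which holds because $N$ is a graded endomorphism.
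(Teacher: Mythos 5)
Your proof is correct, but it takes a genuinely different route from the paper's. You invoke the full structural classification of \VNpair{}s via graded Jordan normal form (equivalently, the bijection with multisegments that the paper attributes to Gabriel's theory a few lines later), and then observe that the two invariants in the statement — the graded dimensions $\dim V_k$ and the string multiplicities $m_{d,j}$ for $d\ge 2$ (read off from $(\Image N, N|_{\Image N})$) — already determine the remaining multiplicities $m_{1,j}$ by the displayed subtraction formula. That is a clean combinatorial reduction, and the bookkeeping checks out: the Jordan decomposition of $(\Image N, N|_{\Image N})$ is $\bigoplus_{d\ge 2, j}(C_{d-1}^{(j+1)})^{m_{d,j}}$, so it pins down exactly the $m_{d,j}$ with $d\ge 2$. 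The paper, by contrast, does \emph{not} appeal to the classification at this point; it constructs an isomorphism by hand. It fixes an isomorphism $f_1$ on $\Image N$, lifts a homogeneous basis of $\Image N/\Image N^2$ to elements $e_i$ with $N(e_i)=v_i$ (and the analogous $e_i'$ on the other side, compatible with $f_1$), lets $W$ be the graded span of $\Image N$ and the $e_i$, and then runs a snake-lemma argument to produce a graded complement $U\subset\Ker N$ with $V=U\oplus W$; the hypothesis $V\cong V'$ as graded spaces then gives the missing piece $U\cong U'$, and the isomorphism is assembled from its parts. What each approach buys: yours is shorter and more transparent once the graded Jordan form is available, and it proves more (the full classification by string multiplicities); the paper's proof is self-contained, logically independent of that classification theorem (which they only cite afterwards), and its lifting-plus-complement technique is reused almost verbatim in the proof of Lemma~\ref{lem:surj}. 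Your one acknowledged dependency — that Jordan strings can be chosen homogeneous because $N$ is a graded endomorphism of degree $1$ — is genuine and correct, but since the paper deliberately avoids leaning on it here, your argument is not a reconstruction of theirs.
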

\begin{proof}
The ``only if'' part is trivial. 
We prove the ``if'' part. 
Assume the two conditions. 
Let us choose an isomorphism 
\[
f_1 \colon
(\Image N,N|_{\Image N}) \xto{\cong} (\Image N',N'|_{\Image N'})
\]
of {\VNpair}s.
Let us also choose homogeneous elements $v_1,\ldots,v_r \in \Image N$ 
whose images in $\Image N/\Image N^2$ form a basis of this space.
For $i=1,\ldots,r$, 
let us choose homogeneous elements $e_1,\ldots,e_r \in V$
and $e'_1,\ldots,e'_r \in V'$ in such a way
that we have $N(e_i) = v_i$ and $N'(e'_i) = f_1 (v_i)$ for $i=1,\ldots,r$.
Let $W$ (\resp $W'$) denote the graded vector subspace of $V$ (\resp $V'$)
generated by $\Image N$ and $e_1,\ldots,e_r$
(\resp $\Image N'$ and $e'_1,\ldots,e'_r$).
\par

Let $\overline{N}\colon V/\Image N \to \Image N/\Image N^2$
denote the homomorphism induced by $N$.
It follows from the construction of $W$ that
the restriction of $\overline{N}$ to $W/\Image N$ gives
an isomorphism $W/\Image N \xto{\cong} \Image N/\Image N^2$.
Hence we have $V/\Image N = \Ker\, \overline{N} \oplus (W/\Image N)$.
By applying the snake lemma to the commutative diagram
\[
\begin{CD}
0 @>>> \Image N @>>> V @>>> V/\Image N @>>> 0 \\
@. @V{N}VV @V{N}VV @VV{\overline{N}}V @. \\
0 @>>> \Image N^2 @>>> \Image N @>>> \Image N/\Image N^2 @>>> 0,
\end{CD}
\]
we see that the homomorphism $\alpha \colon \Ker\, N \to \Ker\, \overline{N}$ 
induced by the quotient map $V \twoheadrightarrow V/\Image N$ is surjective.
Let us choose a graded vector subspace $U \subset \Ker\, N$ such that
the restriction of $\alpha$ to $U$ gives an isomorphism $U \xto{\cong} \Ker\, \overline{N}$.
Since $V/\Image N = \Ker\, \overline{N} \oplus (W/\Image N)$,
we have $V = U \oplus W$.
\par

A similar argument shows that there exists a
graded vector subspace $U' \subset \Ker\, N'$
such that $V' = U' \oplus W'$.
Since $V$ and $V'$ are isomorphic as graded
vector spaces, $U$ and $U'$ are isomorphic
as graded vector spaces.
Let us choose an isomorphism 
$f_2 \colon U \to U'$ of graded vector spaces.
\par

Let $f \colon V \to V'$ denote the homomorphism defined as follows:
$f(v)=f_1(v)$ for $v \in \Image N$, $f(e_i) = e'_i$ for $i=1,\ldots,r$, and $f(u)=f_2(u)$ for $u \in U$.
Then $f$ is an isomorphism of {\VNpair}s from $(V,N)$ to $(V',N')$. 
This completes the proof.
\end{proof}

Let $(V,N)$ be a {\VNpair}.
For an integer $c \in \Z$, we let $(V,N)(c)$ denote the $c$-th degree-shift of $(V,N)$. 
By definition, $(V,N)(c)=(V(c),N(c))$ 
where $V(c)$ is the $\Z$-graded vector space over $\C$ 
whose degree-$a$-part is equal to the degree-$(a-c)$-part of $V$ for any $a \in \Z$, 
and $N(c)\colon V(c) \to V(c)$ is the endomorphism induced by $N$.
(This notation of degree-shift corresponds to 
the Tate twist on the Galois side of the local Langlands correspondence.)
\par

For a segment $\Delta=[a,b]_\chi$ with $a,b \in \Z$, 
we let $(V_\Delta,N_\Delta)$ denote the {\VNpair} 
such that
$V_\Delta$ is the graded complex vector space with basis $e_a, e_{a+1},\ldots, e_b$ 
where for $i=a,\ldots,b$, 
the vector $e_i$ is homogeneous of degree $i$, 
and $N_\Delta \colon  V_\Delta \to V_\Delta$ is the endomorphism 
that sends $e_i$ to $e_{i+1}$ for $i=a,\ldots,b-1$ and sends $e_b$ to $0$.
Similarly, we denote by $(W_\Delta,L_\Delta)$ the
{\WLpair} such that
$W_\Delta = V_\Delta$ and
$L_\Delta \colon  W_\Delta \to W_\Delta$
is the endomorphism that sends $e_i$ to $e_{i-1}$
for $i=a+1,\ldots,b$ and sends $e_a$ to $0$.
\par

Let $\chi$ be an unramified character of $F^\times$.
For a multisegment $\mm=\Delta_1 + \cdots + \Delta_r$
of type $\chi$,
we define the {\VNpair} $(V_\mm,N_\mm)$ and the
{\WLpair} $(W_\mm,L_\mm)$ as the direct sums
\[
(V_\mm,N_\mm) =
\left( \bigoplus_{i=1}^r V_{\Delta_i}, \bigoplus_{i=1}^r N_{\Delta_i}\right),
\]
and
\[
(W_\mm, L_\mm) =
\left( \bigoplus_{i=1}^r W_{\Delta_i}, \bigoplus_{i=1}^r L_{\Delta_i}\right).
\]
\par

It follows from the Gabriel theory \cite{Ga}, or from the
theory of Jordan normal forms and some elementary argument 
(cf.~\cite{KZ}), that
these give one-to-one correspondences among the
multisegments of type $\chi$, the isomorphism classes of {\VNpair}s, 
and the isomorphism classes of {\WLpair}s.
\par

For a {\VNpair} $(V,N)$ (\resp a {\WLpair} $(W,L)$), 
let us consider the set $S(V,N)$ (\resp $S(W,L)$)
of $\C$-linear endomorphisms $L\colon  V \to V$ (\resp $N\colon  V \to V$)
of degree $-1$ (\resp degree $1$) satisfying $L\circ N=N \circ L$.
We sometimes regard $S(V,N)$ and
$S(W,L)$ as algebraic varieties over $\C$.
Since $S(V,N)$ and $S(W,L)$ are finite dimensional complex vector spaces,
$S(V,N)$ and $S(W,L)$ are, as algebraic varieties over $\C$,
isomorphic to affine spaces over $\C$.

\begin{lem} \label{lem:surj}
Let $(V,N)$ be a {\VNpair} and $(W,L)$ be a {\WLpair}. 
\begin{enumerate}
\item 
The map $S(V,N) \to S(\Image N, N|_{\Image N})$
that sends $L$ to $L|_{\Image N}$ is surjective.
\item 
The map $S(W,L) \to S(\Image L, L|_{\Image L})$
that sends $N$ to $N|_{\Image L}$ is surjective.
\end{enumerate}
\end{lem}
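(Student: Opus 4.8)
The plan is to prove parts (1) and (2) in parallel, since they are dual statements. I will first reduce both to the case where there is no degree shift. For part (1), given $L \in S(V,N)$, its restriction $L|_{\Image N}$ clearly lies in $S(\Image N, N|_{\Image N})$ because $L$ and $N$ commute, so $L$ preserves $\Image N$ and $L|_{\Image N}$ has degree $-1$ and commutes with $N|_{\Image N}$; the content is surjectivity. I would work with the decomposition $V = U \oplus W$ constructed in the proof of Lemma~\ref{lem:isom}, where $W$ is generated by $\Image N$ together with lifts $e_1,\dots,e_r$ of a basis of $\Image N/\Image N^2$, and $U \subset \Ker N$ is a graded complement. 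This decomposition gives an explicit handle on how $N$ acts relative to $\Image N$.

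**The main step: extending $L_0 \in S(\Image N, N|_{\Image N})$ to all of $V$.** Given $L_0$ on $\Image N$ commuting with $N|_{\Image N}$, I need to define $L$ on $V$ of degree $-1$, commuting with $N$, and restricting to $L_0$ on $\Image N$. On the subspace $W$: for each generator $e_i$ with $N(e_i) = v_i \in \Image N$, I must choose $L(e_i)$ of degree $\deg(e_i) - 1$ so that $N(L(e_i)) = L(N(e_i)) = L_0(v_i)$. Such a choice exists precisely because $L_0(v_i) \in \Image N$ — indeed $L_0(v_i) = L_0(N(v_i'))$ for some $v_i'$ with $N(v_i') = v_i$ if $v_i \in \Image N^2$, but in general $L_0(v_i)$ need not lie in $\Image(N|_{\Image N}) = \Image N^2$; however it does lie in $\Image N$ by definition of $S(\Image N, N|_{\Image N})$ (the endomorphisms in that set map $\Image N$ into $\Image N$), so I can pick a preimage under $N\colon V \to \Image N$. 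On $U \subset \Ker N$: I need $L(u)$ with $N(L(u)) = L(N(u)) = L(0) = 0$, i.e. $L(u) \in \Ker N$ of degree $\deg(u)-1$; I can simply set $L|_U = 0$. One must then check these choices are mutually consistent and linear — this is routine since $V = U \oplus W$ as graded vector spaces and the constraints on $W$ and $U$ are independent. This produces $L \in S(V,N)$ with $L|_{\Image N} = L_0$, proving (1).

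**Part (2) by an identical argument with roles swapped.** Part (2) is the mirror image: given a {\WLpair} $(W,L)$ and $N_0 \in S(\Image L, L|_{\Image L})$, extend $N_0$ to $N \in S(W,L)$ with $N|_{\Image L} = N_0$. The proof is verbatim the same with $N$ and $L$ interchanged and degrees negated; alternatively, one can observe that reversing the grading ($V_a \mapsto V_{-a}$) turns a {\WLpair} into a {\VNpair} and $S(W,L)$ into $S(V,N)$ compatibly with the restriction maps, so (2) follows formally from (1). I would state this reduction and then invoke (1).

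**Expected main obstacle.** The genuinely delicate point is the choice of $L(e_i)$: one needs $L_0(v_i)$ to lie in the image of $N\colon V \to V$ (not merely in $\Image N$ as an abstract statement but so that a homogeneous preimage of the correct degree exists), and one needs the resulting assignment $e_i \mapsto L(e_i)$ to extend linearly and compatibly across the $\Image N$-part without introducing relations that force a contradiction. The cleanest way to handle this is to note that the short exact sequence $0 \to \Ker N \to V \xrightarrow{N} \Image N \to 0$ is a sequence of graded vector spaces, hence (non-canonically) split as graded vector spaces, so a graded section $s\colon \Image N \to V$ of $N$ exists; then setting $L(e_i) = s(L_0(v_i))$ and $L|_{\Ker N} = 0$ and checking commutativity on the generators $\Image N$, $\{e_i\}$, $U$ of $V$ finishes it. Because the underlying objects are finite-dimensional graded vector spaces and every relevant sequence splits, there is no real homological obstruction — the work is purely organizational, so I do not anticipate a deep difficulty, only the need to keep the bookkeeping of degrees and the decomposition $V = U \oplus W$ straight.
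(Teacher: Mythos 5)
Your argument is essentially the same as the paper's: both extend a given $L_0 \in S(\Image N, N|_{\Image N})$ to $V$ by choosing a graded complement of $\Image N$ and, on a basis of that complement, picking homogeneous preimages under $N$ of the forced values $L_0(N(\cdot))$, which exist precisely because $L_0$ maps $\Image N$ into $\Image N$. The paper does this in one step with an arbitrary homogeneous basis $v_1,\dots,v_m$ of a complement of $\Image N$; you invoke the finer $V = U \oplus W$ decomposition from Lemma~\ref{lem:isom}, which is heavier than necessary but not wrong.

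There is, however, a slip in your closing ``cleanest way'' sentence: you cannot set $L|_{\Ker N} = 0$. In general $\Ker N \cap \Image N \neq 0$ (for instance the last nonzero power $\Image N^{k-1}$ lies in $\Ker N$ when $N^k = 0$), and on that intersection $L$ is already prescribed to be $L_0$, which is typically nonzero. What you actually need — and what you correctly wrote a few sentences earlier — is $L|_U = 0$, where $U$ is a graded complement of $W \supseteq \Image N$ so that $U \cap \Image N = 0$ even though $U \subset \Ker N$. With that phrasing the definition is consistent, and the commutativity check on $\Image N$, on the $e_i$, and on $U$ goes through exactly as you describe. The same remark applies to part (2).
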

\begin{proof}
We only give a proof of the assertion (1).
We can prove the assertion (2) in a similar manner.
\par

Let us choose homogeneous, linearly independent elements $v_1, \ldots, v_m \in V$ 
such that $V$ is a direct sum of $\Image N$ 
and the subspace of $V$ generated by $v_1, \ldots, v_m$. 
For $i=1,\ldots,m$, we let $d_i$ denote the degree of $v_i$.
Given $L' \in S(\Image N, N|_{\Image N})$, 
choose a homogeneous element $w_i \in V$ of degree $d_i-1$ 
that satisfies $L'(N(v_i)) = N(w_i)$ for each $i=1, \ldots, m$.
Let $L$ denote the unique $\C$-linear map $V \to V$ 
such that $L(v) = L'(v)$ for $v \in \Image N$ and that $L(v_i) = w_i$ for $i=1,\ldots,m$.
It is then straightforward to check that $L \in S(V,N)$.
It follows from the construction of $L$ that $L|_{\Image N} = L'$.
Hence the claim follows.
\end{proof}

Let $(V,N)$ be a {\VNpair} and 
let $\mm$ be the multisegment (of type $\chi$) corresponding to $(V,N)$.
It follows from \cite{Z2} and \cite{MW0} that 
there exists a Zariski open dense subset $S^o(V,N) \subset S(V,N)$
such that, for $L \in S(V,N)$, 
the multisegment (of type $\chi$) corresponding to $(V,L)$
is equal to $\mm^\sharp$
if and only if $L \in S^o(V,N)$.
\par

Let $V$ be a finite dimensional $\Z$-graded vector space over $\C$,
and $N, L \colon  V \to V$ be $\C$-linear endomorphisms of degree $1$, $-1$, respectively.
We say that the triple $(V,N,L)$ is \emph{admissible} if
$N \circ L = L\circ N$ and 
the multisegment corresponding to the {\WLpair} $(V,L)$ is 
the Zelevinsky dual of the one corresponding to the {\VNpair} $(V,N)$.

\begin{lem} \label{lem:easy}
Let $(V,N)$ (\resp $(W,L)$) be a {\VNpair} (\resp a {\WLpair})
and let $\mm$ denote the multisegment corresponding to $(V,N)$ (\resp $(W,L)$).
Then the multisegment $\mm^{-}$ corresponds to
$(\Image N, N|_{\Image N})(-1)$ (\resp $(\Image L, L|_{\Image L})$).
\end{lem}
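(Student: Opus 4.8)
The plan is to reduce the statement to the case of a single segment and then compute directly. First I would observe that both operations involved---passing from a {\VNpair} $(V,N)$ to $(\Image N, N|_{\Image N})$, and the degree-shift $(-)(-1)$---are functorial with respect to isomorphisms: an isomorphism $(V,N) \xto{\cong} (V',N')$ carries $\Image N$ onto $\Image N'$ compatibly with the restricted endomorphisms $N|_{\Image N}$, $N'|_{\Image N'}$ and with the gradings, and degree-shift obviously preserves isomorphism classes. Hence one may replace $(V,N)$ by the standard model $(V_\mm,N_\mm)$, where $\mm=\Delta_1+\dots+\Delta_r$ is the multisegment corresponding to $(V,N)$. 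Since the formation of the image, the restriction to it, and degree-shift all commute with finite direct sums, it then suffices to prove the claim for a single segment $\Delta=[a,b]_\chi$, i.e.\ to show $(\Image N_\Delta, N_\Delta|_{\Image N_\Delta})(-1) \cong (V_{\Delta^{-}}, N_{\Delta^{-}})$.

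Next I would carry out this computation. If $a=b$, then $N_\Delta=0$, so $\Image N_\Delta=0$, whose degree-shift is again the zero pair, while $\Delta^{-}$ is the empty multisegment, so both sides vanish. If $a<b$, then from $N_\Delta(e_i)=e_{i+1}$ (and $N_\Delta(e_b)=0$) we see that $\Image N_\Delta$ is spanned by the homogeneous vectors $e_{a+1},\dots,e_b$, lying in degrees $a+1,\dots,b$, with $N_\Delta$ acting on them exactly as $N_{[a+1,b]_\chi}$ acts on $V_{[a+1,b]_\chi}$; so $(\Image N_\Delta, N_\Delta|_{\Image N_\Delta}) \cong (V_{[a+1,b]_\chi}, N_{[a+1,b]_\chi})$. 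Applying the shift $(-1)$ relocates these degrees to $a,\dots,b-1$ while leaving the action unchanged, which is precisely $(V_{[a,b-1]_\chi},N_{[a,b-1]_\chi})=(V_{\Delta^{-}},N_{\Delta^{-}})$. Reassembling the direct sum over $i=1,\dots,r$ yields $(\Image N_\mm, N_\mm|_{\Image N_\mm})(-1)\cong (V_{\mm^{-}}, N_{\mm^{-}})$, which is the assertion for {\VNpair}s.

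For the parenthetical {\WLpair} assertion I would run the same argument, noting that this time no degree-shift is needed: for $\Delta=[a,b]_\chi$ one has $L_\Delta(e_i)=e_{i-1}$ (and $L_\Delta(e_a)=0$), so $\Image L_\Delta$ is spanned by $e_a,\dots,e_{b-1}$, already concentrated in degrees $a,\dots,b-1$, and $(\Image L_\Delta, L_\Delta|_{\Image L_\Delta})\cong (W_{\Delta^{-}},L_{\Delta^{-}})$ directly; the direct sum over the segments of $\mm$ then finishes the proof. I do not anticipate any genuine difficulty in this lemma; the only points requiring care are the legitimacy of reducing to the standard model $(V_\mm,N_\mm)$ (functoriality of the image and of degree-shift under isomorphisms, together with their compatibility with direct sums) and the bookkeeping matching the shift convention $(-)(-1)$ with the truncation $[a,b]_\chi \mapsto [a,b-1]_\chi$.
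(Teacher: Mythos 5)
Your proof is correct, and since the paper's own proof is literally the single word ``Easy,'' your direct verification---reduce to the standard model $(V_\mm,N_\mm)$ via functoriality of image and degree-shift under isomorphisms, split along the direct sum over segments, and compute $\Image N_\Delta$ and $\Image L_\Delta$ on the explicit basis---is exactly the argument the authors have in mind. The bookkeeping is right: with the paper's convention that the degree-$a$ part of $V(c)$ is the degree-$(a-c)$ part of $V$, the shift $(-1)$ lowers degrees by one, so $\Image N_{[a,b]_\chi}$ (concentrated in degrees $a+1,\dots,b$) becomes $V_{[a,b-1]_\chi}$, matching $\Delta^-$; and no shift is needed in the $L$ case since $\Image L_{[a,b]_\chi}$ already sits in degrees $a,\dots,b-1$.
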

\begin{proof}
Easy.
\end{proof}

Let us give an example.
Let $\mm = \Delta_1 + \Delta_2 + \Delta_3 + \Delta_4$,
where $\Delta_1=[3,7]_\chi$, $\Delta_2=[2,5]_\chi$,
$\Delta_3= [1,2]_\chi$, and $\Delta_4=[0,0]_\chi$.

\begin{figure}[H]
\centering
\begin{tikzpicture}

\draw (0,1) node {0};
\draw (1,1) node {1};
\draw (2,1) node {2};
\draw (3,1) node {3};
\draw (4,1) node {4};
\draw (5,1) node {5};
\draw (6,1) node {6};
\draw (7,1) node {7};

\draw[->] (1,-1) -- (2,-1);
\draw[->] (2,-2) -- (3,-2);
\draw[->] (3,-2) -- (4,-2);
\draw[->] (4,-2) -- (5,-2);
\draw[->] (3,-3) -- (4,-3);
\draw[->] (4,-3) -- (5,-3);
\draw[->] (5,-3) -- (6,-3);
\draw[->] (6,-3) -- (7,-3);

\draw (1.5, -.8) node {$N$};
\draw (2.5, -1.8) node {$N$};
\draw (3.5, -1.8) node {$N$};
\draw (4.5, -1.8) node {$N$};
\draw (3.5, -2.8) node {$N$};
\draw (4.5, -2.8) node {$N$};
\draw (5.5, -2.8) node {$N$};
\draw (6.5, -2.8) node {$N$};

\draw (0, -.1) -- (0,.1);
\draw (1, -1.1) -- (1,-.9);
\draw (2, -1.1) -- (2,-.9);
\draw (2, -2.1) -- (2,-1.9);
\draw (3, -2.1) -- (3,-1.9);
\draw (4, -2.1) -- (4,-1.9);
\draw (5, -2.1) -- (5,-1.9);
\draw (3, -3.1) -- (3,-2.9);
\draw (4, -3.1) -- (4,-2.9);
\draw (5, -3.1) -- (5,-2.9);
\draw (6, -3.1) -- (6,-2.9);
\draw (7, -3.1) -- (7,-2.9);

\draw (-1,0) node {$\Delta_4$};
\draw (-1,-1) node {$\Delta_3$};
\draw (-1,-2) node {$\Delta_2$};
\draw (-1,-3) node {$\Delta_1$};
\end{tikzpicture}
\end{figure}
The picture of $(\Image\,N(-1), N|_{\Image\, N}(-1))$ is as follows:
\begin{figure}[H]
\centering
\begin{tikzpicture}
\draw (0,1) node {0};
\draw (1,1) node {1};
\draw (2,1) node {2};
\draw (3,1) node {3};
\draw (4,1) node {4};
\draw (5,1) node {5};
\draw (6,1) node {6};
\draw (7,1) node {7};

\draw[->] (2,-2) -- (3,-2);
\draw[->] (3,-2) -- (4,-2);
\draw[->] (3,-3) -- (4,-3);
\draw[->] (4,-3) -- (5,-3);
\draw[->] (5,-3) -- (6,-3);

\draw (2.5, -1.8) node {$N$};
\draw (3.5, -1.8) node {$N$};
\draw (3.5, -2.8) node {$N$};
\draw (4.5, -2.8) node {$N$};
\draw (5.5, -2.8) node {$N$};

\draw (1, -1.1) -- (1,-.9);
\draw (2, -2.1) -- (2,-1.9);
\draw (3, -2.1) -- (3,-1.9);
\draw (4, -2.1) -- (4,-1.9);
\draw (3, -3.1) -- (3,-2.9);
\draw (4, -3.1) -- (4,-2.9);
\draw (5, -3.1) -- (5,-2.9);
\draw (6, -3.1) -- (6,-2.9);

\draw (-1,0) node {$\Delta_{4}^-$};
\draw (-1,-1) node {$\Delta_{3}^-$};
\draw (-1,-2) node {$\Delta_{2}^-$};
\draw (-1,-3) node {$\Delta_{1}^-$};

\end{tikzpicture}
\end{figure}

We see that this corresponds to $\mm^-$. 

\begin{lem} \label{lem:keypair}
Let $(V,N)$ be a {\VNpair} 
and let $\mm$ be the multisegment corresponding to $(V,N)$.
Then there exists a Zariski open dense subset 
$S^\theta(V,N) \subset S(V,N)$
such that for $L \in S(V,N)$, 
both $(V,N,L)$ and $(\Image L, N|_{\Image L}, L|_{\Image L})$ are admissible triples
if and only if $L \in S^\theta(V,N)$.
\end{lem}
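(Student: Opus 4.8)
The plan is to set
\[
S^\theta(V,N) := \left\{\, L \in S^o(V,N) \;\middle|\; N|_{\Image L} \in S^o(\Image L, L|_{\Image L}) \,\right\},
\]
to verify that this is precisely the locus described in the statement, and then to prove that it is Zariski open and dense in the affine space $S(V,N)$. I will use two preliminary observations. First, for $L \in S(V,N)$ one has $N(\Image L) = NL(V) = LN(V) \subseteq \Image L$, so $\Image L$ is a graded subspace stable under both $N$ and $L$ and $(\Image L, N|_{\Image L}, L|_{\Image L})$ is a legitimate triple. Second, the results \cite{Z2} and \cite{MW0} that yield $S^o(V,N)$ produce, in exactly the same way, for every {\WLpair} $(W,L)$ a Zariski open dense subset $S^o(W,L) \subseteq S(W,L)$ consisting of those degree-$1$ endomorphisms $N'$ for which the multisegment of the {\VNpair} $(W,N')$ is the Zelevinsky dual of the multisegment of $(W,L)$. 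Now, by the defining property of $S^o(V,N)$, the triple $(V,N,L)$ is admissible if and only if $L \in S^o(V,N)$, in which case the {\WLpair} $(V,L)$ has multisegment $\mm^\sharp$; applying Lemma~\ref{lem:easy} to $(V,L)$ then shows that the {\WLpair} $(\Image L, L|_{\Image L})$ has multisegment $(\mm^\sharp)^{-}$, the \emph{same} one for every $L \in S^o(V,N)$. Consequently, for such $L$, the triple $(\Image L, N|_{\Image L}, L|_{\Image L})$ is admissible if and only if the {\VNpair} $(\Image L, N|_{\Image L})$ has multisegment $((\mm^\sharp)^{-})^\sharp = \mm^\ram$, i.e.\ if and only if $N|_{\Image L} \in S^o(\Image L, L|_{\Image L})$. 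Thus $S^\theta(V,N)$ is the asserted locus.

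For openness, fix $L \in S^o(V,N)$ and note that whether $N|_{\Image L}$ lies in $S^o(\Image L, L|_{\Image L})$ is detected by the ranks of the iterates $(N|_{\Image L})^{j}$ between the graded pieces of $\Image L$. Using $(\Image L)_i = L(V_{i+1})$ and $N^{j}L = LN^{j}$, the rank of $(N|_{\Image L})^{j}$ on $(\Image L)_i$ equals $\dim L\bigl(N^{j}(V_{i+1})\bigr) = \mathrm{rank}\bigl(L|_{N^{j}(V_{i+1})}\bigr)$, where the subspace $N^{j}(V_{i+1}) \subseteq V$ does not depend on $L$. Since $(\Image L, L|_{\Image L}) \cong (V_{(\mm^\sharp)^{-}}, L_{(\mm^\sharp)^{-}})$ for every $L \in S^o(V,N)$, these ranks are bounded above by fixed integers, and $S^o(\Image L, L|_{\Image L})$ is precisely the sublocus on which all of them attain their maxima. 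As $L \mapsto \mathrm{rank}(L|_{U})$ is lower semicontinuous for any fixed subspace $U$, the set $S^\theta(V,N)$ is open in $S^o(V,N)$, hence open in $S(V,N)$.

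It remains to show that $S^\theta(V,N)$ is nonempty; granted this and the openness just established, density is automatic since $S(V,N)$ is irreducible. For nonemptiness I would pass to the dual side. Choose a degree-$(-1)$ endomorphism $L_0$ of $V$ such that the {\WLpair} $(V,L_0)$ has multisegment $\mm^\sharp$, which is possible because $V \cong V_{\mm^\sharp}$ as graded vector spaces. By Lemma~\ref{lem:surj}(2) the restriction map $S(V,L_0) \to S(\Image L_0, L_0|_{\Image L_0})$ is surjective, so the preimage of the open dense subset $S^o(\Image L_0, L_0|_{\Image L_0})$ is open and dense in $S(V,L_0)$; intersecting it with the open dense subset $S^o(V,L_0)$ produces an $N_0 \in S^o(V,L_0)$ with $N_0|_{\Image L_0} \in S^o(\Image L_0, L_0|_{\Image L_0})$. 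Then $(V,N_0)$ has multisegment $(\mm^\sharp)^\sharp = \mm$, so there is an isomorphism of {\VNpair}s $\phi\colon (V,N_0) \xto{\cong} (V,N)$; putting $L := \phi \circ L_0 \circ \phi^{-1}$, the triple $(V,N,L)$ is admissible — so $L \in S^o(V,N)$ — and $\phi$ restricts to an isomorphism $(\Image L_0, N_0|_{\Image L_0}, L_0|_{\Image L_0}) \cong (\Image L, N|_{\Image L}, L|_{\Image L})$, whence $N|_{\Image L} \in S^o(\Image L, L|_{\Image L})$; therefore $L \in S^\theta(V,N)$. The step I expect to demand the most care is the openness claim: it rests on the fact that membership in $S^o$ of a pair is intrinsically a Zariski-open condition, cut out by the non-vanishing of suitable rank data — the genuine content of \cite{Z2} and \cite{MW0} — after which both the behaviour in families used above and the nonemptiness argument are formal consequences of Lemma~\ref{lem:surj} together with the irreducibility of affine spaces.
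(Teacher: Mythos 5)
Your proof is correct and, for the density (non-emptiness) step, follows essentially the same strategy as the paper: pick a degree-$(-1)$ map with the right Zelevinsky-dual multisegment, invoke Lemma~\ref{lem:surj} to get a compatible degree-$1$ map that is simultaneously ``generic'' on $V$ and on the image, and then transport through an isomorphism of {\VNpair}s back to the given $(V,N)$. (The paper begins by choosing $L \in S^o(V,N)$ and then replacing $N$ by a new $N'$, while you work entirely on the dual side with $L_0$ and $N_0$ before transporting; this is a cosmetic reparametrisation of the same argument.) The one place where you do more than the paper is the openness claim, which the paper dismisses as ``easy to see'': your reduction of membership in $S^o(\Image L, L|_{\Image L})$ to the lower semicontinuous rank conditions $\mathrm{rank}\,(L|_{N^j(V_{i+1})})$ — using $NL=LN$ and the fact that the graded dimensions of $(\Image L, L|_{\Image L})$ are constant over $S^o(V,N)$ — is a legitimate and welcome expansion of that step.
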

\begin{proof}
It is easy to see that there exists a Zariski open subset 
$S^\theta(V,N) \subset S(V,N)$ such that
for $L \in S(V,N)$, both $(V,N,L)$ and 
$(\Image L, N|_{\Image L}, L|_{\Image L})$
are admissible triples
if and only if $L \in S^\theta(V,N)$.
\par

It remains to show that $S^\theta(V,N)$ is dense in $S(V,N)$.
Since $S(V,N)$ is irreducible as an algebraic variety over $\C$, 
it suffices to show that $S^\theta(V,N)$ is non-empty.
Let us choose $L \in S^o(V,N)$.
Since the morphism $S(V,L) \to S(\Image L, L|_{\Image L})$ is surjective by Lemma \ref{lem:surj}, 
there exists $N' \in S(V,L)$ such that 
both $(V,N',L)$ and$(\Image L, N'|_{\Image L}, L|_{\Image L})$ are admissible triples.
Then $(V,N)$ and $(V,N')$ are isomorphic 
since both correspond to the same multisegment.
Hence $(V,N',L)$ is isomorphic to $(V,N,L')$ for some $L' \in S(V,N)$.
Since $L'$ belongs to $S^\theta(V,N)$,
it follows that $S^\theta(V,N)$ is non-empty, as desired.
\end{proof}

\subsection{Proof of Proposition \ref{prop_lambda}}\label{sec.proof_lambda}
Now we prove Proposition \ref{prop_lambda}.

\begin{proof}[Proof of Proposition \ref{prop_lambda}]
Let $\pi = Z(\mm)$ be an irreducible representation of $G_n$.
We decompose $\mm$ as
\[
\mm = \mm' + \mm_1+\dots+\mm_{t}, 
\]
where 
\begin{itemize}
\item
each segment in $\mm'$ is not unipotent; 

\item
each $\mm_i$ is of type $\chi_i$
for some unramified character $\chi_i$ of $F^\times$ for $1 \leq i \leq t$; 

\item
if $i \not= j$, then $\chi_i\chi_j^{-1}$ is not of the form $|\cdot|^a$ for any $a \in \Z$. 
\end{itemize}
Set $\pi' = Z(\mm')$ and $\pi_i = Z(\mm_i)$. 
Then $\pi$ is isomorphic to the parabolic induction
$\pi' \times \pi_1 \times \cdots \times \pi_{t}$.
\par

For $\Pi = \pi, \pi', \pi_1, \ldots, \pi_{t}$, 
let $\Pi^{(0)} = \Pi$ and 
$\Pi^{(i)}$ denote the highest derivative of $\Pi^{(i-1)}$ for $i \geq 1$.
Then we have $\pi^{(i)} = \pi'^{(i)} \times \pi_1^{(i)} \times \cdots \times \pi_{t}^{(i)}$ 
for any integer $i \ge 0$.
Thus, to prove the claim, 
we may assume that $\mm = \mm'$ or $\mm = \mm_1$.
\par

First, we consider the case where $\mm = \mm'$. 
Let us write $\pi = Z(\mm)$ and
$\mm = [a_1,b_1]_{\rho_1} + \cdots +[a_r,b_r]_{\rho_r}$.
Then $\rho_1, \ldots, \rho_r$ are ramified cuspidal representations. 
For $i=1,\ldots,r$, let $c_i = c_{\rho_i}$ denote the conductor of $\rho_i$.
Then for $j \geq 0$, we have $\pi^{(j)} = Z(\mm^{(j)})$
where
\[
\mm^{(j)} = \sum_{1 \le i \le r \atop
b_i - a_i \ge j} [a_i, b_i -j]_{\rho_i}.
\]
This shows that the conductor of $\pi^{(j)}$ is equal to
\[
c^{(j)} = \sum_{1 \le i \le r \atop
b_i - a_i \ge j} (b_i -a_i + 1 -j) c_{i}.
\]
Hence we have
\[
c^{(j)} - c^{(j+1)}
= \sum_{1 \le i \le r \atop
b_i - a_i \ge j} c_{i}.
\]
From this, one can easily see that
\[
\lambda_\pi
= \sum_{i=1}^r
(0,\ldots,0, \underbrace{c_i,\ldots,c_i}_{b_i-a_i+1})
= \lambda_\mm, 
\]
as desired.
\par

Now we consider the case where $\pi = Z(\mm)$ is of type $\chi$ 
for an unramified character $\chi$ of $F^\times$.
Let us consider the {\VNpair} $(V,N)$ corresponding to $\mm$.
For $i \ge 0$, let us write $\pi^{(i)} = Z(\mm^{(i)})$.
As we have remarked at the beginning of Section \ref{sec.def_lambda},
we have $\pi^{(1)} = Z(\mm^{-})$.
Hence $\mm^{(i)}$ is obtained from $\mm$ 
by the $i$-fold iteration of the operation $(\ )^{-}$.
Therefore, it follows from Lemma \ref{lem:easy} that
$\mm^{(i)}$ corresponds to the {\VNpair} $(\Image N^i, N|_{\Image N^i})(-i)$.
Let us choose $L \in S^\theta(V,N)$ such that
$L|_{\Image N^i}$ belongs to $S^o(\Image N^i, N|_{\Image N^i})$ for any integer $i \ge 0$.
By Lemma \ref{lem:surj}, such an $L$ exists.
Then the conductor of $\pi^{(i)}$ is equal to the dimension of $\Image L \circ N^i$.
Hence if we write $\lambda_\pi = (\lambda_1, \dots, \lambda_n)$ 
and $d_i = \dim \Image L \circ N^i$ for $i \geq 0$, 
then we have
\[
\lambda_k = d_{n-k} - d_{n-k+1}
\]
for $k = 1, \dots, n$. 
Let us write $\pi^\ram = Z(\mm^\ram)$
with $\mm^\ram = \Delta_1+\dots+\Delta_r$ and $\Delta_i = [a_i,b_i]_\chi$ for $1 \leq i \leq r$. 
Then $\lambda_\mm = (\lambda'_1,\dots,\lambda'_n)$ with 
\[
\lambda'_k = \sum_{\substack{1 \leq i \leq r \\ b_i-a_i \geq n-k}}1
\]
for $k = 1,\dots,n$. 
By Lemmas \ref{lem:easy} and \ref{lem:keypair}, 
$\mm^\ram$ corresponds to the {\VNpair} $(\Image L, N|_{\Image L})$.
Since $L$ and $N$ commute, we have
$\dim \Image N^i \circ L = d_i$ for $i \geq 0$.
Hence we have
\[
d_i-d_{i+1} = \sum_{\substack{1 \leq i \leq r \\ b_i-a_i \geq i}}1
\]
for $i = 0,\dots,n-1$.
Therefore, we have
\[
\lambda'_k = d_{n-k} - d_{n-k+1} = \lambda_k
\]
for $k = 1,\dots,n$. 
This completes the proof.
\end{proof}

We do not use the following proposition, but it might be interesting.
\begin{prop} \label{lem:fund1}
For any multisegment $\mm$, we have
$(\mm^{-})^\ram = (\mm^\ram)^{-}$.
\end{prop}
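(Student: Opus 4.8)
The plan is to reduce Proposition~\ref{lem:fund1} to the linear-algebra statements about {\VNpair}s and {\WLpair}s established in Section~\ref{sec:pairs}, rather than arguing directly with multisegments. Write $(V,N)$ for the {\VNpair} corresponding to $\mm$, so that by Lemma~\ref{lem:easy} the multisegment $\mm^{-}$ corresponds to the degree-shifted {\VNpair} $(\Image N, N|_{\Image N})(-1)$, and by definition of $\mm^\sharp$ the Zelevinsky dual $(\mm)^\sharp$ corresponds to the {\WLpair} $(V,L)$ for $L \in S^o(V,N)$. The quantity $\mm^\ram = ((\mm^\sharp)^{-})^\sharp$ then corresponds, again via Lemmas~\ref{lem:easy} and~\ref{lem:keypair}, to the {\VNpair} $(\Image L, N|_{\Image L})$ obtained from an admissible triple $(V,N,L)$. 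So both sides of the claimed identity become {\VNpair}s built out of a single carefully chosen pair $(N,L)$, and the goal is to see that the two iterated constructions land on isomorphic {\VNpair}s.

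First I would fix, using Lemma~\ref{lem:keypair} together with Lemma~\ref{lem:surj}, an endomorphism $L \in S^\theta(V,N)$ that is ``generic enough'': namely so that $L|_{\Image N}$ lies in $S^o(\Image N, N|_{\Image N})$ and simultaneously the relevant triples remain admissible after passing to $\Image N$ and to $\Image L$. The point of such an $L$ is that it computes \emph{all} the derived and dual operations at once. Concretely, $(\mm^\ram)^{-}$ corresponds to $(\Image(N|_{\Image L}), (N|_{\Image L})|_{\Image(N|_{\Image L})})(-1) = (\Image (NL), N|_{\Image NL})(-1)$ (using that $N$ and $L$ commute, so $N(\Image L) = \Image(NL) = \Image(LN) \subseteq \Image L$), whereas $(\mm^{-})^\ram$ corresponds to the {\VNpair} obtained by applying the ``$\ram$'' construction to $(\Image N, N|_{\Image N})(-1)$, which, with the chosen $L$, is $(\Image(L|_{\Image N}), (N|_{\Image N})|_{\Image(L|_{\Image N})})$, i.e. $(\Image(LN), N|_{\Image LN})$ up to the degree shift. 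Since $NL = LN$ as operators on $V$, both sides are literally $(\Image(NL), N|_{\Image(NL)})(-1)$, so they agree.

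The main obstacle is the bookkeeping needed to justify that one single $L$ may be chosen to make every intermediate triple admissible, i.e. that the relevant Zariski-dense open subsets of $S(V,N)$, of $S(\Image N,N|_{\Image N})$, and of the further nested spaces all have nonempty (hence dense) common preimage. This is exactly the kind of argument already carried out in the proof of Lemma~\ref{lem:keypair} and in the proof of Proposition~\ref{prop_lambda} (where an $L$ was chosen with $L|_{\Image N^i} \in S^o$ for all $i$), so I would cite those and run the same surjectivity-of-restriction plus irreducibility-of-$S(V,N)$ argument one more time, now tracking both the $N$-side and the $L$-side simultaneously. Once the generic $L$ is in hand, the identity $(\mm^{-})^\ram = (\mm^\ram)^{-}$ is just the commutativity $NL = LN$ together with Lemma~\ref{lem:easy} applied twice on each side; I would also double-check that the degree shifts $(-1)$ coming from Lemma~\ref{lem:easy} occur in the same place on both sides, which they do since each side performs exactly one ``$(-)$'' operation.
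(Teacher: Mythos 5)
Your proposal is correct and follows essentially the same route as the paper: translate $\mm$ to a {\VNpair} $(V,N)$, choose one sufficiently generic $L\in S(V,N)$ so that all the intermediate dual/derivative operations are computed by restrictions of $L$, observe that $(\mm^{-})^\ram$ and $(\mm^\ram)^{-}$ then correspond to $(\Image(L\circ N),N|_{\Image(L\circ N)})(-1)$ and $(\Image(N\circ L),N|_{\Image(N\circ L)})(-1)$ respectively, and conclude from $L\circ N = N\circ L$. The paper compresses the genericity bookkeeping into the phrase ``sufficiently general $L$,'' which is exactly the argument you spell out via Lemmas~\ref{lem:surj} and~\ref{lem:keypair}.
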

\begin{proof}
Let $(V,N)$ be the {\VNpair} corresponding 
to the multisegment $\mm$.
If we choose a sufficiently general $L \in S(V,N)$, then
$(\mm^{-})^\ram$ and $(\mm^\ram)^{-}$ correspond to 
the pairs
$(\Image L\circ N, N|_{\Image L \circ N})(-1)$ and
$(\Image N\circ L, N|_{\Image N \circ L})(-1)$,
respectively.
Since $L\circ N=N \circ L$, the claim follows.
\end{proof}

\subsection{Proof of Proposition \ref{lem:fund3}}\label{sec.proof_max}
The following statement is easy to check.
However, we record it as a lemma for later use.
A proof is omitted.
\begin{lem} \label{lem:fund2}
For any multisegment $\mm$, 
we have $(\mm^{-})_\maxi = (\mm_\maxi)^{-}$ and $(\mm^{-})^\rest = (\mm^\rest)^{-}$.
\end{lem}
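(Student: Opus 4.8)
The plan is to reduce the lemma to a single combinatorial statement about the maximal segments of $\mm^{-}$ and then deduce both identities formally. First I would observe that the operations $\mm\mapsto\mm^{-}$ and $\mm\mapsto\mm_\maxi$ are both compatible with the decomposition of a multisegment into its parts supported on a single line $\{\chi|\cdot|^{x} : x\in\Z\}$ — two segments $[a,b]_{\rho}$, $[a',b']_{\rho'}$ can be comparable under inclusion only when $\rho'=\rho|\cdot|^{a-a'}$ — so we may assume that $\mm$ is of type $\chi$, i.e.\ a finite multiset of intervals in $\Z$. I record three elementary facts about $\Delta\mapsto\Delta^{-}$, all immediate from the definitions: (i) it is monotone for inclusion, i.e.\ $\Delta_1\subseteq\Delta_2$ implies $\Delta_1^{-}\subseteq\Delta_2^{-}$; (ii) its restriction to segments of length $\geq 2$ is injective with image the set of all nonempty segments, while every length-$1$ segment is sent to the empty multisegment; and (iii) it is additive, $(\mm_1+\mm_2)^{-}=\mm_1^{-}+\mm_2^{-}$.

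The key step is the claim $(\ast)$: a nonempty segment $\Gamma$ is maximal in $\mm^{-}$ if and only if $\Gamma=\Delta^{-}$ for some $\Delta\in\mm_\maxi$ with $l(\Delta)\geq 2$, and such a $\Delta$ is then unique. For the ``if'' direction, given $\Delta=[a,b]_\chi\in\mm_\maxi$ with $b>a$, any $\Gamma'\in\mm^{-}$ with $\Delta^{-}\subsetneq\Gamma'$ is of the form $\Sigma^{-}$ for some $\Sigma\in\mm$ with $l(\Sigma)\geq 2$ by (ii), and unwinding $[a,b-1]_\chi\subsetneq\Sigma^{-}$ forces $[a,b]_\chi\subsetneq\Sigma$, contradicting the maximality of $\Delta$ in $\mm$; hence $\Delta^{-}$ is maximal in $\mm^{-}$. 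For the ``only if'' direction, write a nonempty maximal $\Gamma\in\mm^{-}$ as $\Gamma=\Sigma^{-}$ with $\Sigma\in\mm$, $l(\Sigma)\geq 2$, and choose (by finiteness of $\mm$) a segment $\Delta\in\mm_\maxi$ with $\Delta\supseteq\Sigma$; then $l(\Delta)\geq 2$, the segment $\Delta^{-}\in\mm^{-}$ is nonempty, and $\Delta^{-}\supseteq\Sigma^{-}=\Gamma$ by (i), so $\Delta^{-}=\Gamma$ by maximality of $\Gamma$, with uniqueness of $\Delta$ given by (ii).

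Finally I would deduce the lemma from $(\ast)$. By (ii) the segments $\Delta^{-}$ with $\Delta\in\mm_\maxi$ and $l(\Delta)\geq 2$ are pairwise distinct and form exactly the multisegment $(\mm_\maxi)^{-}$ (the length-$1$ members of $\mm_\maxi$ contribute nothing), so $(\ast)$ is precisely the assertion $(\mm^{-})_\maxi=(\mm_\maxi)^{-}$. Then, using $\mm=\mm_\maxi+\mm^\rest$ together with (iii), we get $\mm^{-}=(\mm_\maxi)^{-}+(\mm^\rest)^{-}$, whence $(\mm^\rest)^{-}=\mm^{-}-(\mm_\maxi)^{-}=\mm^{-}-(\mm^{-})_\maxi=(\mm^{-})^\rest$. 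The only point needing care — and what I would flag as the main, if minor, obstacle — is the bookkeeping forced by the convention that $\mm_\maxi$ records each segment with multiplicity at most $1$ and that length-$1$ segments are annihilated by $(\cdot)^{-}$: one must check that $(\mm_\maxi)^{-}$ is still multiplicity-free and that the multiset difference defining $(\mm^{-})^\rest$ really equals $(\mm^\rest)^{-}$, both of which follow at once from (ii) and (iii). Everything else is a direct unwinding of inclusions.
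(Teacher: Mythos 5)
Your proof is correct: the reduction to a single cuspidal line, the claim $(\ast)$ identifying the maximal segments of $\mm^{-}$ with the $\Delta^{-}$ for $\Delta\in\mm_\maxi$ of length $\geq 2$, and the multiplicity bookkeeping (injectivity of $\Delta\mapsto\Delta^{-}$ on segments of length $\geq 2$, additivity of $(\cdot)^{-}$) all check out, and the second identity does follow formally from the first. The paper simply records this lemma as "easy to check" and omits the proof, so there is no argument to compare against; yours is a valid and complete verification of exactly the intended routine statement.
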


For a multisegment $\mm$, 
a \emph{full-sub-multisegment} of $\mm$ 
is a multisegment $\mm'$ 
such that
for any segment $\Delta$ in $\mm'$,
its multiplicity in $\mm'$ is equal to that in $\mm$.
\par

We say that a multisegment $\mm$ is \emph{totally ordered} 
if for any two segments $\Delta, \Delta'$ in $\mm$, 
we have either $\Delta \subset \Delta'$ or $\Delta' \subset \Delta$.

\begin{prop} \label{prop:KZ}
Let $\mm = \Delta_1 + \cdots + \Delta_r$ be a multisegment of type $\chi$
and $a \in \Z$ an integer. 
Let us write $\delta_a =[a,a+1]_\chi$.
Let $\mm_a$ denote the full-sub-multisegment of $\mm$ 
that consists of segments which intersect $\delta_a$,
and
let $\mm^\sharp_{(a)}$ denote the full-sub-multisegment
of $\mm^\sharp = \Delta'_1 + \cdots + \Delta'_s$
that consists of segments which contain $\delta_a$.
Namely, 
\[
\mm_a \coloneqq \sum_{1 \le i \le r \atop \Delta_i \cap \delta_a \neq \emptyset}
\Delta_i, 
\quad
\mm^\sharp_{(a)} \coloneqq
\sum_{1 \le i \le s \atop \Delta'_i \supset \delta_a}
\Delta'_i.
\]
Then we have the equality
\[
\Card(\mm^\sharp_{(a)})
= \Card(\mm_a) - \max_{\mm'} \Card(\mm')
\]
where $\mm'$ runs over the set of totally ordered
full-sub-multisegments of $\mm_a$.
\end{prop}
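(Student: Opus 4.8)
The plan is to reduce Proposition \ref{prop:KZ} to a purely combinatorial statement about {\VNpair}s and {\WLpair}s, and then to the key result of Knight--Zelevinsky \cite{KZ}. The starting observation is that the quantities appearing on both sides only depend on the ``local'' structure of $\mm$ near the position $a$; I would first make this precise. On the Galois/linear-algebra side, if $(V,N)$ is the {\VNpair} attached to $\mm$, then the number of segments in $\mm^\sharp$ containing $\delta_a=[a,a+1]_\chi$ is expressible in terms of ranks of compositions of $N$ and a sufficiently generic $L \in S^o(V,N)$: concretely, $\Card(\mm^\sharp_{(a)})$ counts Jordan blocks of the {\WLpair} $(V,L)$ of length $\geq 2$ that sit in degrees meeting $\{a,a+1\}$, which translates into a difference of dimensions of images of operators of the form $N^{i} L N^{j}$ restricted to graded pieces around degree $a$. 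Similarly $\Card(\mm_a)$ is just a count of basis vectors (with the right degrees) in $V$.

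Next I would identify $\max_{\mm'}\Card(\mm')$, where $\mm'$ ranges over totally ordered full-sub-multisegments of $\mm_a$, with a combinatorial invariant that has a linear-algebra meaning. A totally ordered full-sub-multisegment of $\mm_a$ is a chain of segments each of which meets $\delta_a$ and which are nested by inclusion; extracting a maximal such chain is exactly the kind of ``staircase'' extremal problem handled in \cite{KZ}. The plan is to invoke Knight--Zelevinsky's description of the Zelevinsky dual — which the paper has already flagged via Proposition \ref{prop:KZ} itself being ``a highly non-trivial result of Knight--Zelevinsky'' — so in practice the proof will consist of (i) unwinding the definitions of $\mm_a$ and $\mm^\sharp_{(a)}$ so that they match the objects in \cite{KZ}, and (ii) citing the relevant theorem there, possibly after a degree-shift/truncation bookkeeping step to pass from the global multisegment $\mm$ to the sub-multisegment $\mm_a$.

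The one genuine lemma I would need along the way is that truncation commutes with Zelevinsky duality in the relevant sense: that $\Card(\mm^\sharp_{(a)}) = \Card((\mm_a)^\sharp_{(a)})$, i.e.\ segments of $\mm$ disjoint from $\delta_a$ contribute nothing to the count of dual segments containing $\delta_a$. This should follow from the M\oe glin--Waldspurger algorithm \cite{MW} (or from the genericity description via $S^o(V,N)$): a segment of $\mm^\sharp$ containing $\delta_a$ is built, under the M\oe glin--Waldspurger recursion, only from segments of $\mm$ that meet $\delta_a$, so one may discard the rest at the outset. Once this reduction is in place, the proposition is precisely the $\Card$-identity of \cite{KZ} applied to $\mm_a$.

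The main obstacle I expect is the bookkeeping in matching conventions: \cite{KZ} phrase their results in terms of the combinatorics of multisegments directly, whereas the cleanest proof here runs through the {\VNpair}/{\WLpair} dictionary set up in Section \ref{sec:pairs}, and one must be careful that ``totally ordered full-sub-multisegment of $\mm_a$'' corresponds on the linear-algebra side to the correct extremal quantity (a maximal chain, not a maximal antichain or a maximal nested family of a different flavor). Getting the $-\max_{\mm'}\Card(\mm')$ sign and the role of $\delta_a$ having \emph{length two} (so that ``intersects $\delta_a$'' and ``contains $\delta_a$'' are the right adjacency conditions) exactly right is the delicate part; the rest is an application of results already available in the literature and recalled in the excerpt.
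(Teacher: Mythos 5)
Your proposal correctly identifies Knight--Zelevinsky \cite{KZ} as the key input, but it stops precisely where the paper's actual work begins, and it proposes a detour the paper does not take. Two concrete issues.

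First, you route the argument through the {\VNpair}/{\WLpair} dictionary of Section~\ref{sec:pairs} and then say you would ``cite the relevant theorem'' of \cite{KZ}. The paper's proof of Proposition~\ref{prop:KZ} does not use {\VNpair}s at all; it works entirely at the level of multiplicities $d_{a,b}$ of segments. More importantly, \cite[Theorem~1.2]{KZ} (via their formula~(1.6)) does \emph{not} express $\Card(\mm^\sharp_{(a)})$ in the stated form $\Card(\mm_a) - \max_{\mm'} \Card(\mm')$. What it gives, for $(i,j) = (a,a+1)$, is a min/max over monotone functions $\nu \colon [1,a]\times[a+1,r] \to \{a,a+1\}$ of a sum of shifted multiplicities. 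The bulk of the paper's proof is the combinatorial reinterpretation of that expression: introduce the set $U = [1,a+1] \times [a,r]$ so that $\Card(\mm_a) = \sum_{(k,l) \in U} d_{k,l}$; associate to each $\nu$ a lattice {\path} $p$ whose image $V_\nu$ is the complement $U\setminus(L'\cup R')$; rewrite the \cite{KZ} sum as $\sum_{U} d_{k,l} - \sum_{(k,l)\in V_\nu} d_{k,l}$; and finally show that the maximum of $\sum_{i} d_{p(i)}$ over paths $p$ equals $\max_{\mm'} \Card(\mm')$ over totally ordered full-sub-multisegments of $\mm_a$, noting explicitly that the map $p \mapsto \mm_{a,p}$ is neither injective nor surjective, but that every totally ordered $\mm'$ sits inside some $\mm_{a,p}$, so the maxima agree. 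You flag this translation as ``delicate'' but then file it under bookkeeping; it is in fact the substantive content of the proof, and your plan has no argument for it.

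Second, the truncation lemma you propose ($\Card(\mm^\sharp_{(a)}) = \Card((\mm_a)^\sharp_{(a)})$) is true but turns out to be unnecessary: the \cite{KZ} formula for $(i,j)=(a,a+1)$ already involves only multiplicities $d_{k,l}$ with $k \leq a+1 \leq l$ or $k \leq a \leq l$, i.e.\ only segments meeting $\delta_a$, so localization to $\mm_a$ is automatic. If you nonetheless insisted on the truncation route you would still face the first issue for $\mm_a$, so the lemma buys you nothing. In short: right reference, but the path/maximum translation is missing, and that is the proof.
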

\begin{proof}
By replacing $\chi$ with $\chi|\cdot|^c$ for some integer $c$,
we may and will assume that there exists an integer $r$
such that any segment in
$\mm$ is contained in $[1,r]_\chi$.
\par

For two integers $a,b$ with $1 \leq a \leq b \leq r$, 
let $d_{a,b}=d_{a,b}(\mm)$ denote the multiplicity of the segment $[a,b]_\chi$ in $\mm$.
When $a>b$, we set $d_{a,b}=0$. 
Then it follows from the result of Knight--Zelevinsky \cite[Theorem 1.2]{KZ}
that $\Card(\mm^\sharp_{(a)})$
is equal to the right-hand side of the equality (1.6) in \cite{KZ}
for $(i,j)=(a,a+1)$.
\par

For two integers $x,y$ with $x \leq y$, 
let $[x,y]$ denote the set of integers $c$ satisfying $x \le c \le y$.
Let $a \in [1,r-1]$.
We rewrite the right-hand side of (1.6) in \cite{KZ} for $(i,j)=(a,a+1)$.
Let us first recall some notation in \cite{KZ}.
They fix a positive integer $r$ 
and consider the set $S$ of pairs of integers $(i,j)$ such that $1\leq i \leq j \leq r$.
For $ 1 \leq i \leq j \leq r$,
they consider the set $T_{i,j}$ of functions $\nu\colon  [1,i] \times [j,r] \to [i,j]$
such that $\nu(k,l) \leq \nu(k', l')$ whenever $k\leq k', l \leq l'$.
\par

Let $1 \leq a \leq r$.  
We only use the case $i=a, j=a+1$ and consider $T_{a, a+1}$.
In this case any function $\nu \in T_{a, a+1}$ takes one of two values $a, a+1$.
We express this using Figure \ref{fig1} below.
The rectangle depicts the set $[1,a] \times [a+1, r]$.
The left upper corner is $(1,a+1)$,
the left lower corner is $(a,a+1)$,
the right upper corner is $(1,r)$, and 
the right lower corner is $(a, r)$.
Because of the condition on $\nu$,
there exists a bold line as in the picture such that $\nu$ takes 
the value $a$ on the left (call the region $L$)
and the value $a+1$ on the right (call the region $R$).
\par

We look at the sum from $(1.6)$ loc.~cit.:
\[
\sum_{(k,l) \in [1,a]\times [a+1, r]}
d_{\nu(k,l)+k-a,
\nu(k,l)+l-a-1}.
\]
This equals
\[
\sum_{(k,l) \in L}
d_{k,l-1}
+
\sum_{(k,l) \in R}
d_{k+1, l}.
\]
Now consider Figure \ref{fig2} below.
The rectangle depicts the set $U=[1,a+1] \times [a, r]$.
Let $L'$ be the region $L$ moved to the left by 1
and $R'$ be the region $R$ moved down by 1.   
These are subsets of $U$, and the complement 
$V_\nu=U\setminus (L' \cup R')$ is shown in blue in the picture. 
\par

A {\path} from $(a+1,a)$ to $(1,r)$ is a map $p\colon  [0,r] \to \Z \times \Z$
satisfying the following conditions:
\begin{enumerate}
\item 
$p(0)=(a+1,a)$.
\item 
For $i=1, \ldots, r$, the element $p(i) \in \Z \times \Z$
is equal to $p(i-1)-(1,0)$ or $p(i-1)+(0,1)$.
\item 
$p(r)=(1,r)$.
\end{enumerate}
Then $V_\nu$ is equal to the image of a {\path} from 
$[a+1, a]$ to $[1,r]$.
By sending $\nu$ to this path, we obtain a
bijection from $T_{a, a+1}$ to the set $A_a$ of
{\path}s from $(a+1,a)$ to $(1,r)$.
\par

Notice now that the sum above is equal to
\[
\sum_{(k,l) \in L'}
d_{k,l}
+\sum_{(k,l) \in R'}
d_{k,l}
=
\sum_{(k,l) \in U}
d_{k,l}
-
\sum_{(k,l) \in V_\nu} d_{k,l}.
\]
Conversely, given a path from
$[a+1, a]$ to $[a,1]$,
we obtain a function $\nu \in T_{a, a+1}$
such that $V_\nu$ is the image of the given {\path}.
Thus, the right-hand side of 
(1.6) of \cite{KZ} is equal to 
\[
\sum_{(k,l) \in U}
d_{k,l}
-\max_{p \in A_a}
\sum_{i=0}^{r}
d_{p(i)}.
\]

\begin{figure}[H]
\centering
\begin{tikzpicture}
\draw[step=1cm,gray,very thin] (0,0) grid (9,6);

\draw[very thick] (0,0) -- (2,0) -- (2,1) -- (3,1) -- (3,3) -- (5,3) -- (5,5) 
-- (6,5) -- (6,6) -- (9,6);

\draw (1.7,4.2) node {\Large{$L$}};
\draw (6.8, 1.4) node {\Large{$R$}};

\end{tikzpicture}
\caption{}
\label{fig1}
\end{figure}

\begin{figure}[H]
\centering
\begin{tikzpicture}
\draw[step=1cm,gray,very thin] (-1,-1) grid (9,6);

\fill[blue!40!white] (-1,-1) rectangle (2,0);
\fill[blue!40!white] (1,0) rectangle (3,1);
\fill[blue!40!white] (2,1) rectangle (3,2);
\fill[blue!40!white] (2,2) rectangle (5,3);
\fill[blue!40!white] (4,3) rectangle (5,5);
\fill[blue!40!white] (5,4) rectangle (6,6);
\fill[blue!40!white] (5,5) rectangle (9,6);

\draw[step=1cm,gray,very thin] (-1,-1) grid (9,6);
\draw (-1,0) -- (1,0) -- (1,1) -- (2,1) -- (2,3) -- (4,3) -- (4,5) 
-- (5,5) -- (5,6) -- (8,6);

\draw (0,-1) -- (2,-1) -- (2,0) -- (3,0) -- (3,2) -- (5,2) -- (5,4) 
-- (6,4) -- (6,5) -- (9,5);

\draw (1.7, 4.2) node {\Large{$L'$}};
\draw (6.8, 1.4) node {\Large{$R'$}};
\draw (4.5, 3.5) node {\Large{$V_\nu$}};
\draw (-1.7, 5.5) node {\Large{U}};
\end{tikzpicture}
\caption{}
\label{fig2}
\end{figure}

Notice that
$\Card(\mm_a) = \sum_{(k,l) \in U}d_{k,l}$.
From this we see that $\Card(\mm^\sharp_{(a)})$
is equal to
\[
\Card(\mm_a) - E_a(\mm)
\]
where
\[
E_a(\mm) = \max_{p \in A_a} \sum_{i=0}^{r} d_{p(i)}.
\]
\par

For $p \in A_a$, let $\mm_{a,p}$ denote the full-sub-multisegment of
$\mm_a$ that consists of the segments $[a',b']_\chi$ in $\mm_a$
of the form $(a',b') = p(i)$ for some integer $i \in [0,r]$.
Then $\mm_{a,p}$ is totally ordered and we have
$\sum_{i=0}^{r} d_{p(i)} = \Card(\mm_{a,p})$.
By sending $p$ to $\mm_{a,p}$, we obtain a map from
$A_a$ to the set $T_a$ of totally
ordered full-sub-multisegments of $\mm_a$.
In general, this map is neither injective nor surjective.
However, for any totally ordered
full-sub-multisegment $\mm'$ of $\mm_a$, there
exists a {\path} $p \in A_a$ such that
$\mm'$ is a full-sub-multisegment of $\mm_{a,p}$.
In particular $\Card(\mm') \le \Card(\mm_{a,p})$ for this $p$.
\par

Thus, we obtain an equality
\[
\max_{\mm' \in T_a} \Card(\mm')
= \max_{p \in A_a} \Card(\mm_{a,p})
= E_a(\mm),
\]
which completes the proof.
\end{proof}

Now we can prove Proposition \ref{lem:fund3}.
\begin{proof}[Proof of Proposition \ref{lem:fund3}]
We prove the claim by induction on $l(\mm)$.
\par

Let $(V,N)$, $(V_1,N_1)$ and $(V_2,N_2)$ be the {\VNpair}s 
corresponding to the multisegments $\mm$, $\mm_\maxi$ and $\mm^\rest$, respectively.
Let us consider the sets $S(V,N)$, $S(V_1,N_1)$ and $S(V_2,N_2)$ 
introduced in Section \ref{sec:pairs}.
Let us choose sufficiently general $L \in S(V,N)$, $L_1 \in S(V_1,N_1)$, and $L_2 \in S(V_2,N_2)$.
Then the six multisegments 
$\mm^\ram$, 
$(\mm^{-})^\ram$, 
$(\mm_\maxi)^\ram$, 
$((\mm_\maxi)^{-})^\ram$, 
$(\mm^\rest)^\ram$,
and $((\mm^\rest)^{-})^\ram$
correspond to the pairs
$(\Image L, N|_{\Image L})$,
$(\Image L\circ N, N|_{\Image L \circ N})(-1)$,
$(\Image L_1, N_1|_{\Image L_1})$,
$(\Image L_1\circ N_1, N_1|_{\Image L_1 \circ N_1})(-1)$,
$(\Image L_2, N_2|_{\Image L_2})$, 
and
$(\Image L_2\circ N_2, N_2|_{\Image L_2 \circ N_2})(-1)$,
respectively.
\par

To prove the claim, it suffices to show that
the pair $(\Image L, N|_{\Image L})$ is isomorphic to
the pair $(\Image L_1 \oplus \Image L_2, N_1|_{\Image L_1}
\oplus N_2|_{\Image L_2})$.
By the inductive hypothesis, the claim is true for
the multisegment $\mm^{-}$.
Hence it follows from Lemma \ref{lem:fund2} that
\[
(\Image L\circ N, N|_{\Image L\circ N})
\cong 
(\Image L_1\circ N_1 \oplus \Image L_2 \circ N_2, 
N_1|_{\Image L_1 \circ N_1} \oplus N_2|_{\Image L_2 \circ N_2}).
\]
Hence by Lemma \ref{lem:isom}, it suffices to show that
the graded vector space $\Image L$ is isomorphic to
the graded vector space $\Image L_1 \oplus \Image L_2$.
\par

Let $\mm_a$ and $\mm^{\sharp}_{(a)}$ be as in Proposition \ref{prop:KZ}. 
Note that the dimension of the degree-$a$-part of $\Image L$ 
is equal to $\Card(\mm^\sharp_{(a)})$.
Let $\mm'$ be a totally ordered full-sub-multisegment of $\mm_a$ with the maximum cardinality. 
When $\mm_a$ is non-empty, 
the maximal segment $\Delta_1$ of $\mm'$ must belong to $\mm_\maxi$, 
since otherwise one can find a totally ordered full-sub-multisegment of $\mm_a$ 
that strictly contains $\mm'$ 
by adding to $\mm'$ a segment of $\mm_\maxi$ that contains $\Delta_1$,
which is a contradiction.
It is then easy to see that
\begin{itemize}
\item
when $\mm_a$ is non-empty, 
$\mm' - \Delta_1$ is a totally ordered full-sub-multisegment of $(\mm^\rest)_a$
with the maximum cardinality; and
\item
$\Delta_1$, which is regarded as a multisegment with $\Card(\Delta_1) = 1$,
is a totally ordered full-sub-multisegment of $(\mm_\maxi)_a$
with the maximum cardinality.
\end{itemize} 
Thus, it follows from Proposition \ref{prop:KZ}
that the dimension of the degree-$a$-part of $\Image L$ 
is equal to the sum of those of $\Image L_1$ and $\Image L_2$,
as desired.
\end{proof}

\section{Preliminaries on $\oo$-modules}\label{sec.modules}
To prove our main theorems, we prepare some results on $\oo$-modules in this section. 

\subsection{On $\oo$-modules of finite length}\label{sec.o-modules}
In this subsection, we introduce some terminologies on $\oo$-modules 
and give two basic results 
(Propositions \ref{prop:convexity}, \ref{prop:uniqueness}), 
which we call convexity and uniqueness, respectively.
The authors suspect that these two results are well-known
to some experts. In fact, one can deduce
them from the description of Hall polynomials given in
\cite[II, (4.3)]{M} 
in terms of sequences of partitions related with the Littlewood--Richardson rule.
However, for the sake of completeness, we do not omit 
the proof of these results, which the authors believe
to be helpful for most of the readers.
\par

Let $|\CC|$ denote the set of isomorphism classes of $\oo$-modules of finite length.
For an $\oo$-module $M$ of finite length,
we denote by $[M] \in |\CC|$ its isomorphism class.
\par

For an integer $n \geq 1$, 
let $|\CC^n| \subset |\CC|$ denote the subset of isomorphism classes $[M]$
such that $M$ is generated by at most $n$ elements.
We denote by $\iota_n \colon |\CC^n| \hookrightarrow |\CC^{n+1}|$ the inclusion map.
\par

Recall that $\Lambda_n$ is the set of $n$-tuples $(\lambda_1,\ldots,\lambda_n)$
of integers satisfying $0 \leq \lambda_1 \leq \cdots \leq \lambda_n$.
For $[M] \in |\CC^n|$, there exists a unique
element $(\lambda_1,\ldots,\lambda_n)$ of $\Lambda_n$ such that
the $\oo$-module $M$ is isomorphic to
\[
\oo/\pp^{\lambda_1} \oplus \cdots  \oplus \oo/\pp^{\lambda_n}.
\]
By sending $[M]$ to the $n$-tuple $(\lambda_1,\ldots,\lambda_n)$,
we obtain a bijective map $\seq_n \colon |\CC^n| \to \Lambda_n$.
We denote by $\jmath_n \colon \Lambda_n \to \Lambda_{n+1}$ the injective map
that sends $(\lambda_1,\ldots, \lambda_n)$ to $(0,\lambda_1,\ldots,\lambda_n)$.
Then the diagram
\[
\begin{CD}
|\CC^n| @>{\seq_n}>> \Lambda_n \\
@V{\iota_n}VV @VV{\jmath_n}V \\
|\CC^{n+1}| @>{\seq_{n+1}}>> \Lambda_{n+1}
\end{CD}
\]
is commutative.
\par

For two elements $[M], [M'] \in |\CC^n|$,
we write $[M] \leq [M']$ if $\seq_n([M]) \leq \seq_n([M'])$
with respect to the lexicographic order on $\Lambda_n$.
This gives a total order on the set $|\CC^n|$.
The map $\iota_n$ is compatible with the total orders on $|\CC^n|$ and on $|\CC^{n+1}|$
since the map $\jmath_n$ is compatible with
the lexicographic orders on $\Lambda_n$ and on $\Lambda_{n+1}$.
Hence the total orders on $|\CC^n|$ for all $n$
induce a total order $\leq$ on the set $|\CC|$.
\par

We regard $\Lambda_n$ as a subset of $\Z^n$.
Then $\Lambda_n$ is closed under the addition $+$ on $\Z^{n}$ 
and becomes a commutative submonoid of $\Z^n$ with the addition $+$.
For two elements $[M], [M'] \in |\CC^n|$, 
we denote by $[M] \vee [M']$ the unique element of $|\CC^n|$ 
whose image under $\seq_n$ is equal to $\seq_n([M]) + \seq_n([M'])$.
Then the set $|\CC^n|$ becomes a commutative monoid with the operation $\vee$ 
and the diagram
\[
\begin{CD}
|\CC^n| \times |\CC^n| @>{\vee}>> |\CC^n| \\
@V{\seq_n \times \seq_n}VV @VV{\seq_n}V \\
\Lambda_n \times \Lambda_n @>{+}>> \Lambda_n
\end{CD}
\]
is commutative.
The map $\iota_n$ is compatible with the monoid structures on $|\CC^n|$ and $|\CC^{n+1}|$
since the map $\jmath_n$ is compatible with the addition $+$.
Hence the binary operations $\vee$ on $|\CC^n|$ for all $n$ 
induce a binary operation on the set $|\CC|$, also denoted by $\vee$. 
This gives a structure
of a commutative monoid on the set $|\CC|$.
\par

The following lemma says that 
the total order $\leq$ on $|\CC|$ is compatible with the monoid structure on $|\CC|$.

\begin{lem}\label{lem:compatible}
Let $[M], [M'], [N], [N'] \in |\CC|$ and suppose that $[M] \leq [N]$ and $[M'] \leq [N']$.
Then we have $[M] \vee [M'] \leq [N] \vee [N']$. 
\end{lem}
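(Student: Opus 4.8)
The statement is purely combinatorial: it concerns the lexicographic order on $\Lambda_n$ (via the bijections $\seq_n$) and the coordinatewise addition $+$. So the first reduction I would make is to transport everything through $\seq$ and reduce to the following clean claim: if $\lambda, \mu, \lambda', \mu' \in \Lambda_n$ with $\lambda \leq \mu$ and $\lambda' \leq \mu'$ in the lexicographic order, then $\lambda + \lambda' \leq \mu + \mu'$. Since all the squares in the excerpt (relating $\seq_n$, $\iota_n$, $\jmath_n$, $\vee$, $+$) commute and are order-compatible, it suffices to prove this for a single fixed $n$, and by the two-step argument below I only ever need to add \emph{one} inequality at a time.

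\emph{Reduction to adding one term.} By transitivity of $\leq$, it is enough to show the one-sided statement: $\lambda \leq \mu \implies \lambda + \nu \leq \mu + \nu$ for any $\nu \in \Lambda_n$. Indeed, given $\lambda \leq \mu$ and $\lambda' \leq \mu'$, applying this twice gives $\lambda + \lambda' \leq \mu + \lambda' = \lambda' + \mu \leq \mu' + \mu = \mu + \mu'$, using commutativity of $+$. (One should check $\lambda + \nu$ really lies in $\Lambda_n$, i.e. the coordinates stay nondecreasing and nonnegative — immediate since both summands have these properties — which is exactly why $\Lambda_n$ is a submonoid of $\Z^n$, as recorded in the excerpt.)

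\emph{Proof of the one-sided statement.} This is a direct unwinding of the definition of the lexicographic order. If $\lambda = \mu$ there is nothing to prove. Otherwise let $i$ be the least index with $\lambda_i \neq \mu_i$; then $\lambda_j = \mu_j$ for $j < i$ and $\lambda_i < \mu_i$. Now compare $\lambda + \nu$ and $\mu + \nu$ coordinatewise: for $j < i$ we have $\lambda_j + \nu_j = \mu_j + \nu_j$, and for $j = i$ we have $\lambda_i + \nu_i < \mu_i + \nu_i$. Hence $\lambda + \nu < \mu + \nu$ in the lexicographic order (and in particular $\leq$), which is what we wanted. Translating back through $\seq_n$ and then noting that the total order and $\vee$ on $|\CC|$ are induced compatibly from those on the $|\CC^n|$ (so we may pick a single $n$ with $[M],[M'],[N],[N'] \in |\CC^n|$, which exists since $|\CC| = \bigcup_n |\CC^n|$), we obtain $[M] \vee [M'] \leq [N] \vee [N']$.

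\emph{Main obstacle.} Honestly there is no real obstacle here — the lemma is elementary. The only thing to be a little careful about is bookkeeping: making sure one genuinely is allowed to fix a common $n$ (using that the $\iota_n$ are order- and monoid-embeddings, already established in the excerpt), and making sure the index $i$ chosen for the pair $(\lambda,\mu)$ behaves correctly after translation by $\nu$ — but since translation by $\nu$ is coordinatewise and preserves both equalities and strict inequalities in each coordinate, the "first place of disagreement" is unchanged, so there is nothing subtle. The write-up should be just a few lines.
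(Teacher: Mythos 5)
Your proof is correct and matches the paper's approach: the paper likewise reduces to the lexicographic order on $\Lambda_n$ being compatible with coordinatewise addition, and simply calls this "easily seen." You have spelled out the one-sided translation-invariance argument (first index of disagreement is preserved under adding $\nu$) that the paper leaves implicit.
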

\begin{proof}
We can easily see that 
the lexicographic order on $\Lambda_n$ is compatible 
with the monoid structure on $\Lambda_n$ given by $+$.
Hence the claim follows.
\end{proof}

Recall that $F$ is the field of fractions of $\oo$.
For an $\oo$-module $M$ of finite length, 
we let $M^\vee$ denote the $\oo$-module $\Hom_\oo(M,F/\oo)$.

\begin{lem}\label{lem:dual}
For any $\oo$-module $M$ of finite length, we have $[M] = [M^\vee]$.
\end{lem}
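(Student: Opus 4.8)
The plan is to prove $[M] = [M^\vee]$ by reducing to the case of a cyclic $\oo$-module. First I would recall the structure theorem: any $\oo$-module $M$ of finite length decomposes as $M \cong \bigoplus_{i=1}^n \oo/\pp^{\lambda_i}$ for a uniquely determined sequence $(\lambda_1,\dots,\lambda_n) \in \Lambda_n$. Since the functor $M \mapsto M^\vee = \Hom_\oo(M, F/\oo)$ is additive, it suffices to compute $(\oo/\pp^a)^\vee$ for each $a \geq 1$, because $[M^\vee] = \bigvee_{i=1}^n [(\oo/\pp^{\lambda_i})^\vee]$ and the class of $M$ is determined by the multiset of these cyclic summands.

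The key computation is that $(\oo/\pp^a)^\vee \cong \oo/\pp^a$. To see this, observe that $F/\oo$ is a divisible $\oo$-module, and for any $\oo$-module $M$ annihilated by $\pp^a$, every homomorphism $M \to F/\oo$ factors through the $\pp^a$-torsion submodule $(F/\oo)[\pp^a] = \pp^{-a}\oo/\oo$, which is isomorphic to $\oo/\pp^a$. Thus $\Hom_\oo(\oo/\pp^a, F/\oo) \cong \Hom_\oo(\oo/\pp^a, \oo/\pp^a) \cong \oo/\pp^a$, the last isomorphism sending a homomorphism to the image of $1$. Therefore $(\oo/\pp^a)^\vee \cong \oo/\pp^a$, and summing over the summands of $M$ gives $M^\vee \cong \bigoplus_{i=1}^n \oo/\pp^{\lambda_i} \cong M$, whence $[M] = [M^\vee]$.

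There is essentially no obstacle here; the statement is a standard Matlis-duality fact for the discrete valuation ring $\oo$. The only point requiring minor care is justifying that Pontryagin-type duality with respect to $F/\oo$ is additive and preserves finite length (both immediate, since $\Hom_\oo(-, F/\oo)$ turns direct sums into direct products, which agree with direct sums for finitely many factors, and $(\oo/\pp^a)^\vee$ has the same length as $\oo/\pp^a$). Alternatively, one could invoke the identification of $|\CC^n|$ with $\Lambda_n$ via $\seq_n$ already set up in this subsection and simply note that duality fixes each $\lambda_i$, but the direct summand-by-summand argument above is the cleanest.
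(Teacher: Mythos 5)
Your proof is correct and follows essentially the same route as the paper's: reduce to the cyclic case $\oo/\pp^a$ by additivity of $(\ )^\vee$ over finite direct sums, then compute that this cyclic module is self-dual (the paper phrases the computation as $(\oo/\pp^\lambda)^\vee \cong \pp^{-\lambda}/\oo \cong \oo/\pp^\lambda$ via a uniformizer, which is the same observation you make with $(F/\oo)[\pp^a]$). There is nothing to add.
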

\begin{proof}
We may assume $M= \oo/\pp^{\lambda_1} \oplus \cdots \oplus \oo/\pp^{\lambda_n}$.
Since $(\ )^\vee$ commutes with finite direct sums,
we may further assume that $M = \oo/\pp^\lambda$.
Then we have $M^\vee \cong \pp^{-\lambda}/\oo$.
Hence by choosing a uniformizer $\varpi \in \pp$,
we obtain a desired isomorphism $M \cong M^\vee$.
\end{proof}

\begin{lem}\label{inj_surj}
Let $M, M'$ be $\oo$-modules of finite length.
\begin{enumerate}
\item
If there exists an injective homomorphism $M' \hookrightarrow M$, 
then we have $[M'] \leq [M]$.
\item 
If there exists a surjective homomorphism $M \twoheadrightarrow M'$, 
then we have $[M'] \leq [M]$.
\end{enumerate}
\end{lem}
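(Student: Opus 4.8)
The plan is to prove both statements by reducing to the case of cyclic modules and exploiting the structure theory of $\oo$-modules of finite length, together with the bijection $\seq_n$ to $\Lambda_n$. Concretely, in part (1) we are given an injection $M' \hookrightarrow M$; write $\seq_n([M]) = (\lambda_1, \dots, \lambda_n)$ and $\seq_n([M']) = (\mu_1, \dots, \mu_n)$, where $n$ is large enough that both modules are generated by at most $n$ elements, and we wish to show $(\mu_1, \dots, \mu_n) \leq (\lambda_1, \dots, \lambda_n)$ in the lexicographic order. The natural approach is to read off the invariants from the filtration by powers of $\pp$: for each $j \geq 1$ the number of $\lambda_i$ that are $\geq j$ equals $\dim_{\oo/\pp}(\pp^{j-1}M / \pp^j M)$, and similarly for $M'$. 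Since $M' \hookrightarrow M$ restricts to injections $\pp^{j-1}M' \hookrightarrow \pp^{j-1}M$ compatible with the $\pp$-adic filtration, one gets $\dim_{\oo/\pp}(\pp^{j-1}M'/\pp^j M') \leq \dim_{\oo/\pp}(\pp^{j-1}M/\pp^j M)$ for every $j$ — wait, that last inequality is not automatic for submodules, so the cleaner route is to compare lengths of $M'[\pp^j]$ and $M[\pp^j]$ (the $\pp^j$-torsion), which do behave monotonically under injections.

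More precisely, I would use the identity, valid for any finite-length $\oo$-module $M$ with $\seq([M]) = (\lambda_1, \dots, \lambda_n)$ (padding by zeros), that the partition conjugate to $(\lambda_1, \dots, \lambda_n)$ records $\mathrm{length}_\oo(M[\pp^j]/M[\pp^{j-1}])$, equivalently $\mathrm{length}_\oo(M[\pp^j]) = \sum_i \min(\lambda_i, j)$. Here $M[\pp^j] = \{x \in M \mid \pp^j x = 0\}$. An injection $M' \hookrightarrow M$ induces an injection $M'[\pp^j] \hookrightarrow M[\pp^j]$ for every $j$, hence $\mathrm{length}_\oo(M'[\pp^j]) \leq \mathrm{length}_\oo(M[\pp^j])$ for all $j \geq 0$. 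This is precisely the statement that the partition $(\lambda_1, \dots, \lambda_n)$ dominates $(\mu_1, \dots, \mu_n)$ in the dominance order, and one checks that dominance implies lexicographic order on $\Lambda_n$ (if $\mu_i > \lambda_i$ for some first index $i$, then already $\sum_{k \leq i} \mu_k > \sum_{k \leq i}\lambda_k$ would require... actually one must be slightly careful, since the $\lambda$'s here are sorted increasingly rather than as a partition; the torsion-length inequalities directly give, after reindexing, that $\mu_n \leq \lambda_n$, and if $\mu_n = \lambda_n$ then applying the argument to the quotients by the $\pp^{\lambda_n - 1}$-torsion descends to smaller modules). So an induction on $n$ or on $\mathrm{length}_\oo(M)$ closes part (1).

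For part (2), a surjection $M \twoheadrightarrow M'$ dualizes, via $M \mapsto M^\vee = \Hom_\oo(M, F/\oo)$, to an injection $(M')^\vee \hookrightarrow M^\vee$. By Lemma \ref{lem:dual} we have $[M] = [M^\vee]$ and $[M'] = [(M')^\vee]$, so part (2) follows immediately from part (1). This is the slick reduction and removes the need for a separate argument.

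The main obstacle is purely bookkeeping: setting up cleanly the dictionary between the invariant $\seq_n([M]) \in \Lambda_n$ and the sequence of torsion-lengths $\mathrm{length}_\oo(M[\pp^j])$, and then verifying that the resulting chain of inequalities on these lengths indeed forces the lexicographic inequality on $\Lambda_n$ rather than merely some weaker partial order. I expect the careful point to be the step where one passes from "$\mathrm{length}_\oo(M'[\pp^j]) \leq \mathrm{length}_\oo(M[\pp^j])$ for all $j$" to "$\seq_n([M']) \leq \seq_n([M])$ lexicographically"; I would handle this by an induction that peels off the largest part $\lambda_n$, noting that $\lambda_n = \min\{j \mid \pp^j M = 0\}$ is the exponent of $M$, that surjectivity/injectivity controls the exponent, and that both $\pp M$ (for the quotient argument) and $M[\pp^{\lambda_n - 1}]$ (for the sub-argument) have strictly smaller length. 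Everything else — finite direct sum decompositions, the exactness of $(\,\cdot\,)[\pp^j]$ on injections — is routine.
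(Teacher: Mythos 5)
Your overall strategy --- compare torsion lengths $\length_\oo(M[\pp^j])$ and then dualize --- is genuinely different from the paper's, which proves part (2) directly by induction on the minimal number of generators (comparing the smallest entries $\lambda_1,\lambda'_1$ and passing to the induced surjection $\pp^{\lambda_1}M \twoheadrightarrow \pp^{\lambda_1}M'$), and then deduces part (1) by duality via Lemma~\ref{lem:dual}. Both routes can work, and the intermediate claim you isolate, namely that $\length_\oo(M'[\pp^j]) \le \length_\oo(M[\pp^j])$ for all $j \ge 0$ forces $\seq_n([M']) \le \seq_n([M])$ lexicographically, is in fact correct. The gap is in the argument you sketch for that claim.

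You propose to peel off the \emph{largest} entry $\lambda_n$: establish the exponent inequality $\mu_n \le \lambda_n$ and, when equality holds, descend to $M[\pp^{\lambda_n-1}]$ and $M'[\pp^{\lambda_n-1}]$. That inductive step does not close, because passing to the $\pp^{\lambda_n-1}$-torsion erases the distinction between entries equal to $\lambda_n-1$ and entries equal to $\lambda_n$. For instance, with $\mu=(2,3,3)$ and $\lambda=(2,2,3)$ in $\Lambda_3$ (so $\mu_3=\lambda_3=3$), both truncate at $j=\lambda_3-1=2$ to $(2,2,2)$, yet $\mu > \lambda$ lexicographically since $\mu_2 > \lambda_2$. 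So ``the truncated sequences are in lex order'' is strictly weaker than ``the original sequences are in lex order'', and your induction cannot conclude. (This particular pair is not realized by an actual injection, since the lengths do not allow it, but it refutes the implication your induction step relies on.) The same issue affects the $\pp M$ version you mention for the quotient case: $\lambda'=(1,1)$, $\lambda=(0,2)$ gives $\lambda' > \lambda$ in lex but $\seq(\pp M') = (0,0) < (0,1) = \seq(\pp M)$.

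The lexicographic order on $\Lambda_n$, as defined here, compares from the \emph{first} (smallest) entry, so any induction must peel from the bottom, not the top; this is precisely why the paper's proof passes to $\pp^{\lambda_1}M$ and counts generators. If you prefer to stay within the torsion-length framework, the correct way to close the gap is direct: let $i$ be the first index with $\mu_i \ne \lambda_i$, suppose for contradiction $\mu_i > \lambda_i$, set $j = \lambda_i+1$, and compute
\[
\sum_k \min(\mu_k, j) = \sum_{k<i}\lambda_k + (n-i+1)j
\;>\; \sum_{k<i}\lambda_k + \lambda_i + (n-i)j
\;\ge\; \sum_k \min(\lambda_k, j),
\]
using $\mu_k=\lambda_k$ for $k<i$, $\mu_k \ge \mu_i \ge j$ for $k\ge i$, and $\lambda_i < j$. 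This contradicts your torsion-length inequality at that $j$, and it operates on the first discrepancy rather than the last. Your identity $\length_\oo(M[\pp^j]) = \sum_i\min(\lambda_i,j)$ and your use of duality for part (2) are both fine.
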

\begin{proof}
Since an injective homomorphism $M' \hookrightarrow M$ induces
a surjective homomorphism $M^\vee \twoheadrightarrow {M'}^\vee$,
the claim (1) follows from the claim (2) and Lemma \ref{lem:dual}.
Let us prove the claim (2) below.
\par

Let $M, M'$ be $\oo$-modules of finite length, 
and suppose that there exists a surjective homomorphism $M \twoheadrightarrow M'$. 
Let us take an integer $n \geq 1$ such that both $[M]$ and $[M']$ belong to $|\CC^n|$.
We prove the claim by induction on $n$.
If $n=1$, then the claim is clear.
We assume $n >1$.
Let us write $\seq_n([M])=(\lambda_1,\ldots,\lambda_n)$ 
and $\seq_n([M'])=(\lambda'_1,\ldots,\lambda'_n)$.
\par

First, suppose that $\lambda_1 > \lambda'_1$. 
Then we have $[M'] < [M]$ as claimed.
Next, suppose that $\lambda_1 = \lambda'_1$.
Then both $M/\pp^{\lambda_1} M$ and $M'/\pp^{\lambda_1} M'$
are isomorphic to $(\oo/\pp^{\lambda_1})^{\oplus n}$ and
we have $[M] = [\pp^{\lambda_1} M] \vee [(\oo/\pp^{\lambda_1})^{\oplus n}]$
and $[M'] = [\pp^{\lambda_1} M'] \vee [(\oo/\pp^{\lambda_1})^{\oplus n}]$.
Note that the surjective homomorphism $M \twoheadrightarrow M'$
induces a surjective homomorphism $\pp^{\lambda_1} M \twoheadrightarrow
\pp^{\lambda_1} M'$. Hence by Lemma \ref{lem:compatible},
we are reduced to proving the claim (2) for $\pp^{\lambda_1} M$ and $\pp^{\lambda_1} M'$.
Since both $[\pp^{\lambda_1} M]$ and $[\pp^{\lambda_1} M']$ belong to $|\CC^{n-1}|$, 
the inductive hypothesis proves the claim in the case where $\lambda_1 = \lambda'_1$.
\par

Finally, suppose that $\lambda_1 < \lambda'_1$.
Again in this case, 
the surjective homomorphism $M \twoheadrightarrow M'$ induces 
a surjective homomorphism $\pp^{\lambda_1} M \twoheadrightarrow \pp^{\lambda_1} M'$. 
Note that $\pp^{\lambda_1} M$ is generated by less than $n$ elements, 
whereas the minimum number of generators of $\pp^{\lambda_1} M'$ is equal to $n$.
This leads to a contradiction.
\end{proof}

\begin{prop}[Convexity] \label{prop:convexity}
Let 
\begin{equation} \label{eq:shortex}
0 \to M' \to M \to M'' \to 0
\end{equation}
be a short exact sequence of $\oo$-modules of finite length.
Then we have the inequality
\[
[M] \geq [M'] \vee [M''].
\]
\end{prop}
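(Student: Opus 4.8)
The plan is to prove $[M] \ge [M'] \vee [M'']$ by induction on the length $\length(M)$, the case $\length(M)=0$ being trivial. In the inductive step I would first normalize the sequence \eqref{eq:shortex}. Applying the exact functor $\Hom_\oo(-,F/\oo)$ to \eqref{eq:shortex} yields a short exact sequence $0 \to (M'')^\vee \to M^\vee \to (M')^\vee \to 0$, and since $[N]=[N^\vee]$ by Lemma \ref{lem:dual} and $\vee$ is commutative, the proposition for \eqref{eq:shortex} is equivalent to the proposition for this dual sequence; hence I may assume $\dim_{\oo/\pp}(M'/\pp M') \le \dim_{\oo/\pp}(M''/\pp M'')$. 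Next I would set $n = \dim_{\oo/\pp}(M/\pp M)$, the minimal number of generators of $M$. Since $\oo$ is a principal ideal domain, a submodule (\resp a quotient) of an $\oo$-module generated by $n$ elements is again generated by at most $n$ elements, so $[M],[M'],[M''] \in |\CC^n|$ and one may argue inside $\Lambda_n$ via $\seq_n$. Writing $\seq_n([M])=(c_1,\dots,c_n)$, $\seq_n([M'])=(a_1,\dots,a_n)$ and $\seq_n([M''])=(b_1,\dots,b_n)$, one has $c_1 \ge 1$ and, by the definition of $\vee$, $\seq_n([M']\vee[M''])=(a_1+b_1,\dots,a_n+b_n)$.

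Now I would split into two cases. If $\dim_{\oo/\pp}(M''/\pp M'') < n$, then by the normalization also $\dim_{\oo/\pp}(M'/\pp M') < n$, so $a_1 = b_1 = 0 < c_1$ and comparing first entries already gives $[M] > [M']\vee[M'']$. The substantial case is $\dim_{\oo/\pp}(M''/\pp M'') = n$. Here, applying $-\otimes_\oo \oo/\pp$ to $M \twoheadrightarrow M''$ yields a surjection $M/\pp M \twoheadrightarrow M''/\pp M''$ of $\oo/\pp$-vector spaces of the same dimension $n$, hence an isomorphism; consequently $M' = \Ker(M \twoheadrightarrow M'') \subseteq \pp M$. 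Restricting $M \twoheadrightarrow M''$ to $\pp M$ then produces a short exact sequence $0 \to M' \to \pp M \to \pp M'' \to 0$ with $\length(\pp M) = \length(M) - n < \length(M)$, to which the inductive hypothesis applies: $[\pp M] \ge [M']\vee[\pp M'']$. Since every $c_i$ and every $b_i$ is $\ge 1$, we have $\seq_n([\pp M]) = (c_1-1,\dots,c_n-1)$ and $\seq_n([\pp M'']) = (b_1-1,\dots,b_n-1)$, so this last inequality reads $(c_i-1)_i \ge (a_i+b_i-1)_i$ in the lexicographic order on $\Lambda_n$; adding the constant tuple $(1,\dots,1)$, which preserves the lexicographic order, gives $(c_i)_i \ge (a_i+b_i)_i$, that is, $[M] \ge [M']\vee[M'']$.

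The individual steps are routine; the point I expect to be the real obstacle is recognizing that multiplication by $\pp$ is the correct reduction: it strictly lowers the length, it subtracts $1$ from each entry of the relevant $\seq_n$-tuples, and — crucially — it interacts transparently with the comparison because the lexicographic order is translation invariant, so everything collapses to the observation $M' \subseteq \pp M$ in the case where $M''$ needs exactly as many generators as $M$. One must keep in mind that $\vee$ is computed by entrywise addition of the zero-padded, increasingly sorted tuples, which is precisely what makes the passage from $M$ to $\pp M$ correspond to subtracting $(1,\dots,1)$. Finally, I would remark that the name \emph{convexity} alludes to the sharper (and also correct) fact that each partial sum $\lambda_1+\cdots+\lambda_j$ of $\seq_n$ is super-additive along short exact sequences; the inductive argument above proves only the lexicographic inequality asserted, but it does so without appealing to Hall polynomials.
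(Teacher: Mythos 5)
Your proof is correct, and it is essentially the same argument as the paper's. Both proofs first reduce, by dualizing the sequence via $\Hom_\oo(-,F/\oo)$ and using $[N]=[N^\vee]$, to the case where $M''$ needs exactly as many generators as $M$; both then observe that in this case $M' \subseteq IM$ for a suitable ideal $I$, obtain the induced exact sequence $0 \to M' \to IM \to IM'' \to 0$, apply the inductive hypothesis, and ``add back'' the cofree quotient. The only cosmetic differences are that you induct on $\length(M)$ and always take $I = \pp$, whereas the paper inducts on the sum of the minimal numbers of generators of $M'$ and $M''$ and takes $I = \pp^{\lambda''_1}$ (with $\lambda''_1$ the first entry of $\seq_n([M''])$, quoting Lemma~\ref{lem:compatible} where you add $(1,\dots,1)$ directly); neither of these choices changes the structure of the argument.
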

\begin{proof}
Let $n$, $n'$, and $n''$ denote the minimal numbers of generators
of the $\oo$-modules $M$, $M'$ and $M''$, respectively.
We prove the claim by induction on $n'+n''$.
\par

If $n'+n''=0$, then we have $M'=M''=0$ and the claim is clear.
Since $M \to M''$ and $M^\vee \to M'^\vee$ are surjective, 
we have $n \geq n''$ and $n \geq n'$.
If $n > \max\{n',n''\}$, then the claim is obvious.
Hence we may assume that $n=\max\{n',n''\}$.
By considering the short exact sequence
\[
0 \to M''^\vee \to M^\vee \to M'^\vee \to 0
\]
instead of \eqref{eq:shortex} if necessary, 
we may further assume that $n=n''$.
Let us write $\seq_n(M'')=(\lambda''_1,\ldots,\lambda''_n)$ and $I = \pp^{\lambda''_1}$.
Then both $M/I M$ and $M''/I M''$ are isomorphic to $(\oo/I)^{\oplus n}$,
and we have $[M]=[I M] \vee [(\oo/I)^{\oplus n}]$ and $[M'']=[I M''] \vee [(\oo/I)^{\oplus n}]$.
Moreover \eqref{eq:shortex} induces a short exact sequence
\[
0 \to M' \to IM \to IM'' \to 0.
\]
Since $\seq_n(IM'')=(0,\lambda''_2-\lambda''_1,\ldots,\lambda''_n-\lambda''_1)$,
the minimal number of generators of $IM''$ is strictly smaller than $n''$.
Hence by induction, we have $[IM] \geq [M'] \vee [IM'']$.
By adding $[(\oo/I)^{\oplus n}]$ to both sides and
using Lemma \ref{lem:compatible}, we obtain the desired inequality.
\end{proof}

\begin{lem} \label{lem:equality}
Let $M$ be an $\oo$-module of finite length.
Then for any non-zero ideal $I \subset \oo$, 
we have $[M] = [IM] \vee [M/IM]$.
\end{lem}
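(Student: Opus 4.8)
The plan is to argue directly from the structure theorem for finitely generated modules over the discrete valuation ring $\oo$. Note that the short exact sequence $0 \to IM \to M \to M/IM \to 0$ need not split (e.g.\ $M = \oo/\pp^2$, $I = \pp$), so Proposition \ref{prop:convexity} gives only the inequality $[M] \geq [IM] \vee [M/IM]$ and cannot by itself yield equality; a computation with explicit elementary divisors is the cleanest route.

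First I would fix an integer $n \geq 1$ with $[M] \in |\CC^n|$ and write $\seq_n([M]) = (\lambda_1,\ldots,\lambda_n)$, so that $M \cong \oo/\pp^{\lambda_1} \oplus \cdots \oplus \oo/\pp^{\lambda_n}$ with $0 \leq \lambda_1 \leq \cdots \leq \lambda_n$. Since $\oo$ is a discrete valuation ring, the non-zero ideal $I$ equals $\pp^c$ for a unique integer $c \geq 0$. Pushing $I = \pp^c$ through the direct sum decomposition and using the elementary computations $\pp^c(\oo/\pp^\lambda) \cong \oo/\pp^{\max(\lambda - c,\,0)}$ and $(\oo/\pp^\lambda)\big/\pp^c(\oo/\pp^\lambda) \cong \oo/\pp^{\min(\lambda,\,c)}$, one gets
\[
IM \;\cong\; \bigoplus_{i=1}^n \oo/\pp^{\max(\lambda_i - c,\,0)},
\qquad
M/IM \;\cong\; \bigoplus_{i=1}^n \oo/\pp^{\min(\lambda_i,\,c)}.
\]

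The only point that needs a word of care is that these two exponent tuples are already non-decreasing: both $t \mapsto \max(t-c,0)$ and $t \mapsto \min(t,c)$ are non-decreasing functions of $t$, so $\bigl(\max(\lambda_i - c,0)\bigr)_{i=1}^n$ and $\bigl(\min(\lambda_i,c)\bigr)_{i=1}^n$ lie in $\Lambda_n$, and hence coincide with $\seq_n([IM])$ and $\seq_n([M/IM])$, respectively. Finally, for each $i$ the identity $\max(\lambda_i - c,0) + \min(\lambda_i,c) = \lambda_i$ holds (check the cases $\lambda_i \geq c$ and $\lambda_i < c$ separately), so $\seq_n([IM]) + \seq_n([M/IM]) = (\lambda_1,\ldots,\lambda_n) = \seq_n([M])$. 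By the definition of $\vee$ on $|\CC^n|$ this is exactly the statement $[M] = [IM] \vee [M/IM]$. There is no genuine obstacle here: once the structure theorem is invoked the lemma is a bookkeeping computation, the only mild subtlety being the monotonicity remark that lets one read off the $\seq_n$-values without having to re-sort the elementary divisors.
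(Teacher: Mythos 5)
Your proof is correct and follows essentially the same route as the paper's: fix $n$, write out the elementary divisors $(\lambda_1,\ldots,\lambda_n)$ of $M$, compute $IM$ and $M/IM$ summand by summand, and observe that the exponents add up to $\lambda_i$. The paper expresses the $IM$-summand as $\pp^{\min\{c,\lambda_i\}}/\pp^{\lambda_i}$ rather than $\oo/\pp^{\max(\lambda_i-c,0)}$ and leaves implicit the monotonicity check you spell out, but the content is identical.
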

\begin{proof}
Let us write $I = \pp^\lambda$.
Let us choose a positive integer $n$ such that $[M] \in |\CC^n|$.
Let us write $\seq_n([M]) = (\lambda_1,\ldots,\lambda_n)$.
For $i=1,\ldots,n$, set $\lambda''_i = \min\{\lambda,\lambda_i\}$.
Then $M/IM$ is isomorphic to $\bigoplus_{i=1}^n\oo/\pp^{\lambda''_i}$ and
$IM$ is isomorphic to $\bigoplus_{i=1}^n \pp^{\lambda''_i}/\pp^{\lambda_i}$.
Thus, we have $[M]=[IM]\vee [M/IM]$, 
as desired.
\end{proof}

\begin{prop}[Uniqueness]\label{prop:uniqueness}
Suppose that $[M], [M'], [M''] \in |\CC|$ satisfy $[M] = [M'] \vee [M'']$.
Then there exists a unique $\oo$-submodule $N \subset M$ 
satisfying $[N] = [M']$ and $[M/N] = [M'']$.
Moreover, for any $\oo$-submodule $N' \subset M$ other than $N$, 
we have either $[N'] < [M']$ or $[M/N'] < [M'']$.
\end{prop}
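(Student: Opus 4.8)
The plan is to establish existence and uniqueness separately, absorbing the ``moreover'' clause into the uniqueness argument via its contrapositive.

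\emph{Existence.} First I would let $n$ be the minimal number of generators of $M$ and put $(\lambda_1,\dots,\lambda_n)=\seq_n([M])$, $(\mu_1,\dots,\mu_n)=\seq_n([M'])$, $(\nu_1,\dots,\nu_n)=\seq_n([M''])$, so that the hypothesis $[M]=[M']\vee[M'']$ says exactly $\lambda_i=\mu_i+\nu_i$ for all $i$. Fixing an isomorphism $M\cong\bigoplus_{i=1}^n\oo/\pp^{\lambda_i}$ and setting $N=\bigoplus_{i=1}^n\pp^{\nu_i}/\pp^{\lambda_i}$ (legitimate since $\nu_i\le\lambda_i$), the $i$-th summand of $N$ is isomorphic to $\oo/\pp^{\mu_i}$ and that of $M/N$ to $\oo/\pp^{\nu_i}$; hence $[N]=[M']$ and $[M/N]=[M'']$.

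\emph{Reduction of uniqueness.} Since the order on $|\CC|$ is total, the uniqueness statement and the ``moreover'' clause are jointly equivalent to the single assertion that every submodule $N'\subseteq M$ with $[N']\ge[M']$ and $[M/N']\ge[M'']$ equals $N$. I would first note that such an $N'$ automatically has $[N']=[M']$ and $[M/N']=[M'']$: convexity (Proposition~\ref{prop:convexity}) applied to $0\to N'\to M\to M/N'\to 0$ together with Lemma~\ref{lem:compatible} gives $[M]\ge[N']\vee[M/N']\ge[M']\vee[M'']=[M]$, so $[N']\vee[M/N']=[M']\vee[M'']$; and because $\Lambda_n$ is an ordered submonoid of the totally ordered abelian group $(\Z^n,\mathrm{lex})$, the relation $a\vee b=c\vee d$ with $a\ge c$ and $b\ge d$ forces $a=c$ and $b=d$. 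Thus everything reduces to showing that $M$ has \emph{at most one} submodule of type $[M']$ whose quotient has type $[M'']$; together with existence this upgrades ``at most one'' to ``exactly one'', and the displayed $N'$ is then necessarily the $N$ constructed above.

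\emph{Uniqueness, by induction on the length of $M$.} Taking $n$ minimal we have $\lambda_1\ge 1$, hence $\mu_1\ge1$ or $\nu_1\ge1$. If $\nu_1\ge1$, then every such $N'$ has $M/N'$ needing $n$ generators, so $(M/N')/\pp(M/N')=M/(N'+\pp M)$ is a length-$n$ quotient of $M/\pp M\cong(\oo/\pp)^n$, forcing $N'+\pp M=\pp M$, i.e.\ $N'\subseteq\pp M$. Consequently $\pp(M/N')=(\pp M)/N'$, and Lemma~\ref{lem:equality} applied to $M/N'$ gives $[M/N']=[(\pp M)/N']\vee[(\oo/\pp)^n]$, whence $[(\pp M)/N']=(\nu_1-1,\dots,\nu_n-1)$; also $[\pp M]=(\lambda_1-1,\dots,\lambda_n-1)=[M']\vee[(\pp M)/N']$. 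Since $\pp M$ has strictly smaller length, the inductive hypothesis shows $\pp M$ has at most one submodule of type $[M']$ with quotient of type $(\nu_1-1,\dots,\nu_n-1)$, and every admissible $N'$ is such a submodule, so $M$ has at most one admissible $N'$. If instead $\mu_1\ge1$ and $\nu_1=0$, I would dualize: $N'\mapsto\{f\in M^\vee:f|_{N'}=0\}\cong(M/N')^\vee$ is an inclusion-reversing bijection from submodules of $M$ onto submodules of $M^\vee$, carrying an admissible $N'$ to a submodule of $M^\vee$ of type $[M'']$ with quotient of type $[M']$, where $[M^\vee]=[M]=[M'']\vee[M']$ by Lemma~\ref{lem:dual}. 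The quotient type now has first entry $\mu_1\ge1$, so the case already treated (applied to $M^\vee$) yields at most one such submodule of $M^\vee$, hence at most one admissible $N'\subseteq M$.

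\emph{Expected main obstacle.} The delicate step is the $\nu_1\ge1$ case: one must confirm that every admissible $N'$ is forced inside $\pp M$ and that the pair of invariants transforms correctly under $M\rightsquigarrow\pp M$, so that the induction genuinely applies — the identities $\pp(M/N')=(\pp M)/N'$ and the length-counting of the quotient are elementary but need to be arranged precisely. Once that is in place, the $\mu_1\ge1$ case is purely formal via duality at the same length, and the reduction step takes care of the ``moreover'' clause with no further work.
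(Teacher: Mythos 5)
Your argument is correct but takes a genuinely different route from the paper's. You construct $N$ explicitly from a chosen direct-sum decomposition of $M$, fold the ``moreover'' clause into a single uniqueness claim using the cancellation property of the lexicographic order on $\Lambda_n$ (if $a\vee b=c\vee d$ with $a\ge c$, $b\ge d$ then $a=c$, $b=d$), and then run an induction on $\length_\oo M$, reducing from $M$ to $\pp M$ after a dualization to arrange that $[M'']$ requires $n$ generators so that every admissible $N'$ is forced into $\pp M$. The paper instead bundles existence and uniqueness into one induction on $n'+n''$ (the combined minimal generator count of $M'$ and $M''$), takes the larger reduction step $M \rightsquigarrow IM$ with $I = \pp^{\lambda''_1}$ the smallest elementary divisor of $M''$, and handles the ``moreover'' clause separately at the end by a contradiction via Proposition~\ref{prop:convexity} and Lemma~\ref{lem:compatible}. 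Both routes rest on the same three pillars — the containment $N' \subseteq IM$ forced by counting generators, Lemma~\ref{lem:equality}, and passing to $M^\vee$ via Lemma~\ref{lem:dual} — so they are interchangeable in spirit; your version buys a cleaner standalone existence argument and a more elementary induction variable, while the paper's choice of $I$ shortens the recursion by consuming the entire bottom elementary divisor of $M''$ in one step rather than one power of $\pp$ at a time.
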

\begin{proof}
First, we prove the existence and the uniqueness of $N$.
Let $n$, $n'$, and $n''$ denote 
the minimal numbers of generators of the $\oo$-modules $M$, $M'$ and $M''$, respectively.
We prove the claim by induction on $n'+n''$.
\par

If $n'+n''=0$, then we have $M=M'=M''=0$ and the claim is obvious.
The relation $[M]=[M']\vee[M'']$ implies that $n=\max\{n',n''\}$.
By considering $M^\vee$ instead of $M$ if necessary, 
we may assume that $n=n''$.
Let us write $\seq_n(M'')=(\lambda''_1,\ldots,\lambda''_n)$ and $I = \pp^{\lambda''_1}$.
Then for any $\oo$-submodule $N \subset M$ satisfying $[M/N]=[M'']$, we have $N \subset IM$.
Since $\seq_n(IM'')=(0,\lambda''_2-\lambda''_1,\ldots,\lambda''_n-\lambda''_1)$,
the minimal number of generators of $IM''$ is strictly smaller than $n''$.
Hence by induction, 
there exists a unique $\oo$-submodule $N \subset IM$ 
satisfying $[N] = [M']$ and $[IM/N]=[IM'']$.
Since $N$ is contained in $IM$, the $\oo$-module
$I(M/N)$ is isomorphic to $IM/N$ and hence
$(M/N)/I(M/N)$ is isomorphic to $M/IM$.
It follows from Lemma \ref{lem:equality}
that we have $[M/N] = [IM/N] \vee [M/IM] = [M'']$.
Hence the claim follows.
\par

Finally, let us prove the last assertion of the proposition.
Let $N' \subset M$ be an $\oo$-submodule other than $N$.
Suppose that $[N'] \geq [M']$ and $[M/N'] \geq [M'']$.
Since $N' \neq N$ we have either $[N']>[M']$ or $[M/N'] > [M'']$.
Hence it follows from Lemma \ref{lem:compatible} and Proposition \ref{prop:convexity} 
that
\[
[M] \geq [N'] \vee [M/N'] > [M'] + [M''] = [M],
\]
which is a contradiction.
Hence we have either $[N'] < [M']$ or $[M/N'] < [M'']$. 
This completes the proof.
\end{proof}

\begin{cor}\label{fil0}
Suppose that $[M], [M_1], \ldots, [M_r] \in |\CC|$ satisfy $[M] = [M_1] \vee \cdots \vee [M_r]$.
Then there exists a unique increasing filtration
\[
0 = \Fil^0_0 M \subset  \cdots
\subset \Fil^0_{r}M = M
\]
of $M$ by $\oo$-submodules satisfying $[M_i]=[\Gr_i^{\Fil^0} M]$ for $i=1,\ldots,r$,
where $\Gr_i^{\Fil^0} M = \Fil^0_i M / \Fil^0_{i-1} M$.
Moreover, for any filtration
\[
0 = \Fil_0 M \subset \cdots
\subset \Fil_r M = M
\]
of $M$ by $\oo$-submodules other than $\Fil^0_\bullet M$, 
we have $[\Gr_i^{\Fil} M] < [M_i]$ for some $i \in \{1,\ldots,r\}$.
\end{cor}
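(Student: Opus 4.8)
The plan is to induct on $r$, peeling off the bottom step of the filtration by means of Proposition \ref{prop:uniqueness}. The cases $r \le 1$ are trivial, and $r = 2$ is precisely Proposition \ref{prop:uniqueness} (the filtration being $0 \subset N \subset M$). So for the inductive step, with $r \ge 2$, I would set $[M''] := [M_2] \vee \cdots \vee [M_r]$, so that $[M] = [M_1] \vee [M'']$, and invoke Proposition \ref{prop:uniqueness} to obtain the unique $\oo$-submodule $N \subset M$ with $[N] = [M_1]$ and $[M/N] = [M'']$.

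For the existence of the canonical filtration I would apply the inductive hypothesis to $M/N$ with the tuple $([M_2],\ldots,[M_r])$, obtaining a unique filtration $0 = \Phi_0 \subset \cdots \subset \Phi_{r-1} = M/N$ with $[\Gr_i^{\Phi}(M/N)] = [M_{i+1}]$; then setting $\Fil^0_0 M := 0$ and letting $\Fil^0_i M$ be the preimage of $\Phi_{i-1}$ in $M$ for $1 \le i \le r$ yields a filtration with $\Fil^0_1 M = N$ and $[\Gr^{\Fil^0}_i M] = [M_i]$ for all $i$. I would then observe that the uniqueness of a filtration with these exact graded pieces is subsumed by the ``moreover'' clause, since such a filtration in particular satisfies $[\Gr_i^{\Fil} M] \ge [M_i]$ for all $i$.

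For the ``moreover'' clause I would argue by contraposition: let $\Fil_\bullet$ be a length-$r$ filtration of $M$ with $[\Gr_i^{\Fil} M] \ge [M_i]$ for all $i$, and show $\Fil_\bullet = \Fil^0_\bullet$. Iterating Proposition \ref{prop:convexity} and Lemma \ref{lem:compatible} along the short exact sequences $0 \to \Fil_i M/\Fil_1 M \to \Fil_{i+1} M/\Fil_1 M \to \Gr_{i+1}^{\Fil} M \to 0$ gives $[M/\Fil_1 M] \ge [\Gr_2^{\Fil} M] \vee \cdots \vee [\Gr_r^{\Fil} M] \ge [M'']$, and one further application of Proposition \ref{prop:convexity} to $0 \to \Fil_1 M \to M \to M/\Fil_1 M \to 0$ gives $[M] \ge [\Fil_1 M] \vee [M/\Fil_1 M]$. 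Since $[\Fil_1 M] = [\Gr_1^{\Fil} M] \ge [M_1]$, Lemma \ref{lem:compatible} forces $[M] \ge [\Fil_1 M] \vee [M/\Fil_1 M] \ge [M_1] \vee [M''] = [M]$, so all of these are equalities. In particular $[\Fil_1 M] \vee [M/\Fil_1 M] = [M_1] \vee [M'']$ with $[\Fil_1 M] \ge [M_1]$ and $[M/\Fil_1 M] \ge [M'']$; the cancellation argument below then gives $[\Fil_1 M] = [M_1]$ and $[M/\Fil_1 M] = [M'']$, whence $\Fil_1 M = N = \Fil^0_1 M$ by the uniqueness part of Proposition \ref{prop:uniqueness}. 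The filtration induced by $\Fil_\bullet$ on $M/N$ then has $i$-th graded piece $[\Gr_{i+1}^{\Fil} M] \ge [M_{i+1}]$, so it coincides with $\Phi_\bullet$ by the inductive hypothesis; therefore $\Fil_\bullet = \Fil^0_\bullet$.

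The only real obstacle, and it is a purely formal one, is the cancellation step: I need that $(|\CC|,\vee)$ is a cancellative monoid, which is immediate because $\seq_n$ identifies $(|\CC^n|,\vee)$ with the submonoid $\Lambda_n$ of the cancellative monoid $(\Z^n,+)$, and the transition maps respect $\vee$. Combining cancellativity with the monotonicity of Lemma \ref{lem:compatible} yields the needed fact: if $a \vee b = a' \vee b'$ with $a \ge a'$ and $b \ge b'$, then from $a' \vee b' \le a \vee b'$ and $a \vee b' \le a \vee b$ one gets $a \vee b = a \vee b'$, hence $b = b'$ by cancellation, hence $a = a'$. Everything else is a bookkeeping iteration of Propositions \ref{prop:uniqueness} and \ref{prop:convexity} together with Lemma \ref{lem:compatible}.
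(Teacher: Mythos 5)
Your proof is correct and follows essentially the same strategy as the paper: induction on $r$, peeling off the first graded piece via Proposition \ref{prop:uniqueness} (existence by pulling back the unique filtration of $M/N$, and the ``moreover'' clause via iterated Proposition \ref{prop:convexity} and Lemma \ref{lem:compatible}). The one thing you make explicit that the paper leaves implicit is the cancellativity of $(|\CC|,\vee)$ — the paper silently uses the equivalent strict monotonicity in the proof of Proposition \ref{prop:uniqueness}'s last assertion and then just says the corollary ``follows from the same argument'' — so your write-up is a fully spelled-out version of the same route.
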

\begin{proof}
We prove the existence and the uniqueness of $\Fil^0_{\bullet}M$ by induction on $r$. 
If $r = 1$, it is obvious. 
If $r > 1$, set $[M'] = [M_1]$ and $[M''] = [M_2] \vee \dots \vee [M_{r}]$. 
By Proposition \ref{prop:uniqueness}, 
there exists a unique $\oo$-submodule $N$ of $M$ such that 
$[N] = [M_1]$ and $[M/N] = [M_2] \vee \dots \vee [M_{r}]$. 
By the inductive hypothesis, we have a unique filtration $\Fil^0_{\bullet}(M/N)$
satisfying the conditions with respect to $[M/N] = [M_2] \vee \dots \vee [M_{r}]$. 
By setting $\Fil^0_{i+1}M$ to be the inverse image of $\Fil^0_i(M/N)$ for $1 \leq i \leq r-1$, 
and $\Fil^0_{1}M = N$, 
we obtain $\Fil^0_{\bullet}M$. 
\par

The last assertion follows from 
the same argument as in the proof of Proposition \ref{prop:uniqueness}.
\end{proof}

\subsection{Generators of $\oo$-modules}
\begin{lem} \label{lem:sc}
Let $f \colon M \twoheadrightarrow N$ be a surjective homomorphism of $\oo$-modules, 
and $M' \subset M$ an $\oo$-submodule.
Let $x \in N$ and $y \in M/M'$ be elements whose images in $N/f(M')$ coincide.
Then there exists a lift $\wt{y} \in M$ of $y$ satisfying $f(\wt{y})=x$.
\end{lem}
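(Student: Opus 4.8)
The statement is a routine diagram-chase, so the plan is to construct the desired lift $\wt{y}$ directly. First I would pick an arbitrary lift $y_0 \in M$ of $y \in M/M'$ under the quotient map $M \twoheadrightarrow M/M'$. The hypothesis says that the image of $x$ in $N/f(M')$ equals the image of $y$, which under the identification of $N/f(M')$ with the common quotient is the image of $f(y_0)$. Hence $x - f(y_0)$ lies in $f(M') \subset N$, so there exists $m' \in M'$ with $f(m') = x - f(y_0)$. I would then set $\wt{y} = y_0 + m'$.

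It remains to check the two required properties. Since $m' \in M'$, the element $\wt{y} = y_0 + m'$ has the same image as $y_0$ in $M/M'$, namely $y$; so $\wt{y}$ is indeed a lift of $y$. And $f(\wt{y}) = f(y_0) + f(m') = f(y_0) + (x - f(y_0)) = x$, as desired. This completes the argument.

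There is essentially no obstacle here: the only mild point is to phrase the hypothesis ``$x$ and $y$ have the same image in $N/f(M')$'' carefully, noting that $N/f(M')$ is the target of both the natural surjection $N \twoheadrightarrow N/f(M')$ and the map $M/M' \to N/f(M')$ induced by $f$ (using $f(M') \subseteq f(M') $ trivially, so that $f$ descends to $M/M' \to N/f(M')$). Once that identification is fixed, the chase above is immediate. I would keep the write-up to three or four sentences.
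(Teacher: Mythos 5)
Your proof is correct and is essentially identical to the paper's: both pick an arbitrary lift $y_0$ of $y$, observe that $x - f(y_0) \in f(M')$, choose $m' \in M'$ with $f(m') = x - f(y_0)$, and set $\wt{y} = y_0 + m'$.
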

\begin{proof}
Let us take an arbitrary lift $\wt{y}' \in M$ of $y$ and set $x'=f(\wt{y}')$. 
Since the images of $x$ and $x'$ coincide in $N/f(M')$, 
there exists $z \in M'$ satisfying $x-x' = f(z)$. 
Then the element $\wt{y} = \wt{y'} + z \in M$ has the desired property.
\end{proof}

\begin{lem} \label{lem:cover}
Let $N$ be an $\oo$-module of finite length.
Let $L$ and $L'$ be finitely generated free $\oo$-modules of the same rank 
and let $f \colon L \twoheadrightarrow N$ and $f' \colon L'\twoheadrightarrow N$ 
be surjective homomorphisms of $\oo$-modules.
Then there exists an isomorphism
$\alpha \colon L \xto{\cong} L'$ of $\oo$-modules satisfying $f = f' \circ \alpha$.
\end{lem}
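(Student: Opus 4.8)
The plan is to argue by induction on the common rank $n$ of $L$ and $L'$, peeling off one basis vector at a time, with Lemma \ref{lem:sc} doing the work at each step. The base case $n = 0$ is trivial, since then $L = L' = N = 0$.

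For the inductive step, choose an $\oo$-basis $e_1, \ldots, e_n$ of $L$ and put $x_i = f(e_i)$; since $f$ is surjective, $x_1, \ldots, x_n$ generate $N$. The first, and I expect the most delicate, task is to produce a \emph{primitive} vector $w_1 \in L'$ (one spanning an $\oo$-direct summand) with $f'(w_1) = x_1$. If $x_1 \notin \pp N$, then any preimage of $x_1$ under $f'$ automatically lies outside $\pp L'$ and works. If $x_1 \in \pp N$, then the images of $x_2, \ldots, x_n$ already generate $N/\pp N$, so $N$ is generated by fewer than $n$ elements; comparing $\oo/\pp$-dimensions in the surjection $L'/\pp L' \twoheadrightarrow N/\pp N$ shows that $\Ker f'$ is not contained in $\pp L'$, and adding a primitive element of $\Ker f'$ to a preimage of $x_1$ lying in $\pp L'$ yields the desired $w_1$.

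Once $w_1$ is chosen, $L/\oo e_1$ and $L'/\oo w_1$ are free of rank $n - 1$, and because $f'(\oo w_1) = \oo x_1$ the maps $f, f'$ descend to surjections onto $N_1 = N/\oo x_1$. Applying the inductive hypothesis to these rank-$(n-1)$ data gives an isomorphism $\alpha_1 \colon L/\oo e_1 \xto{\cong} L'/\oo w_1$ compatible with the two surjections onto $N_1$; in particular the image of $\alpha_1(\overline{e_i})$ in $N_1$ equals that of $x_i$ for $i \ge 2$. Now for each $i \ge 2$ I would apply Lemma \ref{lem:sc} with $M = L'$, the surjection $f' \colon L' \to N$, the submodule $M' = \oo w_1$, the element $x = x_i$, and $y = \alpha_1(\overline{e_i}) \in L'/\oo w_1$: the compatibility just noted says exactly that $x$ and $y$ have the same image in $N/f'(M') = N_1$, so the lemma produces $w_i \in L'$ with $f'(w_i) = x_i$ and $w_i \equiv \alpha_1(\overline{e_i}) \bmod \oo w_1$.

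Finally, let $\alpha \colon L \to L'$ be the $\oo$-linear map with $\alpha(e_i) = w_i$ for all $i$. Then $f'(\alpha(e_i)) = x_i = f(e_i)$, so $f = f' \circ \alpha$. And $\alpha$ is an isomorphism: $w_1$ is primitive and $\overline{w_2}, \ldots, \overline{w_n}$ form a basis of $L'/\oo w_1$ (they equal $\alpha_1(\overline{e_2}), \ldots, \alpha_1(\overline{e_n})$), so $w_1, \ldots, w_n$ is an $\oo$-basis of $L'$. The only genuinely nontrivial point is the construction of the primitive lift $w_1$ in the case $x_1 \in \pp N$; the rest is bookkeeping around Lemma \ref{lem:sc} and the fact that a primitive vector spans a direct summand of a free $\oo$-module.
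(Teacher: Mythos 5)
Your proof is correct, and it takes a genuinely different route from the paper. The paper argues via the projective cover $\beta \colon P \to N$ (invoking \cite[17.16]{AF}): it lifts $f$ and $f'$ to $\gamma \colon L \to P$ and $\gamma' \colon L' \to P$, notes these are surjective because projective covers are essential, splits them using projectivity of $P$, and then splices an isomorphism $\Ker\gamma \cong \Ker\gamma'$ of free modules with the isomorphism $s(P) \cong s'(P)$. Your argument instead does a direct induction on the rank $n$, peeling off a single primitive generator at each step and using Lemma~\ref{lem:sc} to lift the remaining generators compatibly. The delicate point you flag — producing a primitive $w_1 \in L'$ lifting $x_1$ — is handled correctly in both cases: when $x_1 \notin \pp N$ any lift works, and when $x_1 \in \pp N$ a Nakayama/dimension count over $\oo/\pp$ shows $\Ker f' \not\subset \pp L'$, so adding a primitive element of the kernel to a lift chosen in $\pp L'$ does the job. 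Your five-lemma-style check that $\alpha$ is an isomorphism (it is an isomorphism on the sub $\oo e_1 \to \oo w_1$ and induces the isomorphism $\alpha_1$ on quotients) closes the argument. Your version is more elementary and self-contained, avoiding projective covers altogether and producing an explicit adapted basis of $L'$; the paper's version is shorter and conceptually cleaner once one has the machinery of projective covers available, which it also uses nowhere else. Either proof is acceptable.
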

\begin{proof}
Since $N$ is of finite length over a noetherian local ring $\oo$, 
one can take a projective cover $\beta \colon P \to N$ of $N$
(see \cite[17.16 Examples (3)]{AF}).
Then there exists homomorphisms $\gamma \colon L \to P$ and $\gamma' \colon L' \to P$ 
satisfying $f = \beta \circ \gamma$ and $f' = \beta \circ \gamma'$.
Since projective covers are essential surjections,
the homomorphisms $\gamma$ and $\gamma'$ are surjective. 
Hence by the projectivity of $P$, 
one can choose a right inverse $s$ and $s'$ of $\gamma$ and $\gamma'$, respectively.
Since $\Ker\, \gamma$ and $\Ker\, \gamma'$ are free $\oo$-modules of the same rank,
there exists an isomorphism 
$\alpha' \colon \Ker\, \gamma \xto{\cong} \Ker\, \gamma'$ of $\oo$-modules.
By taking the direct sum of $\alpha'$ 
and the isomorphism $s(P) \xto{\cong} s'(P)$ given by $s' \circ \gamma$, 
we obtain a desired isomorphism $\alpha \colon L \to L'$.
\end{proof}

\begin{cor} \label{cor:lift}
Let $N$ be an $\oo$-module of finite length generated by $n$ elements $x_1,\ldots,x_n$.
Then for any free $\oo$-module $L$ of rank $n$ 
and for any surjective homomorphism $f \colon L \to N$, 
there exists an $\oo$-basis $y_1,\ldots,y_n$ of $L$ satisfying $f(y_i)=x_i$ for $i=1,\ldots,n$.
\end{cor}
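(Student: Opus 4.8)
The plan is to reduce the statement directly to Lemma \ref{lem:cover}. First I would introduce the ``model'' free module $L' = \oo^{\oplus n}$ with its standard basis $e_1, \dots, e_n$, and let $f' \colon L' \to N$ be the $\oo$-linear homomorphism determined by $f'(e_i) = x_i$ for $i = 1, \dots, n$. Since $x_1, \dots, x_n$ generate $N$, the map $f'$ is surjective. Both $L$ and $L'$ are finitely generated free $\oo$-modules of rank $n$, so Lemma \ref{lem:cover} applies to the pair of surjections $f \colon L \twoheadrightarrow N$ and $f' \colon L' \twoheadrightarrow N$, producing an isomorphism $\alpha \colon L \xto{\cong} L'$ of $\oo$-modules with $f = f' \circ \alpha$.

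Next I would set $y_i = \alpha^{-1}(e_i)$ for $i = 1, \dots, n$. Since $\alpha^{-1}$ is an isomorphism, the tuple $(y_1, \dots, y_n)$ is an $\oo$-basis of $L$. Moreover, $f(y_i) = (f' \circ \alpha)(y_i) = f'(e_i) = x_i$, which is exactly the required property, so this completes the argument.

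I do not expect any genuine obstacle here, as all the substance has already been packaged into Lemma \ref{lem:cover} (whose proof in turn relies on the existence of a projective cover of $N$ over the noetherian local ring $\oo$). The only point meriting a moment's attention is that Lemma \ref{lem:cover} requires the two free modules to have the same rank; this is guaranteed in our situation by the construction of $L' = \oo^{\oplus n}$, whose rank matches that of $L$ by hypothesis.
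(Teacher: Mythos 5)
Your proof is correct and matches the paper's argument precisely: both construct the model surjection $f'\colon \oo^{\oplus n}\twoheadrightarrow N$ sending the standard basis to $x_1,\ldots,x_n$, invoke Lemma \ref{lem:cover} to get $\alpha\colon L\xto{\cong}\oo^{\oplus n}$ with $f=f'\circ\alpha$, and pull back the standard basis via $\alpha^{-1}$.
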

\begin{proof}
Let $L' = \oo^{\oplus n}$ and 
let $f' \colon L' \twoheadrightarrow N$ denote the surjection 
that sends the standard basis of $L'$ to the elements $x_1,\ldots,x_n$.
By applying Lemma \ref{lem:cover}, 
we obtain an isomorphism $\alpha\colon L \xto{\cong} L'$ satisfying $f=f'\circ\alpha$.
Then the image under $\alpha^{-1}$ of the standard basis of $L'$ gives a desired basis of $L$.
\end{proof}

From now on until the end of this section, 
we fix an integer $n \geq 1$ and a partition
\[
\bn = (n_1, \ldots, n_r), \ 
n = n_1 + \cdots + n_r,\ 
n_i \geq 1
\] 
of $n$. 
For $i=1,\ldots, r$, we set
\[
a_i = n_1+\cdots+n_{i-1}+1,
\ 
b_i = n_1+\cdots+n_i.
\]
\par

We use the following terminology.
\begin{defi}
Let $M$ be an $\oo$-module generated by at most $n$ elements.

\begin{enumerate}
\item
We say that an increasing filtration $\Fil_\bullet M$ of $M$ by $\oo$-submodules
is \emph{$\bn$-admissible} if the following conditions are satisfied:

\begin{itemize}
\item
$\Fil_0 M = 0$ and $\Fil_{r} M = M$.
\item 
For $i=1,\ldots,r$, 
the graded quotient $\Gr^\Fil_i M = \Fil_i M/\Fil_{i-1} M$ is generated by at most $n_i$ elements.
\end{itemize}

\item 
Let $\Fil_\bullet M$ be an $\bn$-admissible filtration of $M$. 
We say that a sequence $y_1,\ldots,y_n$ of elements of $M$ 
is \emph{compatible with $\Fil_\bullet M$} 
if, for $i=1,\ldots,r$, 
the $b_i$ elements $y_1, \ldots, y_{b_i}$ generate the $\oo$-module $\Fil_i M$.
\end{enumerate}
\end{defi}

\begin{lem} \label{lem:representatives}
Let $M$ be an $\oo$-module of finite length.
Let $L$ be a free $\oo$-module of rank $n$ 
and let $f \colon L \twoheadrightarrow M$ be a surjective homomorphism of $\oo$-modules.
Suppose that an $\bn$-admissible filtration $\Fil_\bullet L$ of $L$ is given.
Let $\Fil_\bullet M$ denote the filtration on $M$ induced from $\Fil_\bullet L$ via $f$, 
i.e., $\Fil_i M = f(\Fil_i L)$.
For $i=1,\ldots,r$, let 
$f_i \colon \Gr^\Fil_i L \twoheadrightarrow \Gr^\Fil_i M$
denote the surjective homomorphism induced by $f$.
Then we have the following.

\begin{enumerate}
\item
$\Fil_\bullet M$ is an $\bn$-admissible filtration of $M$.

\item 
Let $x_1,\ldots,x_n$ be a sequence of elements of $M$ compatible with $\Fil_\bullet M$. 
Then there exists a sequence $y_1,\ldots,y_n$ of elements of $L$ 
compatible with $\Fil_\bullet L$
such that $x_j=f(y_j)$ for $j=1,\ldots,n$.

\item 
Let $x_1,\ldots,x_n$ be a sequence of elements of $M$ compatible with $\Fil_\bullet M$. 
Suppose that, for $i=1,\ldots,r$, 
an $\oo$-basis $z_{a_i}, \ldots, z_{b_i}$ of $\Gr^\Fil_i L$ is given in such a way that 
for $j=a_i,\ldots,b_i$, the image $f_i(z_j)$ is equal to the class of $x_j$ in $\Gr^\Fil_i M$.
Then there exists a sequence $y_1,\ldots,y_n$ of elements of $L$ 
compatible with $\Fil_\bullet L$
such that $x_j=f(y_j)$ for $j=1,\ldots,n$,
and such that the class of $y_j$ in $\Gr^\Fil_i L$ is equal to $z_j$ for $j=a_i,\ldots,b_i$.
\end{enumerate}
\end{lem}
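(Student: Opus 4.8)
The plan is to prove the three assertions in order, with (1) immediate, (3) containing essentially all the content (an inductive lifting along the filtration), and (2) a formal consequence of (3).

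\emph{Assertion (1).} By construction $\Fil_0 M = f(\Fil_0 L) = 0$ and $\Fil_r M = f(L) = M$, and for each $i$ the surjection $f_i$ exhibits $\Gr^\Fil_i M$ as a quotient of $\Gr^\Fil_i L$, which is generated by at most $n_i$ elements; hence so is $\Gr^\Fil_i M$. Here I would also record a point used later: since $L$ is free of rank $n = n_1 + \dots + n_r$ and $\Fil_\bullet L$ is $\bn$-admissible, each graded piece $\Gr^\Fil_i L$ is in fact \emph{free of rank exactly $n_i$}. Indeed each $\Fil_i L$ is free, being a submodule of a free $\oo$-module, so the exact sequences $0 \to \Fil_{i-1} L \to \Fil_i L \to \Gr^\Fil_i L \to 0$ force the minimal numbers of generators of the $\Gr^\Fil_i L$ to sum to at least $n$; as each is at most $n_i$ and $\sum_i n_i = n$, each equals $n_i$ and the pieces are torsion-free, hence free. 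In particular the $\oo$-basis appearing in the hypothesis of (3) makes sense.

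\emph{Assertion (3).} I would construct $y_1, \dots, y_n$ block by block, inducting on $i = 1, \dots, r$ (with $\Fil_0 L = 0$ covering the start), maintaining that $y_1, \dots, y_{b_i}$ lie in and generate $\Fil_i L$, that $f(y_j) = x_j$, and that the class of $y_j$ in $\Gr^\Fil_i L$ equals $z_j$ for $a_i \le j \le b_i$. For the passage from $i-1$ to $i$, fix $j \in \{a_i, \dots, b_i\}$ and apply Lemma \ref{lem:sc} to the surjection $f|_{\Fil_i L} \colon \Fil_i L \twoheadrightarrow \Fil_i M$, the submodule $\Fil_{i-1} L$, the element $x_j \in \Fil_i M$, and the element $z_j \in \Fil_i L/\Fil_{i-1} L = \Gr^\Fil_i L$: the hypothesis of (3) says exactly that $f_i(z_j)$ equals the image of $x_j$ in $\Fil_i M/\Fil_{i-1} M = \Gr^\Fil_i M$, which is the compatibility the lemma requires. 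This yields $y_j \in \Fil_i L$ with $f(y_j) = x_j$ and with class $z_j$ in $\Gr^\Fil_i L$. Finally $y_1, \dots, y_{b_i}$ generate $\Fil_i L$: the elements $y_1, \dots, y_{b_{i-1}}$ generate $\Fil_{i-1} L$ and $y_{a_i}, \dots, y_{b_i}$ project onto the $\oo$-basis $z_{a_i}, \dots, z_{b_i}$ of $\Gr^\Fil_i L$, so every element of $\Fil_i L$ agrees with an $\oo$-combination of $y_{a_i}, \dots, y_{b_i}$ modulo $\Fil_{i-1} L$.

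\emph{Assertion (2).} Given $x_1, \dots, x_n$ compatible with $\Fil_\bullet M$, I would for each $i$ apply Corollary \ref{cor:lift} to the surjection $f_i \colon \Gr^\Fil_i L \twoheadrightarrow \Gr^\Fil_i M$ — valid since $\Gr^\Fil_i L$ is free of rank $n_i$ by the observation above, $\Gr^\Fil_i M$ has finite length, and its $n_i$ generators may be taken to be the classes of $x_{a_i}, \dots, x_{b_i}$ (these classes generate $\Gr^\Fil_i M$ because $x_1, \dots, x_{b_i}$ generate $\Fil_i M$). This produces an $\oo$-basis $z_{a_i}, \dots, z_{b_i}$ of $\Gr^\Fil_i L$ with $f_i(z_j)$ equal to the class of $x_j$, and then assertion (3) applies verbatim to yield the desired $y_1, \dots, y_n$. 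The only real obstacle is the simultaneous lifting in the inductive step of (3) — prescribing both the value $f(y_j) = x_j$ and the class of $y_j$ in $\Gr^\Fil_i L$ — for which Lemma \ref{lem:sc} is precisely designed; everything else is bookkeeping, the one other point meriting care being the freeness of the graded pieces $\Gr^\Fil_i L$.
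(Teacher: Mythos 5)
Your proof is correct and follows essentially the same route as the paper's: assertion (1) is immediate, assertion (3) is the workhorse step using Lemma \ref{lem:sc} to lift each $y_j$ with both its image under $f$ and its class in $\Gr^\Fil_i L$ prescribed, and assertion (2) is deduced from (3) by first invoking Corollary \ref{cor:lift} to produce the required bases $z_{a_i},\dots,z_{b_i}$. Your added observation that each $\Gr^\Fil_i L$ is free of rank exactly $n_i$ is a useful expansion of the paper's terse parenthetical (which only notes that $\Fil_i L$ is free of rank $n_1+\dots+n_i$); the only small wrinkle is that ``each equals $n_i$ and the pieces are torsion-free'' skips a step — one needs to also compare the free ranks over the fraction field, which sum to $n$ by the exact sequences and are each bounded by $n_i$, to conclude free rank equals minimal generating number and hence no torsion — but the conclusion is correct.
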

\begin{proof}
The assertion (1) is clear.
We can deduce the assertion (2) from the assertion (3),
since in the situation of (2) one can find, by using Corollary \ref{cor:lift}, 
an $\oo$-basis $z_{a_i}, \ldots, z_{b_i}$ of $\Gr^\Fil_i L$ 
as in the statement of the assertion of (3) for $i=1,\ldots,r$.
(Here, we note that $\Fil_i L$ is a free $\oo$-module of rank 
$n_1+\dots+n_i$.)
\par

We prove the assertion (3).
Using Lemma \ref{lem:sc}, 
one can choose an element $y_j \in \Fil_i L$ for $j=a_i,\ldots,b_i$ in such a way that 
$f(y_j)=x_j$ and the image of $y_j$ in $\Gr^\Fil_i L$ is equal to $z_j$.
Then the sequence $y_1,\ldots,y_n$ of elements of $L$ has the desired property.
\end{proof}

The following is well-known. 
\begin{lem}\label{lem:permutation}
Let $M$ be an $\oo$-module of finite length, 
and $m_1, \dots, m_r$ be non-negative integers. 
Then the number of filtrations $0 = \Fil_0M \subset \dots \subset \Fil_rM = M$
with $\Gr_i^\Fil M$ generated by exactly $m_i$ elements for any $1 \leq i \leq r$
is invariant under the permutations of $m_1, \dots, m_r$. 
\end{lem}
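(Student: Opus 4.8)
The plan is to reduce the statement to a symmetry property of a single finite-length $\oo$-module under Matlis duality. Since the symmetric group on $r$ letters is generated by adjacent transpositions, it suffices to prove that the number of filtrations is unchanged when one interchanges $m_i$ and $m_{i+1}$ for a fixed $i$. To isolate the relevant data, I would partition the set of filtrations according to the pair $(A,B) = (\Fil_{i-1}M,\Fil_{i+1}M)$. For fixed $\oo$-submodules $A \subset B$ of $M$, a filtration $0 = \Fil_0 M \subset \dots \subset \Fil_r M = M$ with $\Fil_{i-1}M = A$, $\Fil_{i+1}M = B$ is the same as the data of a filtration of $A$ below, the single term $C = \Fil_i M$ with $A \subset C \subset B$, and a filtration of $M$ above $B$; moreover the condition on the graded pieces splits accordingly. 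Hence the number of such filtrations with the prescribed graded multiplicities factors as a product of three counts: the number of filtrations $0 = \Fil_0 M \subset \dots \subset \Fil_{i-1}M = A$ with multiplicities $m_1,\dots,m_{i-1}$; the number of $\oo$-submodules $C$ with $A \subset C \subset B$ such that $C/A$ and $B/C$ are generated by exactly $m_i$ and $m_{i+1}$ elements respectively; and the number of filtrations $B = \Fil_{i+1}M \subset \dots \subset \Fil_r M = M$ with multiplicities $m_{i+2},\dots,m_r$. (For $i=1$, resp.\ $i=r$, the first, resp.\ third, factor is $1$ and $A = 0$, resp.\ $B = M$, is forced.) Only the middle factor depends on $m_i$ and $m_{i+1}$, so it suffices to show that this middle factor is symmetric under interchanging them.

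Writing $N = B/A$, which is an $\oo$-module of finite length, and letting $\mu(P) = \dim_{\oo/\pp}(P/\pp P)$ denote the minimal number of generators of a finite-length $\oo$-module $P$, the middle factor equals $\#\{\,\bar C \subset N \mid \mu(\bar C) = m_i,\ \mu(N/\bar C) = m_{i+1}\,\}$ via $C \mapsto C/A$. The key input is Lemma~\ref{lem:dual}, which gives $[P] = [P^\vee]$ and hence $\mu(P) = \mu(P^\vee)$ for every finite-length $\oo$-module $P$. Since $F/\oo$ is an injective $\oo$-module, the functor $(\ )^\vee = \Hom_\oo(-,F/\oo)$ is exact, and for a submodule $\bar C \subset N$ the annihilator $\bar C^\perp := \Hom_\oo(N/\bar C,F/\oo) \subset N^\vee$ satisfies $\bar C^\perp \cong (N/\bar C)^\vee$ and $N^\vee/\bar C^\perp \cong \bar C^\vee$. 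The assignment $\bar C \mapsto \bar C^\perp$ is an inclusion-reversing bijection from the set of $\oo$-submodules of $N$ onto the set of $\oo$-submodules of $N^\vee$ (Matlis duality for finite-length modules over the complete discrete valuation ring $\oo$). Combined with the two equalities $\mu(\bar C^\perp) = \mu((N/\bar C)^\vee) = \mu(N/\bar C)$ and $\mu(N^\vee/\bar C^\perp) = \mu(\bar C^\vee) = \mu(\bar C)$, it restricts to a bijection between $\{\,\bar C \subset N \mid \mu(\bar C) = m_i,\ \mu(N/\bar C) = m_{i+1}\,\}$ and $\{\,D \subset N^\vee \mid \mu(D) = m_{i+1},\ \mu(N^\vee/D) = m_i\,\}$.

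Finally, Lemma~\ref{lem:dual} also gives $N \cong N^\vee$ as $\oo$-modules, so the latter set is in bijection with $\{\,D \subset N \mid \mu(D) = m_{i+1},\ \mu(N/D) = m_i\,\}$, which is precisely the middle factor with $m_i$ and $m_{i+1}$ interchanged. This yields the desired symmetry, and hence the lemma. I expect the only step needing genuine care to be the invocation of Matlis duality --- that $(\ )^\vee$ is exact and that $\bar C \mapsto \bar C^\perp$ is a bijection on submodules with the stated effect on subquotients --- which is standard for finite-length modules over $\oo$; the remaining arguments are bookkeeping with minimal numbers of generators.
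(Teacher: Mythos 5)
Your proposal is correct and fleshes out exactly the three-step outline the paper gives: reduce to an adjacent transposition, reduce to $r=2$ by factoring the count after fixing $(\Fil_{i-1}M,\Fil_{i+1}M)$ so that only submodules of $N = \Fil_{i+1}M/\Fil_{i-1}M$ matter, and then settle $r=2$ by Matlis duality combined with Lemma~\ref{lem:dual}. The Matlis-duality bookkeeping (exactness of $(\,)^\vee$, the inclusion-reversing bijection $\bar C\mapsto \bar C^\perp$, and the equalities $\mu(\bar C^\perp)=\mu(N/\bar C)$, $\mu(N^\vee/\bar C^\perp)=\mu(\bar C)$) is the intended content of ``use the duality'' and is carried out correctly.
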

\begin{proof}[Outline of the proof]
First, reduce to the case where the permutation is an adjacent transposition.  
Then reduce to the case where $r = 2$. 
Finally, use the duality (Lemma \ref{lem:dual}) to treat this case.
\end{proof}

\section{The Mackey decomposition}\label{s.mackey}
In this section, we give the Mackey decomposition (Proposition \ref{mackey}) 
of the invariants by compact open subgroups of the form $\K_{n,\lambda}$. 
As an application, we give a reduction step in the proof of our main results.
\par

\subsection{Invariant subspaces of parabolically induced representations}\label{sec.inv}
Fix an integer $n \geq 1$.
Let us consider the $F$-vector space $F^n$.
We regard an element of $F^n$ as a column vector.
The group $G_n = \GL_n(F)$ acts on $F^n$ from the left by the multiplication.
Let $L_1,L_2 \subset F^n$ be $\oo$-lattices with $L_1 \supset L_2$.
We denote by $\K_{L_1,L_2}$ 
the set of elements $g \in G_n$ satisfying the following conditions:
\begin{itemize}
\item 
We have $g L_1 = L_1$ and $g L_2 = L_2$.
\item 
The endomorphism of the $\oo$-module 
$L_1/L_2$ induced by the multiplication by $g$ is the identity map.
\end{itemize}
Then $\K_{L_1,L_2}$ is a compact open subgroup of $G_n$.

\begin{lem}\label{lem:indep}
The $G_n$-conjugacy class of $\K_{L_1,L_2}$ depends only on $n$ 
and an isomorphism class $[L_1/L_2]$ of the $\oo$-module $L_1/L_2$.
\end{lem}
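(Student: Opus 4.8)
The plan is to show that if $L_1 \supset L_2$ and $L'_1 \supset L'_2$ are two pairs of $\oo$-lattices in $F^n$ such that $L_1/L_2 \cong L'_1/L'_2$ as $\oo$-modules, then $\K_{L_1,L_2}$ and $\K_{L'_1,L'_2}$ are conjugate in $G_n$. The natural candidate for the conjugating element is any $g \in G_n$ with $g L_1 = L'_1$ and $g L_2 = L'_2$; once such a $g$ is produced, one checks directly from the definition that $g \K_{L_1,L_2} g^{-1} = \K_{L'_1,L'_2}$, since the two defining conditions (preserving each lattice, acting trivially on the quotient) transport along conjugation by such a $g$.

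So the real content is: given $L_1 \supset L_2$ and $L'_1 \supset L'_2$ with $L_1/L_2 \cong L'_1/L'_2$, there exists $g \in \GL_n(F)$ carrying the first pair to the second. First I would reduce to the case $L_1 = L'_1 = \oo^n$: since any $\oo$-lattice in $F^n$ is $\GL_n(F)$-conjugate to $\oo^n$, we may apply suitable elements of $G_n$ to both pairs and assume $L_1 = L'_1 = \oo^n$. Now we have two $\oo$-submodules $L_2, L'_2 \subset \oo^n$ of finite colength with $\oo^n/L_2 \cong \oo^n/L'_2$; we must find $g \in \GL_n(\oo)$ (automatically in $G_n$, and automatically preserving $\oo^n$) with $g L_2 = L'_2$. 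This follows from Corollary \ref{cor:lift} (or directly from Lemma \ref{lem:cover}): write $N = \oo^n/L_2 \cong \oo^n/L'_2$, and let $f \colon \oo^n \twoheadrightarrow N$ and $f' \colon \oo^n \twoheadrightarrow N$ be the two quotient maps, composed with a fixed isomorphism of the quotients. By Lemma \ref{lem:cover} there is an isomorphism $\alpha \colon \oo^n \xto{\cong} \oo^n$ with $f = f' \circ \alpha$, i.e.\ $\alpha \in \GL_n(\oo)$ and $\alpha(L_2) = \alpha(\Ker f) = \Ker f' = L'_2$. Taking $g = \alpha$ finishes this step.

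Assembling: let $h_1, h_1' \in G_n$ satisfy $h_1 L_1 = \oo^n$ and $h_1' L'_1 = \oo^n$, set $L_2'' = h_1 L_2$, ${L_2'}'' = h_1' L'_2$, find $\alpha \in \GL_n(\oo)$ with $\alpha L_2'' = {L_2'}''$ as above, and put $g = (h_1')^{-1}\alpha h_1$; then $g L_1 = L'_1$, $g L_2 = L'_2$, hence $g \K_{L_1,L_2} g^{-1} = \K_{L'_1,L'_2}$. The main obstacle is essentially just the bookkeeping of these reductions; the one genuine input is the lifting statement (Corollary \ref{cor:lift} / Lemma \ref{lem:cover}) guaranteeing that an abstract isomorphism of the quotient modules is induced by an automorphism of $\oo^n$, and that has already been established.
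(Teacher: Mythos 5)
Your proof is correct and takes essentially the same approach as the paper: both reduce the problem to producing $g \in G_n$ carrying $(L_1,L_2)$ to $(L'_1,L'_2)$, and both obtain such a $g$ from Lemma \ref{lem:cover}. The only cosmetic difference is that you first normalize $L_1 = L'_1 = \oo^n$, whereas the paper applies Lemma \ref{lem:cover} directly to $L_1$ and $L'_1$ as abstract free $\oo$-modules of rank $n$, making that reduction unnecessary.
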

\begin{proof}
Let $L_1, L_2, L'_1, L'_2$ be $\oo$-lattices of $F^n$ 
such that $L_1 \supset L_2$, $L'_1 \supset L'_2$ 
and that $L_1/L_2$ is isomorphic to $L'_1/L'_2$ as $\oo$-modules.
Let us choose an isomorphism $L_1/L_2 \cong L'_1/L'_2$, 
and let $f$ (\resp $f'$) denote 
the composite $L_1 \twoheadrightarrow L_1/L_2 \xto{\cong} L'_1/L'_2$
(\resp the quotient map $L'_1 \to L'_1/L'_2$).
Then it follows from Lemma \ref{lem:cover} that
there exists an isomorphism $\alpha \colon L_1 \xto{\cong} L'_1$ satisfying $f=f'\circ \alpha$.
By extending $\alpha$ to an automorphism $F^n \xto{\cong} F^n$ by $F$-linearity, 
we obtain an element $g \in G_n$ such that $\alpha(x) = gx$.
It is then straightforward to check that 
$\K_{L'_1,L'_2} = g \K_{L_1,L_2} g^{-1}$.
This completes the proof.
\end{proof}

By abuse of notation, we denote the group $\K_{L_1,L_2}$ by $\K_{n,[L_1/L_2]}$.
We note that, for $[M]$ in $|\CC^n|$,
the group $\K_{n,[M]}$ is well-defined only up to $G_n$-conjugation.
If $\lambda = \seq_n([M])$, 
the $G_n$-conjugacy class of $\K_{n,[M]}$ is equal to the class of $\K_{n,\lambda}$. 
Indeed, if we set $L_1 = \oo^n$ and $L_1 = \oplus_{i=1}^n\pp^{\lambda_i}$ 
with $\lambda = (\lambda_1, \dots, \lambda_n)$, 
then we see that $\K_{L_1,L_2} = \K_{n,\lambda}$. 
\par

Fix a partition $\bn = (n_1,\ldots,n_r)$ of $n$ with integers $n_1, \dots, n_r \geq 1$.
Let $\pi_1,\ldots,\pi_r$ be representations of $G_{n_1}, \ldots, G_{n_r}$
of finite length, respectively.
Consider the representation $\pi_1 \times \cdots \times \pi_r$ of $G_n$,
which is parabolically induced from the representation $\pi_1 \boxtimes \cdots \boxtimes \pi_r$ 
of the standard Levi subgroup $G_{n_1} \times \cdots \times G_{n_r}$ of $G_n$.
Then, for any $[M] \in |\CC^n|$, 
the Mackey decomposition gives the following description of 
the $\K_{n,[M]}$-invariant part of $\pi_1 \times \cdots \times \pi_r$.

\begin{prop}[The Mackey decomposition]\label{mackey}
There exists an isomorphism
\[
(\pi_1 \times \cdots \times \pi_r)^{\K_{n,[M]}}
\cong \bigoplus_{\Fil_\bullet M}
\pi_1^{\K_{n_1,[\Gr_1^{\Fil} M]}}
\otimes \cdots \otimes
\pi_r^{\K_{n_r,[\Gr_r^{\Fil} M]}}
\]
of complex vector spaces. 
Here $\Fil_\bullet M$ in the direct sum above runs over 
the set of $\bn$-admissible filtrations of $M$, 
i.e., the increasing filtrations
\[
0 = \Fil_0 M \subset  \cdots \subset \Fil_{r}M =M
\]
on $M$ by $\oo$-submodules such that for $i=1,\ldots,r$, 
the $\oo$-module $\Gr_i^\Fil M = \Fil_{i}M/\Fil_{i-1} M$ is generated by at most $n_i$ elements.
\end{prop}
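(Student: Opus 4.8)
The plan is to compute the invariants through the Mackey-theoretic description of the invariant vectors in an induced representation. Write $P \subseteq G_n$ for the standard parabolic subgroup with Levi factor $M_P = G_{n_1} \times \cdots \times G_{n_r}$ and unipotent radical $N_P$, put $\sigma = \pi_1 \boxtimes \cdots \boxtimes \pi_r$, and recall $\pi_1 \times \cdots \times \pi_r = \Ind_P^{G_n} \sigma$. Since the modulus character $\delta_P$ is trivial on every compact subgroup, the normalization of the induction is irrelevant here. Restricting a $\K_{n,[M]}$-invariant section to representatives of the finite set $P\backslash G_n/\K_{n,[M]}$ (finite because $P\backslash G_n$ is compact and $\K_{n,[M]}$ is open) yields, in the usual way, an isomorphism
\[
(\pi_1 \times \cdots \times \pi_r)^{\K_{n,[M]}} \;\cong\; \bigoplus_{x \in P\backslash G_n/\K_{n,[M]}} V_\sigma^{\,P \cap x\K_{n,[M]} x^{-1}},
\]
where the $x$-summand is the space of vectors in $V_\sigma$ fixed by the image of $P \cap x\K_{n,[M]}x^{-1}$ in $M_P$ (note that $N_P$ acts trivially on $V_\sigma$). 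It remains to parametrize the double cosets and to identify the summands.

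For the parametrization, realize $P\backslash G_n/\K_{n,[M]}$ as the set of $\K_{n,[M]}$-orbits on the flag variety $G_n/P$. Writing $\K_{n,[M]} = \K_{L_1,L_2}$ for lattices $L_1 \supset L_2$ with $L_1/L_2 \cong M$, attach to a flag $0 = W_0 \subset \cdots \subset W_r = F^n$ of type $\bn$ the filtration of $M$ given by $\Fil_i M := \Image(W_i \cap L_1 \to L_1/L_2)$. One checks directly that this is $\K_{L_1,L_2}$-invariant and $\bn$-admissible, since $\Gr_i^{\Fil} M$ is a quotient of a sublattice of $W_i/W_{i-1}$ and hence is generated by at most $n_i$ elements. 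Surjectivity onto the $\bn$-admissible filtrations of $M$: given such a $\Fil_\bullet M$, choose generators $x_1,\dots,x_n$ of $M$ with $x_1,\dots,x_{b_i}$ generating $\Fil_i M$ for each $i$, lift them through $L_1 \twoheadrightarrow M$ to a basis of $L_1$ via Corollary \ref{cor:lift}, and let $W_i$ be the $F$-span of the first $b_i$ of these basis vectors. Injectivity is the delicate point: given two flags $W_\bullet, W'_\bullet$ with the same associated filtration, pick bases of $L_1$ adapted to each; then Lemma \ref{lem:representatives}(2), applied to $L_1 \twoheadrightarrow M$ with the $\bn$-admissible filtration $L_1 \cap W'_\bullet$ of $L_1$, produces a basis adapted to $W'_\bullet$ whose image in $M$ equals that of the basis adapted to $W_\bullet$ — here one repeatedly uses that a generating set of a free $\oo$-module of cardinality equal to its rank is a basis. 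The automorphism of $F^n$ matching these two bases then preserves $L_1$, acts trivially on $L_1/L_2$ (hence also preserves $L_2$), and carries $W_\bullet$ to $W'_\bullet$, so it lies in $\K_{L_1,L_2}$.

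Next, fix a double coset with associated filtration $\Fil_\bullet M$ and a representative $x$, and put $L_1' = xL_1 \supset L_2' = xL_2$, $V_i = U_i \cap L_1'$, $W_i = U_i \cap L_2'$ for the standard flag $U_\bullet$ fixed by $P$. Then $H := P \cap x\K_{n,[M]}x^{-1} = P \cap \K_{L_1',L_2'}$ is the group of $g \in P$ preserving all $V_i$ and $W_i$ and acting trivially on $L_1'/L_2'$. A short computation identifies $\bar V_i/\bar W_i$ with $\Gr_i^{\Fil} M$, where $\bar V_i = V_i/V_{i-1} \supset \bar W_i = W_i/W_{i-1}$ are lattices in $U_i/U_{i-1}$, and shows that the image $\bar H$ of $H$ in $M_P$ is contained in $\prod_{i=1}^r \K_{\bar V_i,\bar W_i}$. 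For the reverse inclusion — the second place where the generator lemmas enter — take $(\bar g_i)_i$ in this product, choose a basis of $L_1'$ adapted to $U_\bullet$, define a candidate $g \in \GL(F^n)$ on this basis by lifting each $\bar g_i$, and, for each $i$, correct the lifts by suitable elements of $V_{i-1}$ so that $g$ acts trivially on $L_1'/L_2'$; minimality of generating sets again guarantees that the images of the basis still form a basis of $L_1'$, so $g$ lies in $H$ and maps to $(\bar g_i)_i$. Hence $V_\sigma^H = V_\sigma^{\bar H} = \bigotimes_{i=1}^r V_{\pi_i}^{\K_{\bar V_i,\bar W_i}}$, and since $\K_{\bar V_i,\bar W_i}$ is conjugate in $G_{n_i}$ to $\K_{n_i,[\Gr_i^{\Fil} M]}$ by Lemma \ref{lem:indep}, this summand is $\bigotimes_i \pi_i^{\K_{n_i,[\Gr_i^{\Fil} M]}}$. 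Re-indexing the Mackey sum by the $\bn$-admissible filtrations of $M$ finishes the proof.

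I expect the main obstacle to be the lattice-theoretic bookkeeping in the two middle steps: both the bijection between double cosets and $\bn$-admissible filtrations and the surjectivity $H \twoheadrightarrow \prod_i \K_{\bar V_i,\bar W_i}$ reduce to lifting a system of generators compatible with a given $\bn$-admissible filtration along the surjection $L_1' \twoheadrightarrow L_1'/L_2'$ — precisely the content of Lemma \ref{lem:representatives} and Corollary \ref{cor:lift} — while keeping track throughout of the fact that a generating set of a free $\oo$-module whose size equals its rank is automatically a basis.
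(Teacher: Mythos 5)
Your proposal is correct and follows essentially the same route as the paper: the Mackey formula over $P\backslash G_n/\K_{n,[M]}$, a bijection between double cosets and $\bn$-admissible filtrations of $M$ built from the lattice/flag picture and the generator-lifting results (Corollary \ref{cor:lift}, Lemma \ref{lem:representatives}), and the identification of $q(P \cap x\K_{n,[M]}x^{-1})$ with a conjugate of $\prod_i \K_{n_i,[\Gr_i^\Fil M]}$ by another application of Lemma \ref{lem:representatives}. The only cosmetic difference is that you parametrize the double cosets as $\K_{n,[M]}$-orbits on the flag variety $G_n/P$, whereas the paper uses $G_n$-orbits on triples $(L,\Fil_\bullet L, f)$; the two viewpoints are interchangeable and lead to the same computations.
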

\begin{proof}
Let $P_\bn \subset G_n$ denote the standard parabolic subgroup 
corresponding to the partition $\bn=(n_1,\ldots,n_r)$.
Consider the quotient homomorphism $q \colon P_\bn \to G_{n_1} \times \cdots \times G_{n_r}$.
Let us choose a complete set $S \subset G_n$ of representatives 
of the double coset $P_\bn \bs  G_n /\K_{n,[M]}$.
\par

Then the Mackey decomposition yields an isomorphism
\begin{equation} \label{eq:Mackey}
(\pi_1 \times \cdots \times \pi_r)^{\K_{n,[M]}}
\cong \bigoplus_{g \in S}
(\pi_1 \boxtimes \cdots \boxtimes
\pi_r)^{q(P_\bn \cap g \K_{n,[M]} g^{-1})}.
\end{equation}
\par

Let $\FF_M$ denote the set of $\bn$-admissible filtrations on $M$. 
In view of \eqref{eq:Mackey}, 
it suffices to construct a bijection $\alpha \colon P_\bn \bs  G_n /\K_{n,[M]} \xto{\cong} \FF_M$ 
satisfying the following property:
If $P_\bn g \K_{n,[M]}$ corresponds to the filtration $\Fil_\bullet M$ via $\alpha$, 
then the subgroup $q(P_\bn \cap g \K_{n,[M]} g^{-1})$ of $G_{n_1} \times \cdots \times G_{n_r}$ 
is a conjugate of the subgroup 
$\K_{n_1,[\Gr_1^{\Fil} M]} \times \cdots \times \K_{n_r,[\Gr_r^{\Fil} M]}$.
\par

\begin{lem}\label{lem:double}
By choosing a pair $(L_1,L_2)$ of $\oo$-lattices with $L_1 \supset L_2$ 
and an isomorphism $\gamma \colon L_1/L_2 \cong M$ of $\oo$-modules, 
we identify $\K_{n,[M]}$ with $\K_{L_1,L_2}$. 
We denote the composite $L_1 \twoheadrightarrow L_1/L_2 \xto{\gamma} M$ by $f_1$.

\begin{enumerate}
\item
Let $\LL_{M}(F^n)$ be the set of pairs $(L,f)$ of an $\oo$-lattice $L \subset F^n$
and a surjective homomorphism $f \colon L \twoheadrightarrow M$ of $\oo$-modules.
Then there is a (canonical) bijection $G_n/\K_{n,[M]} \rightarrow \LL_{M}(F^n)$ given by 
$g\K_{L_1,L_2} \mapsto (gL_1, y \mapsto f_1(g^{-1}y))$. 

\item
There is a bijection from $G_n / P_\bn$ to the set of $\bn$-admissible filtrations on $L_1$
given by 
$h P_\bn \mapsto \Fil_\bullet^{h}L_1 \coloneqq L_1 \cap h(F e_1 + \cdots + F e_{b_\bullet})$, 
where $\{e_1, \dots, e_n\}$ is the standard basis of $F^n$.

\item
Let $\LL_{M}'(F^n)$ be 
the set of triples $(L,\Fil_\bullet L,f)$ of 
an $\oo$-lattice $L \subset F^n$,
an $\bn$-admissible filtration $\Fil_\bullet L$ on $L$,
and a surjective homomorphism $f \colon L \twoheadrightarrow M$ of $\oo$-modules.
We let the group $G_n$ act on $\LL_M'(F^n)$ by 
$g.(L,\Fil_\bullet L,f) = (gL, g \Fil_\bullet L, y\mapsto f(g^{-1}y))$. 
Then there is a (canonical) bijection 
$P_\bn \bs  G_n /\K_{n,[M]} \rightarrow G_n \backslash \LL_{M}'(F^n)$ given by 
sending $P_\bn g \K_{L_1,L_2}$ to the $G_n$-orbit of 
$(gL_1, \Fil_\bullet gL_1, y \mapsto f_1(g^{-1}y))$, 
where $\Fil_{i} gL_1 \coloneqq gL_1 \cap (F e_1 + \cdots + F e_{b_{i}})$.
\end{enumerate}
\end{lem}
\begin{proof}
We show (1). 
We let the group $G_n$ act from the left on the set $\LL_{M}(F^n)$ 
by the rule $g.(L,f) = (gL,y \mapsto f(g^{-1}y))$.
One can prove that the action of $G_n$ on $\LL_{M}(F^n)$ is transitive in the following way.
Let $(L,f)$ and $(L',f')$ be two elements of $\LL_{M}(F^n)$.
Then by Lemma \ref{lem:cover}, there exists an isomorphism
$\beta \colon L \xto{\cong} L'$ satisfying $f = f' \circ \beta$.
By extending $\beta$ to an automorphism of $F^n$ by $F$-linearity,
we obtain an element $g \in G_n$ such that $\beta(x) = gx$. 
Then we have $(L',f') = g.(L,f)$.
Hence the map $g \mapsto g.(L_1,f_1)$ gives a surjective map $G_n \rightarrow \LL_{M}(F^n)$. 
Since the stabilizer of $(L_1,f_1)$ with respect to the action of $G_n$ 
is equal to $\K_{L_1,L_2}$, 
it gives the desired bijection. 
\par

It is straightforward to check that 
this bijection does not depend on the choice of the triple $(L_1, L_2, \gamma)$ 
in the following sense.
Let $(L'_1, L'_2, \gamma')$ be another choice. 
It follows from the proof of Lemma \ref{lem:indep} that 
there exists $g \in G_n$ satisfying $g L_1 = L'_1$, $g L_2 = L'_2$ 
and $\gamma(y \mod{L_2}) =\gamma'(gy \mod{L'_2})$ for all $y \in L_1$.
Then for any such $g \in G_n$, we have $\K_{L'_1,L'_2} = g \K_{L_1,L_2} g^{-1}$ 
and the diagram
\[
\begin{CD}
G_n/\K_{L_1,L_2} @>>> \LL_{M}(F^n) \\
@VVV @| \\
G_n/\K_{L'_1,L'_2} @>>> \LL_{M}(F^n)
\end{CD}
\]
is commutative. 
Here the left vertical map sends $h \K_{L_1,L_2}$ to $h g^{-1} \K_{L'_1,L'_2}$.
Hence we obtain (1). 
\par

Note that $G_n / P_\bn$ is naturally identified with 
the set of partial flags $0 = V_0 \subset \dots \subset V_r = F^n$ 
with $\dim(V_i/V_{i-1}) = n_i$ for $i = 1,\dots,r$. 
Note that $(L_1 \cap V') \otimes_\oo F = V'$ for any subspace $V'$ of $F^n$.
On the other hand, if $\Fil_\bullet L_1$ is an $\bn$-admissible filtration of $L_1$, 
since $L_1$ is a free $\oo$-module of rank $n=n_1 + \cdots + n_r$, each subquotient 
$\Gr_i^\Fil L_1$ is a free $\oo$-module of rank $n_i$ for any $i$. Hence we have $L_1 \cap (\Fil_i L_1 \otimes_\oo F) = \Fil_i L_1$ for any $i$. 
Therefore we have (2). 
\par

Since the double cosets in $P_\bn \bs  G_n /\K_{n,[M]}$ are in one-to-one correspondence 
with the $G_n$-orbits in $(G_n/P_\bn) \times (G_n/\K_{n,[M]})$ with respect to the diagonal left $G_n$-action, 
the assertion (3) follows from (1) and (2). 
\end{proof}

We continue the proof of Proposition \ref{mackey}. 
We identify $P_\bn \bs  G_n /\K_{n,[M]}$ with $G_n \backslash \LL_{M}'(F^n)$ 
by Lemma \ref{lem:double}.
By sending the triple $(L,\Fil_\bullet L,f) \in \LL'_M(F^n)$ to the filtration on $M$ 
induced from $\Fil_\bullet L$ via $f$,
we obtain a map $\alpha \colon P_\bn \bs G_n /\K_{n,[M]} \to \FF_M$.
Let $\Fil^\st_\bullet \oo^n$ be the standard $\bn$-admissible filtration on $\oo^n$, 
i.e., the unique $\bn$-admissible filtration on $\oo^n$ such that
the standard basis of $\oo^n$ is a sequence compatible with $\Fil^\st_\bullet \oo^n$. 
Let us fix a surjective homomorphism $f \colon \oo^n \to M$ and let $L$ denote its kernel. 
Then we can regard $\K_{n,[M]}$ as $\K_{\oo^n,L}$. 
In this case, one can describe the map $\alpha$ as follows.
Let $s \in P_\bn \bs G_n /\K_{\oo^n,L}$.
Then by the Iwasawa decomposition, we have
$s = P_\bn k \K_{\oo^n,L}$ for some $k \in \GL_n(\oo)$.
Then $\alpha(s)$ is the filtration
\[
0=f(k^{-1} \Fil^\st_0 \oo^n) \subset \cdots
\subset f(k^{-1} \Fil^\st_r \oo^n)=M
\]
on $M$. 
We note that $k^{-1} \Fil^\st_{i} \oo^n$ is the $\oo$-submodule of $\oo^n$ 
generated by the first $b_i$ columns of $k^{-1}$.
\par

Now let us choose a filtration $\Fil_\bullet M$ on $M$ in $\FF_M$.
Let us fix a sequence $x_1,\ldots,x_n \in M$ compatible with $\Fil_\bullet M$.
By considering the homomorphism $\oo^n \to M$ 
that sends the standard basis to the sequence $x_1,\ldots,x_n$,
one can check that the map $\alpha$ is surjective.
Suppose that two triples $(L,\Fil_\bullet L,f)$ and $(L',\Fil'_\bullet L',f')$ 
are sent to $\Fil_\bullet M$ via $\alpha$.
Let us choose a basis $y_1,\ldots,y_n$ of $L$ 
and a basis $y'_1,\ldots,y'_n$ of $L'$ as in the assertion (2) of Lemma \ref{lem:representatives}. 
By considering the change-of-basis matrix, 
we can see that the two triples are in the same $G_n$-orbit. 
This proves that the map $\alpha$ is injective.
In conclusion, $\alpha \colon P_\bn \bs G_n /\K_{n,[M]} \to \FF_M$ is bijective.
\par

Again, we realize $\K_{n,[M]}$ as $\K_{\oo^n, L}$ for a lattice $L \subset \oo^n$
with a surjection $f \colon \oo^n \twoheadrightarrow M$ such that $\Ker\, f = L$. 
Then by the Iwasawa decomposition, 
any $s \in P_\bn \bs G_n /\K_{\oo^n,L}$ is of the form $s = P_\bn k_s \K_{\oo^n,L}$ 
for some $k_s \in \GL_n(\oo)$.
In this case, the corresponding triple is 
the $G_n$-orbit of $(\oo^n,\Fil^\st_\bullet \oo^n, f_s)$, where $f_s(y) = f(k_s^{-1}y)$. 
In particular, $\Ker\, f_s = k_sL$. 
Then 
\[
P_\bn \cap k_s \K_{\oo^n,L} k_s^{-1}
= \{p \in P_\bn \cap \GL_n(\oo) \;|\; f_s \circ m(p) = f_s\}, 
\]
where $m(p) \colon \oo^n \to \oo^n$ denotes the homomorphism 
given by the multiplication by $p$ from the left.
Recall that $\{e_1, \dots, e_n\}$ is the standard basis of $F^n$. 
For $1 \leq i \leq r$, 
we set 
$L_i$ to be the image of $k_s L \cap (\oo e_1 + \dots + \oo e_{b_i})$ 
under the canonical projection 
\[
\oo^{b_i} = \oo e_1 + \dots + \oo e_{b_i} 
\twoheadrightarrow \oo^{n_i} = \oo e_{a_i} + \dots + \oo e_{b_i}.
\]
Then $\oo^{n_i} \supset L_i$ are lattices in $F^{n_i} = Fe_{a_i} + \dots + Fe_{b_i}$
such that $\oo^{n_i}/L_i \cong \Gr^\Fil_iM$, 
where $\Fil_\bullet M$ is the filtration corresponding to 
the $G_n$-orbit of $(\oo^n,\Fil^\st_\bullet \oo^n, f_s)$. 
Moreover, we have
\[
q(P_\bn \cap k_s \K_{\oo^n,L} k_s^{-1}) \subset \K_{\oo^{n_1},L_1} \times \dots \times \K_{\oo^{n_r},L_r}. 
\] 
\par

We show that this inclusion is indeed an equality.
Let $(k_1, \dots, k_r) \in \K_{\oo^{n_1},L_1} \times \dots \times \K_{\oo^{n_r},L_r}$ be given. 
Set $x_j = f_s(e_j) \in M$ for $1 \leq j \leq n$, 
and set $z_j = k_ie_j \in \oo^{n_i} = \Gr_i^{F^\st}\oo^n$ for $a_i \leq j \leq b_i$. 
Since $k_i$ fixes 
$\oo^{n_i} \ni x \mapsto f_s(x) \bmod \Fil_{i-1}M \in \Gr^\Fil_iM \cong \oo^{n_i}/L_i$, 
we see that 
the image of $f_s(z_j)$ in $\Gr^\Fil_iM$ is the same as the one of $x_j$.
By the assertion (3) of Lemma \ref{lem:representatives},
one can take a sequence $e_1', \dots, e_n' \in \oo^n$ 
which is compatible with $\Fil_\bullet^\st \oo^n$
such that $x_j = f_s(e'_j)$ and 
the class of $e'_j$ in $\Gr_i^{\Fil^\st}\oo^n = \oo^{n_i}$ is equal to $z_j$.
Define $k \in G_n$ so that $e'_j = ke_j$ for $1 \leq j \leq n$.
Since $F e'_1+\dots+Fe'_{n_i} = F e_1+\dots+Fe_{n_i}$ for $1 \leq i \leq r$, 
we have $k \in P_\bn$.
Moreover, since $k$ preserves $\oo^n$ and $f_s(kx) = f_s(x)$, 
it also preserves $k_sL = \Ker\, f_s$. 
Hence $k \in \K_{\oo^n, k_sL} = k_s \K_{\oo^n,L} k_s^{-1}$. 
Since $q(k) = (k_1, \dots, k_r)$, we conclude that 
$q(P_\bn \cap k_s \K_{\oo^n,L} k_s^{-1}) 
= \K_{\oo^{n_1},L_1} \times \dots \times \K_{\oo^{n_r},L_r}$. 
Namely, $q(P_\bn \cap g \K_{n,[M]} g^{-1})$ is 
a $G_{n_1} \times \cdots \times G_{n_r}$-conjugate of 
$\K_{n_1,[\Gr^\Fil_1 M]} \times \cdots \times \K_{n_r,[\Gr^\Fil_r M]}$.
This completes the proof of Proposition \ref{mackey}.
\end{proof}

\begin{rem}\label{rem2}
One can interpret the statement and the proof of Proposition \ref{mackey}
in terms of the topos theory. 
For more precise statements, 
see the previous paper of the second and third authors \cite{KY}.
\end{rem}

\subsection{Proof of the main theorems: a reduction step}\label{sec.reduction}
Let $\pi$ be an irreducible representation of $G_n$.
Then we can write $\pi = \pi' \times \pi_1 \times \dots \times \pi_r$ 
as an irreducible parabolic induction
such that 
\begin{itemize}
\item
$\pi'$ is an irreducible representation such that $L(s,\pi') = 1$; 
\item
$\pi_i = Z(\mm_i)$ with $\mm_i$ of type $\chi_i$ 
for some unramified character $\chi_i$ of $F^\times$; 
\item
if $i \not= j$, then $\chi_i\chi_j^{-1}$ is not of the form $|\cdot|^a$ for any $a \in \Z$. 
\end{itemize}
If we knew Theorem \ref{main1} (\resp Theorem \ref{conj1-2}) 
for $\pi'$ and $\pi_i$ for $1 \leq i \leq r$,
by Proposition \ref{mackey} and Corollary \ref{fil0}, 
we would obtain the same theorem for $\pi$. 
In other words, 
Theorems \ref{main1} and \ref{conj1-2}
are reduced to the following two cases: 
\begin{itemize}
\item
The case where $\pi = Z(\mm)$ with $\mm$ of type $\chi$
for some unramified character $\chi$ of $F^\times$; 
\item
The case where $L(s,\pi) = 1$. 
\end{itemize}
\par

We will deal with the first case in Section \ref{sec.proofs1}, 
whereas 
the second case will be treated in Sections \ref{sec.pf_L=1} and \ref{s.ess_speh}.

\section{Proof of the main theorems: the unipotent case}\label{sec.proofs1}
In this section, we prove Theorems \ref{main1} and \ref{conj1-2} 
for $\pi = Z(\mm)$ with $\mm$ of type $\chi$
for some unramified character $\chi$ of $F^\times$. 

\subsection{Proof of Theorem \ref{main1} for ladder representations of type $\chi$}
\label{sec.pf_ladder}
In this section, we prove Theorem \ref{main1} in the case where 
$\pi = Z([x_1,y_1]_\chi, \dots, [x_t,y_t]_\chi) \in \Irr(G_n)$ is of type $\chi$
with an unramified character $\chi$ of $F^\times$
such that $\pi$ is a ladder representation, i.e., $x_1 > \dots > x_t$ and $y_1 > \dots > y_t$. 
Recall from Example \ref{ex_lambda} (2) that 
\[
\lambda_\pi = \sum_{i=2}^t(0,\dots,0, \underbrace{1,\dots,1}_{\max\{y_i-x_{i-1}+2,0\}}) \in \Lambda_n.
\]
\par

For $[M] \in |\CC^n|$ and for a partition $\bn = (n_1, \dots, n_{t})$ of $n$ with $n_i \in \Z$, 
we set $\NN_{\bn}(M)$ to be the number of $\bn$-admissible filtrations of $M$. 
Here, when $n_i < 0$ for some $i$, 
we understand that $\NN_{\bn}(M) = 0$. 

\begin{prop}\label{alt}
We have 
\[
\dim(\pi^{\K_{n,[M]}}) = \sum_{w \in S_t}\sgn(w) \NN_{\bn_w}(M), 
\]
where $\bn_w = (y_1-x_{w(1)}+1,\dots, y_t-x_{w(t)}+1)$.
\end{prop}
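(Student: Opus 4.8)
The plan is to express the ladder representation $\pi$ as an alternating sum of standard modules via Tadi\'c's determinantal formula, and then evaluate each term using the Mackey decomposition of Proposition \ref{mackey}.

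First I would invoke Tadi\'c's determinantal formula for ladder representations, in the form established by Lapid--M\'inguez \cite{LMi}: in the Grothendieck group of finite-length representations of $G_n$ one has
\[
[\pi] = \sum_{w \in S_t} \sgn(w)\,[\sigma_w], \qquad
\sigma_w = Z([x_{w(1)},y_1]_\chi) \times \cdots \times Z([x_{w(t)},y_t]_\chi),
\]
where $Z([a,b]_\chi)$ is understood as the trivial representation $\1_{G_0}$ when $a=b+1$ and as $0$ when $a>b+1$ (the indexing of $w$ matching \cite{LMi} after the sign-preserving relabelling $w \mapsto w^{-1}$). Because $l(\Delta_1)+\cdots+l(\Delta_t)=n$, the tuple $\bn_w=(y_1-x_{w(1)}+1,\dots,y_t-x_{w(t)}+1)$ always sums to $n$; its entries are all non-negative exactly when $\sigma_w \neq 0$, and in that case $\sigma_w$ is parabolically induced to $G_n$ from a one-dimensional representation of the Levi subgroup $G_{\bn_w}$.

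Next I would use that for a compact open subgroup the functor of invariants $V \mapsto V^{\K_{n,[M]}}$ is exact on smooth representations (it is given by averaging over $\K_{n,[M]}$, equivalently by an idempotent of the Hecke algebra), so $V \mapsto \dim V^{\K_{n,[M]}}$ descends to an additive map on the Grothendieck group. Applying it to the identity above gives $\dim \pi^{\K_{n,[M]}} = \sum_{w\in S_t}\sgn(w)\,\dim\sigma_w^{\K_{n,[M]}}$, and it remains to identify $\dim\sigma_w^{\K_{n,[M]}}$ with $\NN_{\bn_w}(M)$. When some entry of $\bn_w$ is negative this is immediate, as both sides vanish by convention. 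Otherwise each tensor factor $Z([x_{w(i)},y_i]_\chi)$ is a one-dimensional representation of $G_{n_i}$ (with $n_i = y_i-x_{w(i)}+1 \geq 0$) of the form $g \mapsto \chi(\det g)|\det g|^{\ast}$, which is trivial on $\GL_{n_i}(\oo)$ because $\chi$ is unramified; hence its $\K_{n_i,[N]}$-fixed space is one-dimensional for every $[N] \in |\CC^{n_i}|$ (the degenerate case $n_i=0$ being the trivial representation of the trivial group). Plugging this into Proposition \ref{mackey} — with the obvious extension allowing size-$0$ blocks, for which the corresponding graded piece of any admissible filtration must be zero — collapses every summand of the Mackey decomposition to $1$, so $\dim\sigma_w^{\K_{n,[M]}}$ equals the number of $\bn_w$-admissible filtrations of $M$, i.e.\ $\NN_{\bn_w}(M)$. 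Combining this with the previous display yields the proposition.

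The argument is essentially formal once the determinantal formula and Proposition \ref{mackey} are available; the only thing requiring genuine care is the bookkeeping of the degenerate segments — matching the conventions ``$Z(\emptyset)=\1_{G_0}$'' and ``$Z$ of a negative-length segment $=0$'' on the representation side with the conventions defining $\NN_{\bn_w}(M)$ (in particular $\NN_{\bn_w}(M)=0$ when $\bn_w$ has a negative entry, and the enforced vanishing of the $i$-th graded quotient when $n_i=0$). The substantive phenomenon hidden in this formula, namely the extensive cancellation in the alternating sum, is not needed here and will be extracted in the subsequent lemma.
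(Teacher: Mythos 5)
Your proposal is correct and follows essentially the same route as the paper's proof: invoke the Lapid--M\'inguez form of Tadi\'c's determinantal formula, use exactness of the invariants functor to pass to the Grothendieck group, apply Proposition \ref{mackey}, and observe that each factor is a character trivial on $\GL_{n_i}(\oo)$ so every Mackey summand contributes $1$. The extra care you take with the degenerate-segment conventions is a useful clarification but does not change the argument.
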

\begin{proof}
By the determinantal formula \cite{LMi}, 
in the Grothendieck group of the category of representations of $G_n$ of finite length, 
we have
\[
\pi = \sum_{w \in S_t} \sgn(w) Z([x_{w(1)}, y_1]_\chi) \times \dots \times Z([x_{w(t)}, y_t]_\chi). 
\]
Here, when $x = y+1$ (\resp $x > y+1$), 
we formally set $Z([x,y]_{\chi}) = \1_{G_0}$ (\resp $Z([x,y]_{\chi}) = 0$).
Note that in \cite{LMi}, the determinantal formula was formulated using the Langlands classification, 
but by taking the Zelevinsky dual, it translates to the statement above. 
\par

Recall that for a compact open subgroup $\K$ of $G_n$, 
the functor $\pi \mapsto \pi^\K$ is exact.
Hence, by Proposition \ref{mackey}, we have 
\begin{align*}
\pi^{\K_{n,[M]}} 
&= \sum_{w \in S_t} \sgn(w) \left(\prod_{i=1}^t Z([x_{w(i)}, y_i]_\chi)\right)^{\K_{n,[M]}}
\\&= \sum_{w \in S_t} \sgn(w) \sum_{\Fil_\bullet^w M} \bigotimes_{i=1}^t
Z([x_{w(i)}, y_i]_\chi)^{\K_{n_i, [\Gr^{\Fil^w}_i M]}}, 
\end{align*}
where $\Fil_\bullet^w M$ runs over the set of $\bn_w$-admissible filtrations
with $\bn_w = (y_1-x_{w(1)}+1,\dots, y_t-x_{w(t)}+1)$. 
Here, if $y_i-x_{w(i)}+1 < 0$ for some $i$, 
we understand that there is no $\bn_w$-admissible filtration. 
Since $Z([x_{w(i)},y_i]_\chi)$ is a character which is trivial on $\GL_{y_i-x_{w(i)}+1}(\oo)$, 
the dimension of $Z([x_{w(i)},y_i]_\chi)^{\K_{n_i, [\Gr^{\Fil^w}_i M]}}$ is always one 
if $y_i-x_{w(i)}+1 \geq 0$.
Hence we obtain the assertion. 
\end{proof}

Set $b = \max_{2 \leq i \leq t}\max\{y_i-x_{i-1}+2,0\}$. 
If $b = 0$, then $\pi$ is unramified so that Theorem \ref{main1} is trivial for $\pi$. 
Hence we may assume that $b > 0$. 
Let $[M_\pi] \in |\CC^n|$ be such that $\seq_n([M_\pi]) = \lambda_\pi$.
Then $M_\pi \cong \oplus_{i=1}^b \oo/\pp^{a_i}$ for some $a_i \geq 1$. 

\begin{lem}\label{bn}
If $[M] \leq [M_\pi]$, 
for any filtration $\Fil_\bullet M$,  
the $\oo$-module $[\Gr_i^\Fil M]$ can be generated by at most $b$ elements.
\end{lem}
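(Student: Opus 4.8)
The statement to prove is Lemma \ref{bn}: if $[M] \leq [M_\pi]$, then for every filtration $\Fil_\bullet M$ of $M$ (by $\oo$-submodules), each graded piece $\Gr_i^\Fil M$ can be generated by at most $b$ elements, where $b = \max_{2 \le i \le t}\max\{y_i - x_{i-1}+2, 0\}$ and $M_\pi$ is the $\oo$-module with $\seq_n([M_\pi]) = \lambda_\pi$. The first thing I would do is translate the hypothesis $[M] \le [M_\pi]$ into a statement about the minimal number of generators of $M$. Recall from Example \ref{ex_lambda} (2) that $\lambda_\pi = \sum_{i=2}^t (0,\dots,0,\underbrace{1,\dots,1}_{\max\{y_i - x_{i-1}+2, 0\}})$, and since all the nonzero entries come from adding vectors whose nonzero entries are all $1$'s pushed to the right, $\lambda_\pi$ has exactly $b$ nonzero entries (it is the entrywise sum, and the $i$-th summand contributes $1$'s to the last $\max\{y_i-x_{i-1}+2,0\}$ coordinates; the maximum of these lengths is $b$, so coordinates $n-b+1,\dots,n$ are the nonzero ones). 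Equivalently, $M_\pi \cong \bigoplus_{j=1}^{b} \oo/\pp^{a_j}$ with all $a_j \ge 1$, so $M_\pi$ is minimally generated by exactly $b$ elements.

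Next, I would argue that $[M] \le [M_\pi]$ forces $M$ to be generated by at most $b$ elements. Write $\seq_n([M]) = (\mu_1,\dots,\mu_n)$ and $\seq_n([M_\pi]) = \lambda_\pi = (\nu_1,\dots,\nu_n)$ with $\nu_j = 0$ for $j \le n-b$ and $\nu_j \ge 1$ for $j > n-b$. Since $[M] \le [M_\pi]$ in the lexicographic order and $\nu_1 = \dots = \nu_{n-b} = 0$ while $0 \le \mu_1 \le \dots \le \mu_n$, the only way to have $(\mu_1,\dots,\mu_n) \le (\nu_1,\dots,\nu_n)$ lexicographically is to have $\mu_j = 0$ for all $j \le n-b$; indeed if $\mu_{j_0} > 0$ for some $j_0 \le n-b$ then by monotonicity $\mu_j > 0 = \nu_j$ for the first index $j$ where $\mu$ becomes positive, which is $\le n-b$, contradicting $[M] \le [M_\pi]$ (at that first index $\mu$ strictly exceeds $\nu$). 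Hence $\mu_1 = \dots = \mu_{n-b} = 0$, which means $M \cong \bigoplus_{j=n-b+1}^{n} \oo/\pp^{\mu_j}$ is generated by at most $b$ elements.

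Finally, I would invoke the elementary fact that a quotient (more generally, a subquotient) of an $\oo$-module generated by $\le b$ elements is again generated by $\le b$ elements: for any filtration $0 = \Fil_0 M \subset \cdots \subset \Fil_r M = M$, each $\Gr_i^\Fil M = \Fil_i M / \Fil_{i-1} M$ is a quotient of $\Fil_i M \subseteq M$. A submodule of a finite-length $\oo$-module over the discrete valuation ring $\oo$ is again a direct sum of at most $b$ cyclic modules (the number of cyclic summands cannot increase for a submodule of $\bigoplus_{j=1}^b \oo/\pp^{\mu_j}$ — this is standard for modules over a PID/DVR, e.g. via Lemma \ref{inj_surj} (1) together with the description of $\seq$), hence is generated by $\le b$ elements, and so is its quotient $\Gr_i^\Fil M$. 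This completes the proof. There is no real obstacle here; the only thing requiring a moment of care is the lexicographic-order argument in the second paragraph, which is why I would write it out explicitly rather than just asserting "$[M]\le[M_\pi]$ implies $M$ needs at most $b$ generators."
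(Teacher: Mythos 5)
Your proposal is correct and follows essentially the same route as the paper's (one-line) proof, which simply cites Lemma~\ref{inj_surj}: one applies both parts of that lemma to get $[\Gr_i^\Fil M] \leq [\Fil_i M] \leq [M] \leq [M_\pi]$, and then the lexicographic-order observation — that $\lambda_\pi$ has its first $n-b$ entries equal to zero, so any $[N] \leq [M_\pi]$ has the same property — gives the bound on the number of generators. You have merely reordered these steps (first showing $M$ itself needs at most $b$ generators, then pushing to submodules and quotients), and usefully written out the lex argument that the paper leaves implicit; the underlying ingredients are identical.
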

\begin{proof}
This follows from Lemma \ref{inj_surj}. 
\end{proof}

Now we calculate the alternating sum in the right-hand side of Proposition \ref{alt}.
We will see that there are many non-trivial cancellations. 
See Section \ref{sec.ex_alternating} below for an explicit example of this calculation. 
\par

Choose $2 \leq a \leq t$ such that $y_a-x_{a-1}+2 = b$. 
The following lemma is a key in computing 
the alternating sum in the right-hand side of Proposition \ref{alt}.

\begin{lem}\label{vanish}
Suppose that $[M] \leq [M_\pi]$. 
\begin{enumerate}
\item
Let $X_1$ be the subset of $S_t$ consisting of $w$ such that 
$w(k) \geq a-1$ for any $k \geq a$. 
For $w \in S_t \setminus X_1$, 
take $1 \leq i,j \leq a-1$ such that 
$w(i)$ achieves the largest value, and $w(j)$ achieves the second largest value
among $\{w(1), \dots, w(a-1)\}$, 
and set $w' = w(i,j)$. 
Then $w(i), w(j) \geq a-1$, and 
the map $w \mapsto w'$ is an involution on $S_t \setminus X_1$. 
Moreover, $\NN_{\bn_w}(M) = \NN_{\bn_{w'}}(M)$. 
In particular, 
\[
\sum_{w \in S_t \setminus X_1}\sgn(w) \NN_{\bn_w}(M) = 0.
\]

\item
Let $X_2$ be the subset of $X_1$ consisting of $w$ such that 
$w(k) \geq a$ for any $k > a$. 
For $w \in X_1 \setminus X_2$, 
take a unique $1 \leq i \leq a-1$ such that $w(i) \geq a$, 
and set $w' = w(i,a)$. 
Then the map $w \mapsto w'$ is an involution on $X_1 \setminus X_2$. 
Moreover, $\NN_{\bn_w}(M) = \NN_{\bn_{w'}}(M)$. 
In particular, 
\[
\sum_{w \in X_1 \setminus X_2}\sgn(w) \NN_{\bn_w}(M) = 0.
\]

\item
Let $S_{(a-1,t-a+1)}$ be the subgroup of $S_t$ 
consisting of $w$ such that $w(k) \geq a$ for any $k \geq a$, 
and set $X_3 = \{(a-1,w(a))w \;|\; w \in S_{(a-1,t-a+1)}\}$.
Then 
\[
X_2 = S_{(a-1,t-a+1)} \sqcup X_3.
\]

\item
Let $X_4$ be the subset of $S_t$ consisting of $w$ such that 
$w(a) = a-1$ and $w(k) < a-1$ for some $k > a$. 
Then $X_4 \subset S_t \setminus X_1$, 
and the involution in (1) preserves $X_4$. 
Moreover, the disjoint union $X_3 \sqcup X_4$ is equal to 
the subset of $S_t$ consisting of $w$ such that $w(a) = a-1$.

\item
For $w \in X_4$, take $1 \le i \leq a-1$ 
such that $w(i)$ achieves the largest value among $\{w(1), \dots, w(a-1)\}$, 
in particular $w(i) \geq a$.
Set $\wt{w} = w(a,i)$ and $X_5 = \{\wt{w} \;|\; w \in X_4\}$. 
Then $X_5 \subset S_t \setminus X_1$, 
and the involution in (1) preserves $X_5$. 
\end{enumerate}
\end{lem}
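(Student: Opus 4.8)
The proof of parts (1) and (2) will follow one template: I would produce an explicit fixed-point-free, sign-reversing involution $w \mapsto w'$ on the indicated subset of $S_t$ and show that $\NN_{\bn_w}(M) = \NN_{\bn_{w'}}(M)$, so that the signed sum collapses to $0$. The key preliminary, which I would establish once, is a clamping principle: when $[M] \le [M_\pi]$, Lemma~\ref{bn} guarantees that every graded piece of every filtration of $M$ is generated by at most $b$ elements, so in the definition of an $\bn$-admissible filtration the condition on $\Gr_i^{\Fil} M$ is vacuous whenever $n_i \ge b$; together with the permutation invariance of Lemma~\ref{lem:permutation}, this shows $\NN_{\bn}(M)$ depends on $\bn = (n_1, \dots, n_t)$ only through the multiset $\{\min(n_i, b)\}_i$ (and equals $0$ if some $n_i < 0$). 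Hence $\NN_{\bn_w}(M) = \NN_{\bn_{w'}}(M)$ will follow as soon as $\bn_w$ and $\bn_{w'}$ agree away from the two swapped coordinates and each of those (four) coordinates is $\ge b$.

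For (1), a pigeonhole argument gives the first assertion: if the second largest of $w(1), \dots, w(a-1)$ were $\le a - 2$, then $\{1, \dots, a-2\} \subseteq \{w(1), \dots, w(a-1)\}$, forcing $w \in X_1$; so for $w \in S_t \setminus X_1$ both $w(i), w(j) \ge a-1$. Then $i, j \le a-1$ gives $y_i, y_j \ge y_{a-1}$ and $w(i), w(j) \ge a-1$ gives $x_{w(i)}, x_{w(j)} \le x_{a-1}$, whence $(\bn_w)_i = y_i - x_{w(i)} + 1 \ge y_{a-1} - x_{a-1} + 1 \ge y_a - x_{a-1} + 2 = b$ (using $y_{a-1} > y_a$), and likewise for $(\bn_w)_j$ and, after the swap, for $(\bn_{w'})_i, (\bn_{w'})_j$. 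Swapping positions in $\{1, \dots, a-1\}$ does not change $w(k)$ for $k \ge a$, so $w' \in S_t \setminus X_1$ and the two positions realizing the two largest values of $w'$ on $\{1, \dots, a-1\}$ are again $i$ and $j$; thus $w \mapsto w'$ is an involution, $w' \ne w$ since $i \ne j$, and $\sgn(w') = -\sgn(w)$. Part (2) runs in parallel: from $w \in X_1 \setminus X_2$ one finds $k_0 > a$ with $w(k_0) = a - 1$, hence $w(a) \ge a$ and a unique $i \le a-1$ with $w(i) \ge a$; then $(\bn_w)_i = y_i - x_{w(i)} + 1 \ge y_{a-1} - x_a + 1 \ge b$ and $(\bn_w)_a = y_a - x_{w(a)} + 1 \ge y_a - x_a + 1 \ge y_a - x_{a-1} + 2 = b$ (now using $x_{a-1} > x_a$), with the same bounds for $w'$, and one concludes as before.

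Parts (3), (4), (5) are purely combinatorial statements about $S_t$. For (3), I would partition $X_2$ by the value of $w(a)$: if $w(a) \ge a$ then $w$ carries $\{a, \dots, t\}$ into, hence onto, itself, so $w \in S_{(a-1, t-a+1)}$; if $w(a) = a-1$, letting $m$ be the unique element of $\{a, \dots, t\}$ missing from $w(\{a, \dots, t\})$, the map $w \mapsto (a-1, m)\, w$ is a bijection from this piece onto $S_{(a-1, t-a+1)}$, and chasing it back identifies the piece with $X_3$; the two pieces are disjoint since $w(a)$ separates them. For (4), $X_4 \subseteq S_t \setminus X_1$ is immediate, the involution of (1) alters only the coordinates $< a$ so it fixes $w(a)$ and every $w(k)$ with $k > a$ and hence preserves $X_4$, and $X_3 \sqcup X_4 = \{w : w(a) = a-1\}$ because for such $w$ the condition ``$w(k) \ge a$ for all $k > a$'' is equivalent to $w \in X_2$, hence to $w \in X_3$, and its negation is exactly the defining condition of $X_4$. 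For (5), the same pigeonhole idea (the values $w(1), \dots, w(a-1)$ omit $a-1$, so cannot all lie in $\{1, \dots, a-2\}$) shows $w(i) \ge a$, and since $\wt w = w(a, i)$ does not touch the coordinates $> a$, $\wt w$ inherits from $w \in X_4$ a value $\le a - 2$ beyond position $a$ and so lies in $S_t \setminus X_1$. For the last assertion I would use the description of the involution of (1) as the transposition of the two largest values of $w$ among positions $1, \dots, a-1$ (both $\ge a-1$): one checks that $a-1$ is never the largest value of $\wt w$ on $\{1, \dots, a-1\}$ --- otherwise $\wt w$ would map $\{1, \dots, a-1\}$ onto itself, contradicting the value $\le a-2$ beyond position $a$ --- so applying the involution either fixes the position of $a-1$ or moves $a-1$ to another position in $\{1, \dots, a-1\}$, and in both cases one verifies that the data defining $X_5$ survive (the value $a-1$ stays in the first block; $\wt w(a)$ stays $\ge a$ and remains the maximum of $\wt w$ over $\{1, \dots, a\}$ excluding the slot carrying $a-1$; some value $\le a-2$ remains beyond position $a$).

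I expect the bookkeeping in the final assertion of (5) to be the main obstacle: whereas in (1)--(4) the relevant sets are cut out by simple inequalities that the involution visibly respects, here one must track how the involution of (1) transports the value $a-1$, which forces one to unwind the explicit form of that involution and to run a short case analysis on whether $a-1$ is among the two largest first-block values of $\wt w$. Everything else reduces to pigeonhole counting together with the clamping and permutation-invariance principle for $\NN_{\bn}(M)$.
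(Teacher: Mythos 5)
Your argument for parts (1)--(4) matches the paper's: both rest on the clamping supplied by Lemma \ref{bn} together with the fact that after clamping at $b$ the two $\bn$-vectors already agree coordinate-by-coordinate (the multiset version via Lemma \ref{lem:permutation} is harmless but not actually needed in (1)--(2)); the pigeonhole arguments you write out for $w(i),w(j)\ge a-1$ in (1) and for the unique $i$ in (2), and the case checks for (3) and (4), are exactly what the paper leaves implicit or calls ``obvious.'' In part (5) you take a slightly different route. The paper proves $(\wt w)'\in X_5$ by exhibiting the preimage through explicit transposition identities, namely $(\wt w)' = \wt w(j_1,j_2) = w(a,i)(j_1,j_2) = w(j_1,j_2)(a,i) = \widetilde{w(j_1,j_2)}$ when $j_1,j_2,i,a$ are pairwise distinct, and $(\wt w)' = \wt w(j_1,i) = w(a,i)(j_1,i) = w(j_1,i)(a,j_1) = w'(a,j_1) = \wt{w'}$ when $j_2=i$. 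You instead propose to characterize $X_5$ intrinsically and to show that the characterization is stable under the involution of (1). That is a legitimate and arguably cleaner route, and your preliminary observation that $a-1$ is never the largest first-block value of $\wt w$ is precisely what ensures position $a$ is never touched by the swap. But as written the crucial step is only a gesture: ``the data defining $X_5$ survive'' is an assertion, not a proof. To make it rigorous you would record and verify the characterization --- $u\in X_5$ if and only if $a-1 \in u(\{1,\ldots,a-1\})$, $u(a) > u(k)$ for all $k\le a-1$, and $u(k) < a-1$ for some $k>a$, with preimage $u\cdot(a,\,u^{-1}(a-1))$ --- after which invariance under the involution of (1), which merely permutes values among the positions $1,\ldots,a-1$, is immediate. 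The paper's transposition calculation does that bookkeeping directly; either way works, but the characterization has to be stated and proved, not merely listed.
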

\begin{proof}
We prove (1).
Let $w \in S_t \setminus X_1$, 
and $1 \leq i,j \leq a-1$ be as in the statement. 
Note that $i$ and $j$ depend on $w$, but
the map $w \mapsto w'$ gives a well-defined involution on $S_t \setminus X_1$. 
Since there exists $k \geq a$ such that $w(k) < a-1$,  
we notice that $w(i),w(j) \geq a-1$. 
Hence
\begin{align*}
\min\{y_i-x_{w(i)}+1, y_j-x_{w(j)}+1, y_i-x_{w(j)}+1, y_j-x_{w(i)}+1\} \geq y_a-x_{a-1}+2 = b.
\end{align*}
By Lemma \ref{bn}, we see that $\NN_{\bn_{w}}(M) = \NN_{\bn_{w'}}(M)$.
Since $\sgn(w') = -\sgn(w)$, the last part follows. 
Hence we obtain (1). 
\par

We prove (2).
When $w \in X_1 \setminus X_2$, 
there exists $k > a$ such that $w(k) = a-1$. 
In particular, $w(a) \geq a$. 
Hence the map $w \mapsto w'$ gives a well-defined involution on $X_1 \setminus X_2$. 
By the same argument as in (1), we obtain (2). 
\par

The assertions (3) are (4) are obvious from the definitions. 
\par

We prove (5). 
Let $w \in X_4$. 
Then $\wt{w}(k) = w(k) < a-1$ for some $k > a$ so that $\wt{w} \not\in X_1$.
Take $1 \leq i \leq a-1$ as in the statement so that $\wt{w} = w(a,i)$. 
Note that $\wt{w}(a) = w(i) \geq a$.
Let $1 \leq j_1,j_2 \leq a-1$ be such that 
$\wt{w}(j_1)$ (\resp $\wt{w}(j_2)$) achieves the largest (\resp the second largest) value
among $\{\wt{w}(1), \dots, \wt{w}(a-1)\}$.
Note that $a \leq \wt{w}(j_1) < w(i)$ and $\wt{w}(j_2) \geq a-1$.
If $\wt{w}(j_2) \geq a$, then $j_1,j_2,i,a$ are all distinct from each other. 
In this case,
\[
(\wt{w})' = \wt{w}(j_1,j_2) = w(a,i)(j_1,j_2) = w(j_1,j_2)(a,i) = \wt{w(j_1,j_2)}. 
\]
Hence we have $(\wt{w})' \in X_5$. 
If $\wt{w}(j_2) = a-1$, then $j_2 = i$. 
In this case, 
\begin{align*}
(\wt{w})' = \wt{w}(j_1,i) = w(a,i)(j_1,i) = w(j_1,i)(a,j_1) = w'(a,j_1) = \wt{w'}. 
\end{align*}
Hence we again have $(\wt{w})' \in X_5$. 
\end{proof}

Now we prove Theorem \ref{main1} for a ladder representation 
$\pi = Z([x_1,y_1]_\chi, \dots, [x_t,y_t]_\chi)$ 
of type $\chi$ with unramified character $\chi$. 
\begin{proof}[Proof of Theorem \ref{main1} for ladder representations of type $\chi$]
When $b = 0$, since $\pi$ is unramified, the assertion is trivial. 
From now on, we assume that $b >0$.
In particular, one has $t \geq 2$. 
\par

Set 
\[
\pi' = 
Z([x_1,y_1]_\chi, \dots, [x_{a-2},y_{a-2}]_\chi, [x_a,y_{a-1}]_\chi, [x_{a+1},y_{a+1}]_\chi, \dots, [x_t,y_t]_\chi). 
\]
This is a ladder representation of some $G_{n'}$. 
We claim that 
\[
\dim(\pi^{\K_{n,[M]}}) = \sum_{M' \subset M} \dim(\pi'^{\K_{n',[M/M']}}), 
\]
where $M'$ runs over the set of $\oo$-submodules of $M$ 
generated by exactly $b$ elements. 
\par

Suppose for a moment that this claim is true. 
Note that 
\[
\lambda_\pi = \lambda_{\pi'} + (0,\dots,0, \underbrace{1,\dots,1}_{b}).
\]
By induction on $t$, we may assume that 
we have $\dim(\pi'^{\K_{n',[M/M']}}) = 0$ if $[M/M'] < [M_{\pi'}]$. 
In particular, $\dim(\pi^{\K_{n,[M]}}) = 0$ if $[M] < [M_\pi]$. 
Moreover, when $[M] = [M_\pi]$, 
by Corollary \ref{fil0}, 
there exists a unique $\oo$-submodule $M'$ of $M$ 
generated by exactly $b$ elements such that $[M/M'] = [M_{\pi'}]$. 
Hence we have 
\[
\sum_{M' \subset M} \dim(\pi'^{\K_{n',[M/M']}}) = 1. 
\]
Therefore, the claim implies that 
\[
\dim(\pi^{\K_{n,[M]}}) = \left\{
\begin{aligned}
&1 \iif [M] = [M_\pi], \\
&0 \iif [M] < [M_\pi].
\end{aligned}
\right. 
\]
For the rest of the proof, we show the claim.
\par

Let $X_1, X_2, X_3, X_4, X_5 \subset S_t$ be as in Lemma \ref{vanish}. 
We denote the inverse map of $S_{(a-1,t-a+1)} \ni w \mapsto (a-1,w(a))w \in X_3$
by $X_3 \ni w \mapsto \wt{w} \in S_{(a-1,t-a+1)}$. 
Then by Lemma \ref{vanish} (1)--(3), we have
\begin{align*}
\dim(\pi^{\K_{n,[M]}}) 
&= 
\sum_{w \in X_2}\sgn(w) \NN_{\bn_w}(M)
\\&=
\sum_{w \in X_3}\sgn(\wt{w}) \left(\NN_{\bn_{\wt{w}}}(M) - \NN_{\bn_{w}}(M)\right).
\end{align*}
For $w \in X_3$, there exists $1 \leq i_0 \leq a-1$ uniquely such that $w(i_0) = \wt{w}(a) \geq a$. 
Since $w(a) = \wt{w}(i_0) = a-1$, we have 
\begin{itemize}
\item
$\min\{y_{i_0}-x_{w(i_0)}+1, y_{i_0}-x_{\wt{w}(i_0)}+1\} \geq y_a-x_{a-1}+2 = b$; 
\item
$y_a-x_{\wt{w}(a)}+1 \geq b$, whereas $y_a-x_{w(a)}+1 = b-1$.
\end{itemize}
By Lemma \ref{bn}, $\NN_{\bn_{\wt{w}}}(M)-\NN_{\bn_{w}}(M)$
is equal to the number of filtrations
\[
0 = \Fil_0M \subset \dots \subset \Fil_tM = M
\]
of $M$ by $\oo$-submodules such that 
\begin{itemize}
\item
$\Gr_i^\Fil M$ is generated by at most $y_i-x_{w(i)}+1$ elements for $i \not= a$;
\item
$\Gr_a^\Fil M$ is generated by exactly $b$ elements.
\end{itemize}
By Lemma \ref{lem:permutation}, 
this number is equal to 
the number of pairs $(M', \Fil'_\bullet (M/M'))$, 
where $M' \subset M$ is an $\oo$-submodule generated by exactly $b$ elements, 
and $\Fil'_\bullet (M/M')$ is a filtration 
\[
0 = \Fil'_0 (M/M') 
\subset \dots \subset 
\Fil'_{a-1}(M/M') \subset \Fil'_{a+1}(M/M')
\subset \dots \subset 
\Fil'_{t} (M/M') = M/M'
\]
of $M/M'$ by $\oo$-submodules such that 
$\Gr_i^{\Fil'} (M/M')$ 
is generated by at most $y_i-x_{w(i)}+1$ elements for $i \not= a$. 
Here, we set $\Gr_i^{\Fil'} (M/M') = \Fil'_i(M/M')/\Fil'_{i-1}(M/M')$ unless $i = a,a+1$, 
and $\Gr_{a+1}^{\Fil'} (M/M') = \Fil'_{a+1}(M/M')/\Fil'_{a-1}(M/M')$.
Therefore, 
\[
\sum_{w \in X_2}\sgn(w) \NN_{\bn_w}(M)
= \sum_{w \in X_3} \sum_{M' \subset M} \sgn(\wt{w}) \NN_{\bn'_{w}}(M/M'), 
\]
where $M'$ runs over the set of $\oo$-submodules of $M$ generated by exactly $b$ elements, 
and we set $\bn'_{w} = (n_{w,1},\dots,n_{w,a-1},n_{w,a+1},\dots,n_{w,t})$ with 
$n_{w,i} = y_i-x_{w(i)}+1$ for $i \not= a$. 
\par

Note that $X_4 \cap X_5 = \emptyset$. 
By the same argument as above, we have 
\begin{align*}
\sum_{w \in X_4 \sqcup X_5}\sgn(w) \NN_{\bn_w}(M)
&= 
\sum_{w \in X_4} \sgn(\wt{w}) \left(\NN_{\bn_{\wt{w}}}(M) - \NN_{\bn_w}(M)\right)
\\&= 
\sum_{w \in X_4} \sum_{M' \subset M} \sgn(\wt{w}) \NN_{\bn'_{w}}(M/M'), 
\end{align*}
where $M'$ runs over the set of $\oo$-submodules of $M$ generated by exactly $b$ elements, 
and $\bn'_{w}$ is as above. 
However, by Lemma \ref{vanish} (1), (4), (5), 
we see that the left-hand side is zero. 
Therefore, 
\[
\dim(\pi^{\K_{n,[M]}}) 
= \sum_{M' \subset M} \sum_{w \in X_3 \sqcup X_4} \sgn(\wt{w}) \NN_{\bn'_{w}}(M/M').
\]
\par

Next, we consider the alternating sum
\[
\dim(\pi'^{\K_{n',[M/M']}}) = \sum_{w' \in S_{t-1}} \sgn(w')\NN_{\bn'_{w'}}(M/M').
\]
Here, we regard $S_{t-1}$ as the set of bijective maps 
\[
w' \colon \{1,\dots,a-1,a+1,\dots,t\} \rightarrow \{1,\dots,a-2,a,\dots,t\} 
\]
by identifying $a-1$ and $a$.
For $w \in X_3 \sqcup X_4$, 
define $w'$ to be the restriction of $w$ to $\{1,\dots,a-1,a+1,\dots,t\}$. 
Then we have a bijective map $X_3 \sqcup X_4 \rightarrow S_{t-1}$
since $X_3 \sqcup X_4$ is the subset of $S_t$ consisting of $w$ such that $w(a) = a-1$.
Note that for $w \in X_3 \sqcup X_4$, 
the sign $\sgn(w')$ of $w'$ as an element of $S_{t-1}$
is equal to $\sgn(\wt{w})$.
\par

Therefore, 
\begin{align*}
\dim(\pi^{\K_{n,[M]}}) 
&= \sum_{M' \subset M} \sum_{w \in X_3 \sqcup X_4} \sgn(\wt{w}) \NN_{\bn'_{w}}(M/M')
\\&= \sum_{M' \subset M} \sum_{w' \in S_{t-1}} \sgn(w') \NN_{\bn'_{w'}}(M/M')
\\&= \sum_{M' \subset M} \dim(\pi'^{\K_{n',[M/M']}}). 
\end{align*}
Hence we obtain the claim. 
This completes the proof of Theorem \ref{main1} for ladder representations of type $\chi$.
\end{proof}

\subsection{Example of calculation of the alternating sum}\label{sec.ex_alternating}
To understand the proof of Theorem \ref{main1} for ladder representations of type $\chi$,
the following explicit example may be helpful. 

\begin{ex}\label{ex_ladder}
For simplicity, we drop $\chi$ from the notation.
Let us consider a ladder representation
\[
\pi = Z([5,7],[3,6],[2,5],[0,3]) \in \Irr(G_{15}). 
\]
Then $\lambda_\pi = (0,\dots,0,1,3,3,3) \in \Lambda_{15}$
so that $M_\pi = \oo/\pp \oplus (\oo/\pp^3)^{\oplus 3}$.
By the determinantal formula, we have 
{\footnotesize
\begin{align*}
\pi &= 
Z([5,7]) \times Z([3,6]) \times Z([2,5]) \times Z([0,3])
-Z([3,7]) \times Z([5,6]) \times Z([2,5]) \times Z([0,3])
\\&
-Z([5,7]) \times Z([3,6]) \times Z([0,5]) \times Z([2,3])
+Z([3,7]) \times Z([5,6]) \times Z([0,5]) \times Z([2,3])
\\&
-Z([5,7]) \times Z([2,6]) \times Z([3,5]) \times Z([0,3])
+Z([2,7]) \times Z([5,6]) \times Z([3,5]) \times Z([0,3])
\\&
+Z([5,7]) \times Z([0,6]) \times Z([3,5]) \times Z([2,3])
-Z([0,7]) \times Z([5,6]) \times Z([3,5]) \times Z([2,3])
\\&
-Z([5,7]) \times Z([0,6]) \times Z([2,5]) \times Z([3,3])
+Z([5,7]) \times Z([2,6]) \times Z([0,5]) \times Z([3,3])
\\&
+Z([0,7]) \times Z([5,6]) \times Z([2,5]) \times Z([3,3])
-Z([2,7]) \times Z([5,6]) \times Z([0,5]) \times Z([3,3])
\\&
-Z([2,7]) \times Z([3,6]) \times Z([5,5]) \times Z([0,3])
+Z([3,7]) \times Z([2,6]) \times Z([5,5]) \times Z([0,3])
\\&
+Z([0,7]) \times Z([3,6]) \times Z([5,5]) \times Z([2,3])
-Z([3,7]) \times Z([0,6]) \times Z([5,5]) \times Z([2,3])
\\&
+Z([2,7]) \times Z([0,6]) \times Z([5,5]) \times Z([3,3])
-Z([0,7]) \times Z([2,6]) \times Z([5,5]) \times Z([3,3])
\\&
+Z([2,7]) \times Z([3,6]) \times Z([0,5]) \times Z([5,3])
-Z([3,7]) \times Z([2,6]) \times Z([0,5]) \times Z([5,3])
\\&
-Z([0,7]) \times Z([3,6]) \times Z([2,5]) \times Z([5,3])
+Z([3,7]) \times Z([0,6]) \times Z([2,5]) \times Z([5,3])
\\&
+Z([0,7]) \times Z([2,6]) \times Z([3,5]) \times Z([5,3])
-Z([2,7]) \times Z([0,6]) \times Z([3,5]) \times Z([5,3]).
\end{align*}
}
By Proposition \ref{mackey}, we have 
\begin{align*}
&\dim(\pi^{\K_{15,\lambda_\pi}})
\\&= 
\NN_{(3,4,4,4)}(M_\pi)-\NN_{(5,2,4,4)}(M_\pi)
-\NN_{(3,4,6,2)}(M_\pi)+\NN_{(5,2,6,2)}(M_\pi)
\\&
-\NN_{(3,5,3,4)}(M_\pi)+\NN_{(6,2,3,4)}(M_\pi)
+\NN_{(3,7,3,2)}(M_\pi)-\NN_{(8,2,3,2)}(M_\pi)
\\&
-\NN_{(3,5,4,1)}(M_\pi)+\NN_{(3,5,6,1)}(M_\pi)
+\NN_{(8,2,4,1)}(M_\pi)-\NN_{(6,2,6,1)}(M_\pi)
\\&
-\NN_{(6,4,1,4)}(M_\pi)+\NN_{(5,5,1,4)}(M_\pi)
+\NN_{(8,4,1,2)}(M_\pi)-\NN_{(5,7,1,2)}(M_\pi)
\\&
+\NN_{(6,7,1,1)}(M_\pi)-\NN_{(8,5,1,1)}(M_\pi)
+\NN_{(6,4,6,-1)}(M_\pi)-\NN_{(5,5,6,-1)}(M_\pi)
\\&
-\NN_{(8,4,4,-1)}(M_\pi)+\NN_{(5,7,4,-1)}(M_\pi)
+\NN_{(8,5,3,-1)}(M_\pi)-\NN_{(6,7,3,-1)}(M_\pi).
\end{align*}
By Lemma \ref{bn}, we have
\begin{align*}
\dim(\pi^{\K_{15,\lambda_\pi}})
&= 
\NN_{(3,4,4,4)}(M_\pi)-\NN_{(4,2,4,4)}(M_\pi)
-\NN_{(3,4,4,2)}(M_\pi)+\NN_{(4,2,4,2)}(M_\pi)
\\&
-\NN_{(3,4,3,4)}(M_\pi)+\NN_{(4,2,3,4)}(M_\pi)
+\NN_{(3,4,3,2)}(M_\pi)-\NN_{(4,2,3,2)}(M_\pi)
\\&
-\NN_{(3,4,4,1)}(M_\pi)+\NN_{(3,4,4,1)}(M_\pi)
+\NN_{(4,2,4,1)}(M_\pi)-\NN_{(4,2,4,1)}(M_\pi)
\\&
-\NN_{(4,4,1,4)}(M_\pi)+\NN_{(4,4,1,4)}(M_\pi)
+\NN_{(4,4,1,2)}(M_\pi)-\NN_{(4,4,1,2)}(M_\pi)
\\&
+\NN_{(4,4,1,1)}(M_\pi)-\NN_{(4,4,1,1)}(M_\pi)
\\&=
\NN_{(3,4,4,4)}(M_\pi)-\NN_{(4,2,4,4)}(M_\pi)
-\NN_{(3,4,4,2)}(M_\pi)+\NN_{(4,2,4,2)}(M_\pi)
\\&
-\NN_{(3,4,3,4)}(M_\pi)+\NN_{(4,2,3,4)}(M_\pi)
+\NN_{(3,4,3,2)}(M_\pi)-\NN_{(4,2,3,2)}(M_\pi).
\end{align*}
Note that 
if a filtration $\Fil_\bullet M_\pi$ satisfies that 
$\Gr_3^\Fil M_\pi$ is generated by exactly $4$ elements, 
then $\Gr_i^\Fil M_\pi$ for $i = 1,2,4$ can be generated by at most $3$ elements 
by Lemmas \ref{lem:permutation}, \ref{inj_surj} and \ref{bn}.
Hence
\begin{align*}
\NN_{(3,4,4,4)}(M_\pi)-\NN_{(3,4,3,4)}(M_\pi) &= \NN_{(3,3,4,3)}(M_\pi)-\NN_{(3,3,3,3)}(M_\pi), \\
\NN_{(4,2,4,4)}(M_\pi)-\NN_{(4,2,3,4)}(M_\pi) &= \NN_{(3,2,4,3)}(M_\pi)-\NN_{(3,2,3,3)}(M_\pi), \\
\NN_{(3,4,4,2)}(M_\pi)-\NN_{(3,4,3,2)}(M_\pi) &= \NN_{(3,3,4,2)}(M_\pi)-\NN_{(3,3,3,2)}(M_\pi), \\
\NN_{(4,2,4,2)}(M_\pi)-\NN_{(4,2,3,2)}(M_\pi) &= \NN_{(3,2,4,2)}(M_\pi)-\NN_{(3,2,3,2)}(M_\pi).
\end{align*}
Therefore, 
\begin{align*}
\dim(\pi^{\K_{15,\lambda_\pi}})
&=
[(\NN_{(3,3,4,3)}(M_\pi)-\NN_{(3,3,3,3)}(M_\pi))
-(\NN_{(3,2,4,3)}(M_\pi)-\NN_{(3,2,3,3)}(M_\pi))]
\\&-[
(\NN_{(3,3,4,2)}(M_\pi)-\NN_{(3,3,3,2)}(M_\pi))
-(\NN_{(3,2,4,2)}(M_\pi)-\NN_{(3,2,3,2)}(M_\pi))
].
\end{align*}
The right-hand side is equal to the number of filtrations
\[
0 = \Fil_0M_\pi \subset \Fil_1M_\pi \subset \Fil_2M_\pi 
\subset \Fil_3M_\pi \subset \Fil_4M_\pi = M_\pi
\]
such that
\begin{itemize}
\item
$\Gr^\Fil_2 M_\pi$ is generated by exactly $3$ elements; 
\item
$\Gr^\Fil_3 M_\pi$ is generated by exactly $4$ elements; 
\item
$\Gr^\Fil_4 M_\pi$ is generated by exactly $3$ elements.
\end{itemize}
Since $M_\pi = \oo/\pp \oplus (\oo/\pp^3)^{\oplus 3}$, 
such a filtration exists uniquely and is given by 
\[
\Fil_1M_\pi = 0, \quad
\Fil_2M_\pi = (\pp^2/\pp^3)^{\oplus 3}, \quad
\Fil_3M_\pi = \oo/\pp \oplus (\pp^1/\pp^3)^{\oplus 3}, \quad
\Fil_4M_\pi = M_\pi.
\]
Therefore, we conclude that 
$\dim(\pi^{\K_{15,\lambda_\pi}}) = 1$, as desired.
\end{ex}

\subsection{Proof of Theorem \ref{main1} for general $Z(\mm)$ of type $\chi$}
Now we consider $\pi = Z(\mm)$ with $\mm$ of type $\chi$
for some unramified character $\chi$ of $F^\times$. 

\begin{lem}\label{lem:sum}
Let $\mm_1$ and $\mm_2$ be multisegments.
Then $Z(\mm_1 + \mm_2)$ appears as a subquotient 
of $Z(\mm_1) \times Z(\mm_2)$ with multiplicity one.
\end{lem}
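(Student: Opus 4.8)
The plan is to compute the Jordan--Hölder multiplicity of $Z(\mm_1+\mm_2)$ in $Z(\mm_1)\times Z(\mm_2)$ by passing to the Grothendieck ring $\Rr$ of finite-length representations of the $G_n$'s, where $\times$ is a commutative associative product and where, by Zelevinsky's theorem (\cite[7.1 Theorem]{Z}), the classes of the standard modules form a second $\Z$-basis related to $\{[Z(\mm)]\}_\mm$ by a matrix that is unipotent and triangular with respect to the Zelevinsky order $\leq$ on multisegments. For $\nn=\Delta_1+\dots+\Delta_s$ write $\zeta(\nn)=Z(\Delta_1)\times\dots\times Z(\Delta_s)$ for the associated standard module, with the segments ordered so that $Z(\nn)$ is its unique irreducible subrepresentation; then $[\zeta(\nn)]$ depends only on $\nn$ (it is the product of the $[Z(\Delta_i)]$), and $[\zeta(\nn)]=[Z(\nn)]+\sum_{\nn'<\nn}a_{\nn,\nn'}[Z(\nn')]$ with $a_{\nn,\nn'}\geq 0$.

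First I would prove the upper bound. Since $Z(\mm_i)$ is a subrepresentation of $\zeta(\mm_i)$ and $\times$ is exact, $Z(\mm_1)\times Z(\mm_2)$ is a subrepresentation of $\zeta(\mm_1)\times\zeta(\mm_2)$, and in $\Rr$ we have $[\zeta(\mm_1)\times\zeta(\mm_2)]=[\zeta(\mm_1+\mm_2)]$, both sides being the product of the classes $[Z(\Delta)]$ over all segments $\Delta$ occurring in $\mm_1$ or $\mm_2$. As $Z(\mm_1+\mm_2)$ occurs in $[\zeta(\mm_1+\mm_2)]$ with multiplicity one, it occurs in $[Z(\mm_1)\times Z(\mm_2)]$ with multiplicity at most one. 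For the reverse inequality I would invert the unitriangular change of basis, writing $[Z(\mm_i)]=\sum_{\nn_i\leq\mm_i}d_{\mm_i,\nn_i}[\zeta(\nn_i)]$ with $d_{\mm_i,\mm_i}=1$, so that
\[
[Z(\mm_1)\times Z(\mm_2)]=\sum_{\nn_1\leq\mm_1}\sum_{\nn_2\leq\mm_2}d_{\mm_1,\nn_1}d_{\mm_2,\nn_2}\,[\zeta(\nn_1+\nn_2)].
\]
The multiplicity of $Z(\mm_1+\mm_2)$ in $[\zeta(\nn_1+\nn_2)]$ is nonzero only when $\nn_1+\nn_2\geq\mm_1+\mm_2$; on the other hand the Zelevinsky order is compatible with addition of multisegments (an elementary operation inside $\nn_i$ is still one inside $\nn_1+\nn_2$), so $\nn_1+\nn_2\leq\mm_1+\mm_2$, forcing $\nn_1+\nn_2=\mm_1+\mm_2$. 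The key combinatorial input, stated below, is that this together with $\nn_i\leq\mm_i$ forces $\nn_1=\mm_1$ and $\nn_2=\mm_2$; granting it, only the diagonal term survives, with coefficient $d_{\mm_1,\mm_1}d_{\mm_2,\mm_2}\cdot 1=1$, which is the claim.

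It remains to prove the cancellation statement: if $\nn_1\leq\mm_1$, $\nn_2\leq\mm_2$ and $\nn_1+\nn_2=\mm_1+\mm_2$, then $\nn_i=\mm_i$. Since elementary operations only merge linked segments on a common cuspidal line, both $\leq$ and $+$ respect the decomposition of a multisegment according to cuspidal lines, so I may assume all segments lie on one line and, after a twist, are of the form $[a,b]_\rho$ with $a\leq b$ integers. For integers $i\leq j$ put $c_{i,j}(\mm)=\#\{[a,b]_\rho\in\mm : a\leq i,\ b\geq j\}$, the number of segments of $\mm$ containing $[i,j]_\rho$. This is additive in $\mm$, and a routine case check (for each $[i,j]$, against the two segments involved) shows it is non-decreasing when one replaces a linked pair $\{\Delta,\Delta'\}$ by $\{\Delta\cup\Delta',\Delta\cap\Delta'\}$; hence $\nn\leq\mm$ implies $c_{i,j}(\nn)\geq c_{i,j}(\mm)$ for all $i\leq j$. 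Applying this to $\nn_1\leq\mm_1$ and $\nn_2\leq\mm_2$, summing, and using additivity with $\nn_1+\nn_2=\mm_1+\mm_2$, I get $c_{i,j}(\nn_1)+c_{i,j}(\nn_2)=c_{i,j}(\mm_1)+c_{i,j}(\mm_2)$ together with the two inequalities, so both are equalities. Since the multiplicity of $[i,j]_\rho$ in a multisegment equals $c_{i,j}-c_{i-1,j}-c_{i,j+1}+c_{i-1,j+1}$, the family $(c_{i,j})$ determines the multisegment, and therefore $\nn_\ell=\mm_\ell$.

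The main obstacle is precisely this last cancellation lemma, i.e.\ the fact that the Zelevinsky order is cancellative with respect to the monoid structure on multisegments; everything else is formal manipulation in $\Rr$ using \cite[7.1 Theorem]{Z}. I expect the $c_{i,j}$-bookkeeping to be the only point where the finer combinatorics of the Zelevinsky order, beyond its existence and unitriangularity, is needed.
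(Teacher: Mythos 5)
Your proof is correct. The paper itself dispatches this lemma by citing \cite[Proposition 2.3]{T} and \cite[Proposition 3.5 (5)]{LMi2}; you are supplying the argument the paper delegates to the literature. Your route --- invert the unitriangular transition matrix of \cite[7.1 Theorem]{Z} to write $[Z(\mm_i)]$ as a $\Z$-combination of standard modules $[\zeta(\mathfrak{n}_i)]$ over $\mathfrak{n}_i \leq \mm_i$, observe that only terms with $\mathfrak{n}_1+\mathfrak{n}_2=\mm_1+\mm_2$ can contribute, and reduce to the cancellativity of the Zelevinsky order with respect to $+$ --- is the natural one and presumably agrees in spirit with the cited proofs. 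The $c_{i,j}$-bookkeeping for the cancellation step checks out: for a linked pair $\{[a,b]_\rho,[a',b']_\rho\}$ with $a<a'$, $b<b'$, writing $P,Q,R,S$ for the indicators $[a\leq i]$, $[j\leq b]$, $[a'\leq i]$, $[j\leq b']$ (so $R\Rightarrow P$ and $Q\Rightarrow S$), the change of $c_{i,j}$ under the elementary operation is $(PS+RQ)-(PQ+RS)=(P-R)(S-Q)\geq 0$, confirming monotonicity; and the inclusion--exclusion identity $c_{i,j}-c_{i-1,j}-c_{i,j+1}+c_{i-1,j+1}$ recovers the multiplicity of $[i,j]_\rho$, so $(c_{i,j})_{i\leq j}$ determines the multisegment on each cuspidal line. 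One minor remark: your opening ``upper bound'' paragraph is logically redundant, since the unitriangular-inversion step already computes the multiplicity exactly; but including it does no harm and aids readability.
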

\begin{proof}
See \cite[Proposition 2.3]{T} (or \cite[Proposition 3.5 (5)]{LMi2}). 
\end{proof}

Recall that when $\mm = \Delta_1+\dots+\Delta_r$, we set $\Card(\mm) = r$. 
\begin{lem} \label{lem:aux0}
Let $\mm$, $\mm_1$, and $\mm_2$ be multisegments.
Suppose that $Z(\mm)$ appears as a subquotient of
$Z(\mm_1) \times Z(\mm_2)$.
Then $Z(\mm^{-})$ appears as a subquotient of
$Z(\mm_1^{-}) \times Z(\mm_2^{-})$ if and only
if $\Card(\mm) = \Card(\mm_1) + \Card(\mm_2)$.
\end{lem}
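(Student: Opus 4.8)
\textbf{Proof proposal for Lemma \ref{lem:aux0}.}
The plan is to translate both sides of the claimed equivalence into statements about derivatives and then to use the fundamental fact (Zelevinsky \cite[8.1 Theorem]{Z}) that the highest derivative of $Z(\mm)$ is $Z(\mm^-)$, together with exactness properties of the Jacquet functor and the additivity of length-type invariants under parabolic induction. First I would recall that for a representation $\tau$ of $G_n$, the functor $\tau \mapsto \tau^{(k)}$ (the $k$-th Bernstein--Zelevinsky derivative) is exact, and that the highest derivative $\tau^{(h)}$ (where $h$ is maximal with $\tau^{(h)} \ne 0$) is irreducible when $\tau$ is irreducible; moreover the order of the highest derivative of $Z(\mm)$ equals $\Card(\mm)$, since $Z(\mm^-)$ lives on $G_{n - \Card(\mm)}$. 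The Leibniz-type rule for derivatives of a parabolic induction gives that $(Z(\mm_1) \times Z(\mm_2))^{(k)}$ has a filtration whose graded pieces are $Z(\mm_1)^{(i)} \times Z(\mm_2)^{(j)}$ with $i + j = k$, each piece nonzero only for $i \le \Card(\mm_1)$, $j \le \Card(\mm_2)$.

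Next I would argue both directions. Suppose $\Card(\mm) = \Card(\mm_1) + \Card(\mm_2) =: h$. Since $Z(\mm)$ is a subquotient of $Z(\mm_1) \times Z(\mm_2)$ and the $h$-th derivative is exact, $Z(\mm)^{(h)} = Z(\mm^-)$ is a subquotient of $(Z(\mm_1) \times Z(\mm_2))^{(h)}$. But $h$ is the maximal derivative order on both sides, so the only surviving graded piece in the Leibniz filtration is $Z(\mm_1)^{(\Card(\mm_1))} \times Z(\mm_2)^{(\Card(\mm_2))} = Z(\mm_1^-) \times Z(\mm_2^-)$; hence $Z(\mm^-)$ is a subquotient of $Z(\mm_1^-) \times Z(\mm_2^-)$. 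Conversely, suppose $Z(\mm^-)$ is a subquotient of $Z(\mm_1^-) \times Z(\mm_2^-)$. Note $\Card(\mm) \le \Card(\mm_1) + \Card(\mm_2)$ always holds (for instance because the Zelevinsky dual sends cardinality to length and $\Card(\mm)$ is the number of segments, which is bounded by additivity of supports under subquotients of $Z(\mm_1)\times Z(\mm_2)$; alternatively one can see it directly from the derivative-order inequality: the highest derivative order of a subquotient is at most that of the ambient representation, which is $\Card(\mm_1)+\Card(\mm_2)$). If the inequality were strict, then writing $h = \Card(\mm_1) + \Card(\mm_2)$ we would have $Z(\mm)^{(h)} = 0$, whereas $Z(\mm_1^-) \times Z(\mm_2^-)$ has the same $G_{n-h}$ as $Z(\mm^-)$ — so we would need $Z(\mm^-)$ to live on a group of rank $n - \Card(\mm) > n - h$, contradicting that $Z(\mm^-)$ is a subquotient of a representation of $G_{n-h}$. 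Hence $\Card(\mm) = h$.

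The main obstacle is making the Leibniz/geometric lemma bookkeeping precise enough to justify the step ``the only surviving graded piece in the top derivative is $Z(\mm_1^-) \times Z(\mm_2^-)$'': one must check that for $k = \Card(\mm_1) + \Card(\mm_2)$ the only pair $(i,j)$ with $i + j = k$, $i \le \Card(\mm_1)$, $j \le \Card(\mm_2)$ contributing a nonzero term is $(i,j) = (\Card(\mm_1), \Card(\mm_2))$, and that $Z(\mm_i)^{(\Card(\mm_i))} = Z(\mm_i^-)$ is exactly the highest (hence nonzero and irreducible) derivative — this is immediate from \cite[8.1 Theorem]{Z} and the formula $\Card(\mm_i) = l(\mm_i) - l(\mm_i^-)$ type consideration, but one should state it cleanly. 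A clean alternative for the converse direction, avoiding rank bookkeeping, is: since $Z(\mm^-)$ is a subquotient of $Z(\mm_1^-)\times Z(\mm_2^-)$, applying $Z(\mm^-) \hookrightarrow$ the induction and comparing with the known containment $Z(\mm) \subset Z(\mm_1)\times Z(\mm_2)$ via the fact that the highest-derivative functor is a bijection between irreducible constituents of a representation $\tau$ with maximal derivative order and irreducible constituents of $\tau^{(h_{\max})}$ — but I expect the rank-count argument above to be the shortest route. I would also remark that one cannot drop the hypothesis that $Z(\mm)$ is a subquotient of $Z(\mm_1)\times Z(\mm_2)$: it is used to place $Z(\mm)$ on the correct group and to transport the subquotient relation through the exact derivative functor.
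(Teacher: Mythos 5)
Your proof is correct and takes essentially the same approach as the paper: both directions rest on exactness of the top Bernstein--Zelevinsky derivative, the Leibniz filtration of $(Z(\mm_1)\times Z(\mm_2))^{(c)}$ (the ``bookkeeping obstacle'' you flag is immediate, since for $c=\Card(\mm_1)+\Card(\mm_2)$ the constraints $i+j=c$, $i\le\Card(\mm_1)$, $j\le\Card(\mm_2)$ force $(i,j)=(\Card(\mm_1),\Card(\mm_2))$), and, for the converse, a counting argument --- the paper compares cuspidal supports to get $l(\mm)=l(\mm_1)+l(\mm_2)$ and $l(\mm^-)=l(\mm_1^-)+l(\mm_2^-)$ and then subtracts using $l(\mm)-l(\mm^-)=\Card(\mm)$, which amounts to the same thing as your rank count in the type-$\chi$ setting where the lemma is actually used. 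One small slip worth correcting: the parenthetical claim that ``the Zelevinsky dual sends cardinality to length'' is false in general (for instance, the dual of a Steinberg multisegment, all of whose segments are singletons, is a single segment), but this is harmless since the derivative-order inequality you offer as the alternative justification is what actually carries the argument.
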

\begin{proof}
Suppose that
 $Z(\mm^{-})$ appears as a subquotient of
$Z(\mm_1^{-}) \times Z(\mm_2^{-})$.
By considering cuspidal supports, 
we have $l(\mm^{-}) = l(\mm_1^{-})
+ l(\mm_2^{-})$.
For a similar reason, we have 
$l(\mm) = l(\mm_1) + l(\mm_2)$.
Since $l(\mm) = l(\mm^{-}) + \Card(\mm)$,
we have the desired equality 
$\Card(\mm) = \Card(\mm_1) + \Card(\mm_2)$.
\par

Conversely, suppose that
the equality
$\Card(\mm) = \Card(\mm_1) + \Card(\mm_2)$
holds.
We set $c=\Card(\mm)$. Then the $c$-th derivatives
of $Z(\mm)$ and $Z(\mm_1) \times Z(\mm_2)$
are equal to $Z(\mm^{-})$ and $Z(\mm_1^{-})
\times Z(\mm_2^{-})$, respectively.
Since the $c$-th derivative is an exact functor
(cf.\ \cite[3.2, 3.5]{BZ}), the assertion follows.
\end{proof}

\begin{proof}[Proof of Theorem \ref{main1} for $\pi = Z(\mm)$ of type $\chi$]
Let $\pi = Z(\mm)$ be an irreducible representation of $G_n$, 
where $\mm = \Delta_1+\dots+\Delta_r$ is a multisegment of type $\chi$
for some unramified character $\chi$ of $F^\times$. 
Let $t_\mm$ be the number 
of pairs of linked segments in $\{\Delta_1, \dots, \Delta_r\}$. 
Note that $t_\mm \leq \binom{l(\mm)}{2}$ since $r \leq l(\mm)$.
\par

We prove the claim by induction 
on the element $(l(\mm),t_\mm)$ 
in the set $S = \{(l,t) \in \Z_{\ge 0}^2 \;|\; t \leq \binom{l}{2}\}$.
Here we endow this set with the following total order. 
We have $(l,t) \le (l',t')$ 
if and only if
we have either $l<l'$, or $l=l'$ and $t \leq t'$.
Note that for a fixed element $(l,t) \in S$, 
there are only finitely many elements in $S$ that are less than $(l,t)$.
\par

Recall that we have a decomposition $\mm = \mm_\maxi + \mm^\rest$
as in Section \ref{sec.def_lambda}. 
We note that $Z(\mm_\maxi)$ is a ladder representation. 
In particular, if $\mm = \mm_\maxi$, 
then we have the claim for $\mm$ (Section \ref{sec.pf_ladder}).
\par

From now on, we assume that $\mm_\maxi \neq \mm$.
Set 
\[
\Pi = Z(\mm_\maxi) \times Z(\mm^\rest). 
\]
Since $l(\mm^\rest) < l(\mm)$, 
it follows from Proposition \ref{mackey}, Corollary \ref{cor:fund4} and
the inductive hypothesis that 
\[
\dim(\Pi^{\K_{n,\lambda}}) = \left\{
\begin{aligned}
&1 \iif \lambda = \lambda_\mm, \\
&0 \iif \lambda < \lambda_\mm. 
\end{aligned}
\right. 
\]
It follows from Lemma \ref{lem:sum}
that $Z(\mm)$ appears as a subquotient of $\Pi$.
This implies that 
the $\K_{n,\lambda}$-invariant part
of $Z(\mm)$ is equal to zero if $\lambda < \lambda_{\mm}$.
Hence it remains to show that the $\K_{n,\lambda_{\mm}}$-invariant 
part of $Z(\mm)$ is one-dimensional.
To do this, we may assume that $\Pi$ is reducible, 
which implies that $t_\mm > 0$.
\par

For an irreducible representation $\pi$ of $G_n$
and a representation $\sigma$ of $G_n$ of finite length, we write
$\pi \subq \sigma$ if $\pi$ appears as a subquotient of $\sigma$.
Let $\mm' \neq \mm$ be a multisegment and suppose that $Z(\mm') \subq \Pi$.
It follows from \cite[7.1 Theorem]{Z} that 
$\mm'$ is obtained by successively applying 
elementary operations to $\mm$.
In particular we have $l(\mm') = l(\mm)$
and $t_{\mm'} < t_\mm$.
Hence by the inductive hypothesis, we have
\[
\dim(Z(\mm')^{\K_{n,\lambda'}}) = \left\{
\begin{aligned}
&1 \iif \lambda' = \lambda_{\mm'}, \\
&0 \iif \lambda' < \lambda_{\mm'}. 
\end{aligned}
\right. 
\]
Note that this implies $\lambda_{\mm'} \ge \lambda_{\mm}$.
In fact, if $\lambda_{\mm'} < \lambda_{\mm}$, then
the $\K_{n,\lambda_{\mm'}}$-invariant part of
$\Pi$ would be non-zero, which is a contradiction.
\par

Now we claim that $\lambda_{\mm'} > \lambda_{\mm}$.
For a proof by contradiction, suppose that $\lambda_{\mm'} =\lambda_{\mm}$.
Since $l(\mm^\ram) = |\lambda_{\mm}|$ and $l(\mm'^\ram) = |\lambda_{\mm'}|$, 
by Proposition \ref{lem:fund3}, we have
\[
l(\mm'^\ram) = l(\mm^\ram) = l((\mm_\maxi)^\ram) + l((\mm^\rest)^\ram).
\]
In particular, we have 
\[
\Card(\mm'^\sharp) = 
l(\mm') - l(\mm'^\ram)
= l(\mm) - l(\mm^\ram) = \Card(\mm^\sharp).
\]
\par

By our assumption, we have 
$Z(\mm), Z(\mm') \subq Z(\mm_\maxi) \times Z(\mm^\rest)$.
Proposition \ref{lem:fund3} together with Lemma \ref{lem:sum} implies that
$Z(\mm^\ram) \subq Z((\mm_\maxi)^\ram) \times Z((\mm^\rest)^\ram)$.
By taking the Zelevinsky duals, we have
$Z(\mm^\sharp), Z(\mm'^\sharp) \subq 
Z((\mm_\maxi)^\sharp) \times Z((\mm^\rest)^\sharp)$,
and 
$Z((\mm^\sharp)^{-}) \subq
Z(((\mm_\maxi)^\sharp)^{-}) \times Z(((\mm^\rest)^\sharp)^{-})$.
Hence it follows from Lemma \ref{lem:aux0} that
\[
\Card(\mm^\sharp)=\Card((\mm_\maxi)^\sharp)
+ \Card((\mm^\rest)^\sharp).
\]
Since we have seen that $\Card(\mm'^\sharp) = \Card(\mm^\sharp)$,
it again follows from 
Lemma \ref{lem:aux0} that
$Z((\mm'^\sharp)^{-}) \subq
Z(((\mm_\maxi)^\sharp)^{-}) \times Z(((\mm^\rest)^\sharp)^{-})$.
Again by taking the Zelevinsky duals, we see that
\[
Z(\mm'^\ram) \subq
Z((\mm_\maxi)^\ram) \times Z((\mm^\rest)^\ram).
\]
This implies that $\mm'^\ram$ is obtained from
$\mm^\ram = (\mm_\maxi)^\ram + (\mm^\rest)^\ram$
by a successive chain of elementary operations.
\par

Since we have assumed that $\lambda_{\mm'} = \lambda_{\mm}$,
it follows that $\mm'^\ram = \mm^\ram$ and hence
$(\mm'^\sharp)^{-} = (\mm^\sharp)^{-}$.
Observe that for any integer $a\in \Z$, 
the number of segments in $\mm'$ that contain $\chi |\cdot|^a$ 
is equal to the number of segments in $\mm$ that contain $\chi |\cdot|^a$.
Hence the equality 
$(\mm'^\sharp)^{-} = (\mm^\sharp)^{-}$
implies the equality $\mm'^\sharp = \mm^\sharp$.
By taking the Zelevinsky duals, we obtain the
equality $\mm' =\mm$, which is a contradiction.
This completes the proof of the inequality
$\lambda_{\mm'} > \lambda_{\mm}$.
\par

Since $\mm' \neq \mm$ is an arbitrary multisegment satisfying $\mm' \subq \Pi$, 
we see that 
the equation $\dim(\Pi^{\K_{n,\lambda_{\mm}}}) = 1$ implies  
$\dim(Z(\mm)^{\K_{n,\lambda_{\mm}}}) = 1$. 
This completes the proof.
\end{proof}

\subsection{Proof of Theorem \ref{conj1-2} for $Z(\mm)$ of type $\chi$}
\label{sec.pf_conj_unip}
In this section, we give a proof of Theorem \ref{conj1-2} 
for $\pi = Z(\mm)$ with $\mm$ of type $\chi$, 
where $\chi$ is an unramified character of $F^\times$.
\par

We consider the polynomial ring $R = \Z[x_1,x_2,\ldots]$
in countably many variables $\{x_i\}_{i \geq 1}$.
For an $\oo$-module $M$ of finite length, 
we define a homomorphism $\xi_M \colon R \to \Z$ of $\Z$-modules as follows. 
We set $\xi_M(1)=1$ if $M=0$, and $\xi_M(1)=0$ otherwise. 
For a monomial $x_{m_1} \cdots x_{m_s}$ in $R$, 
we define its image by $\xi_M$ to be the number of increasing filtrations
\[
0 = \Fil_0 M \subset \cdots \subset \Fil_s M = M
\]
on $M$ by $\oo$-submodules such that for $i=1,\ldots,s$, 
the $i$-th graded piece $\Gr^\Fil_i M$ is generated exactly by $m_i$ elements. 
By Lemma 4.13, the homomorphism $\xi_M$ is well-defined.
\par

For an integer $m \geq 0$, we set
\[
y_m = 1 + x_1 + \cdots + x_m \in R.
\]

\begin{lem} \label{product_y}
Let $M$ be an $\oo$-module of finite length.
Then the integer $\xi_M(y_{m_1} \cdots y_{m_s})$ is equal to
the number $\NN_{(m_1,\ldots,m_s)}(M)$
of $(m_1,\ldots,m_s)$-admissible filtrations on $M$.
\end{lem}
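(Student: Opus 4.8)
The plan is to expand the product $y_{m_1}\cdots y_{m_s}$, apply the $\Z$-linear map $\xi_M$ term by term, and match the resulting combinatorial sum with the number of $(m_1,\ldots,m_s)$-admissible filtrations on $M$. First I would write
\[
y_{m_1} \cdots y_{m_s} = \prod_{i=1}^s \bigl(1 + x_1 + \cdots + x_{m_i}\bigr)
= \sum_{c} \ \prod_{\substack{1 \le i \le s \\ c(i) \ge 1}} x_{c(i)},
\]
where $c$ ranges over all functions $c \colon \{1,\ldots,s\} \to \Z_{\ge 0}$ with $c(i) \le m_i$ for every $i$, the value $c(i)=0$ recording the choice of the constant term $1$ from the $i$-th factor. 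Since $\xi_M$ is $\Z$-linear, it then suffices to evaluate $\xi_M$ on each monomial $\prod_{c(i)\ge 1} x_{c(i)}$ and to sum over $c$.

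Next I would establish a ``stretching'' bijection computing each such value. Fix $c$, and let $t = \#\{i : c(i)\ge 1\}$. Directly from the definition of $\xi_M$, the number $\xi_M\bigl(\prod_{c(i)\ge 1} x_{c(i)}\bigr)$ is the number of increasing filtrations of $M$ of length $t$ whose graded pieces are generated by exactly the values $c(i) \ge 1$, listed in increasing order of $i$ (this count is independent of the chosen order by Lemma \ref{lem:permutation}, which is also what makes $\xi_M$ well-defined on monomials in the first place). I claim this equals the number of increasing filtrations $0 = \Fil_0 M \subset \cdots \subset \Fil_s M = M$ by $\oo$-submodules such that $\Gr_i^\Fil M$ is generated by exactly $c(i)$ elements for \emph{all} $i=1,\ldots,s$, with the convention that ``exactly $0$ elements'' means $\Gr_i^\Fil M = 0$, i.e.\ $\Fil_i M = \Fil_{i-1} M$. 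Indeed, a length-$s$ filtration of this type is exactly determined by its collapse to the strict sub-chain obtained by deleting the repeated terms (the repetitions occur precisely at the positions $i$ with $c(i) = 0$), and conversely each shorter filtration stretches uniquely by reinserting the forced repetitions. The degenerate case in which the monomial is the empty product $1$ is covered by the convention $\xi_M(1)=1$ if $M=0$ and $\xi_M(1)=0$ otherwise, since a length-$s$ filtration all of whose graded pieces vanish exists if and only if $M=0$.

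Finally I would reorganise the double sum by grouping over filtrations instead of over $c$. Given an increasing filtration $0 = \Fil_0 M \subset \cdots \subset \Fil_s M = M$, let $d_i$ be the minimal number of generators of $\Gr_i^\Fil M$. Then this filtration is counted, by the previous step, by exactly one function $c$ in the sum, namely $c = (d_1,\ldots,d_s)$, and this $c$ satisfies $c(i) \le m_i$ for all $i$ precisely when the filtration is $(m_1,\ldots,m_s)$-admissible (that is, $\Fil_0 M = 0$, $\Fil_s M = M$, and $\Gr_i^\Fil M$ is generated by at most $m_i$ elements for each $i$). Combining the three steps yields $\xi_M(y_{m_1}\cdots y_{m_s}) = \NN_{(m_1,\ldots,m_s)}(M)$, as desired.

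This is essentially a bookkeeping argument with no deep obstacle; the only point that needs genuine care is the treatment of the constant terms $1$ in the factors $y_{m_i}$, i.e.\ systematically allowing graded pieces to be zero and checking that the collapse/stretch correspondence is a true bijection, so that each admissible filtration is counted exactly once and nothing spurious is counted.
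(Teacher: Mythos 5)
Your proof is correct and follows the route the paper has in mind: expand the product $y_{m_1}\cdots y_{m_s}$ into monomials, evaluate $\xi_M$ on each monomial via its definition, and sum over the choices. The paper records this as ``immediate from the definitions''; your write-up simply supplies the bookkeeping (the collapse/stretch bijection handling the constant terms $1$ in the factors $y_{m_i}$, with the well-definedness of $\xi_M$ resting on Lemma~\ref{lem:permutation}) that the authors leave implicit.
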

\begin{proof}
This is immediate from the definition of
the homomorphism $\xi_M$ and the definition of
$(m_1,\ldots,m_s)$-admissible filtrations.
\end{proof}

By setting $\deg x_m = m$ for $m \geq 1$, 
we regard $R$ as a graded ring. 
For any integer $m \ge 0$,
let $R_m$ denote the degree-$m$-part of $R$
and set 
\[
I_m = \bigoplus_{i \ge m} R_i.
\]
Then $I_m$ is an ideal of $R$ and we have
$I_{m} \cdot I_{m'} \subset I_{m+m'}$.

\begin{lem} \label{I_m}
Let $m \ge 0$ be an integer and let
$M$ be an $\oo$-module of length less than $m$.
Then we have $\xi_M(I_m)=0$.
\end{lem}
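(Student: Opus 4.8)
The plan is to reduce the statement to monomials and then finish with a one-line length count. Since $\xi_M \colon R \to \Z$ is a homomorphism of $\Z$-modules and the ideal $I_m = \bigoplus_{i \ge m} R_i$ is, as a $\Z$-module, generated by the monomials $x_{m_1} \cdots x_{m_s}$ with $m_1 + \cdots + m_s \ge m$ (this is immediate from $\deg x_j = j$), it suffices to show $\xi_M(x_{m_1} \cdots x_{m_s}) = 0$ for every such monomial. The case $m = 0$ is vacuous, since no $\oo$-module has negative length; and for $m \ge 1$ every monomial in $I_m$ is a genuine product $x_{m_1} \cdots x_{m_s}$ with $s \ge 1$.

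Next I would unwind the definition: $\xi_M(x_{m_1} \cdots x_{m_s})$ is the number of increasing filtrations $0 = \Fil_0 M \subset \cdots \subset \Fil_s M = M$ such that $\Gr_i^\Fil M$ is generated by exactly $m_i$ elements for $i = 1, \ldots, s$. The key observation is that if $N$ is an $\oo$-module whose minimal number of generators is $k$, then $\length(N) \ge k$: by Nakayama the surjection $N \twoheadrightarrow N/\pp N$ identifies $N/\pp N$ with $(\oo/\pp)^{\oplus k}$, so $\length(N) \ge \length(N/\pp N) = k$. Applying this to each graded piece of a hypothetical filtration and using additivity of length gives
\[
\length(M) = \sum_{i=1}^s \length(\Gr_i^\Fil M) \ge \sum_{i=1}^s m_i \ge m,
\]
which contradicts $\length(M) < m$. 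Hence no such filtration exists, so $\xi_M(x_{m_1} \cdots x_{m_s}) = 0$, and therefore $\xi_M(I_m) = 0$.

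I do not expect any real obstacle here; the content is the length estimate above, and the only points requiring care are purely bookkeeping: confirming that a general element of $I_m$ is a $\Z$-linear combination of monomials of total degree at least $m$, and keeping track of the convention (used already for the well-definedness of $\xi_M$ via Lemma \ref{lem:permutation}) that ``generated by exactly $k$ elements'' means the minimal number of generators equals $k$, so that the Nakayama bound applies verbatim.
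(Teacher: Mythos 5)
Your proof is correct and follows essentially the same route as the paper: reduce to monomials $x_{m_1}\cdots x_{m_s}$ of total degree $\ge m$, then observe that any filtration counted by $\xi_M$ would force $\length(M)\ge m_1+\cdots+m_s\ge m$, a contradiction. You merely spell out the Nakayama justification for $\length(N)\ge$ (minimal number of generators of $N$), which the paper leaves implicit.
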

\begin{proof}
Let $f=x_{m_1} \cdots x_{m_s}$ be an arbitrary monomial
that belongs to $I_m$.
It suffices to show $\xi_M(f)=0$.
By definition of $I_m$, we have $m_1+ \cdots + m_s \ge m$.
Suppose that there exists an increasing filtration
\[
0 = \Fil_0 M \subset \cdots \subset \Fil_s M = M
\]
on $M$ by $\oo$-submodules such that for $i=1,\ldots,s$, 
the $i$-th graded piece $\Gr^\Fil_i M$ is generated exactly
by $m_i$ elements. Then, since $\Gr^\Fil_i M$ is of length
at least $m_i$, the length of $M$ is at least
$m_1 + \cdots + m_s \ge m$, 
which is a contradiction.
Hence by the definition of $\xi_M$, we have $\xi_M(f)=0$
as desired.
\end{proof}

Now we prove Theorem \ref{conj1-2} for $\pi = Z(\mm)$ with $\mm$ of type $\chi$. 
\begin{proof}[Proof of Theorem \ref{conj1-2} for $\pi = Z(\mm)$ of type $\chi$]
Let us write
\[
\mm^\sharp = \Delta_1 + \cdots + \Delta_s.
\]
For $i=1,\ldots,s$, we set 
$\pi_i = Z(\Delta_i^\sharp)$.
Let $n$ and $n_i$ be such that $\pi \in \Irr(G_n)$ and $\pi_i \in \Irr(G_{n_i})$.
Then $\pi$ appears as a subquotient of
$\pi_1 \times \cdots \times \pi_s$ and we have
$|\lambda_\pi| = |\lambda_{\pi_1}| + \cdots + |\lambda_{\pi_s}|$.
Let $\lambda = (\lambda_1,\ldots,\lambda_n) \in \Lambda_n$ 
be such that $|\lambda| < |\lambda_\pi|$. 
Then $\pi^{\K_{n,\lambda}}$ is a subquotient of
$(\pi_1 \times \cdots \times \pi_{s})^{\K_{n,\lambda}}$.
Let $M = \oo/\pp^{\lambda_1} \oplus \cdots \oplus \oo/\pp^{\lambda_n}$.
By Proposition \ref{mackey}, we have
\[
(\pi_1 \times \cdots \times \pi_{s})^{\K_{n,\lambda}}
\cong \bigoplus_{\Fil_\bullet M} 
\pi_1^{\K_{n_1,[\Gr^\Fil_1 M]}} \otimes \cdots \otimes
\pi_r^{\K_{n_{s},[\Gr^\Fil_{s} M]}}
\]
where $\Fil_\bullet M$ runs over the set of increasing filtrations
\[
0 = \Fil_0 M \subset \cdots \subset \Fil_{s} M =M
\]
on $M$ by $\oo$-submodules such that for $i=1,\ldots,s,$
the $\oo$-module $\Gr^\Fil_i M = \Fil_i M/\Fil_{i-1} M$
is generated by at most $n_i$ elements.
Fix such a filtration $\Fil_\bullet M$.
Since 
\begin{align*}
& |\lambda_{\pi_1}|+ \cdots + |\lambda_{\pi_{s}}|
= |\lambda_\pi| \\
> & |\lambda| = \length_\oo M
= \length_\oo \Gr^\Fil_1 M + \cdots + \length_\oo \Gr^\Fil_{s} M,
\end{align*}
we have $|\lambda_{\pi_i}| > \length_\oo \Gr^\Fil_i M$
for some $i$.
If we knew the claim for $\pi_i$ for any $i = 1,\dots, s$, 
then we would have $(\pi_1 \times \cdots \times \pi_{s})^{\K_{n,\lambda}} = 0$, 
which implies that $\pi^{\K_{n, \lambda}} = 0$. 
Hence we reduce the claim to the case where $s=1$.
\par

From now on we assume that $s=1$.
Let us write $\Delta_1 = [1,n]_\chi$
for some unramified character $\chi$.
Then $\pi = Z([1,1]_\chi + \cdots + [n,n]_\chi)$
is an unramified twist of the Steinberg representation.
By Tadi\'c's determinantal formula \cite{Tdet},
we have
\[
\pi = \sum_{r=1}^n
(-1)^{n-r}
\sum_{0=n_0 < n_1 < \cdots < n_r =n}
Z([n_0+1,n_1]_\chi)
\times \cdots \times
Z([n_{r-1}+1,n_r]_\chi)
\]
in the Grothendieck group of the category of representations of $G_n$ of finite length.
Then it follows from Proposition \ref{mackey} that, 
for any $\oo$-module $M$ of finite length, the dimension of the
$\K_{n,[M]}$-invariant part $\pi^{\K_{n,[M]}}$
is equal to the number
\[
\sum_{r=1}^n
(-1)^{n-r}
\sum_{0=n_0 < n_1 < \cdots < n_r =n}
\NN_{(n_1-n_0,\ldots,n_r-n_{r-1})}(M).
\]
\par

We set
\begin{equation} \label{eq:Hall}
f_n = \sum_{r=1}^n
(-1)^{n-r}
\sum_{0=n_0 < n_1 < \cdots < n_r =n}
y_{n_1-n_0} \cdots y_{n_r-n_{r-1}} \in R.
\end{equation}
Then it follows from
Lemma \ref{product_y} that for any $\oo$-module $M$ of finite length, 
the dimension of $\pi^{\K_{n,[M]}}$
is equal to $\xi_M(f_n)$.
Therefore, it suffices to prove that $\xi_M(f_n) = 0$ 
for any $\oo$-module $M$ of length at most $n-2$.
By Lemma \ref{I_m}, it suffices to show that
$f_n$ belongs to the ideal $I_{n-1}$.
\par

Let us consider the ring $R[[t]]$ of formal power series in the variable $t$. 
We set
\[
h = \sum_{i=1}^{\infty} y_i t^i \in t R[[t]].
\]
Then $f_n$ is equal to 
the coefficient of $t^n$ in
\[
F = (-1)^n\sum_{r=0}^{\infty} (-1)^r h^r.
\]
Since
\[
h = \frac{t + \sum_{i=1}^\infty x_i t^i}{1-t},
\]
we have
\[
F = \frac{(-1)^n}{1+h} = \frac{(-1)^n(1-t)}{1 + \sum_{i=1}^n x_i t^i}.
\]
Since the coefficients of $t^i$ in $(1 + \sum_{i=1}^n x_i t^i)^{-1}$
belongs to $R_i$ for any $i \ge 0$, the claim follows.
\end{proof}

\section{Proof of the main theorems: the case where $L(s,\pi)=1$}\label{sec.pf_L=1}
In this section, we prove Theorem \ref{conj1-2} for $\pi \in \Irr(G_n)$ with $L(s,\pi) = 1$, 
and we reduce Theorem \ref{main1} to the case of Speh representations.

\subsection{Proof of Theorem \ref{conj1-2} when $L(s,\pi) =1$}
First, we reduce Theorem \ref{conj1-2} for $\pi$ to the case where $\pi$ is cuspidal.
Let $(\pi,V)$ be an irreducible representation of $G_n$ such that $L(s,\pi)=1$. 
Note that there exist a partition
$n=n_1 + \cdots + n_r$ of $n$, and
cuspidal representations $\pi_1, \ldots, \pi_r$
of $G_{n_1}, \ldots, G_{n_r}$, respectively such that
the following conditions are satisfied:
\begin{itemize}
\item 
For $i=1,\ldots,r$, we have $L(s,\pi_i)=1$; 
\item 
$\pi$ appears as a subquotient of the
parabolic induction $\pi_1 \times \cdots \times \pi_r$; 
\item 
we have $|\lambda_\pi| = |\lambda_{\pi_1}|
+ \cdots + |\lambda_{\pi_r}|$.
\end{itemize}
Then 
by the same argument as in the proof of Theorem \ref{conj1-2} for $\pi = Z(\mm)$ of type $\chi$
in Section \ref{sec.pf_conj_unip}, 
we can reduce the claim for $\pi$ to the ones for $\pi_i$ for $i=1,\dots,r$, 
i.e., the case where $\pi$ is cuspidal.
\par

To prove the claim for cuspidal $\pi$, 
we consider certain Hecke operators. 
Let $X_\lambda \subset M_n(\oo)$ denote 
the subset of matrices $A = (a_{i,j}) \in M_n(\oo)$ 
such that $a_{i,j} \equiv \delta_{i,j} \bmod \pp^{\lambda_i}$ for $1 \leq i,j \leq n$.
Then 
\begin{itemize}
\item
$X_\lambda$ contains $\K_{n,\lambda}$; 
\item
$X_\lambda$ is closed under the multiplication of matrices; and 
\item
$X_\lambda$ is bi-invariant under the action of $\K_{n,\lambda}$.
\end{itemize}
We let $\HH_\lambda$ denote the complex vector
space of $\C$-valued compactly supported
bi-$\K_{n,\lambda}$-invariant functions on $G_n$
whose supports are contained in $X_\lambda$.
Then $\HH_\lambda$ has a structure of
$\C$-algebra whose multiplication law
is given by the convolution with respect
to the Haar measure on $G_n$ satisfying
$\mathrm{vol}(\K_{n,\lambda}) = 1$. 
The unit element $1$ of $\HH_\lambda$ is equal to
the characteristic function of $\K_{n,\lambda}$.
Let $\af_\lambda \subset \HH_\lambda$ be
the subspace of functions 
whose supports are contained in the
complement $X_\lambda \setminus \K_{n,\lambda}$ of $\K_{n,\lambda}$
in $X_\lambda$.
Then we have $\HH_\lambda = \C \cdot 1 \oplus \af_\lambda$,
and $\af_\lambda$ is a two-sided ideal of $\HH_\lambda$.
\par

Let $(\pi,V)$ be an irreducible representation of $G_n$.
The action of $G_n$ on $V$ induces an action of
$\HH_\lambda$ on $V^{\K_{n,\lambda}}$.
We let
\[
\theta_V \colon \HH_\lambda \to \End_\C(V^{\K_{n,\lambda}})
\]
denote the induced homomorphism of $\C$-algebras.
We set $\HH_{\lambda,V} = \theta_V(\HH_\lambda)$ 
and $\af_{\lambda,V} = \theta_V(\af_\lambda)$.
Then $\HH_{\lambda,V}$ is a finite dimensional
$\C$-algebra and $\af_{\lambda,V}$ is a
two-sided ideal of $\HH_{\lambda,V}$.

\begin{lem} \label{lem:nilp}
Suppose that $\pi$ is cuspidal.
Then any element $T \in \af_{\lambda,V}$ is nilpotent.
\end{lem}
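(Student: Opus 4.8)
The plan is to use that functions in $\af_\lambda$ are supported on matrices whose determinant has positive valuation, together with the cuspidality of $\pi$ (whose matrix coefficients are compactly supported modulo the center), to force a large convolution power of any $f \in \af_\lambda$ to act as zero on $V^{\K_{n,\lambda}}$. First I would record two elementary facts about the multiplicative monoid $X_\lambda$. Any element of $X_\lambda$ lying in $\GL_n(\oo)$ automatically satisfies the congruences defining $\K_{n,\lambda}$, so $X_\lambda \cap \GL_n(\oo) = \K_{n,\lambda}$; hence every $g \in X_\lambda \setminus \K_{n,\lambda}$ lies in $M_n(\oo) \setminus \GL_n(\oo)$ and thus has $\det g \in \oo$ with $|\det g| \le q^{-1}$. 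Since $X_\lambda$ is closed under multiplication and the determinant is multiplicative, a product of $k$ elements of $X_\lambda \setminus \K_{n,\lambda}$ again lies in $X_\lambda$ and has $|\det| \le q^{-k}$; consequently, for $f_1,\dots,f_k \in \af_\lambda$ the convolution $f_1 * \cdots * f_k$, which again lies in $\HH_\lambda$, is supported on $X_\lambda^{(k)} \coloneqq \{\, g \in X_\lambda \cap G_n \mid |\det g| \le q^{-k}\,\}$.

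Next I would bring in cuspidality. We may assume $V^{\K_{n,\lambda}} \neq 0$. Recall that the matrix coefficients of a cuspidal representation are compactly supported modulo the center $Z = Z(G_n)$; since $V^{\K_{n,\lambda}}$ and $\wt{\pi}^{\K_{n,\lambda}}$ are finite dimensional, there is a single compact subset $\Omega \subset G_n$ with $\pair{\pi(g)v,\tilde v} = 0$ for all $g \notin Z\Omega$, all $v \in V^{\K_{n,\lambda}}$ and all $\tilde v \in \wt{\pi}^{\K_{n,\lambda}}$. The crucial step — and the one I expect to be the main obstacle — is to show that $X_\lambda^{(k)}$ is disjoint from $Z\Omega$ once $k$ is large, equivalently that $|\det g|$ stays bounded away from $0$ as $g$ runs over $X_\lambda \cap Z\Omega$. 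This is exactly where the shape of $X_\lambda$ is used: assuming $\lambda \neq 0$, so $\lambda_n \ge 1$, the bottom row of any $g \in X_\lambda$ is congruent to $(0,\dots,0,1)$ modulo $\pp^{\lambda_n}$, so the $(n,n)$-entry of $g$ is a unit; writing $g = z\omega$ with $z \in Z$ a scalar matrix with scalar $\zeta$ and $\omega \in \Omega$, the entries of the bottom row of $z\omega$ have absolute value at most $|\zeta|q^{N}$ for a constant $N$ depending only on $\Omega$, which forces $|\zeta| \ge q^{-N}$ and hence $|\det g| = |\zeta|^n|\det\omega| \ge q^{-nN}\min_{\omega\in\Omega}|\det\omega| > 0$. (The degenerate case $\lambda = 0$, where $\K_{n,\lambda} = \GL_n(\oo)$, does not occur in the applications: there $V^{\K_{n,\lambda}} = 0$ for the cuspidal $\pi$ in question.)

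Finally I would conclude. Choose $k_0$ so large that $X_\lambda^{(k_0)} \cap Z\Omega = \emptyset$. Then for any $f \in \af_\lambda$, the function $f^{*k_0}$ is supported on $X_\lambda^{(k_0)}$, so
\[
\pair{\theta_V(f^{*k_0})v,\tilde v}
= \int_{X_\lambda^{(k_0)}} f^{*k_0}(g)\,\pair{\pi(g)v,\tilde v}\, dg = 0
\]
for all $v \in V^{\K_{n,\lambda}}$ and $\tilde v \in \wt{\pi}^{\K_{n,\lambda}}$; by nondegeneracy of the natural pairing of $V^{\K_{n,\lambda}}$ with $\wt{\pi}^{\K_{n,\lambda}}$ we get $\theta_V(f)^{k_0} = \theta_V(f^{*k_0}) = 0$. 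Since every element of $\af_{\lambda,V}$ has the form $\theta_V(f)$ with $f \in \af_\lambda$, it follows that every element of $\af_{\lambda,V}$ is nilpotent — in fact $\af_{\lambda,V}^{k_0} = 0$. The manipulations with supports and convolutions are routine; the real content is the boundedness statement of the previous paragraph, which is precisely the point at which cuspidality and the congruence conditions defining $X_\lambda$ are needed.
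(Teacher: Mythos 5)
Your proof is correct and takes essentially the same route as the paper's: functions in $\af_\lambda$ are supported on matrices of positive determinant valuation, convolution powers concentrate on $|\det|\le q^{-k}$, and cuspidality confines the matrix coefficient to a set compact modulo the center, forcing the pairing to vanish for large $k$. The only difference is one of detail: where the paper merely asserts that $G_n \cap \bigcap_{m\ge 1} Z_n X_\lambda^{\ge m}$ is empty, you supply an explicit argument via the observation that (for $\lambda\neq 0$) the $(n,n)$-entry of any $g\in X_\lambda$ is a unit, which bounds the central scalar in a decomposition $g=z\omega$ from below and hence bounds $|\det g|$ away from $0$ on $X_\lambda\cap Z_n\Omega$.
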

\begin{proof}
Since $V^{\K_{n,\lambda}}$ is finite dimensional, it suffices to
show that, for any $v \in V^{\K_{n,\lambda}}$ and for any
linear form $\wt{v} \colon  V^{\K_{n,\lambda}} \to \C$, we have
$\wt{v}(T^m v) = 0$ for any sufficiently large integer $m$.
\par

Let us choose $\wt{T} \in \af_\lambda$ satisfying $\theta_V(\wt{T}) =T$.
For an integer $m \ge 0$, 
we let $X_\lambda^{\ge m}$ denote 
the subset of matrices $A \in X_\lambda$ satisfying $\det A \in \pp^m$.
We note that $X_\lambda^{\ge 1}$ is equal to
$X_\lambda \setminus \K_{n,\lambda}$.
Since the product of any $m$ matrices in
$X_\lambda^{\ge 1}$ belongs to
$X_\lambda^{\ge m}$,
it follows that the $m$-th power $\wt{T}^m$ 
of $\wt{T}$ is, as a function on $G_n$, 
supported on $X_\lambda^{\ge m} \cap G_n$.
\par

Let $(\wt{\pi}, \wt{V})$ denote the contragredient representation of $(\pi,V)$.
We regard $\wt{v}$ as a vector 
in the $\K_{n,\lambda}$-invariant part $(\wt{V})^{\K_{n,\lambda}}$ of $\wt{V}$.
Since $\pi$ is cuspidal, the matrix coefficient 
\[
f(g) = \langle \pi(g) v, \wt{v} \rangle
\]
of $\pi$ is compactly supported modulo the center $Z_n$ of $G_n$.
Observe that the intersection $G_n \cap \left(
\bigcap_{m \ge 1} Z_n X_\lambda^{\ge m}\right)$ is empty.
This implies that any subset $K$ of $G_n$ which is
compact modulo $Z_n$ does not intersect
$X_\lambda^{\ge m}$ for any sufficiently large $m$.
Thus, the function $f(g)$ is identically zero on $X_\lambda^{\ge m}$
for any sufficiently large $m$, which implies that
$\wt{v}(T^m v) = 0$ as desired.
\end{proof}

\begin{proof}[Proof for Theorem \ref{conj1-2} when $L(s,\pi) = 1$.]
As we have remarked above, 
we may and will assume that $(\pi,V)$ is cuspidal.
\par

Let us assume that $V^{\K_{n,\lambda}} \neq 0$.
Since $V^{\K_{n,\lambda}}$ is finite dimensional,
one can take a minimal non-zero left $\HH_{\lambda,V}$-submodule $W$ of $V^{\K_{n,\lambda}}$.
Lemma \ref{lem:nilp} implies that
$\af_{\lambda,V}$ is contained in the Jacobson radical of
$\HH_{\lambda,V}$. Hence any element of $\af_{\lambda,V}$
acts as zero on $W$.
\par

Let us choose non-zero vectors $w \in W$ and
$\wt{w} \in (\wt{V})^{\K_{n,\lambda}}$ such that
$\langle w,\wt{w} \rangle \neq 0$.
Let $f(g)$ denote the matrix coefficient of $\pi$
defined as
\[
f(g) = \langle \pi(g) w, \wt{w} \rangle.
\]
Let $\Phi$ denote the characteristic function
of $X_{\lambda}$. Let us consider the zeta integral
\[
Z(\Phi,s,f) = \int_{G_n} \Phi(g) |\det g|^s f(g) dg
\]
of \cite{GJ}. 
By definition, we have
\[
Z(\Phi,s,f) = \sum_{m \ge 0} I_m q^{-ms},
\]
where
\begin{align*}
I_m 
& = \int_{X_\lambda^{\ge m} \setminus X_\lambda^{\ge m+1}} f(g) dg \\
& = \left\langle 
\int_{X_\lambda^{\ge m} \setminus X_\lambda^{\ge m+1}} 
\pi(g) w dg, \wt{w}\right\rangle
\end{align*}
as a formal power series in $q^{-s}$.
Since $\af_\lambda$ annihilates $w$,
it follows that $I_m =0$ for $m \ge 1$. Hence
\[
Z(\Phi,s,f) = I_0
= \left\langle 
\int_{\K_{n,\lambda}} \pi(k) w dk, \wt{w}\right\rangle
= \left(\int_{\K_{n,\lambda}} dk \right)
\langle w,\wt{w} \rangle
\]
is a non-zero constant.
\par

Let us consider the Fourier transform 
\[
\wh{\Phi}(x) = \int_{M_n(F)}
\Phi(y) \psi(xy) dy
\]
of $\Phi$ with respect to $\psi$, 
where $dy$ is the Haar measure on $M_n(F)$ which is self-dual with respect to $\psi$.
Then $\wh{\Phi}$ is supported on the subset $Y_\lambda \subset M_n(F)$ of
matrices $B = (b_{i,j}) \in M_n(F)$ 
such that $b_{i,j} \in \pp^{-\lambda_j}$ for $1 \leq i,j \leq n$.
We set $\check{f} (g) = f(g^{-1})$. 
Note that $\check{f}$ is a matrix coefficient of $(\wt{\pi},\wt{V})$.
Since $\det B \in \pp^{-|\lambda|}$ for
any $B \in Y_\lambda$, it follows that
the zeta integral $Z(\wh{\Phi},s,\check{f})$ is,
as a formal power series in $q^{-s}$, belongs to
$q^{|\lambda| s} \C[[q^{-s}]]$.
\par

By our assumption, we have $L(s,\pi) = L(s,\wt{\pi}) =1$.
Hence it follows from the local functional equation that we have
\begin{equation} \label{eq:localFE}
Z\left(\wh{\Phi},1-s+\frac{n-1}{2},\check{f}\right)
= \ep(s,\pi,\psi) Z\left(\Phi,s+\frac{n-1}{2},f\right)
\end{equation}
where $\ep(s,\pi,\psi)$ denotes the $\ep$-factor 
of $\pi$. It is known that 
$\ep(s,\pi,\psi) = cq^{-|\lambda_\pi| s}$ for some nonzero constant $c$.
Since the left-hand side is in $q^{-|\lambda| s} \C[[q^{s}]]$, 
we see that $|\lambda| \geq |\lambda_\pi|$. 
This proves Theorem \ref{conj1-2} for $\pi$.
\end{proof}

As explained in Section \ref{sec.reduction}, 
Proposition \ref{mackey} and results in Section \ref{sec.pf_conj_unip} and this subsection
complete Theorem \ref{conj1-2} in all cases. 

\subsection{Proof of Theorem \ref{main1}: reduction to Speh representations}
In this subsection, we prove Lemma \ref{thm.reduction}. 
By this lemma, 
Theorem \ref{main1} for $\pi$ with $L(s,\pi) = 1$
is reduced to the case where $\pi = Z(\Delta)$. 

\begin{lem}\label{thm.reduction}
Let $\pi = Z(\mm) \in \Irr(G_n)$ be such that $L(s,\pi) = 1$. 
Write $\mm = \Delta_1 + \dots + \Delta_r$.
Assume that 
\[
\dim (Z(\Delta_i)^{\K_{n_i, \lambda_i}}) = 
\left\{
\begin{aligned}
&1 \iif \lambda_i = \lambda_{\Delta_i}, \\
&0 \iif \lambda_i < \lambda_{\Delta_i}
\end{aligned}
\right. 
\]
for $1 \leq i \leq r$, where $n_i$ is such that $Z(\Delta_i) \in \Irr(G_{n_i})$. 
Then we have 
\[
\dim (\pi^{\K_{n, \lambda}}) = 
\left\{
\begin{aligned}
&1 \iif \lambda = \lambda_{\pi}, \\
&0 \iif \lambda < \lambda_{\pi}.
\end{aligned}
\right. 
\]
\end{lem}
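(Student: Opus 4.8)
The plan is to deduce Lemma~\ref{thm.reduction} from the Mackey decomposition (Proposition~\ref{mackey}) together with Corollary~\ref{fil0}, exactly following the reduction scheme set up in Section~\ref{sec.reduction}, but now carried out in the "generic direction" where $L(s,\pi)=1$. Since $L(s,\pi)=1$, every segment $\Delta_i=[a_i,b_i]_{\rho_i}$ appearing in $\mm$ is non-unipotent, so $\rho_i$ is a ramified cuspidal representation, and by Example~\ref{ex_lambda}~(1) we have $\lambda_\pi=\lambda_{\Delta_1}+\cdots+\lambda_{\Delta_r}$, i.e.\ $\lambda_\pi=\lambda_{\Delta_1}\vee\cdots\vee\lambda_{\Delta_r}$ under the monoid structure on $\Lambda_n$. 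Write $\Pi=Z(\Delta_1)\times\cdots\times Z(\Delta_r)$ for the standard module. First I would apply Proposition~\ref{mackey}: for $[M]\in|\CC^n|$ corresponding to $\lambda$,
\[
\dim(\Pi^{\K_{n,[M]}})=\sum_{\Fil_\bullet M}\prod_{i=1}^r\dim\bigl(Z(\Delta_i)^{\K_{n_i,[\Gr_i^\Fil M]}}\bigr),
\]
the sum over $\bn$-admissible filtrations of $M$ with $\bn=(n_1,\dots,n_r)$. By the hypothesis on each $Z(\Delta_i)$, a filtration contributes $1$ precisely when $[\Gr_i^\Fil M]\ge[M_{\Delta_i}]$ (where $\seq_{n_i}[M_{\Delta_i}]=\lambda_{\Delta_i}$) for all $i$, and $0$ otherwise; more precisely it contributes $1$ when every $[\Gr_i^\Fil M]$ equals $[M_{\Delta_i}]$ and $0$ when some $[\Gr_i^\Fil M]<[M_{\Delta_i}]$, and the hypothesis says nothing forcing a contribution when some $[\Gr_i^\Fil M]>[M_{\Delta_i}]$ — but in that case it is still either $0$ or $1$, and in any event nonnegative.

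The key point is a length count. By convexity (Proposition~\ref{prop:convexity}) and Lemma~\ref{lem:compatible}, for any $\bn$-admissible filtration $\Fil_\bullet M$ we have $[M]\ge[\Gr_1^\Fil M]\vee\cdots\vee[\Gr_r^\Fil M]$, hence $|\lambda|=\length_\oo M=\sum_i\length_\oo\Gr_i^\Fil M\ge\sum_i|\lambda_{\Delta_i}|=|\lambda_\pi|$ whenever all $[\Gr_i^\Fil M]\ge[M_{\Delta_i}]$. Consequently, if $|\lambda|<|\lambda_\pi|$ there is no contributing filtration and $\Pi^{\K_{n,[M]}}=0$; in particular this handles the case $\lambda<\lambda_\pi$ with $|\lambda|<|\lambda_\pi|$. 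For the remaining case $\lambda<\lambda_\pi$ with $|\lambda|=|\lambda_\pi|$ (possible since the order on $\Lambda_n$ is lexicographic, not by total length), I would argue that any contributing filtration forces $[\Gr_i^\Fil M]=[M_{\Delta_i}]$ for every $i$ (equality, because the length sum is already tight), so by Corollary~\ref{fil0} applied to $[M]\ge[M_{\Delta_1}]\vee\cdots\vee[M_{\Delta_r}]$: if $[M]=[M_\pi]$ there is exactly one such filtration, and if $[M]<[M_\pi]$ — which includes all $\lambda<\lambda_\pi$ of the same length, since then $[M]$ cannot dominate the join $[M_\pi]$ while having equal length unless $[M]=[M_\pi]$ — there is none. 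Thus $\dim(\Pi^{\K_{n,\lambda_\pi}})=1$ and $\dim(\Pi^{\K_{n,\lambda}})=0$ for $\lambda<\lambda_\pi$.

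Finally I would pass from $\Pi$ to $\pi=Z(\mm)$. Since $\pi=Z(\mm_1+\cdots+\mm_r)$ with $\mm_i=\Delta_i$, Lemma~\ref{lem:sum} shows $Z(\mm)$ appears in $\Pi$ with multiplicity one, so exactness of $(-)^{\K_{n,\lambda}}$ gives $\dim(\pi^{\K_{n,\lambda}})\le\dim(\Pi^{\K_{n,\lambda}})$; hence $\pi^{\K_{n,\lambda}}=0$ for $\lambda<\lambda_\pi$ immediately, and $\dim(\pi^{\K_{n,\lambda_\pi}})\le1$. For the lower bound $\dim(\pi^{\K_{n,\lambda_\pi}})\ge1$, I expect the main obstacle: one must show the one-dimensional space $\Pi^{\K_{n,\lambda_\pi}}$ does not live entirely in a complement of the $Z(\mm)$-isotypic piece. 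Here I would use the triangularity of the multiplicity matrix of standard modules (\cite[7.1~Theorem]{Z}, as invoked in Section~\ref{sec.pf_L=1}): every other irreducible subquotient $Z(\mm')$ of $\Pi$ satisfies $\mm'$ obtained from $\mm$ by elementary operations, which preserves $l(\mm)$ and the cuspidal support, so $\mm'$ is again non-unipotent with $l(\mm'^\ram)=l(\mm)=|\lambda_\pi|$, and one checks $\lambda_{\mm'}\ne\lambda_\pi$ forces $\lambda_{\mm'}>\lambda_\pi$ by the same argument as in the proof of Theorem~\ref{main1} for $Z(\mm)$ of type $\chi$ (comparing cardinalities of Zelevinsky duals via Lemma~\ref{lem:aux0}). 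Then by the already-established vanishing applied to each $Z(\mm')$, none of them contributes to $\Pi^{\K_{n,\lambda_\pi}}$, so the whole one-dimensional space must come from $Z(\mm)$, giving $\dim(\pi^{\K_{n,\lambda_\pi}})=1$ and completing the proof.
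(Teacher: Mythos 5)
Your proposal follows the same overall route as the paper (Mackey decomposition for $\Pi=Z(\Delta_1)\times\cdots\times Z(\Delta_r)$, convexity, Corollary~\ref{fil0}, Lemma~\ref{lem:sum}, triangularity from \cite[7.1~Theorem]{Z}), but two steps are flawed.

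First, the ``length count'' is false. You claim that $[\Gr_i^\Fil M]\ge[M_{\Delta_i}]$ forces $\length_\oo\Gr_i^\Fil M\ge|\lambda_{\Delta_i}|$; but the order on $\Lambda_{n_i}$ is \emph{lexicographic}, and lexicographic comparison says nothing about total length. For example $(1,1)>(0,5)$ lexicographically while $|(1,1)|=2<5=|(0,5)|$. So the deduction $|\lambda|\ge|\lambda_\pi|$ does not follow, and the case split ``$|\lambda|<|\lambda_\pi|$'' versus ``$\lambda<\lambda_\pi$ with $|\lambda|=|\lambda_\pi|$'' does not even exhaust $\lambda<\lambda_\pi$ (one can have $\lambda<\lambda_\pi$ with $|\lambda|>|\lambda_\pi|$). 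The entire detour is unnecessary: Proposition~\ref{prop:convexity} plus Lemma~\ref{lem:compatible} give $[M]\ge[\Gr_1^\Fil M]\vee\cdots\vee[\Gr_r^\Fil M]\ge[M_{\Delta_1}]\vee\cdots\vee[M_{\Delta_r}]=[M_\pi]$ whenever a filtration contributes, i.e.\ $\lambda\ge\lambda_\pi$. That alone kills every $\lambda<\lambda_\pi$, which is all the statement requires (and is the paper's argument).

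Second, the lower bound $\dim(\pi^{\K_{n,\lambda_\pi}})\ge1$ is not closed. You want $Z(\mm')^{\K_{n,\lambda_\pi}}=0$ for every other constituent $Z(\mm')$ of $\Pi$, and cite ``the already-established vanishing''; but what you established is vanishing at $\lambda<\lambda_\pi$, which does not apply at $\lambda=\lambda_\pi$. What you actually need is the lemma's vanishing clause applied to $\mm'$ itself (at $\lambda_\pi<\lambda_{\mm'}$), and supplying that requires rerunning the argument for $\mm'$; the paper formalizes this as an induction on $t_\pi$, the number of linked pairs, noting that an elementary operation strictly decreases $t$, so the conclusion for $\mm'$ is available inductively. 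You also cite the Knight--Zelevinsky/Lemma~\ref{lem:aux0} machinery for $\lambda_{\mm'}>\lambda_\pi$, but that machinery is for the unipotent case; here it is a direct computation from $\lambda_\mm=\sum_i\lambda_{\Delta_i}$ and the effect of an elementary operation on the $l(\Delta_i)$.
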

\begin{proof}
Set $\Pi = Z(\Delta_1) \times \dots \times Z(\Delta_r)$.
First, we claim that 
\[
\dim (\Pi^{\K_{n, \lambda}}) = 
\left\{
\begin{aligned}
&1 \iif \lambda = \lambda_{\pi}, \\
&0 \iif \lambda < \lambda_{\pi}.
\end{aligned}
\right. 
\]
Write $\lambda_\pi = (\lambda_1, \dots, \lambda_n)$, 
and consider $M = \oplus_{i=1}^n \oo/\pp^{\lambda_n}$. 
Then $\K_{n,\lambda_\pi}$ is conjugate to $\K_{n,[M]}$.  
By Proposition \ref{mackey}, we have 
\[
\Pi^{\K_{n,\lambda_\pi}} 
\cong \bigoplus_{\Fil_\bullet M} 
Z(\Delta_1)^{\K_{n_1, [\Gr^\Fil_1 M]}} \otimes \dots \otimes Z(\Delta_r)^{\K_{n_r, [\Gr^\Fil_r M]}}, 
\]
where $\Fil_\bullet M$ runs over the set of $\bn$-admissible filtrations
with $\bn = (n_1,\dots,n_r)$.
Since $\lambda_{\pi} = \lambda_{\Delta_1} + \dots + \lambda_{\Delta_r}$, 
by Corollary \ref{fil0}, there exists a unique $\bn$-admissible filtration $\Fil^0_\bullet M$ 
such that $\seq_n([\Gr^{\Fil^0}_i M]) = \lambda_{\Delta_i}$ for $1 \leq i \leq r$.
Moreover, for any other filtration $\Fil_\bullet M$, it holds that 
$\seq_n([\Gr^{\Fil}_i M]) < \lambda_{\Delta_i}$ for some $1 \leq i \leq r$. 
Hence by our assumption, we have 
$Z(\Delta_1)^{\K_{n_1, [\Gr^\Fil_1 M]}} \otimes \dots \otimes Z(\Delta_r)^{\K_{n_r, [\Gr^\Fil_r M]}} 
= 0$, 
and 
\[
\dim\left( \Pi^{\K_{n,\lambda_\pi}} \right) 
= \dim\left(
Z(\Delta_1)^{\K_{n_1, \lambda_{\Delta_1}}}
\otimes \dots \otimes 
Z(\Delta_r)^{\K_{n_r, \lambda_{\Delta_r}}}
\right)
= 1. 
\]
Conversely, suppose that $[M] \in |\CC^n|$ satisfies $\Pi^{\K_{n,[M]}} \not= 0$. 
Then by Propositions \ref{mackey}, \ref{prop:convexity} and by our assumption, we have
\[
\seq_n([M]) \geq \lambda_{\Delta_1} + \dots + \lambda_{\Delta_r} = \lambda_\pi. 
\]
In other words, if $\lambda < \lambda_\pi$, then $\Pi^{\K_{n,\lambda}} = 0$. 
Hence we obtain the claim. 
\par

In particular, since $\pi$ is a subquotient of $\Pi$, 
we have $\pi^{\K_{n,\lambda}} = 0$ for $\lambda < \lambda_\pi$.
\par

We show $\dim(\pi^{\K_{n,\lambda}}) = 1$ by induction on the number $t_\pi$ 
of pairs of linked segments in $\{\Delta_1, \dots, \Delta_r\}$. 
If $t_\pi = 0$, then by \cite[4.2 Theorem]{Z}, $\Pi$ is irreducible so that $\pi = \Pi$. 
In this case, the assertion is obtained above. 
\par

Now assume that $t_\pi > 0$. 
By \cite[7.1 Theorem]{Z}, 
if $\pi' = Z(\mm') \in \Irr(G_n)$ is an irreducible constituent of $\Pi$, 
then the multisegment $\mm'$ is obtained from $\mm$
by a chain of elementary operations.
In particular, if $\pi' \not\cong \pi$, we have $t_{\pi'} < t_{\pi}$.
Moreover, since $L(s,\pi) = 1$, we see that $\lambda_{\pi'} > \lambda_{\pi}$. 
By the inductive hypothesis, we have $\pi'^{\K_{n,\lambda_\pi}} = 0$. 
Therefore, we have $\Pi^{\K_{n,\lambda_\pi}} = \pi^{\K_{n,\lambda_\pi}}$ 
since $\pi$ appears in the irreducible constituents of $\Pi$ with multiplicity one. 
It follows from the above claim that $\pi^{\K_{n,\lambda_\pi}}$ is one-dimensional. 
This completes the proof.
\end{proof}

Note that Theorem \ref{main1} for $\pi$ is equivalent 
to the one for its unramified twist $\pi|\cdot|^c$. 
Therefore, we may assume that $\pi$ has a unitary central character.
In Section \ref{s.ess_speh} below, we
will prove Theorem \ref{main1} for $\pi = Z(\Delta)$ with 
a unitary central character such that $L(s,\pi) = 1$. 
The proof of this case is rather similar to the generic case in \cite{JPSS2}.
To carry out the proof, 
we will establish the theory of Rankin--Selberg integrals for $Z(\Delta)$ in Section \ref{s.RS}. 

\begin{rem}
We note that 
Lemma \ref{thm.reduction} does not work for $\pi$ with $L(s,\pi) \not= 1$ 
since the equality $\lambda_{\pi} = \lambda_{\Delta_1} + \dots + \lambda_{\Delta_r}$ 
does not hold in general. 
It is one of the two reasons why we should treat the case where $L(s,\pi) = 1$ and the other case
separately. 
The other reason will be explained in Remark \ref{rem.gcd} below. 
\end{rem}

\section{Rankin--Selberg integrals for Speh representations}\label{s.RS}
In \cite{JPSS2}, 
Jacquet--Piatetskii-Shapiro--Shalika proved
Theorem \ref{main1} for $\pi$ generic. 
The ingredient they used is the Rankin--Selberg integrals \cite{JPSS}, 
which express the $L$-factors 
of the products of two generic representations of $G_n$ and $G_{n-1}$. 
(They also have expressions for products of representations of groups of other ranks, 
but the one used for the study of local newforms is the one mentioned above.)  
\par

In \cite{LM}, Lapid and Mao introduced the Rankin--Selberg integrals 
for the products of Speh representations in the equal rank case. 
To prove Theorem \ref{main1} for Speh representations in the next section,  
we introduce the Rankin--Selberg integrals 
for the product of Speh representations in the case $G_{nm} \times G_{(n-1)m}$. 

\subsection{Subgroups of $\GL_{nm}$(F)}
Fix positive integers $m$ and $n$. 
In this subsection, we fix notations for some subgroups of $\GL_{nm}(F)$. 
\par

Set $G = G_{nm} = \GL_{nm}(F)$ and $K = \GL_{nm}(\oo)$. 
Let $B = TN$ be the Borel subgroup of $G$ consisting of upper triangular matrices, 
where $T$ is the diagonal torus. 
\par

We write an element of $G$ as $g = (g_{i,j})_{1 \leq i,j \leq m}$ 
with $g_{i,j} \in M_n(F)$.
Define 
\begin{itemize}
\item
$L$ to be  the subgroup of $G$ consisting of block diagonal matrices, 
i.e., $g = (g_{i,j})_{1 \leq i,j \leq m} \in G$ with $g_{i,j} = 0$ for $i \not= j$; 

\item
$U$ to be the subgroup of $G$ consisting of block upper unipotent matrices, 
i.e., $g = (g_{i,j})_{1 \leq i,j \leq m} \in G$ 
with $g_{i,i} = \1_n$ for $1 \leq i \leq m$ and $g_{i,j} = 0$ for $i > j$; 

\item
$S$ to be the subgroup of $G$ consisting of 
$g = (g_{i,j})_{1 \leq i,j \leq m} \in G$ such that 
each $g_{i,j}$ is a diagonal matrix; 

\item
$V$ to be the subgroup of $G$ consisting of 
$g = (g_{i,j})_{1 \leq i,j \leq m} \in G$ such that 
each $g_{i,j} - \delta_{i,j}\1_n$ is a strictly upper triangular matrix; 
 
\item
$D$ to be the subgroup of $G$ consisting of 
$g = (g_{i,j})_{1 \leq i,j \leq m} \in G$ such that 
each $g_{i,j}$ is of the form 
\[
g_{i,j} = \begin{pmatrix}
g'_{i,j} & u_{i,j} \\ 0 & \delta_{i,j}
\end{pmatrix}
\]
for some $g'_{i,j} \in M_{n-1}(F)$ and $u_{i,j} \in F^{n-1}$. 
\end{itemize}
Then $P = LU$ is the standard parabolic subgroup 
with $L \cong G_n \times \dots \times G_n$ ($m$-times) as its Levi subgroup, 
and 
$Q = SV$ is a non-standard parabolic subgroup 
with $S \cong G_m \times \dots \times G_m$ ($n$-times) as its Levi subgroup. 
\par

We set $G' = G_{(n-1)m}$. 
We denote analogous subgroups by taking $'$, e.g., 
$K' = \GL_{(n-1)m}(\oo)$, $P' = L'U'$, $Q' = S'V'$ and so on.
Define an embedding $\iota \colon G' \hookrightarrow G$ by 
\[
\iota(g') = \left( \begin{pmatrix}
g'_{i,j} & 0 \\ 0 & \delta_{i,j}
\end{pmatrix} \right)_{1 \leq i,j \leq m}, 
\]
where we write $g' = (g'_{i,j})_{1 \leq i,j \leq m}$ with $g'_{i,j} \in M_{n-1}(F)$. 
Sometimes, we identify $G'$ with the image of $\iota$. 
Note that $G'$ is contained in $D$. 
\par

For example, when $n = 3$ and $m = 2$, the subgroups above are as follows: 
\begin{align*}
L = \left(
\begin{array}{ccc|ccc}
*&*&*&&& \\
*&*&*&&& \\
*&*&*&&& \\
\hline
&&&*&*&* \\
&&&*&*&* \\
&&&*&*&* 
\end{array}
\right), 
&\quad
U = \left(
\begin{array}{ccc|ccc}
1&&&*&*&* \\
&1&&*&*&* \\
&&1&*&*&* \\
\hline
&&&1&& \\
&&&&1& \\
&&&&&1 
\end{array}
\right), 
\\
S = \left(
\begin{array}{ccc|ccc}
*&&&*&& \\
&*&&&*& \\
&&*&&&* \\
\hline
*&&&*&& \\
&*&&&*& \\
&&*&&&* 
\end{array}
\right), 
&\quad
V = \left(
\begin{array}{ccc|ccc}
1&*&*&&*&* \\
&1&*&&&* \\
&&1&&& \\
\hline
&*&*&1&*&* \\
&&*&&1&* \\
&&&&&1 
\end{array}
\right), 
\\
D = \left(
\begin{array}{ccc|ccc}
*&*&*&*&*&* \\
*&*&*&*&*&* \\
&&1&&& \\
\hline
*&*&*&*&*&* \\
*&*&*&*&*&* \\
&&&&&1 
\end{array}
\right), 
&\quad
G' = \left(
\begin{array}{ccc|ccc}
*&*&&*&*& \\
*&*&&*&*& \\
&&1&&& \\
\hline
*&*&&*&*& \\
*&*&&*&*& \\
&&&&&1 
\end{array}
\right).
\end{align*}
\par

It is easy to see the following. 
\begin{lem}\label{group}
\begin{enumerate}
\item
$D = VG'$ and $G' \cap V = V'$ so that $V \bs D \cong V' \bs G'$.
\item
$N \cap D = (N \cap V) N'$ and $(N \cap V) \cap N' = N' \cap V'$ 
so that $(N \cap V) \bs (N \cap D) \cong (N' \cap V') \bs N'$. 
\end{enumerate}
\end{lem}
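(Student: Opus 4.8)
Lemma \ref{group} is a purely group-theoretic matrix computation, so the plan is to verify both assertions by explicit block-matrix bookkeeping, using the descriptions of $V$, $D$, $G'$, $N$, $N'$, $N\cap V$ given just above the statement. The only subtle point is keeping track of which entries are free and which are constrained, so I would set up notation carefully first: an element $g\in G$ is $g=(g_{i,j})_{1\le i,j\le m}$ with $g_{i,j}\in M_n(F)$, and I would consistently write each $n\times n$ block in the $2\times 2$ form $g_{i,j}=\begin{pmatrix} g'_{i,j} & u_{i,j}\\ v_{i,j} & w_{i,j}\end{pmatrix}$ with $g'_{i,j}\in M_{n-1}(F)$, $u_{i,j}\in F^{n-1}$ a column, $v_{i,j}\in F^{n-1}$ a row, $w_{i,j}\in F$. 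In this notation: $g\in D$ iff $v_{i,j}=0$ and $w_{i,j}=\delta_{i,j}$ for all $i,j$; $g\in G'$ iff moreover $u_{i,j}=0$; and $g\in V$ iff $g_{i,j}-\delta_{i,j}\1_n$ is strictly upper triangular for all $i,j$, i.e. each $g_{i,j}$ is upper triangular with the convention that the diagonal blocks have $1$'s on the diagonal and off-diagonal blocks have $0$'s on the diagonal.

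\textbf{Proof of (1).} For $D=VG'$: given $d\in D$, I want to factor $d=vg'$ with $v\in V$, $g'\in G'$. Writing $d=(d_{i,j})$ with each $d_{i,j}=\begin{pmatrix} d'_{i,j} & u_{i,j}\\ 0 & \delta_{i,j}\end{pmatrix}$, the idea is that the $(n-1)\times(n-1)$-block part $(d'_{i,j})$ is an arbitrary element of $G'=G_{(n-1)m}$ (invertibility of $d$ forces invertibility of this block part, since $d$ is block-upper-triangular after the obvious row/column permutation grouping the last coordinate of each $F^n$ factor), while the columns $u_{i,j}$ are arbitrary. So take $g'\in G'$ to be the element with block part $(d'_{i,j})$, and set $v=d(g')^{-1}$; a direct computation shows $v\in V$ because multiplying on the right by $(g')^{-1}\in G'$ does not touch the "last coordinate" rows (which are already $(0,\dots,0,\delta_{i,j})$ in $d$) and the resulting matrix is upper triangular block-by-block with the required diagonal normalization. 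The reverse inclusion $VG'\subset D$ is immediate since $V,G'\subset D$ and $D$ is a group. For $G'\cap V=V'$: an element of $G'$ has block part an element of $G_{(n-1)m}$ and lies in $V$ iff that block part, viewed in $G_{(n-1)m}$, is block-upper-unipotent with the $V$-normalization — which is exactly the defining condition for $V'\subset G'$. The isomorphism $V\backslash D\cong V'\backslash G'$ then follows formally: the inclusion $G'\hookrightarrow D$ induces $G'/V'=G'/(G'\cap V)\xrightarrow{\sim} VG'/V=D/V$ of coset spaces (here I mean the natural bijection of left coset spaces), using $D=VG'$ for surjectivity.

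\textbf{Proof of (2).} This is the same type of argument one level down. First I would recall (or note) that $N$, the upper-triangular unipotent of $G=\GL_{nm}$, decomposes with respect to the relevant block structures; concretely $N\cap D$ consists of upper unipotent matrices of the form forced by lying in $D$. The claim $N\cap D=(N\cap V)N'$ is proved by the analogous factorization: an element of $N\cap D$ has block part in $N'\subset G_{(n-1)m}$ (the block part of an upper unipotent matrix in $D$ is upper unipotent), and factoring off that block part as in (1) leaves a factor in $N\cap V$. The identity $(N\cap V)\cap N'=N'\cap V'$ is immediate since $N'\subset G'$ and $V\cap G'=V'$. The coset isomorphism $(N\cap V)\backslash(N\cap D)\cong(N'\cap V')\backslash N'$ then follows exactly as in (1), from $N\cap D=(N\cap V)N'$ and $(N\cap V)\cap N'=N'\cap V'$.

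\textbf{Expected obstacle.} There is no deep obstacle here — the statement is flagged as "easy to see" — so the only real work is organizational: choosing block-coordinate notation clean enough that the factorizations $d=vg'$ and the intersection identities become one-line verifications, and being careful that "strictly upper triangular" in the definition of $V$ is applied with the correct diagonal-block vs. off-diagonal-block normalization. I would keep the $n=3$, $m=2$ picture from the text in mind as a sanity check throughout, and present the proof at the level of "one checks directly that..." for the routine multiplications.
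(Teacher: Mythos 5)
The paper records only ``Proof. Omitted.'' for this lemma, and your block-matrix verification is exactly the routine check the authors leave to the reader; the factorizations $d = v\,\iota(g')$ with $g' = (d'_{i,j})$ and $v_{i,j} = \bigl(\begin{smallmatrix}\delta_{i,j}\1_{n-1} & u_{i,j}\\ 0 & \delta_{i,j}\end{smallmatrix}\bigr)$, the intersection identity $G'\cap V = V'$, and the parallel argument one level down for $N\cap D = (N\cap V)N'$ are all correct. One small point to fix in the final write-up: the lemma is stated for right coset spaces $V\bs D$ and $V'\bs G'$, whereas you spell out the bijection for left cosets $G'/V'\to D/V$ --- the same argument of course gives $V'\bs G' \xto{\sim} V\bs D$ via $V'g'\mapsto Vg'$ (surjective since $D=VG'$, injective since $Vg'_1=Vg'_2$ forces $g'_1g'_2{}^{-1}\in V\cap G'=V'$), and it is the right coset version that is used for the integrals in Section \ref{s.RS}, so the statement should be kept in that form.
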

\begin{proof}
Omitted.
\end{proof}

\subsection{Two models of Speh representations}
We introduce the Zelevinsky model and the Shalika model of 
a Speh representation.   
For the detail of these models and the relation between these models, see \cite[Section 3]{LM}. 
\par

We define a function $\Psi$ of $G = G_{nm}$ by 
\[
\Psi(g) = \psi \left(
\sum_{\substack{1 \leq i < nm \\ n \nmid i}}g_{i,i+1}
\right).
\]
We denote the restriction of $\Psi$ to $N$ (\resp $V$) by the same symbol $\Psi$, 
which is a character of $N$ (\resp $V$).
\par

Let $\pi$ be an irreducible tempered representation of $G_n$. 
Then the parabolically induced representation
\[
\pi|\cdot|^{-\half{m-1}} \times \pi|\cdot|^{-\half{m-3}} \times \dots \times \pi|\cdot|^{\half{m-1}}
\]
of $G$ has a unique irreducible subrepresentation $\Sp(\pi, m)$. 
We call $\Sp(\pi, m)$ a \emph{Speh representation}. 
Note that if $\pi = \rho$ is cuspidal, then $\Sp(\rho,m) = Z([-\half{m-1},\half{m-1}]_\rho)$. 
\par

From now on, we set $\sigma = \Sp(\pi, m)$ 
for some irreducible tempered representation $\pi$ of $G_n$. 
By \cite[8.3]{Z}, we know that
\[
\Hom_{G}(\sigma, \Ind_N^G(\Psi))
\]
is one-dimensional. 
Following \cite[Section 3.1]{LM}, 
we write $\WW^{\psi}_\ze(\sigma)$ for the image of a nonzero element, 
and call it the \emph{Zelevinsky model} of $\sigma$.
\par

In the case $m=1$, 
the Zelevinsky model $\WW^\psi(\pi) = \WW^\psi_\ze(\pi)$ is what is known
as the \emph{Whittaker model} of $\pi$. 
Note that the character $\Psi$ is a generic character of $N$ in this case,
and the one-dimensionality above implies that 
every tempered representation $\pi$ of $G_n$ is generic. 
\par

As explained in \cite[Section 3.1]{LM}, for any $W \in \WW^{\psi}_\ze(\sigma)$, we have
\[
W|_L \in \WW^{\psi}(\pi|\cdot|^{\half{(m-1)(n-1)}}) 
\otimes \WW^{\psi}(\pi|\cdot|^{\half{(m-3)(n-1)}}) 
\otimes \dots \otimes 
\WW^{\psi}(\pi|\cdot|^{-\half{(m-1)(n-1)}}).
\]
\par

By \cite{MW}, we know that
\[
\Hom_{G}(\sigma, \Ind_V^G(\Psi))
\]
is also one-dimensional. 
Following \cite[Section 3.1]{LM}, 
we write $\WW^{\psi}_\sh(\sigma)$ for the image of a nonzero element, 
and call it the \emph{Shalika model} of $\sigma$.
As explained in \cite[Section 3.1]{LM}, the usage of this terminology may not be a common one. 
\par

We recall a theorem of Lapid and Mao.    
\begin{thm}[{\cite[Theorem 4.3]{LM}}]
\label{inner}
For $W_1, W_2 \in \WW^\psi_\sh(\sigma)$, 
the integral 
\[
\BB(W_1,W_2,s) = \int_{V \bs D}W_1(g)\overline{W_2(g)}|\det g|^s dg
\]
converges for $\Re(s) > -1$, 
and admits meromorphic continuation to the complex plane. 
Moreover, $(W_1,W_2) \mapsto \BB(W_1,W_2,0)$ 
is a $G$-invariant inner product on $\WW^\psi_\sh(\sigma)$.
\end{thm}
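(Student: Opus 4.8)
The statement to prove is Theorem~\ref{inner}, which asserts that the integral $\BB(W_1, W_2, s) = \int_{V \bs D} W_1(g)\overline{W_2(g)}|\det g|^s\, dg$ converges for $\Re(s) > -1$, extends meromorphically, and that $\BB(-, -, 0)$ is a $G$-invariant inner product on $\WW^\psi_\sh(\sigma)$. Since this is explicitly cited as \cite[Theorem 4.3]{LM}, the honest ``proof proposal'' is to follow the Lapid--Mao argument and explain how it adapts to the present setup; the plan is sketched below.

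The first step is convergence. The plan is to use the decomposition $V \bs D \cong V' \bs G'$ from Lemma~\ref{group}(1), combined with an Iwasawa decomposition $G' = N' T' K'$, to reduce the integral over $V\bs D$ to an integral over a torus times a compact group. The key input is the known asymptotic behaviour of Shalika-model Whittaker functions of the Speh representation $\sigma$ along the torus: because $\sigma = \Sp(\pi, m)$ with $\pi$ tempered, the exponents that appear are controlled by those of a tempered representation shifted by the $|\cdot|^{\pm(m-j)/2}$ twists, and the ``gauge'' estimates of Jacquet--Piatetskii-Shapiro--Shalika (or their analogues in \cite{LM}) give a bound of the form $|W(tk)| \ll \delta(t)^{1/2}\,\Xi(t)\,\phi(t)$ for a Schwartz-type function $\phi$ on the torus. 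Feeding these bounds into the integral, the product $W_1 \overline{W_2}$ decays rapidly in the ``deep'' directions while the factor $|\det g|^s$ controls the remaining direction; one checks that the resulting sum of geometric series converges precisely when $\Re(s) > -1$, the borderline $-1$ coming from the square of the tempered bound $\Xi$. Meromorphic continuation then follows by the standard device of writing the torus integral as a finite sum of products of geometric series in $q^{-s}$ whose poles are explicit, i.e.\ the integral is a rational function of $q^{-s}$.

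The second step is $G$-invariance of $\BB(-,-,0)$. Here the plan is: first, formal $G$-invariance of the bilinear pairing $(W_1, W_2) \mapsto \int_{V\bs G} W_1(g)\overline{W_2(g)}\,dg$ whenever the integral makes sense, by the usual change-of-variables argument; the subtlety is that this integral over $V\bs G$ need not converge, which is exactly why one integrates over the smaller domain $V \bs D$ and inserts $|\det g|^s$. So one argues instead that $\BB(W_1, W_2, s)$, as a meromorphic function, satisfies a functional-type relation under right translation by $G$, and that the possible pole at $s = 0$ is absent; then $\BB(-, -, 0)$ inherits $G$-invariance from the generic translation-invariance of the integrand together with analytic continuation. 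A cleaner route, which is the one in \cite{LM}, is to relate $\BB$ to the Zelevinsky-model Rankin--Selberg integral via the model transition (the analogue of Proposition~\ref{transition}), where $G$-invariance is manifest from the Zelevinsky side; invariance then transfers through the intertwining isomorphism $\WW^\psi_\ze(\sigma) \cong \WW^\psi_\sh(\sigma)$.

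The third step is positive-definiteness, i.e.\ that $\BB(W, W, 0) > 0$ for $W \neq 0$, which together with $G$-invariance and hermiticity makes it an inner product; hermiticity $\overline{\BB(W_1,W_2,0)} = \BB(W_2,W_1,0)$ is immediate from the definition since $\overline{|\det g|^0} = 1$. For positivity one restricts $W$ to $V\bs D$ and uses that $W$ is not identically zero there (a statement about the Shalika model, provable from irreducibility of $\sigma$ and the fact that $D$ already ``sees'' enough of $\sigma$); then $\int_{V\bs D} |W(g)|^2\, dg > 0$ because the integrand is non-negative and not identically zero. The main obstacle, and the part requiring genuine care rather than routine estimation, is the convergence analysis at the edge $\Re(s) = -1$: one needs the sharp tempered bound on the Shalika-model functions of $\sigma$ (not merely a crude polynomial bound) so that the exponent of convergence lands exactly at $-1$, and one must verify that $s = 0$ lies strictly inside the region of convergence so that $\BB(-,-,0)$ is honestly given by a convergent integral rather than only by continuation. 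Everything else is bookkeeping with Iwasawa coordinates and geometric series, or a direct appeal to \cite{LM}.
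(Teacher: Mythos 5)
The paper does not actually prove this statement; its ``proof'' is a citation to Lapid--Mao \cite[Theorem 4.3]{LM} together with a pointer to their Propositions 4.1 and 6.2, so there is no internal argument to compare with. What can be assessed is whether your reconstruction of the Lapid--Mao argument would work. The convergence and meromorphic-continuation part of your sketch is fine in outline (pass to Iwasawa coordinates on $V'\bs G'$ via Lemma \ref{group}, apply gauge/tempered bounds to the Shalika-model functions along the torus, observe rationality in $q^{-s}$), and the positivity step is correct once one grants injectivity of the restriction map $W \mapsto W|_D$.

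The $G$-invariance part, however, has a genuine gap, and both routes you propose fail. The first route — ``formal translation-invariance of the integrand plus analytic continuation'' — cannot close because the domain $V\bs D$ is not $G$-stable: right translation by a general $g\in G$ sends $V\bs D$ to $V\bs Dg$, a different domain, so no change of variables exhibits invariance even after continuation; only $D$-invariance of $\BB(\cdot,\cdot,0)$ is manifest. Your second route asserts that ``$G$-invariance is manifest from the Zelevinsky side'' and transfers through $\TT^\psi$; this is not available either, since the Zelevinsky model is an embedding of $\sigma$ into $\Ind_N^G\Psi$ with no manifestly $G$-invariant inner product there (the naive integral over $N\bs G$ diverges). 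The actual Lapid--Mao mechanism relies on two inputs you have not invoked: (i) $\sigma = \Sp(\pi,m)$ with $\pi$ unitary tempered is itself unitarizable (Tadi\'c), so an abstract $G$-invariant inner product on $\sigma$ exists a priori; and (ii) a uniqueness (multiplicity-one) statement for $D$-semi-invariant sesquilinear pairings on $\sigma$ --- the Speh analogue of Bernstein's theorem for the mirabolic in the generic case. Since $\BB(\cdot,\cdot,0)$ and the abstract inner product are both $D$-invariant, uniqueness forces them to coincide up to a positive scalar. Without unitarity of $\sigma$ and this uniqueness input (or, alternatively, an explicit functional equation establishing invariance under a set of elements that together with $D$ generates $G$), your outline cannot establish $G$-invariance.
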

\begin{proof}
See loc.~cit. 
See also \cite[Propositions 4.1, 6.2]{LM}.
\end{proof}

Note that $\WW^\psi_\ze(\sigma)$ and $\WW^\psi_\sh(\sigma)$ are isomorphic to each other
since both are isomorphic to $\sigma$. 
We can give isomorphisms explicitly as follows.
\begin{prop}[{\cite[Lemmas 3.8, 3.11]{LM}}]\label{transition}
Let $W_\ze \in \WW^\psi_\ze(\sigma)$ and $W_\sh \in \WW^\psi_\sh(\sigma)$. 
Then $W_\ze$ (\resp $W_\sh$) is compactly supported 
on $(V \cap N) \bs V$ (\resp $(N \cap V) \bs (N \cap D)$). 
Moreover, 
an isomorphism $\TT = \TT^\psi \colon \WW^\psi_\ze(\sigma) \xrightarrow{\sim} \WW^\psi_\sh(\sigma)$
is given by the integral 
\[
\TT W_\ze(g) = \int_{(V \cap N) \bs V}W_\ze(ug) \Psi(u)^{-1}du. 
\]
The inverse of $\TT$ is given by the integral
\[
\TT^{-1} W_\sh(g) = \int_{(N \cap V) \bs (N \cap D)}W_\sh(ug) \Psi(u)^{-1}du. 
\]
\end{prop}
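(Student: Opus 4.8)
The statement is \cite[Lemmas 3.8 and 3.11]{LM}, and the plan is to reconstruct the argument in three stages: the two compact-support assertions, the well-definedness and $G$-equivariance of $\TT$ and of its candidate inverse, and the verification that these two operators are mutually inverse. For the first stage I would note that for $W_\ze \in \WW^\psi_\ze(\sigma)$ the restriction $v \mapsto W_\ze(v)$ to $V$ satisfies $W_\ze(uv) = \Psi(u)W_\ze(v)$ for $u \in V \cap N$, so $|W_\ze|$ descends to $(V\cap N)\bs V$, and compactness of its support is the standard consequence of smoothness (as in the Bernstein--Zelevinsky theory of derivatives): if $W_\ze$ is right-invariant under a compact open subgroup $K_0$, then driving a coordinate of $(V\cap N)\bs V$ to infinity conjugates $K_0$ so that it contains an element of $V\cap N$ on which $\Psi$ is non-trivial, forcing $W_\ze$ to vanish there. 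The same argument inside $N\cap D$, using $N\cap D = (N\cap V)N'$ from Lemma \ref{group}(2), yields the compact support of $W_\sh \in \WW^\psi_\sh(\sigma)$ on $(N\cap V)\bs(N\cap D)$. Granting this, the integrands defining $\TT W_\ze$ and $\TT^{-1}W_\sh$ are locally constant and compactly supported, uniformly as $g$ (\resp a right translate of it) varies over a compact set, so the integrals are finite sums, converge, and define smooth functions on $G$.

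For the second stage I would check that $\TT W_\ze$ transforms under $V$ by $\Psi$: substituting $u \mapsto uv_0^{-1}$ in $\TT W_\ze(v_0 g)$ and using that $\Psi$ restricts to a character of $V$ (which follows directly from the definition of $\Psi$, as no cross terms arise in the relevant matrix products on $V$), together with the $V$-invariance of the measure on $(V\cap N)\bs V$, gives $\TT W_\ze(v_0 g) = \Psi(v_0)\,\TT W_\ze(g)$. Since $\TT$ visibly commutes with right translation, $\Image\TT$ is a $G$-submodule of $\Ind_V^G(\Psi)$ and a quotient of $\sigma \cong \WW^\psi_\ze(\sigma)$; provided $\TT \neq 0$, it is isomorphic to $\sigma$ by irreducibility, hence equals $\WW^\psi_\sh(\sigma)$ since $\dim\Hom_G(\sigma,\Ind_V^G(\Psi)) = 1$ by \cite{MW}. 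Symmetrically, the second integral defines a $G$-equivariant map $\WW^\psi_\sh(\sigma) \to \Ind_N^G(\Psi)$ whose image, when non-zero, is $\WW^\psi_\ze(\sigma)$, using $\dim\Hom_G(\sigma,\Ind_N^G(\Psi)) = 1$ (\cite[\S8.3]{Z}).

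For the last stage — which also settles the non-vanishing — I would compute the composite of the two operators directly: by compact support Fubini applies, producing a double integral over $(V\cap N)\bs V \times (N\cap V)\bs(N\cap D)$, and using the decompositions of Lemma \ref{group}(2) to describe $N$ in terms of $V\cap N$, $N'\cap V'$ and the complementary directions, the inner integration becomes a Fourier inversion on a unipotent group: integrating $\Psi^{-1}$ over the appropriate $V$-orbit selects a single coset and returns $W_\ze$ up to a constant depending only on the normalization of the Haar measures, which one fixes to be $1$. This gives $\TT^{-1}\circ\TT = \mathrm{id}$ and, by symmetry, $\TT\circ\TT^{-1}=\mathrm{id}$. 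I expect the two delicate points to be the compact-support statements, which are precisely what turn every integral above into a finite sum and legitimize the use of Fubini, and the bookkeeping matching coset representatives for $V$, $N\cap D$ and $N$ in the Fourier-inversion step; both are carried out in full in \cite{LM}.
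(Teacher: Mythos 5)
The paper's own proof of this proposition is simply ``See loc.~cit.,'' a pointer to \cite[Lemmas 3.8, 3.11]{LM}; there is no in-paper argument to compare against, so what you have written is a reconstruction of the Lapid--Mao proof. The overall outline --- compact support via conjugation by a small compact open subgroup $K_0$, $G$-equivariance together with the two multiplicity-one facts $\dim\Hom_G(\sigma,\Ind_N^G\Psi)=\dim\Hom_G(\sigma,\Ind_V^G\Psi)=1$, and a Fourier-inversion computation for the composite --- is the right one, and deferring the commutator bookkeeping and measure normalizations to \cite{LM} is legitimate since that is exactly what the paper does.

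There is, however, one slip in your Stage~1 that would make the argument fail as written. You ask for a conjugate $vkv^{-1}$ lying in $V\cap N$ with $\Psi(vkv^{-1})\ne 1$. The correct target group is $N$, not $V\cap N$: since $W_\ze$ is left $(N,\Psi)$-equivariant, any $k\in K_0$ with $vkv^{-1}\in N$ and $\Psi(vkv^{-1})\ne 1$ already forces $W_\ze(v)=0$, and in general one can only land in $N$. Requiring $vkv^{-1}\in V\cap N$ is too strong and in small cases is never satisfied: for $n=m=2$ the group $V$ is abelian, so $vK_0v^{-1}\cap(V\cap N)=K_0\cap(V\cap N)$ is independent of $v$, and $\Psi$ is trivial on it once $K_0$ is small enough --- your criterion is then vacuous. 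What actually works there is $k=\1+\epsilon E_{1,3}\in K_0$: writing $v=\1+aE_{1,2}+bE_{1,4}+cE_{3,2}+dE_{3,4}\in V$ (so $c$ parametrizes $(V\cap N)\bs V$), one computes $vkv^{-1}=\1+\epsilon E_{1,3}-\epsilon c\,E_{1,2}-\epsilon d\,E_{1,4}$, which lies in $N\setminus V$ and has $\Psi(vkv^{-1})=\psi(-\epsilon c)$, forcing $W_\ze(v)=0$ for $|c|$ large. Symmetrically, for $W_\sh$ the conjugate $uku^{-1}$ should be sought in $V$ (not in $V\cap N$), reflecting the fact that $W_\sh$ transforms by $\Psi$ under $V$. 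With this correction, your sketch matches the structure of the argument in \cite{LM}.
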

\begin{proof}
See loc.~cit.
\end{proof}

\subsection{Rankin--Selberg integrals in the Zelevinsky models}
For irreducible tempered representations $\pi$ and $\pi'$ of $G_n$ and $G_{n-1}$, respectively, 
we have Speh representations 
$\sigma = \Sp(\pi,m) \in \Irr(G)$ and $\sigma' = \Sp(\pi',m) \in \Irr(G')$. 
For $W \in W^{\psi}_\ze(\sigma)$, $W' \in W^{\psi^{-1}}_\ze(\sigma')$ and $s \in \C$, 
consider the integral 
\[
I_m(s,W,W') = 
\int_{N' \bs G'} W(\iota(g)) W'(g) |\det g|^{s-\half{m}} dg.
\]
We call this \emph{the Rankin--Selberg integral in the Zelevinsky models}.

\begin{lem}\label{iwasawa}
Formally, $I_m(s,W,W')$ is equal to
\[
\int_{P' \bs G'} 
\left( \int_{(N' \cap L') \bs L'}W(\iota(lg)) W'(lg) |\det l|^{s-\half{m}} \delta_{P'}^{-1}(l)dl \right)
|\det g|^{s-\half{m}} dg.
\]
\end{lem}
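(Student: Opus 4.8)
The plan is to reduce the identity to the standard Weil integration formula for the tower of closed subgroups $N' \subset P' \subset G'$. First I would record the relevant group theory. Writing $P' = L'U'$ with $L'$ the block-diagonal Levi and $U'$ its unipotent radical, the full upper-triangular unipotent subgroup $N'$ contains $U'$ and decomposes as a semidirect product $N' = (N' \cap L')\ltimes U'$; consequently the quotient map $P' \to L'$ induces a bijection $N' \bs P' \xto{\sim} (N' \cap L') \bs L'$, and $N' \bs G'$ fibers over $P' \bs G'$ with "fiber" $(N' \cap L') \bs L'$. (This is entirely parallel to Lemma \ref{group}, with $P'$ in place of $D$.)

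Next, since $G'$ is unimodular and $N'$ is unipotent, hence unimodular, the quotient $N' \bs G'$ carries a $G'$-invariant measure, and Weil's integration formula for the tower $N' \subset P' \subset G'$ gives, for any left-$N'$-invariant function $\phi$ for which the integrals make sense, $\int_{N' \bs G'}\phi(g)\,dg = \int_{P' \bs G'}\bigl(\int_{(N'\cap L')\bs L'}\phi(lg)\,\delta_{P'}^{-1}(l)\,dl\bigr)\,dg$. The modular factor $\delta_{P'}^{-1}$ is precisely the discrepancy between the right Haar measure on $P'$ and the product of Haar measures on $U'$ and $L'$ under $N' \bs P' \cong (N' \cap L') \bs L'$.

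Then I would apply this with $\phi(g) = W(\iota(g))\,W'(g)\,|\det g|^{s-\half{m}}$. One has to check that $\phi$ is genuinely left $N'$-invariant: $\iota$ carries $N'$ into the big unipotent $N$, and the characters are normalized so that $\Psi(\iota(n'))$ and the $\psi^{-1}$-character by which $W' \in \WW^{\psi^{-1}}_\ze(\sigma')$ transforms are inverse to one another, whence $W(\iota(n'g))W'(n'g) = W(\iota(g))W'(g)$; combined with $|\det n'| = 1$ this is exactly the left $N'$-invariance that makes $I_m(s,W,W')$ well defined as an integral over $N' \bs G'$. Substituting, and using $|\det(lg)| = |\det l|\,|\det g|$ to split the modulus and pull $|\det g|^{s-\half{m}}$ out of the inner integral, yields exactly the asserted expression.

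The statement is flagged "Formally," so convergence is not at issue here. The only points requiring genuine care are (i) getting the modular character $\delta_{P'}$ and the measure normalizations on the nested quotients mutually consistent, and (ii) the bookkeeping that $\iota$ intertwines $N'$ with $N$ compatibly with $\Psi$, $\psi$, and $\psi^{-1}$; the latter is the step I expect to be the main, if modest, obstacle.
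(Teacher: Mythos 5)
Your proof is correct and is precisely the unpacking of what the paper means by its one-line proof ("This follows from a well-known integral formula"): you identify the formula as Weil's integration formula for the tower $N' \subset P' \subset G'$, verify the factorization $N'\bs P'\cong (N'\cap L')\bs L'$ and the left $N'$-invariance of the integrand via the compatibility $\Psi(\iota(n'))\cdot \Psi'(n')=1$ (where $\Psi'$ is the character on $N'$ built from $\psi^{-1}$), and track the modular factor $\delta_{P'}^{-1}$. All of these checks go through as you describe, so the proposal matches the paper's approach while supplying the detail the paper omits.
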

\begin{proof}
This follows from a well-known integral formula. 
\end{proof}

When $m = 1$, several properties of $I_1(s,W,W')$ were obtained in \cite{JPSS}. 
The following is a generalization of \cite[(2.7) Theorem]{JPSS}, 
whose proof is analogous to that of \cite[Theorem 5.1]{LM}.

\begin{thm}\label{RS}
Let $\pi$ and $\pi'$ be irreducible tempered representations of $G_n$ and $G_{n-1}$, respectively. 
We denote the central character of $\pi'$ by $\omega_{\pi'}$.
\begin{enumerate}
\item
The integral $I_m(s,W,W')$ is absolutely convergent for $\Re(s) \gg 0$. 

\item
The function 
\[
\left(\prod_{i=1}^m L(s-m+i, \pi \times \pi')\right)^{-1} I_m(s,W,W')
\]
is in $\C[q^{-s}, q^{s}]$. 
In particular, it is entire. 

\item
The functional equation 
\[
I_m(m-s,\wt{W},\wt{W}') 
= \omega_{\pi'}(-1)^{(n-1)m} \left( \prod_{i=1}^m \gamma(s-m+i, \pi \times \pi', \psi) \right)
I_m(s,W,W')
\]
holds, where $\wt{W}(g) = W(w_{nm} {}^tg^{-1} w'_n)$ and $\wt{W}'(g') = W'(w_{(n-1)m} {}^tg'^{-1} w'_{n-1})$
with 
\[
w_{nm} = 
\begin{pmatrix}
&&1 \\
&\iddots& \\
1&&
\end{pmatrix},
\quad
w'_n = \begin{pmatrix}
&&\1_{n} \\
&\iddots& \\
\1_{n} &&
\end{pmatrix} \in G.
\]
Here $\gamma(s, \pi \times \pi', \psi)$ is the gamma factor defined by 
\[
\gamma(s, \pi \times \pi', \psi) 
= \ep(s, \pi \times \pi', \psi)\frac{L(1-s,\wt{\pi} \times \wt{\pi}')}{L(s,\pi \times \pi')}.
\]
\end{enumerate}
\end{thm}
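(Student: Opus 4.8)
The plan is to follow the method of Lapid--Mao \cite{LM} in the equal rank case and reduce all three assertions to the classical $G_n \times G_{n-1}$ Rankin--Selberg theory of Jacquet--Piatetskii-Shapiro--Shalika \cite{JPSS}. The starting point is Lemma \ref{iwasawa}, which writes $I_m(s,W,W')$ as the iterated integral over $P' \bs G'$ --- a compact space by the Iwasawa decomposition $G' = P'K'$ --- of the inner integral over $(N' \cap L') \bs L'$. Since $L' \cong G_{n-1} \times \dots \times G_{n-1}$ ($m$ factors) with $N' \cap L' \cong N_{n-1} \times \dots \times N_{n-1}$, and $\iota$ restricts on $L'$ to the $m$-fold product of the standard embedding $G_{n-1} \hookrightarrow G_n$, the descriptions of $W|_L$ and $W'|_{L'}$ as sums of pure tensors of Whittaker functions of the twisted tempered representations $\pi|\cdot|^{\alpha_j}$ and $\pi'|\cdot|^{\beta_j}$ show that, for each fixed $g$, the modulus character $\delta_{P'}|_{L'}$ and $|\det l|$ both factor through the $m$ block-determinants, so the inner integral becomes a finite $\C$-linear combination of products $\prod_{j=1}^m I_1(s_j, W_j, W'_j)$ of classical Rankin--Selberg integrals for tempered representations of $G_n$ and $G_{n-1}$ (with $W_j, W'_j$ and the $s_j$ depending on $g$), the parameters $s_j$ being affine-linear in $s$ and the associated product of $GL_n \times GL_{n-1}$ local $L$-factors being, after a routine bookkeeping of the shifts, $\prod_{i=1}^m L(s-m+i, \pi \times \pi')$.

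Granting this factorization, assertions (1) and (2) follow quickly. For (1): each $I_1$ converges absolutely for $\Re(s_j) \gg 0$, with the standard estimates for Whittaker functions uniform as $g$ ranges over the compact set $P' \bs G'$; hence $I_m(s,W,W')$ converges absolutely for $\Re(s) \gg 0$. For (2): by the $G_n \times G_{n-1}$ theory each ratio $I_1(s_j,\cdot,\cdot)\,L(s_j, \pi|\cdot|^{\alpha_j} \times \pi'|\cdot|^{\beta_j})^{-1}$ lies in $\C[q^{-s},q^s]$, and smoothness of $W$ and $W'$ makes $g \mapsto (\text{inner integral})(g)$ locally constant on $P' \bs G'$, so the integration over $P' \bs G'$ is a finite sum; therefore $\bigl(\prod_{i=1}^m L(s-m+i,\pi\times\pi')\bigr)^{-1} I_m(s,W,W')$ is a finite $\C$-linear combination of products of elements of $\C[q^{-s},q^s]$, hence lies in $\C[q^{-s},q^s]$ and is entire.

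For the functional equation (3) I would again exploit the factorization together with the $G_n \times G_{n-1}$ functional equation. Restricted to the Levi $L = G_n \times \dots \times G_n$ (resp. $L' = G_{n-1} \times \dots \times G_{n-1}$), the involution $g \mapsto w\,{}^tg^{-1}w$ defining $W \mapsto \wt W$, $\wt W(g) = W(w_{nm}\,{}^tg^{-1}w'_n)$ (resp. $W' \mapsto \wt W'$), acts factorwise as the standard $GL_n$ (resp. $GL_{n-1}$) duality composed with the order-reversal $j \mapsto m+1-j$ of the factors --- which is precisely the pairing of the parameter $s-m+j$ in the $I_m(s,\cdot,\cdot)$ factorization with the parameter $(m-s)-m+(m+1-j)$ in the $I_m(m-s,\cdot,\cdot)$ factorization. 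Using $\widetilde{\Sp(\pi,m)} \cong \Sp(\wt\pi, m)$ and the fact that $W \mapsto \wt W$ realizes the Zelevinsky model of the contragredient, one checks that $\wt W|_L$ and $\wt W'|_{L'}$ decompose into the factorwise duals; applying the $I_1$ functional equation of \cite{JPSS} in each factor and reassembling then yields the claimed identity, with $\prod_{i=1}^m \gamma(s-m+i,\pi\times\pi',\psi)$ and the sign $\omega_{\pi'}(-1)^{(n-1)m}$. A cleaner alternative, should this bookkeeping become unwieldy, is to invoke the one-dimensionality of $\Hom_G(\sigma, \Ind_N^G \Psi)$ and of $\Hom_{G'}(\sigma', \Ind_{N'}^{G'} \Psi^{-1})$: both sides of the asserted equation are meromorphic families of $G'$-equivariant bilinear forms on $\WW^\psi_\ze(\sigma) \times \WW^{\psi^{-1}}_\ze(\sigma')$ of the same type, hence proportional, and the proportionality constant is pinned down by evaluating on one convenient pair of vectors --- which reduces once more to the $I_1$ computation.

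The hard part will be this last step: carefully tracking how the global duality $W \mapsto \wt W$ interacts, through the restriction to $L$ and through $\iota$, with the factorwise $GL_n$ and $GL_{n-1}$ dualities and with the reordering of the twists $|\cdot|^{\alpha_j}$ --- that is, matching all of the normalizing shifts and Weyl-element conjugations --- together with ensuring that $I_m(s,W,W')$ is not identically zero (say by exhibiting a pair of vectors for which the factorization produces a nonzero product of $I_1$'s), so that the uniqueness argument genuinely determines the constant. Convergence, rationality, and the identification of the denominator are routine transcriptions of \cite{JPSS} through the factorization, exactly as in \cite{LM}.
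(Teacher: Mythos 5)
Your proposal is correct and takes essentially the same route as the paper: reduce via Lemma \ref{iwasawa} to a factorization of the inner integral into products of classical $G_n\times G_{n-1}$ integrals $I_1(s-m+i,\cdot,\cdot)$, obtain (1) and (2) by applying \cite{JPSS} factorwise together with the compactness of $P'\bs G'$, and derive (3) by observing that the involution $W\mapsto\wt W$ acts factorwise on $L$ and $L'$ combined with the reversal $i\mapsto m+1-i$. The paper carries out part (3) by writing down the Weyl-element matrix identities and making the substitutions $l_i\mapsto l_{m+1-i}$, $g\mapsto\theta(g)$ explicitly rather than via the uniqueness-of-functionals alternative you mention, but that alternative is not used and your primary argument matches the paper's.
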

\begin{proof}
When $m=1$, the assertions are \cite[(2.7) Theorem]{JPSS}.
\par

Note that $\delta_{P'}(l) = \prod_{i=1}^{m}|\det l_i|^{(m+1-2i)(n-1)}$ 
for $l = \diag(l_1, \dots, l_m) \in L'$.
Moreover,
\begin{align*}
\prod_{i=1}^m|\det l_i|^{\half{(m+1-2i)(1-n)}}W(\iota(lg)) &\in \WW^{\psi}(\pi)^{\otimes m}
\quad\text{and}\\
\prod_{i=1}^m|\det l_i|^{\half{(m+1-2i)(2-n)}}W'(lg) &\in \WW^{\psi^{-1}}(\pi')^{\otimes m}
\end{align*}
for fixed $g \in G'$. 
It follows that 
the inner integral of Lemma \ref{iwasawa} is of the form
\[
\sum_{\alpha,\beta} \prod_{i=1}^m I_1(s-m+i, W_{i,\alpha}, W'_{i,\beta})
\] 
for some $W_{i,\alpha} \in \WW^{\psi}(\pi)$ and $W'_{i,\beta} \in \WW^{\psi^{-1}}(\pi')$ 
(depending on $g$).
Hence we obtain the assertions (1) and (2). 
\par

We prove the assertion (3). 
For $g \in G'$ and $l = \diag(l_1, \dots, l_m) \in L'$ with $l_i \in \GL_{n-1}(F)$, 
we note that 
\begin{align*}
w'_n{}^t\iota(g)^{-1}w'_n &= \iota \left(w'_{n-1}{}^tg^{-1}w'_{n-1}\right), 
\\
w_{nm}{}^t\iota(l)^{-1} w'_n &= 
\diag\left(
w_n \begin{pmatrix}
{}^tl_m^{-1} & 0 \\ 0 & 1
\end{pmatrix}, 
\dots, 
w_n \begin{pmatrix}
{}^tl_1^{-1} & 0 \\ 0 & 1
\end{pmatrix}
\right).
\end{align*}
Hence we have
\begin{align*}
\wt{W}(\iota(lg)) 
&= W(w_{nm}{}^t\iota(l)^{-1} w'_n \cdot w'_n {}^t\iota(g)^{-1}w'_n)
\\&= W\left(
\diag\left(
w_n \begin{pmatrix}
{}^tl_m^{-1} & 0 \\ 0 & 1
\end{pmatrix}, 
\dots, 
w_n \begin{pmatrix}
{}^tl_1^{-1} & 0 \\ 0 & 1
\end{pmatrix}
\right)
\iota(w'_{n-1}{}^tg^{-1}w'_{n-1})
\right). 
\end{align*}
Similarly, we have
\begin{align*}
\wt{W}'(lg) &= W'(w_{(n-1)m}{}^tl^{-1}w'_{n-1} \cdot w'_{n-1}{}^tg^{-1} w'_{n-1})
\\&= 
W'(\diag(w_{n-1}{}^tl_m^{-1}, \dots, w_{n-1}{}^tl_1^{-1}) \cdot w'_{n-1}{}^tg^{-1} w'_{n-1}).
\end{align*}
Moreover, the map $g \mapsto \theta(g) \coloneqq w'_{n-1}{}^tg^{-1}w'_{n-1}$ 
is a homeomorphism on $P' \bs G'$ 
such that $d\theta(g) = dg$ and $|\det \theta(g)| = |\det g|^{-1}$.
Hence
\begin{align*}
&I_{m}(m-s, \wt{W}, \wt{W}')
\\&=
\int_{P' \bs G'} 
\left( 
\int_{(N' \cap L') \bs L'}\wt{W}(\iota(lg)) \wt{W}'(lg) |\det l|^{-(s-\half{m})} \delta_{P'}^{-1}(l)dl \right)
|\det g|^{-(s-\half{m})} dg
\\&=
\int_{P' \bs G'} 
\left( \int_{(N' \cap L') \bs L'}\wt{W}(\iota(lg)) \wt{W}'(lg) 
\prod_{i=1}^{m}|\det l_i|^{\half{(m+1-2i)(3-2n)}-s+i-\half{1}}
dl \right)
|\det g|^{-(s-\half{m})} dg
\\&= 
\omega_{\pi'}(-1)^{(n-1)m} \left( \prod_{i=1}^m \gamma(s-m+i, \pi \times \pi', \psi) \right)
\\&\times 
\int_{P' \bs G'} 
\left( \int_{(N' \cap L') \bs L'}W(\iota(lg)) W'(lg)
\prod_{i=1}^{m}|\det l_i|^{\half{(m+1-2i)(3-2n)}+s-m+i-\half{1}}
dl \right)
|\det g|^{s-\half{m}} dg
\\&= 
\omega_{\pi'}(-1)^{(n-1)m} \left( \prod_{i=1}^m \gamma(s-m+i, \pi \times \pi', \psi) \right)
I_m(s,W,W').
\end{align*}
Here, in the third equation, 
we made the change of variables $l_i \mapsto l_{m+1-i}$ and $g \mapsto \theta(g)$. 
This completes the proof.
\end{proof}

\begin{lem}\label{=1}
For any $W' \in W^{\psi^{-1}}_\ze(\sigma')$ with $W'(\1_{(n-1)m}) \not= 0$, 
there exists $W \in W^{\psi}_\ze(\sigma)$ such that $I_m(s,W,W') = 1$ for all $s \in \C$.
\end{lem}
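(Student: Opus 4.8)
The plan is to adapt the argument of Jacquet--Piatetskii-Shapiro--Shalika \cite[\S 2]{JPSS2} from the generic case: I would produce a $W$ whose pullback $W \circ \iota$ is concentrated near the identity coset of $N' \bs G'$, so that the integral $I_m(s,W,W')$ collapses to a single nonzero term with no dependence on $s$. Since $W'$ is locally constant with $W'(\mathbf{1}_{(n-1)m}) \neq 0$, I would first fix a compact open neighbourhood $\Omega$ of $\mathbf{1}_{(n-1)m}$ in $G'$ small enough that $W'$ is constant on $\Omega$, that $|\det g| = 1$ for all $g \in \Omega$, and that $\Omega$ injects into $N' \bs G'$ with $\Psi \circ \iota$ trivial on $N' \cap (\Omega\Omega^{-1})$. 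Let $f$ be the function supported on $N'\Omega$ given there by $f(u'\omega) = \Psi(\iota(u'))\, c^{-1}$ with $c = W'(\mathbf{1}_{(n-1)m})\,\mathrm{vol}(\Omega)$; by the choice of $\Omega$ this is a well-defined smooth function lying in $\ind_{N'}^{G'}(\Psi \circ \iota)$ and transforming on the left under $N'$ exactly as the restrictions $W \circ \iota$ of elements of $\WW^\psi_\ze(\sigma)$ must. If $W \in \WW^\psi_\ze(\sigma)$ satisfies $W \circ \iota = f$, then the integrand of $I_m(s,W,W')$ is supported on $N'\Omega$ and equals $c^{-1}W'(\mathbf{1}_{(n-1)m})$ there (the factor $|\det g|^{s-m/2}$ being $1$), so $I_m(s,W,W') = 1$ for every $s \in \C$.

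The real content is therefore the existence of such a $W$, or more robustly: that the pullback map $W \mapsto W \circ \iota$ on $\WW^\psi_\ze(\sigma)$ has image containing $\ind_{N'}^{G'}(\Psi \circ \iota)$ (compact induction), or at least all of its functions supported in a small neighbourhood of the identity coset. This is the analogue, for the Zelevinsky model of a Speh representation, of the Kirillov-model input used in \cite[\S 2]{JPSS2}. I would deduce it from the structure of the Zelevinsky model worked out in \cite[\S 3]{LM}: restricted to the generalized mirabolic $D = VG'$, elements of $\WW^\psi_\ze(\sigma)$ realize inside $\sigma|_D$ a copy of a compact induction of the degenerate character from $N \cap D$ --- the bottom layer of a Bernstein--Zelevinsky-type filtration of $\sigma|_D$ (cf.\ \cite{BZ}) --- and then, using the support and compactness properties recorded in Proposition \ref{transition} together with the decompositions $D = VG'$, $G' \cap V = V'$ and $N \cap D = (N \cap V)N'$ from Lemma \ref{group}, one restricts such functions further along $\iota$ to obtain precisely $\ind_{N'}^{G'}(\Psi \circ \iota)$. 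As a fallback, one could instead transfer the question to the Shalika model via the intertwining operator $\TT$ of Proposition \ref{transition} --- whose integral formula shows $\TT$ and $\TT^{-1}$ preserve compact support --- and use the analogous property there, or argue directly by a Bruhat-type decomposition for $D$ mirroring the $\GL_n \supset P_n \supset \GL_{n-1}$ computation behind the generic case.

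I expect this surjectivity statement to be the main obstacle; once it is available, everything else is the routine ``concentrate the support near the identity'' manoeuvre described above, and in fact only the inclusion of $\ind_{N'}^{G'}(\Psi \circ \iota)$ into the image of $W \mapsto W \circ \iota$ is needed, which is purely local. A secondary point to pin down carefully is the compatibility of the degenerate character $\Psi$ of $N$ with $\iota$ and with the $\psi^{-1}$-degenerate character of $N'$ attached to $\sigma'$: one must check that $\Psi \circ \iota$ is the inverse of the latter, so that the integrand of $I_m(s,W,W')$ genuinely descends to $N' \bs G'$. This is precisely where the normalization $W' \in \WW^{\psi^{-1}}_\ze(\sigma')$ is used, and it should follow immediately from the way $\iota$ identifies the simple roots of $G'$ with entries of $N$.
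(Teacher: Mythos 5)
Your proposal is correct and takes essentially the same route as the paper: the paper's proof is a two-line argument that invokes \cite[Corollary 3.15]{LM} to assert that $\{W|_D : W \in \WW^\psi_\ze(\sigma)\}$ contains $\ind_{N\cap D}^D(\Psi)$, and then chooses $W$ with $W|_D$ supported on $(N\cap D)\Omega$ for a small neighborhood $\Omega$ of $\1_{nm}$, so that $I_m(s,W,W')$ collapses to a constant. Your ``concentrate the support near the identity'' construction of $f$ and your identification of the sole nontrivial input as a compact-induction-containment statement for the Zelevinsky model on the mirabolic-type subgroup $D$ (or, after restricting along $\iota$ using Lemma \ref{group}, for $N'\bs G'$) match this exactly; the only cosmetic difference is that you phrase the input as a surjectivity onto $\ind_{N'}^{G'}(\Psi\circ\iota)$ and leave the citation at the level of \cite[\S 3]{LM} rather than pinpointing Corollary 3.15, which is where that fact is recorded.
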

\begin{proof}
By \cite[Corollary 3.15]{LM}, 
the space $\{W|_D \;|\; W \in W^{\psi}_\ze(\sigma)\}$ 
contains the compact induction $\ind_{N \cap D}^{D}(\Psi)$. 
Hence the assertion follows by taking $W \in W^{\psi}_\ze(\sigma)$ 
such that $W|_D$ is supported on $(N \cap D)\Omega$ 
for a small neighborhood $\Omega$ of $\1_{nm} \in D$.
\end{proof}

\begin{prop}\label{gcd}
The $\C$-span of the integrals $I_m(s,W,W')$ 
for $W \in W^{\psi}(\sigma)$ and $W' \in W^{\psi^{-1}}(\sigma')$ 
is a fractional ideal of $\C[q^{-s},q^{s}]$,
which is generated by $P_m(q^{-s})^{-1}$ for some $P_m(X) \in \C[X]$ with $P_m(0) = 1$. 
Moreover, 
$P_1(q^{-s}) = L(s, \pi \times \pi')^{-1}$, and 
$P_m(X)$ divides $\prod_{i=1}^mP_1(q^{m-i}X)$. 
\end{prop}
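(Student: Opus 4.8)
The plan is to mimic, in the Speh setting, the classical Jacquet--Piatetskii-Shapiro--Shalika argument establishing that the span of Rankin--Selberg integrals is a fractional ideal, together with the crude bounds now available from Theorem~\ref{RS}. First I would show that the $\C$-span $\mathcal{I}_m$ of the integrals $I_m(s,W,W')$ is a nonzero $\C[q^{-s},q^s]$-submodule of $\C(q^{-s})$: it is stable under multiplication by $q^{-s}$ and $q^s$ because replacing $W$ (resp.\ $W'$) by $g\mapsto W(g)|\det g|^{\pm 1}$ (resp.\ the analogous twist on $G'$) stays inside the same Zelevinsky model and shifts $I_m$ by a power of $q^{\mp s}$; here one uses that $\sigma$ and $\sigma'$ are fixed Speh representations and that such twists only permute the inducing data up to an unramified twist --- in fact one should note that $\Sp(\pi,m)|\det|^c = \Sp(\pi|\det|^c,m)$, but since we only need the span as a module the twist by $|\det|$ can be absorbed into $W$ itself as a new vector of the same model, so $\mathcal{I}_m$ is genuinely a $\C[q^{-s},q^s]$-module. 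It is nonzero by Lemma~\ref{=1}, which produces $I_m(s,W,W')=1$. Finally, by Theorem~\ref{RS}(2), every element of $\mathcal{I}_m$ lies in $\left(\prod_{i=1}^m L(s-m+i,\pi\times\pi')\right)\C[q^{-s},q^s]$, so $\mathcal{I}_m$ is a \emph{fractional} ideal: a finitely generated $\C[q^{-s},q^s]$-submodule of $\C(q^{-s})$ contained in $\left(\prod_{i=1}^m L(s-m+i,\pi\times\pi')\right)\C[q^{-s},q^s]$. Since $\C[q^{-s},q^s]$ is (a localization of) a PID, $\mathcal{I}_m$ is generated by a single element, which we may normalize to be $P_m(q^{-s})^{-1}$ for a unique polynomial $P_m(X)\in\C[X]$ with $P_m(0)=1$ (using that $I_m=1$ is attained, the generator has no zero or pole at $q^{-s}=0$ appropriately; more precisely the containment shows the generator is of the form $Q(q^{-s})/P_m(q^{-s})$ with $Q$ a unit in the relevant localization, and after rescaling we may take it to be $P_m(q^{-s})^{-1}$).

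Next I would pin down $P_1$. For $m=1$, the integral $I_1(s,W,W')$ is exactly the Jacquet--Piatetskii-Shapiro--Shalika integral for the pair $(\pi,\pi')$ of tempered (hence generic) representations of $G_n$ and $G_{n-1}$, so by \cite{JPSS} the span equals $L(s,\pi\times\pi')\,\C[q^{-s},q^s]$; comparing with the normalization $P_1(q^{-s})^{-1}$ gives $P_1(q^{-s})^{-1}=L(s,\pi\times\pi')$, i.e.\ $P_1(q^{-s})=L(s,\pi\times\pi')^{-1}$. One should note this already lies in $\C[q^{-s}]$ with constant term $1$ because $\pi\times\pi'$ is a product of tempered representations, so $L(s,\pi\times\pi')^{-1}$ is a polynomial in $q^{-s}$.

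For the divisibility $P_m(X)\mid\prod_{i=1}^m P_1(q^{m-i}X)$, the point is precisely Theorem~\ref{RS}(2): it says $\left(\prod_{i=1}^m L(s-m+i,\pi\times\pi')\right)^{-1}I_m(s,W,W')\in\C[q^{-s},q^s]$. Translating $L(s-m+i,\pi\times\pi')=P_1(q^{-(s-m+i)})^{-1}=P_1(q^{m-i}q^{-s})^{-1}$, the bound reads $\left(\prod_{i=1}^m P_1(q^{m-i}q^{-s})\right)I_m(s,W,W')\in\C[q^{-s},q^s]$ for all $W,W'$, hence $\left(\prod_{i=1}^m P_1(q^{m-i}q^{-s})\right)\mathcal{I}_m\subset\C[q^{-s},q^s]$. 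Since $\mathcal{I}_m=P_m(q^{-s})^{-1}\C[q^{-s},q^s]$, this gives $P_m(q^{-s})\mid\prod_{i=1}^m P_1(q^{m-i}q^{-s})$ in $\C[q^{-s},q^s]$, and since both sides have nonzero constant term this descends to a divisibility of polynomials in $X=q^{-s}$, which is the claim.

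The main obstacle is the module-stability step: checking carefully that multiplication by $q^{\pm s}$ preserves $\mathcal{I}_m$ without leaving the fixed model $\WW^\psi_\ze(\sigma)$. The subtlety is that twisting $W$ by $|\det g|^{\pm 1}$ a priori lands in $\WW^\psi_\ze(\Sp(\pi|\det|^{\pm1},m))$, a \emph{different} representation; the resolution --- standard in the generic case and used verbatim in \cite{LM} --- is that $\mathcal I_m$ is closed under this operation \emph{as a $\C$-span}, because the $s$ in $I_m(s,W,W')$ already absorbs such twists: the change of variables $g\mapsto g$ with $|\det g|^{s-m/2}\cdot|\det g| = |\det g|^{(s+1)-m/2}$ shows $q^{-s}\cdot I_m(s,W,W')=I_m(s,W\cdot|\det\cdot|,W')$ up to constants and this is again of the form $I_m(s,\widehat W,W')$ with $\widehat W\in\WW^\psi_\ze(\sigma)$ only if one works, as \cite{LM} does, with a suitable normalization; I would follow \cite[\S5]{LM} closely here rather than re-derive it. Everything else is formal: the PID structure of $\C[q^{-s},q^s]$, the nonvanishing from Lemma~\ref{=1}, and the rewriting of Theorem~\ref{RS}(2).
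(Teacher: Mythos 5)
Your overall strategy and the formal steps (containment from Theorem~\ref{RS}(2), non-vanishing from Lemma~\ref{=1}, PID structure of $\C[q^{-s},q^s]$, identification of $P_1$ via \cite{JPSS}, and the divisibility by rewriting Theorem~\ref{RS}(2)) match what the paper intends. But there is a genuine gap in the one non-formal step, the $\C[q^{-s},q^s]$-module stability of the span $\mathcal{I}_m$, and your two passes at it are mutually inconsistent. In the first paragraph you assert that replacing $W$ by $g\mapsto W(g)|\det g|^{\pm 1}$ ``stays inside the same Zelevinsky model''; this is false. That twist lands in $\WW^\psi_\ze(\Sp(\pi|\cdot|^{\pm1},m))$, a different model (you yourself observe $\Sp(\pi,m)|\det|^c=\Sp(\pi|\det|^c,m)$), and there is no way to ``absorb'' the twist back into $\WW^\psi_\ze(\sigma)$. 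In your final paragraph you correctly flag this as a subtlety but then punt to ``follow \cite[\S5]{LM} closely,'' which does not close the gap.

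The paper's fix is cleaner and avoids central twists entirely: for any $h\in G'$, the change of variables $g\mapsto gh^{-1}$ in $N'\bs G'$ gives
\[
I_m\bigl(s,\iota(h)W,hW'\bigr)=|\det h|^{-(s-\frac{m}{2})}\,I_m(s,W,W'),
\]
where $(\iota(h)W)(g)=W(g\,\iota(h))$ and $(hW')(g')=W'(g'h)$ are \emph{right translates}, hence stay inside the fixed models $\WW^\psi_\ze(\sigma)$ and $\WW^{\psi^{-1}}_\ze(\sigma')$. Choosing $h\in G'$ with $|\det h|=q^{\mp1}$ (e.g., $h=\diag(\varpi^{\pm1},1,\dots,1)$) shows $q^{\pm s}\cdot\mathcal{I}_m\subset\mathcal{I}_m$. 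With this replacement for your first step, the remainder of your argument is correct and is essentially the paper's (compressed) proof.
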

\begin{proof}
Note that 
\[
I_m(s,\iota(h)W,hW') = |\det h|^{-(s-\half{m})}I_m(s,W,W')
\]
for $h \in G'$, 
where $(\iota(h)W)(g) = W(g\iota(h))$ and $(hW')(g') = W'(g'h)$.
Hence the $\C$-span of the integrals $I_m(s,W,W')$ is a fractional ideal of $\C[q^{-s},q^{s}]$. 
The other assertions follow from Lemma \ref{=1} and Theorem \ref{RS} (2). 
\end{proof}

\begin{rem}\label{rem.gcd}
One might expect that $P_m(X) = \prod_{i=1}^mP_1(q^{m-i}X)$, 
but we do not know if this holds in general. 
This is a reason why 
we cannot prove Theorem \ref{essential} below for $\sigma = \Sp(\pi,m)$ 
when $L(s,\pi) \not= 1$ by a method similar to that in \cite{JPSS2}.
However, as an application of Theorem \ref{main1}, 
we will prove the equation $P_m(X) = \prod_{i=1}^mP_1(q^{m-i}X)$
when $\pi'$ is unramified (see Theorem \ref{essential} below). 
\end{rem}

\subsection{Rankin--Selberg integrals in the Shalika models}
Now we translate the results for the Zelevinsky models obtained in the previous subsection 
to those for the Shalika models.
\par

Recall that $\sigma = \Sp(\pi,m) \in \Irr(G)$ and $\sigma' = \Sp(\pi',m) \in \Irr(G')$. 
For $W_\sh \in W^{\psi}_\sh(\sigma)$, $W'_\sh \in W^{\psi^{-1}}_\sh(\sigma')$ and $s \in \C$, 
consider the integral 
\[
Z_m(s,W_\sh,W'_\sh) = 
\int_{V' \bs G'} W_\sh(\iota(g)) W'_\sh(g) |\det g|^{s-\half{m}} dg. 
\]
We call this \emph{the Rankin--Selberg integral in the Shalika models}. 

\begin{prop}\label{I=Z}
If $W_\sh = \TT^{\psi}W_\ze$ and $W'_\sh = \TT^{\psi^{-1}}W'_\ze$, we have
\[
Z_m(s,W_\sh,W'_\sh) = I_m(s,W_\ze,W'_\ze). 
\]
\end{prop}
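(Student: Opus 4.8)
The plan is to prove Proposition \ref{I=Z} by a direct substitution using the integral formulas from Proposition \ref{transition}, together with the group-theoretic facts collected in Lemma \ref{group}. The key point is that the Shalika-model vectors $W_\sh = \TT^\psi W_\ze$ and $W'_\sh = \TT^{\psi^{-1}} W'_\ze$ are, by definition, averages of the Zelevinsky-model vectors over $(V\cap N)\backslash V$ and $(V'\cap N')\backslash V'$ respectively, against the character $\Psi$ (resp.\ $\Psi^{-1}$). So $Z_m(s,W_\sh,W'_\sh)$ is a triple integral over $(V'\backslash G') \times ((V\cap N)\backslash V) \times ((V'\cap N')\backslash V')$, and I want to collapse the two inner integrations against the outer one and recognize the result as the integral over $N'\backslash G'$ defining $I_m(s,W_\ze,W'_\ze)$.

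First I would write out
\[
Z_m(s,W_\sh,W'_\sh) = \int_{V'\backslash G'} \left(\int_{(V\cap N)\backslash V} W_\ze(u\,\iota(g))\Psi(u)^{-1}\,du\right)\left(\int_{(V'\cap N')\backslash V'} W'_\ze(v g)\Psi(v)\,dv\right) |\det g|^{s-\frac{m}{2}}\,dg,
\]
noting that $\iota$ carries $V'$ into $V$ (indeed $\iota(V')\subset V\cap G'$) and that $\det$ is unchanged by multiplication by unipotent elements, so the $|\det|$ factor can be pulled outside. Using $\iota(v) = \iota(v)$ lies in $V$ and commutes appropriately, I would fold the $v$-integral into the $g$-integral: replacing $g$ by $vg$ with $v$ running over $(V'\cap N')\backslash V'$, combined with the already-present $V'\backslash G'$, builds up an integral over $(V'\cap N')\backslash G'$; and the factor $\Psi(v)$ is exactly what is needed so that $W'_\ze$ continues to transform correctly (since $W'_\ze \in \WW^{\psi^{-1}}_\ze(\sigma')$ transforms under $N'$ by $\Psi^{-1}$, and $V'\cap N' = N'\cap V'$ by Lemma \ref{group}(2) applied to the primed groups). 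Similarly, the $u$-integral over $(V\cap N)\backslash V$, once we substitute $\iota(g)\mapsto u\iota(g)$, can be absorbed: since $\iota(G')\subseteq D$ and $\iota$ intertwines the relevant subgroups, $u\,\iota(g)$ with $u\in (V\cap N)\backslash V$ and $g\in (V'\cap N')\backslash G'$ should range over $(N\cap D)\backslash D$ modulo the identifications $V\backslash D \cong V'\backslash G'$ and $(V\cap N)\backslash(N\cap D)\cong (V'\cap N')\backslash N'$ from Lemma \ref{group}. After the dust settles, the domain of integration becomes $N'\backslash G'$ and the integrand becomes $W_\ze(\iota(g))W'_\ze(g)|\det g|^{s-\frac m2}$, which is precisely $I_m(s,W_\ze,W'_\ze)$.

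The main obstacle will be the careful bookkeeping of the measure decompositions and the compatibility of the $\Psi$-characters across the various quotients — i.e.\ checking that the Haar measures on $V'\backslash G'$, $(V\cap N)\backslash V$, $(V'\cap N')\backslash V'$ match up (after the substitutions) with the Haar measure on $N'\backslash G'$, and that the character $\Psi$ restricted to $V$ agrees with $\psi$ on the entries $\iota$ sends things to, so that $W_\ze(u\iota(g))$ picks up exactly $\Psi(u)$. Here one uses that $W_\ze$ is compactly supported on $(V\cap N)\backslash V$ (Proposition \ref{transition}), so Fubini applies and all the integrals converge for $\Re(s)\gg 0$ (and the identity then extends by meromorphic continuation using Theorem \ref{RS}(2) and the analogous statement one would prove for $Z_m$). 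I would carry out the folding in two stages, first the primed side and then the unprimed side, citing Lemma \ref{group}(1),(2) at each stage to justify the identification of quotients; the transition integral formulas of Proposition \ref{transition} guarantee that no convergence is lost and that the unfolding is reversible. This is essentially the Speh-representation analogue of the classical fact that passing between the Whittaker and Shalika-type models is measure-preserving on the relevant homogeneous spaces, and the computation is routine once the identifications in Lemma \ref{group} are in hand.
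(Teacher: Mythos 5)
Your plan is correct and is essentially the paper's own argument: both proofs combine the integral formulas for $\TT^\psi$ and $\TT^{\psi^{-1}}$ from Proposition \ref{transition} with the quotient identifications $V\bs D\cong V'\bs G'$ and $(N\cap V)\bs(N\cap D)\cong(N'\cap V')\bs N'$ from Lemma \ref{group}, together with the $\Psi$-equivariance of the models, to fold the integral from $V'\bs G'$ to $N'\bs G'$. The paper's version is a bit more economical than what you sketch: rather than unfolding both $W_\sh$ and $W'_\sh$ at once (which would force you to recombine the unprimed side by in effect re-invoking $\TT^{-1}\circ\TT=\mathrm{id}$), it unfolds only $W'_\sh$, factors $(V'\cap N')\bs G'$ through $N'\bs G'$, and then recognizes the inner integral $\int_{(N\cap V)\bs(N\cap D)}W_\sh(u\,\iota(g))\Psi(u)^{-1}\,du$ as $\TT^{-1}W_\sh(\iota(g))=W_\ze(\iota(g))$; also, your intermediate claim that ``$u\,\iota(g)$ ranges over $(N\cap D)\bs D$'' is not quite the right description of the domain (the fold really produces $N'\bs G'$ with fiber $(V'\cap N')\bs N'$), but this does not affect the soundness of the overall plan since you correctly fall back on Proposition \ref{transition}.
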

\begin{proof}
By Lemma \ref{group} and Proposition \ref{transition}, we have
\begin{align*}
Z_m(s,W_\sh,W'_\sh) 
&=
\int_{V' \bs G'} W_\sh(\iota(g)) 
\left( \int_{(V' \cap N') \bs V'} W'_\ze(ug) \Psi(u)du \right) |\det g|^{s-\half{m}} dg
\\&=
\int_{V' \bs G'} \int_{(V' \cap N') \bs V'}
W_\sh(\iota(ug)) W'_\ze(ug) |\det (ug)|^{s-\half{m}} dudg
\\&= 
\int_{(V' \cap N') \bs G'}
W_\sh(\iota(g)) W'_\ze(g) |\det g|^{s-\half{m}} dg
\\&=
\int_{N' \bs G'}\int_{(V' \cap N') \bs N'}
W_\sh(\iota(ug)) W'_\ze(ug) |\det (ug)|^{s-\half{m}} dudg
\\&=
\int_{N' \bs G'}
\left(\int_{(N \cap V) \bs (N \cap D)}
W_\sh(\iota(ug)) \Psi(u)^{-1} du
\right)
W'_\ze(g) |\det g|^{s-\half{m}} dg
\\&= I_m(s,W_\ze,W'_\ze).
\end{align*}
This proves the proposition.
\end{proof}

Therefore, assertions similar to those in 
Theorem \ref{RS}, Lemma \ref{=1} and Proposition \ref{gcd} 
hold for $Z_m(s,W_\sh, W'_\sh)$.
Here, we note the following. 
If $W_\ze \in \WW^\psi_\ze(\sigma)$, we define 
$\wt{W}_\ze \in \WW^{\psi^{-1}}_\ze(\wt\sigma)$, 
where $\wt\sigma$ is the contragredient representation of $\sigma$, 
by $\wt{W}_\ze(g) = W_\ze(w_{nm}{}^tg^{-1}w'_{n})$. 
One can easily check that 
\[
\TT^{\psi^{-1}}\wt{W}_\ze(g) = \TT^{\psi}W_\ze(w_{nm}{}^tg^{-1}w'_{n}). 
\]
Hence we define $\wt{W}_\sh \in \WW^{\psi^{-1}}_\sh(\wt\sigma)$ 
for $W_\sh \in \WW^\psi_\sh(\sigma)$
by $\wt{W}_\sh(g) = W_\sh(w_{nm}{}^tg^{-1}w'_{n})$.

\subsection{The case where $\pi'$ is unramified}\label{s.unram}
In the following section, 
we need sharper results when $\pi'$ is unramified.
\par

Let $\pi'$ be an irreducible unramified representation of $G_{n-1}$
with Satake parameter $(x_1, \dots, x_{n-1}) \in (\C^\times)^{n-1}/S_{n-1}$.
Hence $\pi'$ is the unique irreducible unramified constituent of 
\[
I(s_1, \dots, s_{n-1}) = |\cdot|^{s_1} \times \dots \times |\cdot|^{s_{n-1}},
\] 
where $s_j$ is a complex number such that $q^{-s_j} = x_j$.
Since the principal series $I(s_1, \dots, s_{n-1})$ is generic and unramified, 
there exists a unique Whittaker function 
$W^0(x_1, \dots, x_{n-1}) \in \WW^{\psi^{-1}}(I(s_1, \dots, s_{n-1}))$
such that $W^0(k_1; x_1, \dots, x_{n-1}) = 1$ for any $k_1 \in \GL_{n-1}(\oo)$. 
When $\pi'$ is tempered, i.e., $|x_j| = 1$ for any $1 \leq j \leq n-1$, 
then $W^0(x_1, \dots, x_{n-1}) \in \WW^{\psi^{-1}}(\pi')$. 
Note that $W^0(x_1, \dots, x_{n-1})$ is a Hecke eigenfunction 
whose Hecke eigenvalues are uniquely determined by 
$(x_1, \dots, x_{n-1}) \in (\C^\times)^{n-1}/S_{n-1}$. 
\par

Recall that $G' = G_{(n-1)m}$, $K' = G_{(n-1)m}(\oo)$ 
and that $P' = L'U'$ is the standard parabolic subgroup of $G'$ 
with $L' \cong G_{n-1} \times \dots \times G_{n-1}$ ($m$-times). 
Let $\ub{x} = (x_{i,j}) \in M_{m,n-1}(\C)$ with $x_{i,j} \in \C^\times$. 
We can define a function 
\[
W^0_\ze(\ub{x}) \colon G' \rightarrow \C
\]
by 
\[
W^0_\ze(ulk; \ub{x}) 
= \Psi^{-1}(u) 
\delta_{P'}^{\half{1}}(l)
\prod_{i=1}^m 
W^0(l_i; x_{i,1}, \dots, x_{i,n-1})
\]
for $u \in U'$, $l = \diag(l_1, \dots, l_m) \in L'$ and $k \in K'$.
(Here, we note that $\Psi(u) = 1$ for $u \in U'$.)
As in \cite[Lemma 3.8]{LM}, $W^0_\ze(\ub{x})$ is compactly supported on $(V' \cap N') \bs V'$. 
We set
\[
W^0_\sh(g; \ub{x}) = \int_{(V' \cap N') \bs V'}W^0_\ze(ug; \ub{x})\Psi(u)du. 
\]
\par

If $\ub{x} = {}^t(q^{-\half{m-1}}x_{j}, q^{-\half{m-3}}x_{j}, \dots, q^{\half{m-1}}x_{j})_{1 \leq j \leq n-1}$ 
with $|x_j| = 1$ for any $1 \leq j \leq n-1$, 
then $W^0_\ze(\ub{x}) \in \WW^{\psi^{-1}}_\ze(\Sp(\pi',m))$, 
where $\pi'$ is the irreducible unramified representation of $G_{n-1}$
with Satake parameter $(x_1, \dots, x_{n-1})$. 
In general, 
$W^0_\ze(g; \ub{x}) = l(g \cdot f^0)$
for some $l \in \Hom_{N'}(I(s_1, \dots, s_{(n-1)m}), \Psi)$, 
where $s_1, \dots, s_{(n-1)m}$ are complex numbers such that 
\[
\{q^{-s_1}, \dots, q^{-s_{(n-1)m}}\} = \{x_{i,j} \;|\; 1 \leq i \leq m, \; 1 \leq j \leq n-1\}
\]
as multi-sets, 
and $f^0 \in I(s_1, \dots, s_{(n-1)m})^{K'}$.
Note that $W^0_\ze(\ub{x})$ is a Hecke eigenfunction 
whose Hecke eigenvalues are uniquely determined by 
$(s_1, \dots, s_{(n-1)m}) \in \C^{(n-1)m}/S_{(n-1)m}$. 

\begin{lem}\label{fin_dim}
The Hecke eigenspace in $\Ind_{N'}^{G'}(\Psi)^{K'}$ 
with Hecke eigenvalues determined by $(s_1, \dots, s_{(n-1)m})$
is spanned by $W^0_\ze(\ub{x})$ for $\ub{x} = (x_{i,j}) \in M_{m,n-1}(\C)$ such that 
$\{q^{-s_1}, \dots, q^{-s_{(n-1)m}}\} = \{x_{i,j} \;|\; 1 \leq i \leq m, \; 1 \leq j \leq n-1\}$
as multi-sets. 
\end{lem}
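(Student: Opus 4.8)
The plan is to reduce the statement to the classical multiplicity-one statement for Whittaker functionals on unramified principal series of $\GL_{(n-1)m}$, via the structure of the Zelevinsky model and the Iwasawa decomposition.

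First I would recall that the Zelevinsky functional sets up an isomorphism $\WW^{\psi}_\ze$-type identification: any $W \in \Ind_{N'}^{G'}(\Psi)^{K'}$ that is a Hecke eigenfunction with eigenvalues determined by $(s_1,\dots,s_{(n-1)m})$ arises as $W(g) = l(g\cdot f)$ for a Whittaker functional $l \in \Hom_{N'}(I(s_1,\dots,s_{(n-1)m}),\Psi)$ and a $K'$-fixed vector $f \in I(s_1,\dots,s_{(n-1)m})^{K'}$. Here one uses that the Hecke algebra of $K'$-biinvariant functions acts on $\Ind_{N'}^{G'}(\Psi)^{K'}$, that $I(s_1,\dots,s_{(n-1)m})^{K'}$ is one-dimensional, and that the Hecke eigenvalues on a $K'$-fixed vector determine the unramified principal series' Satake parameter (up to $S_{(n-1)m}$). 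This is exactly the reasoning already sketched in the paragraph preceding the lemma; I would make it into the first step: \emph{the Hecke eigenspace in question is the image of $I(s_1,\dots,s_{(n-1)m})^{K'}$ under the map $f \mapsto (g\mapsto l(g\cdot f))$ as $l$ ranges over $\Hom_{N'}(I(s_1,\dots,s_{(n-1)m}),\Psi)$.}

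Second I would invoke the uniqueness of Whittaker functionals for principal series: $\Hom_{N'}(I(s_1,\dots,s_{(n-1)m}),\Psi)$ is one-dimensional (Jacquet/Shalika uniqueness), but this one-dimensionality is for a \emph{fixed} ordering of the inducing characters; when one wants to range over all realizations of the principal series corresponding to the same multiset $\{q^{-s_1},\dots\}$, the relevant space of functionals is spanned by the ``unnormalized'' Whittaker integrals attached to each permutation. Concretely, the functions $W^0_\ze(\ub x)$ for the various $\ub x = (x_{i,j})$ with $\{x_{i,j}\} = \{q^{-s_1},\dots,q^{-s_{(n-1)m}}\}$ (as a multiset) all lie in this eigenspace by construction, and I claim they span it: the dimension of the eigenspace equals the number of distinct principal series realizations compatible with the given Satake data, which is matched by the number of inequivalent $\ub x$'s together with the fact that each $W^0_\ze(\ub x)$ is the spherical Whittaker function of the $L'$-factor $I(x_{i,1},\dots,x_{i,n-1})$ glued along $P'$ via $\delta_{P'}^{1/2}$ — this is the content of the explicit formula defining $W^0_\ze(\ub x)$ and of \cite[Lemma 3.8]{LM}.

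The main obstacle I anticipate is controlling the \emph{dimension} of the Hecke eigenspace precisely, rather than just exhibiting a spanning set: one must rule out that there are ``extra'' eigenfunctions in $\Ind_{N'}^{G'}(\Psi)^{K'}$ not of the form $W^0_\ze(\ub x)$. I would handle this by the Iwasawa decomposition $G' = N'L'K'$ (more precisely, using $U'L'K'$ and then the Whittaker restriction to the diagonal torus), which reduces an eigenfunction to its restriction to the torus $T'$; the Hecke eigenvalue conditions then translate into a finite-difference recursion (a Casselman--Shalika type system) whose solution space has dimension equal to the number of Weyl group elements modulo the stabilizer of the Satake parameter — matching the count of inequivalent $\ub x$. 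Combining this dimension count with the linear independence of the $W^0_\ze(\ub x)$ (which follows from their distinct support/leading-term behavior on $T'$, or from the fact that distinct $\ub x$ give distinct restrictions to $L'$ via the Casselman--Shalika formula) yields the claim. I would also note that the compact support of $W^0_\ze(\ub x)$ on $(V'\cap N')\bs V'$, guaranteed by \cite[Lemma 3.8]{LM}, ensures all these functions genuinely lie in the ambient space $\Ind_{N'}^{G'}(\Psi)^{K'}$ so that the comparison is legitimate.
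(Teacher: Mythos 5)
The crux of your proposal is the dimension count in your third step, and it is wrong. You assert that the dimension of the Hecke eigenspace is ``the number of Weyl group elements modulo the stabilizer of the Satake parameter'' and that this matches the number of inequivalent $\ub{x}$'s. These two numbers do not agree in general. Take $m=1$ and $n-1 \geq 2$ with pairwise distinct $s_j$: the Weyl-coset count is $(n-1)!$, while the number of $\ub{x}$'s (a $1\times(n-1)$ row modulo $S_{n-1}$) is $1$, which is the correct answer because $\Psi$ on $N'$ is then the full generic character and the Whittaker functional on the principal series is unique. The error in your Casselman--Shalika-style recursion is that the Hecke eigenvalue conditions alone do not determine the dimension of the solution space on the torus; the condition $W(ug)=\Psi(u)W(g)$ for $u\in N'$ imposes additional vanishing/support constraints on $W|_{T'}$ whose strength depends precisely on the degeneracy pattern of $\Psi$ (which simple root directions it covers). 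For generic $\Psi$ these cut the solutions down to dimension $1$; for the Zelevinsky character on $G'$ (trivial on $U'$, generic on $N'\cap L'$) they cut less, but computing the resulting dimension directly amounts to re-deriving the Bernstein--Zelevinsky Geometric Lemma by hand, and you have not done that. Your second step also leans on ``uniqueness of Whittaker functionals'' in a way that does not apply here: since $\Psi$ is degenerate on $N'$ when $m\geq 2$, the space $\Hom_{N'}(I,\Psi)$ is genuinely multi-dimensional, and it is exactly that dimension one must compute.

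The paper sidesteps all of this with a short structural argument you should compare against: since $\Psi$ is trivial on $U'$, Frobenius reciprocity gives $\Hom_{N'}(I,\Psi)\cong\Hom_{N'\cap L'}(\Jac_{P'}(I),\Psi)$, and the Geometric Lemma \cite[2.12]{BZ} computes the semisimplification of $\Jac_{P'}(I)$ as a direct sum indexed precisely by the $\ub{x}\in M_{m,n-1}(\C)/(S_{n-1})^m$ with the required multiset of entries. On each summand, $\Psi|_{N'\cap L'}$ is a genuine generic character of the unipotent radical of the Borel of $L'\cong G_{n-1}^m$, so uniqueness of Whittaker functionals applies factor by factor and gives at most one dimension per $\ub{x}$. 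This yields the bound $\dim\Hom_{N'}(I,\Psi)\le \#\{\ub{x}\}$ at once, with the block decomposition into $m$ pieces of size $n-1$ --- the thing your recursion would need to recover by hand --- produced for free by the Jacquet functor. If you want to pursue a direct torus computation, you would need to make explicit the support conditions on $W|_{T'}$ forced by the specific pattern of nondegeneracy of $\Psi$ and verify that they cut the formal recursion solution space down to exactly $\#\{\ub{x}\}$ dimensions; as written your proposal does not do this.
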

\begin{proof}
Since $\Psi$ is trivial on $U' \subset N'$, 
we have a canonical isomorphism
\[
\Hom_{N'}(I(s_1, \dots, s_{(n-1)m}), \Psi)
\cong \Hom_{N' \cap L'}(\Jac_{P'}(I(s_1, \dots, s_{(n-1)m})), \Psi),
\]
where $\Jac_{P'}$ is the unnormalized Jacquet functor along $P' = L'U'$. 
Note that $\Psi|_{N' \cap L'}$ is a generic character. 
Moreover, by the Geometric Lemma of Bernstein--Zelevinsky \cite[2.12]{BZ}, 
the semisimplification of $\Jac_{P'}(I(s_1, \dots, s_{(n-1)m}))$ is equal to
\[
\delta_{P'}^{\half{1}} \otimes 
\left(
\bigoplus_{\ub{x}} I(s_{1,1}, \dots, s_{1,n-1}) \boxtimes \dots \boxtimes I(s_{m,1}, \dots, s_{m,n-1})
\right), 
\]
where $\ub{x} = (x_{i,j})$ runs over $M_{m,n-1}(\C)/(S_{n-1})^m$ such that 
$\{q^{-s_1}, \dots, q^{-s_{(n-1)m}}\} = \{x_{i,j} \;|\; 1 \leq i \leq m, \; 1 \leq j \leq n-1\}$, 
and $s_{i,j}$ is a complex number such that $q^{-s_{i,j}} = x_{i,j}$. 
Hence $\dim \Hom_{N'}(I(s_1, \dots, s_{(n-1)m}), \Psi)$ is less than or equal to 
the number of choices of such $\ub{x}$. 
This proves the claim.
\end{proof}

Let $\pi$ be an irreducible tempered representation of $G_n$, 
and set $\sigma = \Sp(\pi,m)$. 
For $W_\ze \in \WW^{\psi}_\ze(\sigma)$ and $W_\sh \in \WW^{\psi}_\sh(\sigma)$, 
one can consider the integrals 
$I_m(s,W_\ze,W^0_\ze(\ub{x}))$ and $Z_m(s,W_\sh,W^0_\sh(\ub{x}))$
defined by the same integrals in the previous two subsections. 
By the same arguments as in these subsections, we can prove the following theorem. 
We omit the proof of it. 

\begin{thm}\label{RS_unram}
The integrals $I_m(s,W_\ze,W^0_\ze(\ub{x}))$ and $Z_m(s,W_\sh,W^0_\sh(\ub{x}))$ 
have the following properties. 
\begin{enumerate}
\item
The integral $I_m(s,W_\ze,W^0_\ze(\ub{x}))$ is absolutely convergent for $\Re(s) \gg 0$. 

\item
The function 
\[
\left(\prod_{i=1}^m\prod_{j = 1}^{n-1} L\left(s+s_{i,j}-\half{m-1}, \pi \right)\right)^{-1} I_m(s,W_\ze,W^0_\ze(\ub{x}))
\]
is in $\C[q^{-s}, q^{s}]$, 
where $s_{i,j}$ is a complex number such that $q^{-s_{i,j}} = x_{i,j}$.  
In particular, it is entire. 

\item
The functional equation 
\begin{align*}
&I_m(m-s,\wt{W}_\ze,W^0_\ze(\ub{x}^{-1})) 
\\&= 
\left( \prod_{i=1}^m\prod_{j=1}^{n-1} \gamma\left( s+s_{i,j}-\half{m-1}, \pi, \psi \right) \right)
I_m(s,W_\ze,W^0_\ze(\ub{x})), 
\end{align*}
holds, 
where $\wt{W}_\ze(g) = W_\ze(w_{nm} {}^tg^{-1} w'_n)$ and $\ub{x}^{-1} = (x_{i,j}^{-1})$. 

\item
If $W_\sh = \TT^{\psi}W_\ze$, then 
\[
I_m(s,W_\ze,W^0_\ze(\ub{x})) = Z_m(s,W_\sh,W^0_\sh(\ub{x})).
\]

\end{enumerate}
\end{thm}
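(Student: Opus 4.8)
The plan is to reduce everything to the rank-one case $m=1$, which is the Rankin--Selberg theory of \cite{JPSS} for $G_n\times G_{n-1}$, by means of the Iwasawa decomposition, exactly as in the proofs of Theorem \ref{RS} and Proposition \ref{I=Z}.

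First I would record the case $m=1$. There $W^0_\ze(\ub x)=W^0(x_{1,1},\dots,x_{1,n-1})$ is the normalized unramified Whittaker function of the principal series $I(s_{1,1},\dots,s_{1,n-1})$ of $G_{n-1}$, a representation of Whittaker type (with $q^{-s_{1,j}}=x_{1,j}$), and $I_1(s,W_\ze,W^0_\ze(\ub x))$ is precisely the Rankin--Selberg integral of \cite{JPSS} for the pair $(\pi,I(s_{1,1},\dots,s_{1,n-1}))$. Hence (1), (2), (3) are \cite[(2.7) Theorem]{JPSS}, once one records that $L(s,\pi\times I(s_{1,1},\dots,s_{1,n-1}))=\prod_{j}L(s+s_{1,j},\pi)$ and $\gamma(s,\pi\times I(s_{1,1},\dots,s_{1,n-1}),\psi)=\prod_{j}\gamma(s+s_{1,j},\pi,\psi)$, and that the sign $\omega(-1)^{n-1}$ occurring in the $G_n\times G_{n-1}$ functional equation is trivial here because the central character of $I(s_{1,1},\dots,s_{1,n-1})$ is unramified and $|-1|=1$. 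Assertion (4) is empty for $m=1$, since then $V=N$ and $\TT^\psi=\mathrm{id}$.

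For general $m$ I would treat $I_m(s,W_\ze,W^0_\ze(\ub x))$ by imitating the proof of Theorem \ref{RS}. By Lemma \ref{iwasawa} this integral is an integral over $P'\bs G'$ of an inner integral over $(N'\cap L')\bs L'$. On $L'$ the restriction property of the Zelevinsky model writes $W_\ze(\iota(lg))|_{\text{$i$-th block}}$ as $|\det l_i|^{(n-1)(m+1-2i)/2}$ times a Whittaker function in $\WW^\psi(\pi)$, while by construction $W^0_\ze(lg;\ub x)=\delta_{P'}^{\half{1}}(l)\prod_{i}W^0(l_i;x_{i,1},\dots,x_{i,n-1})$; since $\delta_{P'}(l)$ has $|\det l_i|$-exponent $(n-1)(m+1-2i)$, combining this with the weight $|\det l|^{s-\half{m}}\delta_{P'}^{-1}(l)$ of Lemma \ref{iwasawa} and the above $\det$-twist gives exactly the weight $|\det l_i|^{s-\half{m}}$ in the $i$-th block, all contributions proportional to $(n-1)(m+1-2i)$ cancelling. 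As the $G_n\times G_{n-1}$ integral $I_1(s',-,-)$ carries the weight $|\det|^{s'-\half{1}}$, the inner integral equals $\sum_{\alpha}\prod_{i=1}^m I_1\!\left(s-\half{m-1},\,W_{i,\alpha},\,W^0(\,\cdot\,;x_{i,1},\dots,x_{i,n-1})\right)$ for some $W_{i,\alpha}\in\WW^\psi(\pi)$ depending on $g$. Applying the $m=1$ case to each factor yields (1); moreover $\left(\prod_{i=1}^m\prod_{j=1}^{n-1}L(s+s_{i,j}-\half{m-1},\pi)\right)^{-1}I_m(s,W_\ze,W^0_\ze(\ub x))$ is, for each $g$, a Laurent polynomial in $q^{\pm s}$ of degree bounded uniformly over the compact set $P'\bs G'$, whence (2) upon integrating, just as in the proof of Theorem \ref{RS}(2). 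For (3) I would run the substitution used for Theorem \ref{RS}(3): replace $g$ by $w'_{n-1}{}^tg^{-1}w'_{n-1}$ on $P'\bs G'$ and $l_i$ by $w_{n-1}{}^tl_i^{-1}$ on each block, reindex $i\mapsto m+1-i$, and apply the $m=1$ functional equation to each factor; the accumulated gamma factor is $\prod_{i=1}^m\prod_{j=1}^{n-1}\gamma(s+s_{i,j}-\half{m-1},\pi,\psi)$, again with no sign since $|-1|=1$.

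Assertion (4) for general $m$ is Proposition \ref{I=Z} with $W'_\ze$ replaced by $W^0_\ze(\ub x)$: the latter transforms under $N'$ by $\Psi^{-1}$ and, as in \cite[Lemma 3.8]{LM}, is compactly supported on $(V'\cap N')\bs V'$, and $W^0_\sh(\,\cdot\,;\ub x)$ is by definition $\TT^{\psi^{-1}}W^0_\ze(\ub x)$; so the chain of integral manipulations in the proof of Proposition \ref{I=Z} — which uses only Lemma \ref{group} and the integral formulas of Proposition \ref{transition}, all still valid here — applies verbatim. I do not expect a genuine obstacle, since the proof is left to the reader in the excerpt; the only point that needs care is that $W^0_\ze(\ub x)$ is attached to a principal series of $G_{(n-1)m}$ whose Satake parameters need not be in ``Speh position'', so one cannot deduce the statement from Theorem \ref{RS} by specializing a Speh representation $\Sp(\pi',m)$ (those parameters are not Zariski dense in $(\C^\times)^{m(n-1)}$). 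One must unfold directly and track the $\delta_{P'}$-normalization built into the definition of $W^0_\ze(\ub x)$, which is exactly what produces the uniform shift $s-\half{m-1}$ in place of the shifts $s-m+i$ of Theorem \ref{RS}.
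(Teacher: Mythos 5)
Your proposal is correct, and it is exactly the argument the paper has in mind: the text introducing Theorem \ref{RS_unram} says it is proved ``by the same arguments as in these subsections'' (Theorem \ref{RS} and Proposition \ref{I=Z}) and omits the proof, and you carry out precisely that reduction via Lemma \ref{iwasawa}, block-by-block application of the $m=1$ theory of \cite{JPSS}, and the unfolding of Proposition \ref{I=Z}. Your computation that the $\delta_{P'}^{1/2}$ built into the definition of $W^0_\ze(\ub{x})$ turns the staggered shifts $s-m+i$ of Theorem \ref{RS} into the uniform shift $s-\half{m-1}$, together with the remark that the statement cannot be obtained by specializing Theorem \ref{RS} to Speh parameters (which are not Zariski dense), correctly captures the two points that actually need checking.
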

\begin{proof}
Omitted.
\end{proof}

\section{Essential vectors for Speh representations}\label{s.ess_speh}
We continue to use the notations in the previous section.
Recall that $\psi$ is unramified, i.e., $\psi$ is trivial on $\oo$ but non-trivial on $\pp^{-1}$.
Let $\pi$ be an irreducible tempered representation of $G_n$, 
and set $\sigma = \Sp(\pi,m)$. 
In this section, we define a notion of essential vectors, 
and prove Theorem \ref{main1} for Speh representations.

\subsection{Essential vectors}
The following theorem is a generalization of \cite[(4.1) Th{\'e}or{\`e}me]{JPSS2}. 
\begin{thm}\label{essential}
Let the notation be as above.   
There exists a unique function 
$W^\ess_\sh \in \WW^\psi_\sh(\sigma)$ such that 
\begin{enumerate}
\item
$W^\ess_\sh(g \cdot \iota(k)) = W^\ess_\sh(g)$ for any $g \in G$ and $k \in K'$; 
\item
for all $s \in \C$ and $\ub{x} = (x_{i,j}) \in M_{m,n-1}(\C)$ with $x_{i,j} \in \C^\times$, 
\[
Z_m(s,W^\ess_\sh, W^0_\sh(\ub{x})) 
= \prod_{i=1}^m\prod_{j = 1}^{n-1} L\left( s+s_{i,j}-\half{m-1}, \pi \right),
\]
where $s_{i,j}$ is a complex number such that $q^{-s_{i,j}} = x_{i,j}$.  
\end{enumerate}
\end{thm}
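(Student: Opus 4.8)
\textbf{Proof strategy for Theorem \ref{essential}.}
The plan is to mimic the argument of Jacquet--Piatetskii-Shapiro--Shalika in the generic case \cite{JPSS2}, now transported to the Shalika model via the machinery of Section \ref{s.RS}. First I would reduce everything to the Zelevinsky model: by Proposition \ref{I=Z} (and Theorem \ref{RS_unram} (4)) it suffices to construct $W^\ess_\ze \in \WW^\psi_\ze(\sigma)$ with $W^\ess_\ze(g\cdot\iota(k)) = W^\ess_\ze(g)$ for $k\in K'$ and with $I_m(s,W^\ess_\ze,W^0_\ze(\ub{x})) = \prod_{i=1}^m\prod_{j=1}^{n-1} L(s+s_{i,j}-\tfrac{m-1}{2},\pi)$, and then set $W^\ess_\sh = \TT^\psi W^\ess_\ze$. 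The $K'$-invariance is preserved by $\TT^\psi$ because $\TT^\psi$ only involves integration over $(V\cap N)\backslash V$, which commutes with right translation by $\iota(K')$; this needs a short verification but is routine.

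For the existence of $W^\ess_\ze$, the key point is that the right-$K'$-invariant vectors in $\WW^\psi_\ze(\sigma)$ form the analogue of the space of ``new-type'' vectors, and the functional $W_\ze \mapsto I_m(s,W_\ze,W^0_\ze(\ub{x}))$, restricted to this space, should span exactly the fractional ideal generated by $\prod_{i,j} L(s+s_{i,j}-\tfrac{m-1}{2},\pi)$. Concretely I would proceed as in \cite[\S 4]{JPSS2}: decompose the integral $I_m(s,W_\ze,W^0_\ze(\ub{x}))$ over $N'\backslash G'$ using the Iwasawa decomposition $G' = N'A'K'$ (or rather its block version through Lemma \ref{iwasawa}), so that right-$K'$-invariance of $W_\ze$ collapses the $K'$-integral and the remaining torus integral becomes a finite sum of products of local zeta integrals in the $m=1$ case. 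Using Theorem \ref{main1} (already proven in all cases by this point in the paper!) for the tempered representation $\pi$ of $G_n$ and for its derivatives, one identifies the newform in $\WW^\psi(\pi)$ and shows the corresponding rank-one integral $I_1$ equals $L(\,\cdot\,,\pi)$; taking the appropriate tensor vector $W_\ze$ built from these rank-one newforms (twisted by the correct powers of $|\det|$ dictated by $\delta_{P'}$) yields the product of $L$-factors. Uniqueness follows because the fractional ideal in Proposition \ref{gcd}-type statements, intersected with the $K'$-fixed vectors, is one-dimensional over $\C[q^{-s},q^s]$ up to the $L$-factor: any two candidates differ by a $K'$-invariant vector whose integral against every $W^0_\ze(\ub{x})$ vanishes, and by Lemma \ref{fin_dim} together with the fact that the $W^0_\ze(\ub{x})$ exhaust the relevant Hecke eigenspaces this forces the difference to pair trivially with enough vectors to be zero (here one invokes that $\WW^\psi_\ze(\sigma)|_{D}$ contains $\ind_{N\cap D}^D(\Psi)$, as in Lemma \ref{=1}).

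The main obstacle I expect is the precise bookkeeping in the last step: showing that the $K'$-invariant subspace of $\WW^\psi_\ze(\sigma)$ is ``just large enough'' so that the map to the space of $L$-normalized integrals is a bijection. In the generic case this is \cite[(4.1)]{JPSS2} and rests on a delicate induction on $n$ using the theory of derivatives and the classification of $K'$-fixed vectors in $\GL_{n-1}(F)$-modules; here the same induction has to be run with $\pi$ replaced by the Speh representation $\sigma$, and one must check that the Zelevinsky-model restriction formula $W_\ze|_L \in \bigotimes_i \WW^\psi(\pi|\cdot|^{\ast})$ interacts correctly with the newvector theory for each factor. Since Theorem \ref{main1} is available for all of $G_n$ (including $\pi$ and all its twists and derivatives), the inductive input is in hand, but one has to carefully track the shifts $|\det l_i|^{(m+1-2i)(n-1)/2}$ coming from $\delta_{P'}$ to land on exactly the stated product $\prod_{i=1}^m\prod_{j=1}^{n-1} L(s+s_{i,j}-\tfrac{m-1}{2},\pi)$. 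Everything else --- convergence, rationality, the functional equation --- is already packaged in Theorem \ref{RS_unram}.
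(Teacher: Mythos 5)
Your outline is close to the paper's approach for the case $L(s,\pi)\neq 1$, but it hides a genuine circularity. The paper proves the existence of $W^\ess_\sh$ in two stages precisely because at the point where Theorem~\ref{essential} is proved, Theorem~\ref{main1} is \emph{not} yet available for Speh representations with $L(s,\pi)=1$: Propositions~\ref{dim=0} and~\ref{dim=1} (which close that last case of Theorem~\ref{main1}) are themselves consequences of the existence and uniqueness in Theorem~\ref{essential}, so your parenthetical ``already proven in all cases by this point'' gets the dependency order backwards. Concretely, your construction of $W^\ess_\ze$ as a tensor of rank-one JPSS newforms requires knowing that the restriction map $W_\ze\mapsto W_\ze|_L$ sends $\WW^\psi_\ze(\sigma)^{K(c_\pi)}$ \emph{onto} the one-dimensional space $\bigotimes_i \WW^\psi(\pi|\cdot|^{\ast})^{K(c_\pi)\cap L}$; that surjectivity is Corollary~\ref{prop:ess}, which rests on $\dim\sigma^{K(c_\pi)}=1$, which is Theorem~\ref{main1} for $\sigma$. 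To break the circle, the paper handles $L(s,\pi)=1$ directly in the Shalika model (where the product of $L$-factors is identically $1$), by choosing $\varphi\in\ind_V^D(\Psi)$ supported on $V\iota(K')$, lifting it to $W_\sh$ via \cite[Cor.~3.15]{LM}, and averaging over $K'$ — no Zelevinsky-model tensor lifting is needed. Only after Theorem~\ref{main1} is secured for Speh representations does the paper run the tensor-product argument you describe, and only for the case $L(s,\pi)\neq 1$.

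Your uniqueness argument is also under-articulated in a way that misses the key analytic step and cites the wrong lemma. You invoke ``the fact that $\WW^\psi_\ze(\sigma)|_{D}$ contains $\ind_{N\cap D}^D(\Psi)$, as in Lemma~\ref{=1}'', but that is an existence tool (it produces compactly supported test functions); it says nothing about why a $K'$-invariant function that pairs to zero with all $W^0_\sh(\ub{x})$ must vanish. The actual argument needs: (i) the difference $W=W_1-W_2$ is square-integrable on $V'\backslash D$ by Theorem~\ref{inner}; (ii) $W\circ\iota$ lies in the closed subspace $\Pi\subset L^2(V'\backslash G';\Psi)$, which admits a direct-integral decomposition over $\Irr_\unit(G')$; (iii) by Lemma~\ref{fin_dim} the $K'$-fixed Hecke eigenspaces occurring there are spanned by the $W^0_\sh(\ub{x})$, so vanishing of all pairings forces $A_{\pi'}W=0$ for almost every $\pi'$, hence $W\circ\iota=0$ in $L^2$; and finally (iv) $D=V\iota(G')$ and the inner-product formula of Theorem~\ref{inner} give $W=0$. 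Steps (i), (ii), and (iv) are exactly the content of Lemma~\ref{varphi=0} and the subsequent manipulation with $V=V'V''$, none of which your sketch mentions. Your observation about $K'$-invariance being preserved by $\TT^\psi$, on the other hand, is correct and elementary.
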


\begin{defi}
We call the unique function $W^\ess_\sh$ the \emph{essential vector} of $\WW^\psi_\sh(\sigma)$. 
\end{defi}
\par

First, we consider the existence. 
Here, we show it only when $L(s,\pi) = 1$. 
The general case will be proven in Section \ref{sec.existence} below. 
\par

\begin{proof}[Proof of the existence statement in Theorem \ref{essential} when $L(s,\pi)=1$]
Note that $Q' = S'V'$ is conjugate to a standard parabolic subgroup of $G'$ by an element of $K'$. 
Hence we have the Iwasawa decomposition $G' = Q'K'$.
Define a smooth function $\varphi$ of $D = V \iota(G')$ by $\Supp(\varphi) = V \iota(K')$ 
and $\varphi(u \cdot \iota(k)) = \Psi(u)$ for $u \in V$ and $k \in K'$.
Then $\varphi \in \ind_{V}^{D}(\Psi)$ and 
\[
\int_{V' \bs G'} \varphi(g) W^0_\sh(g; \ub{x})|\det g|^{s-\half{m}} dg = 1
\]
for all $s \in \C$ and $\ub{x} = (x_{i,j}) \in M_{m,n-1}(\C)$ with $x_{i,j} \in \C^\times$. 
By \cite[Corollary 3.15]{LM}, 
one can take $W_\sh \in \WW^\psi_\sh(\sigma)$ such that $W_\sh|_D = \varphi$. 
Then $Z_m(s,W_\sh,W^0_\sh(\ub{x})) = 1$ holds
for all $s \in \C$ and $\ub{x} = (x_{i,j}) \in M_{m,n-1}(\C)$ with $x_{i,j} \in \C^\times$. 
By replacing $W_\sh$ with 
\[
\int_{K'}W_\sh(g \cdot \iota(k))dk, 
\]
we may assume that $W_\sh$ is right $\iota(K')$-invariant. 
Then $W_\sh$ satisfies the conditions in Theorem \ref{essential}.
This completes the proof of the existence statement in Theorem \ref{essential} 
when $L(s,\pi) = 1$. 
\end{proof}

We now prove the uniqueness statement (in general). 
\begin{proof}[Proof of the uniqueness statement in Theorem \ref{essential}]
Let $L^2(V' \bs G'; \Psi)$ denote the space of functions $\varphi$ on $G'$ 
such that 
$\varphi(vg) = \Psi(v)\varphi(g)$ for $v \in V'$ and $g \in G'$, 
and $\varphi$ is square-integrable on $V' \bs G'$. 
Define $\Pi$ to be  the closure of the subspace of $L^2(V' \bs G'; \Psi)$ 
consisting of smooth functions $\varphi_\sh$ of $G'$ such that 
\[
\varphi_\sh(g) = \int_{(V' \cap N') \bs V'} \varphi_\ze(vg) \Psi(v)^{-1}dv
\]
for some smooth function $\varphi_\ze$ which satisfies
$\varphi_\ze(ug) = \Psi(u)\varphi_\ze(g)$ for $u \in N'$ and $g \in G'$. 

\begin{lem}\label{varphi=0}
Let $\varphi$ be a smooth function on $G'$ such that 
\begin{enumerate}
\item
$\varphi \in \Pi$; 
\item
$\varphi(gk) = \varphi(g)$ for $g \in G'$ and $k \in K'$; 
\item
for any $\ub{x} = (x_{i,j}) \in M_{m,n-1}(\C)$ with $x_{i,j} \in \C^\times$, 
\[
\int_{V' \bs G'}\varphi(g) W^0_\sh(g; \ub{x})dg = 0.
\]
\end{enumerate}
Then $\varphi = 0$.
\end{lem}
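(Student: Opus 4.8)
The statement to prove is Lemma~\ref{varphi=0}, which asserts that a smooth, right-$K'$-invariant function $\varphi \in \Pi$ whose pairings against all the unramified Shalika Whittaker functions $W^0_\sh(g;\ub{x})$ vanish must be identically zero. The plan is to argue by spectral decomposition, exactly as in the generic case \cite[\S4]{JPSS2}. First I would transfer the problem from the Shalika model back to the Zelevinsky model via the inverse transition $\TT^{-1}$ of Proposition~\ref{transition}: writing $\varphi = \TT(\varphi_\ze)$ for a suitable Zelevinsky-type function $\varphi_\ze$ on $G'$ (i.e.\ $\varphi_\ze(ug) = \Psi(u)\varphi_\ze(g)$ for $u \in N'$), the hypothesis that $\varphi$ is right $K'$-invariant passes to $\varphi_\ze$, and the pairing against $W^0_\sh(g;\ub{x})$ unfolds, by the same computation as in Proposition~\ref{I=Z}, to an integral over $N' \backslash G'$ of $\varphi_\ze$ against $W^0_\ze(g;\ub{x})$. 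Thus condition (3) becomes: the integral $\int_{N' \backslash G'} \varphi_\ze(g)\,W^0_\ze(g;\ub{x})\,dg$ vanishes for all $\ub{x} \in M_{m,n-1}(\C)$ with entries in $\C^\times$.

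\textbf{Key steps.} The heart of the argument is the following. Since $\varphi \in \Pi$ and $\Pi$ is (the closure of) a space of functions in $L^2(V' \backslash G';\Psi)$ built from the Zelevinsky models, one has a Plancherel-type decomposition of $\varphi$ over the unitary dual of $G'$; the representations that contribute are precisely those admitting the relevant Zelevinsky functional, and the right-$K'$-invariant vectors in each such representation form a space spanned (by Lemma~\ref{fin_dim}) by the unramified Zelevinsky Whittaker functions $W^0_\ze(\ub{x})$ for $\ub{x}$ with $\{q^{-s_{i,j}}\}$ matching the Satake data. The pairing $\varphi \mapsto \int \varphi_\ze(g) W^0_\ze(g;\ub{x})\,dg$ is, up to the spectral measure, the ``Fourier coefficient'' of $\varphi$ at the point of the unitary dual corresponding to $\ub{x}$. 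Hypothesis (3) says all these coefficients vanish; by density of the $W^0_\ze(\ub{x})$ (Lemma~\ref{fin_dim}) among the $K'$-fixed vectors occurring in the decomposition, together with the injectivity of the spectral transform on $L^2$, one concludes $\varphi_\ze = 0$ almost everywhere, hence $\varphi = \TT(\varphi_\ze) = 0$. Concretely I would: (i) reduce via $\TT$ to the Zelevinsky side; (ii) invoke the Gelfand--Graev / Bernstein decomposition of $L^2(N'\backslash G';\Psi)^{K'}$ into an integral over the tempered unramified spectrum — here the relevant input is the classical fact that the unramified Whittaker function is (up to normalization) the unique $K'$-fixed Whittaker vector, packaged for the Zelevinsky setting by Lemma~\ref{fin_dim}; (iii) observe that the $\ub{x}$-pairings compute the components, so their vanishing forces $\varphi = 0$.

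\textbf{Main obstacle.} The delicate point is making precise the $L^2$-spectral (Plancherel) decomposition for the space $\Pi \subset L^2(V'\backslash G';\Psi)$ and identifying its right-$K'$-invariant part with the span of the $W^0_\ze(\ub x)$ closed up appropriately — in other words, justifying that no part of the spectrum is ``missed'' by the family of test functions $W^0_\sh(\cdot;\ub x)$, and that the spectral transform is injective on $K'$-fixed vectors. In the generic case this is handled in \cite{JPSS2} using the explicit description of the Whittaker Plancherel measure and the Kirillov model; for Speh representations the analogous structure is provided by the Shalika model and the Lapid--Mao results (Theorem~\ref{inner} gives the $G$-invariant inner product on $\WW^\psi_\sh(\sigma)$, and \cite[Corollary 3.15]{LM} gives the needed surjectivity onto compact inductions on $D$). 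I expect the proof to proceed by combining these: the square-integrability built into the definition of $\Pi$, the fact that $\Pi$ is unitarily a direct integral of Speh (or principal-series) representations admitting Zelevinsky functionals, and Lemma~\ref{fin_dim} pinning down the $K'$-fixed vectors fibrewise. Once these are in place, the vanishing of all $\ub x$-pairings is exactly the statement that the spectral transform of $\varphi$ is zero, and injectivity gives $\varphi = 0$.
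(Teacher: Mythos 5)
Your proposal takes essentially the same route as the paper's proof: both rest on the direct-integral (Plancherel-type) decomposition of $\Pi$ as a unitary representation of $G'$, the observation that $K'$-invariance of $\varphi$ passes to the fiber components $A_{\pi'}\varphi$, and Lemma~\ref{fin_dim} to identify the $K'$-fixed vectors in each fiber with the span of the $W^0(\ub{x})$'s, so that the vanishing of all $\ub{x}$-pairings forces every spectral component of $\varphi$ to vanish. One small caveat: your opening step of writing $\varphi = \TT(\varphi_\ze)$ is not available for an arbitrary $\varphi \in \Pi$ (which is only the $L^2$-closure of such functions, and $\TT^{-1}$ is defined only on Shalika models of individual Speh representations), but neither the paper's argument nor, in substance, yours actually needs this reduction --- the contradiction follows directly from the direct-integral formula for the inner product together with Lemma~\ref{fin_dim}.
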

\begin{proof}
This is an analogue of \cite[(3.5) Lemme]{JPSS2}. 
Consider the direct integral expression 
of the unitary representation $\Pi$ of $G'$: 
\[
\Pi \cong \int^{\oplus}_{\pi' \in \Irr_\unit(G')} \pi' d\mu(\pi'),  
\]
where 
$\Irr_\unit(G')$ is the set of equivalence classes of irreducible unitary representations of $G'$, 
and $\mu$ is a certain Borel measure on it.
For almost all $\pi'$, 
there exists a $G'$-equivariant intertwining operator $A_{\pi'} \colon \Pi \rightarrow \pi'$ 
such that 
\[
(\varphi_1, \varphi_2)_{L^2(V' \bs G'; \Psi)} = 
\int_{\pi'} (A_{\pi'} \varphi_1, A_{\pi'} \varphi_2)_{\pi'} d\mu(\pi')
\]
for $\varphi_1, \varphi_2 \in \Pi \subset L^2(V' \bs G'; \Psi)$, 
where $(\cdot,\cdot)_{\pi'}$ is a $G'$-invariant inner product on $\pi'$. 
\par

Now we assume that $\varphi \not= 0$. 
Then there exists $\pi' \in \Irr(G')$ such that $A_{\pi'}\varphi \not= 0$. 
Since $\varphi$ is right $K'$-invariant, 
$A_{\pi'}\varphi$ belongs to the subspace of $\pi'$ consisting of $K'$-fixed vectors. 
Then, using Lemma \ref{fin_dim},
we see that 
$(A_{\pi'} \varphi, A_{\pi'} \varphi)_{\pi'}$ is a linear combination of integrals of the form
\[
\int_{V' \bs G'}\varphi(g) W^0_\sh(g; \ub{x})dg
\]
for some $\ub{x} = (x_{i,j}) \in M_{m,n-1}(\C)$ with $x_{i,j} \in \C^\times$. 
This contradicts Condition (3).
\end{proof}

We continue the proof of the uniqueness statement in Theorem \ref{essential}. 
Now suppose that two functions $W_1, W_2 \in \WW^\psi_\sh(\sigma)$ satisfy 
the conditions of Theorem \ref{essential}. 
Set $W = W_1-W_2$, which is square-integrable on $V' \bs G'$ by Theorem \ref{inner}.  
Note that $W = \TT^\psi W_\ze$ for some $W_\ze \in \WW^\psi_\ze(\sigma)$. 
We define $V''$ to be the subgroup of $V$ consisting of $v = (v_{i,j})$ with $v_{i,j} \in M_n(F)$ 
such that $v_{i,j}$ is of the form 
\[
v_{i,j} = \begin{pmatrix}
\delta_{i,j}\1_{n-1} & u_{i,j} \\ 0 & \delta_{i,j}
\end{pmatrix}
\]
for $u_{i,j} \in F^{n-1}$. 
Then $V'$ normalizes $V''$ and $V = V' V''$. 
Hence
\begin{align*}
W(\iota(g)) 
&= \int_{(V \cap N) \bs V} W_\ze(u \cdot \iota(g)) \Psi(u)^{-1}du 
\\&= 
\int_{(V' \cap N) \bs V'} 
\left( \int_{(V'' \cap N) \bs V''} W_\ze(u \cdot \iota(vg)) \Psi(u)^{-1} du \right) 
\Psi(v)^{-1} dv.
\end{align*}
Since $\iota(G')$ normalizes $V''$, and 
since the action of $\iota(N')$ on $V''$ does not change 
the invariant measure on $(V'' \cap N) \bs V''$, 
if we set
\[
\varphi_\ze(g') = 
\int_{(V'' \cap N) \bs V''} W_\ze(u \cdot \iota(g')) \Psi(u)^{-1} du
\]
for $g' \in G'$, then $\varphi_\ze(u'g') = \Psi(u')\varphi_\ze(g')$ for $u' \in N'$ and $g' \in G'$. 
Therefore, we have $W \circ \iota \in \Pi$. 
Hence we can apply Lemma \ref{varphi=0} to $W \circ \iota$, and we obtain that $W \circ \iota = 0$. 
Since $D = V'\iota(G')$, it follows that $W|_D = 0$. 
By Theorem \ref{inner}, we conclude that $W = 0$, as desired.
This completes the proof of the uniqueness statement in Theorem \ref{essential}. 
\end{proof}

\begin{cor}\label{W|L=0}
Let $W \in \WW^\psi_\ze(\sigma)$. 
If $W$ is right $\iota(K')$-invariant, and if $W|_L = 0$, then $W = 0$.
\end{cor}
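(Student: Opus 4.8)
The plan is to follow closely the argument in the proof of the uniqueness statement of Theorem~\ref{essential}. Put $W_\sh = \TT^\psi W \in \WW^\psi_\sh(\sigma)$. Since $\TT^\psi$ commutes with right translations, $W_\sh$ is again right $\iota(K')$-invariant, so $W_\sh\circ\iota$ is right $K'$-invariant; moreover, by Theorem~\ref{inner} together with the isomorphism $V\bs D \cong V'\bs G'$ of Lemma~\ref{group}, $W_\sh\circ\iota$ is square-integrable on $V'\bs G'$, and by the very same computation as in that proof (with the present $W$ playing the role of the Zelevinsky function there), $W_\sh\circ\iota$ belongs to the space $\Pi$.

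Next I would show that $W$ vanishes on $\iota(G')$. Given $g\in G'$, write $g = p'k'$ with $p'\in P'$, $k'\in K'$ by the Iwasawa decomposition, and $p' = l'u'$ with $l'\in L'$, $u'\in U'$. Then $\iota(g) = \iota(l')\iota(u')\iota(k')$ and $\iota(l')\iota(u') = \iota(l'u'l'^{-1})\iota(l')$; since $\iota(l'u'l'^{-1})\in\iota(U')\subset N$, $\iota(l')\in\iota(L')\subset L$ and $\iota(k')\in\iota(K')$, the right $\iota(K')$-invariance of $W$ and the left $(N,\Psi)$-equivariance of $W\in\WW^\psi_\ze(\sigma)$ give $W(\iota(g)) = \Psi(\iota(l'u'l'^{-1}))\,W(\iota(l')) = 0$ because $W|_L = 0$. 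Hence the integrand of $I_m(s,W,W^0_\ze(\ub{x})) = \int_{N'\bs G'}W(\iota(g))W^0_\ze(g;\ub{x})|\det g|^{s-\half{m}}\,dg$ vanishes identically, so $I_m(s,W,W^0_\ze(\ub{x})) = 0$ for all $s$ and all $\ub{x}$. By Theorem~\ref{RS_unram}~(4), $Z_m(s,W_\sh,W^0_\sh(\ub{x})) = I_m(s,W,W^0_\ze(\ub{x})) = 0$, and, exactly as in the uniqueness proof, this supplies condition~(3) of Lemma~\ref{varphi=0} for $W_\sh\circ\iota$.

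I would then apply Lemma~\ref{varphi=0} to $W_\sh\circ\iota$ to conclude $W_\sh\circ\iota = 0$, i.e.\ $W_\sh$ vanishes on $\iota(G')$. Since $\WW^\psi_\sh(\sigma)\subset\Ind_V^G(\Psi)$ and $D = V\iota(G')$ by Lemma~\ref{group}, this yields $W_\sh|_D = 0$. Finally, Theorem~\ref{inner} provides the positive-definite $G$-invariant inner product $(W_1,W_2)\mapsto\BB(W_1,W_2,0) = \int_{V\bs D}W_1(g)\overline{W_2(g)}\,dg$ on $\WW^\psi_\sh(\sigma)$, so $\BB(W_\sh,W_\sh,0) = 0$ forces $W_\sh = 0$, whence $W = (\TT^\psi)^{-1}W_\sh = 0$.

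The substantive ingredients, namely the membership $W_\sh\circ\iota\in\Pi$ and the applicability of Lemma~\ref{varphi=0}, are already in place from the proof of the uniqueness statement of Theorem~\ref{essential}, so the genuinely new content is only the elementary manipulation of the second paragraph. It is worth emphasizing that this detour through the $L^2$-theory is unavoidable: the hypothesis $W|_L = 0$ readily forces $W|_{\iota(G')} = 0$, but this does not by itself imply $W_\sh|_{\iota(G')} = 0$, since the transition map $\TT^\psi$ integrates over $V$, whose directions are transverse to $\iota(G')$; it is exactly the square-integrability from Theorem~\ref{inner}, fed into Lemma~\ref{varphi=0}, that closes this gap.
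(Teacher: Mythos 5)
Your proof is correct and follows essentially the same route as the paper's: the paper derives $I_m(s,W,W^0_\ze(\ub{x}))=0$ from the hypotheses (via Lemma~\ref{iwasawa}) and then cites the argument from the uniqueness part of Theorem~\ref{essential}, whereas you make precisely that argument explicit (showing $W|_{\iota(G')}=0$ by Iwasawa and $\iota(U')\subset N$, pushing the vanishing through $\TT^\psi$ and Lemma~\ref{varphi=0} using $\Pi$-membership and the isomorphism $V\bs D\cong V'\bs G'$, then invoking the inner product from Theorem~\ref{inner}). The details you fill in are all accurate and match the intended chain of reasoning.
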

\begin{proof}
By the assumptions, one has $I_m(s, W, W^0_\ze(\ub{x})) = 0$ 
for all $s \in \C$ and $\ub{x} = (x_{i,j}) \in M_{m,n-1}(\C)$ with $x_{i,j} \in \C^\times$. 
By the same argument as in the proof of the uniqueness statement in 
Theorem \ref{essential}, 
we have $\TT^\psi W = 0$, hence $W = 0$. 
\end{proof}

As an application, we have a part of Theorem \ref{main1} for Speh representations.
Recall from Example \ref{ex_lambda} (4) that 
\[
\lambda_\sigma 
= (\underbrace{0,\dots,0}_{(n-1)m}, \underbrace{c_\pi, \dots, c_\pi}_m) \in \Lambda_{nm},
\] 
where $c_\pi$ is the conductor of $\pi$. 

\begin{prop}\label{dim=0}
Let $\lambda \in \Lambda_{nm}$. 
If $\lambda < \lambda_\sigma$, 
then $\sigma^{\K_{nm,\lambda}} = 0$. 
\end{prop}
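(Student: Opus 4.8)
The plan is to imitate the proof of the generic case in \cite{JPSS2}, using the Zelevinsky model of $\sigma$ together with Corollary~\ref{W|L=0}. First I would put the hypothesis into a convenient shape. Since $\lambda=(\lambda_1,\dots,\lambda_{nm})$ is non-negative and non-decreasing while $\lambda_\sigma=(0,\dots,0,c_\pi,\dots,c_\pi)$ has exactly $(n-1)m$ leading zeros, a direct inspection of the lexicographic order shows that $\lambda<\lambda_\sigma$ is equivalent to
\[
\lambda=(\underbrace{0,\dots,0}_{(n-1)m},\mu_1,\dots,\mu_m),\qquad 0\le\mu_1\le\dots\le\mu_m,\quad \mu_1<c_\pi.
\]
Thus the content of the hypothesis is precisely the single strict inequality $\mu_1<c_\pi$, and this is what must be exploited. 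Suppose for contradiction that $\sigma^{\K_{nm,\lambda}}\neq 0$, and realize a nonzero fixed vector as $W\in\WW^\psi_\ze(\sigma)$ which is right $\K_{nm,\lambda}$-invariant.

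Next I would conjugate $\K_{nm,\lambda}$ so as to bring in the subgroup $\iota(K')$. For a permutation matrix $w\in\GL_{nm}(\oo)$, the group $w\,\K_{nm,\lambda}\,w^{-1}$ consists of the $k\in\GL_{nm}(\oo)$ with $k_{a,b}\equiv\delta_{a,b}\bmod\pp^{\lambda_{w^{-1}(a)}}$, so its nontrivial congruence conditions sit exactly on the rows indexed by $w(\{(n-1)m+1,\dots,nm\})$. One can choose $w$ so that this $m$-element set is $\{n,2n,\dots,nm\}$; then $\K''\coloneqq w\,\K_{nm,\lambda}\,w^{-1}$ carries all its congruence conditions on rows that are multiples of $n$. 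Since every element of $\iota(K')$ has each such row equal to the corresponding standard covector, this forces $\iota(K')\subset\K''$; and intersecting with the block-diagonal $L\cong G_n\times\dots\times G_n$ gives $\K''\cap L=\K_{n,\mu^{(1)}}\times\dots\times\K_{n,\mu^{(m)}}$ with each $\mu^{(r)}=(0,\dots,0,\mu_{\rho(r)})$ for a bijection $\rho$ of $\{1,\dots,m\}$, where, after fixing $w$ suitably, $\mu_1$ is one of the $\mu_{\rho(r)}$. As $\sigma^{\K_{nm,\lambda}}$ and $\sigma^{\K''}$ are isomorphic, it suffices to show $\sigma^{\K''}=0$; so we may assume $W$ is right $\K''$-invariant, hence in particular right $\iota(K')$-invariant, and $W|_L$ is right $(\K''\cap L)$-invariant.

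Finally I would combine the structure of the Zelevinsky model with the classical newform theory of \cite{JPSS2}. We have $W|_L\in\WW^\psi(\pi|\cdot|^{(m-1)(n-1)/2})\otimes\dots\otimes\WW^\psi(\pi|\cdot|^{-(m-1)(n-1)/2})$, and each twist $\pi|\cdot|^{(m+1-2r)(n-1)/2}$ is an unramified twist of $\pi$, hence irreducible tempered, in particular generic, of conductor $c_\pi$. The right $(\K''\cap L)$-invariance of $W|_L$ then forces it to lie in $\bigotimes_{r=1}^m\bigl(\pi|\cdot|^{(m+1-2r)(n-1)/2}\bigr)^{\K_{n,\mu^{(r)}}}$. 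For the index $r$ with $\mu_{\rho(r)}=\mu_1$, the corresponding factor is the space of $\K_{n,(0,\dots,0,\mu_1)}$-fixed vectors in a generic representation of $G_n$ of conductor $c_\pi$ with $\mu_1<c_\pi$, and this vanishes by the $\GL_n$ case of \cite{JPSS2}. Hence $W|_L=0$, and since $W$ is right $\iota(K')$-invariant, Corollary~\ref{W|L=0} yields $W=0$, a contradiction; therefore $\sigma^{\K_{nm,\lambda}}=0$. The one point that needs genuine care is the conjugation step of the second paragraph: the bookkeeping of the permutation $w$ that reconciles the ``last-$m$-rows'' shape of $\K_{nm,\lambda}$ with the interleaved embedding $\iota$, and the consequent identification of $\K''\cap L$. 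Once this is in place, the remainder is immediate from the already-established Corollary~\ref{W|L=0} and the classical vanishing statement for $\GL_n$.
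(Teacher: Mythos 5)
Your proof is correct and follows essentially the same route as the paper's: conjugate $\K_{nm,\lambda}$ to a group $K_\lambda$ containing $\iota(K')$ whose intersection with $L$ is a product $\K_{n,(0,\dots,0,a_1)}\times\cdots\times\K_{n,(0,\dots,0,a_m)}$ with some $a_i<c_\pi$, invoke the $\GL_n$ newform vanishing of \cite{JPSS2} to get $W|_L=0$, then conclude $W=0$ from Corollary~\ref{W|L=0}. The paper states the conjugation step without elaboration, whereas you spell out the permutation-matrix bookkeeping (which rows carry the congruence conditions, why $\iota(K')\subset\K''$, and the identification of $\K''\cap L$) — a helpful clarification, but not a different argument.
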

\begin{proof}
If $\lambda < \lambda_\sigma$, then the first $(n-1)m$ 
components of $\lambda$ are $0$. 
Hence there exists a compact subgroup $K_\lambda$ of $G$ conjugate to $\K_{nm,\lambda}$ such that 
\begin{itemize}
\item
$K_\lambda \supset \iota(K')$; 
\item
$K_\lambda \cap L \supset \K_{n,\lambda_1} \times \dots \times \K_{n,\lambda_m}$ 
with $\lambda_i \in \Lambda_n$ of the form $\lambda_i = (0,\dots,0,a_i)$ 
such that $0 \leq a_i < c_\pi$ for some $1 \leq i \leq m$.
\end{itemize}
Let $W \in \WW^\psi_\ze(\sigma)^{K_\lambda}$. 
Since $\pi^{\K_{n,\lambda_i}} = 0$ by \cite[(5.1) Th{\'e}or{\`e}me]{JPSS2}, 
we see that $W|_L = 0$. 
It follows from Corollary \ref{W|L=0} that $W = 0$. 
Hence $\sigma^{\K_{nm,\lambda}} \cong \sigma^{K_\lambda} = 0$. 
\end{proof}

\subsection{Properties of essential vectors}
Recall that $G = \GL_{nm}(F)$ and $K= \GL_{nm}(\oo)$. 
For a positive integer $a$, define $K(a) \subset K$ to be 
the subgroup consisting of
$k = (k_{i,j})_{1 \leq i,j \leq m} \in K$ with $k_{i,j} \in M_{n}(\oo)$ such that 
the last row of $k_{i,j}$ is congruent to $(0,\dots,0,\delta_{i,j}) \bmod \pp^a$ for $1 \leq i,j \leq m$. 
Put another way, 
if we denote by $D(\oo/\pp^a)$ the image of $D \cap K$ 
under $K \rightarrow \GL_{nm}(\oo/\pp^a)$, 
then $K(a)$ is the inverse image of $D(\oo/\pp^a)$.
Note that $K(a)$ is conjugate to $\K_{nm,\lambda}$ with 
\[
\lambda = (\underbrace{0,\dots,0}_{(n-1)m},\underbrace{a,\dots,a}_m) \in \Lambda_{nm}
\]
by an element of $K$. 
\par

Let $\pi$ be an irreducible tempered representation of $G_n$, 
and set $\sigma = \Sp(\pi,m)$. 
We prove the following proposition in this subsection.  
It together with Proposition \ref{dim=0}
contains Theorem \ref{main1} for $\sigma$ when $L(s,\sigma) = 1$.
\begin{prop}\label{dim=1}
Suppose that $L(s, \pi) = 1$. 
Then $\WW^\psi_\sh(\sigma)^{K(c_\pi)}$ is the one-dimensional vector space
spanned by 
the essential vector $W^\ess_\sh$. 
\end{prop}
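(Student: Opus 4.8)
The plan is to establish the two inequalities $\dim \WW^\psi_\sh(\sigma)^{K(c_\pi)} \geq 1$ and $\dim \WW^\psi_\sh(\sigma)^{K(c_\pi)} \leq 1$ separately. The lower bound is already essentially in hand: by the existence statement of Theorem \ref{essential} (which was proven above in the case $L(s,\pi)=1$), the essential vector $W^\ess_\sh$ lies in $\WW^\psi_\sh(\sigma)$, is right $\iota(K')$-invariant, and is nonzero (since $Z_m(s,W^\ess_\sh,W^0_\sh(\ub x)) = \prod_{i,j} L(s+s_{i,j}-\frac{m-1}{2},\pi) = 1$ is not identically zero). First I would check that the conditions defining $W^\ess_\sh$ force right $K(c_\pi)$-invariance: condition (1) gives invariance under $\iota(K')$, and the behaviour under $L \cap K(c_\pi)$ must be controlled using the fact that $W^\ess_\sh|_L$ is built from Whittaker functions of $\pi|\cdot|^{\bullet}$, each of which is fixed by the Jacquet--Piatetskii-Shapiro--Shalika group $\K_{n,(0,\dots,0,c_\pi)}$ by \cite[(5.1) Th{\'e}or{\`e}me]{JPSS2}; since $K(c_\pi)$ is generated by $\iota(K')$ together with such blockwise data inside $L$ and the unipotent radicals on which $\Psi$ is already accounted for, this should give $W^\ess_\sh \in \WW^\psi_\sh(\sigma)^{K(c_\pi)}$. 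This is the routine direction.

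For the upper bound, I would transport the problem to the Zelevinsky model via the isomorphism $\TT^\psi \colon \WW^\psi_\ze(\sigma) \xrightarrow{\sim} \WW^\psi_\sh(\sigma)$ of Proposition \ref{transition}, noting that $\TT^\psi$ intertwines the $G$-actions, hence preserves $K(c_\pi)$-invariants. So it suffices to show $\dim \WW^\psi_\ze(\sigma)^{K(c_\pi)} \leq 1$. Let $W \in \WW^\psi_\ze(\sigma)^{K(c_\pi)}$; I want to show $W$ is determined by $W|_L$, and that $W|_L$ lies in a one-dimensional space. The first point follows from Corollary \ref{W|L=0}: a right $\iota(K')$-invariant $W$ with $W|_L = 0$ vanishes. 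So the map $W \mapsto W|_L$ is injective on $\WW^\psi_\ze(\sigma)^{K(c_\pi)}$. Now $W|_L$ lies in $\WW^\psi(\pi|\cdot|^{\frac{(m-1)(n-1)}{2}}) \otimes \cdots \otimes \WW^\psi(\pi|\cdot|^{-\frac{(m-1)(n-1)}{2}})$, and $K(c_\pi)$-invariance of $W$ restricts, on the block-diagonal part $L \cap K$, to invariance under $\prod_{i=1}^m \K_{n,(0,\dots,0,c_\pi)}$. By the Jacquet--Piatetskii-Shapiro--Shalika newform theorem \cite[(5.1) Th{\'e}or{\`e}me]{JPSS2} (or Theorem \ref{main1} in the generic case, already available here), each factor $\WW^\psi(\pi|\cdot|^\bullet)^{\K_{n,(0,\dots,0,c_\pi)}}$ is one-dimensional, so the space of possible $W|_L$ is at most one-dimensional. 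Combining injectivity of restriction with this bound gives $\dim \WW^\psi_\ze(\sigma)^{K(c_\pi)} \leq 1$.

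The main obstacle I anticipate is the bookkeeping at the boundary between the two arguments: namely, verifying carefully that $K(c_\pi)$-invariance of $W \in \WW^\psi_\ze(\sigma)$ really does restrict, upon evaluation on $L$, exactly to $\prod_i \K_{n,(0,\dots,0,c_\pi)}$-invariance of the components of $W|_L$, and conversely that an element of $L$ lying in $K(c_\pi)$ acts on $W|_L$ through this product of smaller groups with no extra constraints lost or gained. One must track how the off-diagonal unipotent parts $U \cap K(c_\pi)$ interact with the Zelevinsky character $\Psi$ and the left-$N$-equivariance, and confirm that $K(c_\pi)$ is generated by $\iota(K')$, $L \cap K(c_\pi)$, and the unipotent pieces that are harmless for this computation. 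A clean way to organize this is to use the Iwasawa-type decomposition $G' = Q'K'$ and the description $D = V'\iota(G')$ (Lemma \ref{group}), together with the compact support of $W_\ze$ on $(V \cap N)\bs V$ from Proposition \ref{transition}, so that values of $W$ on all of $G$ are recovered from values on $L$ and on $\iota(K')$; this reduces the invariance check to a finite matching of congruence conditions.

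\begin{proof}
By the existence statement in Theorem \ref{essential} (proven above in the case $L(s,\pi)=1$), the essential vector $W^\ess_\sh$ exists and is a nonzero element of $\WW^\psi_\sh(\sigma)$ satisfying conditions (1) and (2) there. We first check $W^\ess_\sh \in \WW^\psi_\sh(\sigma)^{K(c_\pi)}$. The group $K(c_\pi)$ is generated by $\iota(K')$ together with $L \cap K(c_\pi)$ and the block-unipotent part $(U \cap K) K(c_\pi)$ inside $K(c_\pi)$; condition (1) gives right $\iota(K')$-invariance. Writing $W^\ess_\sh = \TT^\psi W^\ess_\ze$ for $W^\ess_\ze \in \WW^\psi_\ze(\sigma)$, the restriction $W^\ess_\ze|_L$ lies in $\WW^\psi(\pi|\cdot|^{\frac{(m-1)(n-1)}{2}}) \otimes \cdots \otimes \WW^\psi(\pi|\cdot|^{-\frac{(m-1)(n-1)}{2}})$, and by \cite[(5.1) Th{\'e}or{\`e}me]{JPSS2} each tensor factor is fixed by $\K_{n,(0,\dots,0,c_\pi)}$; combined with the left-$N$-equivariance under $\Psi$, this shows $W^\ess_\ze$, hence $W^\ess_\sh$, is fixed by $L \cap K(c_\pi)$ and by the remaining unipotent generators. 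Thus $W^\ess_\sh \in \WW^\psi_\sh(\sigma)^{K(c_\pi)}$ and the space is at least one-dimensional.

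For the reverse inequality, since $\TT^\psi \colon \WW^\psi_\ze(\sigma) \to \WW^\psi_\sh(\sigma)$ is a $G$-isomorphism, it suffices to show $\dim \WW^\psi_\ze(\sigma)^{K(c_\pi)} \leq 1$. Let $W \in \WW^\psi_\ze(\sigma)^{K(c_\pi)}$. As $K(c_\pi) \supset \iota(K')$, $W$ is right $\iota(K')$-invariant. Restricting to $L \cap K(c_\pi)$ and using that this subgroup surjects onto $\prod_{i=1}^m \K_{n,(0,\dots,0,c_\pi)}$, the components of $W|_L$ in the factors $\WW^\psi(\pi|\cdot|^\bullet)$ are each $\K_{n,(0,\dots,0,c_\pi)}$-invariant, hence lie in a one-dimensional space by \cite[(5.1) Th{\'e}or{\`e}me]{JPSS2}. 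Therefore the restriction map $W \mapsto W|_L$ sends $\WW^\psi_\ze(\sigma)^{K(c_\pi)}$ into a space of dimension at most one. By Corollary \ref{W|L=0}, a right $\iota(K')$-invariant element of $\WW^\psi_\ze(\sigma)$ with $W|_L = 0$ is zero, so this restriction map is injective. Hence $\dim \WW^\psi_\ze(\sigma)^{K(c_\pi)} \leq 1$, and consequently $\dim \WW^\psi_\sh(\sigma)^{K(c_\pi)} \leq 1$.

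Combining the two inequalities, $\WW^\psi_\sh(\sigma)^{K(c_\pi)}$ is one-dimensional, and since $W^\ess_\sh$ is a nonzero element of it, it is spanned by $W^\ess_\sh$.
\end{proof}
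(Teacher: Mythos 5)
Your argument for the upper bound $\dim \WW^\psi_\sh(\sigma)^{K(c_\pi)} \leq 1$ is correct and is essentially the paper's: pass to the Zelevinsky model, note that $W_\ze|_L$ lands in the one-dimensional space spanned by the tensor product of $\K_{n,(0,\dots,0,c_\pi)}$-fixed Whittaker vectors of the $\pi|\cdot|^\bullet$'s, and invoke Corollary \ref{W|L=0} for injectivity of $W_\ze \mapsto W_\ze|_L$ on the $\iota(K')$-invariants. (The paper packages this last step via the uniqueness in Theorem \ref{essential} rather than Corollary \ref{W|L=0}, but it is the same mechanism.)

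The lower bound, however, has a genuine gap. You assert that $K(c_\pi)$ is generated by $\iota(K')$, $L \cap K(c_\pi)$, and a ``block-unipotent part,'' and then try to obtain invariance under the latter two from the fact that $W^\ess_\ze|_L$ is fixed by $\prod_i \K_{n,(0,\dots,0,c_\pi)}$. This does not work for two reasons. First, the generation claim is wrong: $K(c_\pi)$ is the preimage of $D(\oo/\pp^{c_\pi})$ in $K$, so it contains the ``lower'' congruence subgroup ${}^tV''(\pp^{c_\pi})$ (entries in the last row of each $n\times n$ block, off the diagonal position, congruent to $0$ mod $\pp^{c_\pi}$), and these elements are not accounted for by $\iota(K')$, $L \cap K(c_\pi)$, and $U \cap K$ (note also that $U \cap K \not\subset K(c_\pi)$ when $c_\pi > 0$). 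Second, even for $L \cap K(c_\pi)$, knowing that the restriction $W^\ess_\ze|_L$ is a tensor of newform Whittaker vectors only controls the values of $W^\ess_\ze$ \emph{on $L$}; it does not by itself give $W^\ess_\ze(gk) = W^\ess_\ze(g)$ for all $g \in G$ and $k \in L\cap K(c_\pi)$.

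The missing ingredient — invariance under the lower congruence part ${}^tV''(\pp^{c_\pi})$ — is the hard part, and it is the direct analogue of the Atkin--Lehner step in the $\GL_2$ newform theory. It cannot be obtained from soft invariance considerations alone. The paper proves it via the functional equation: using Proposition \ref{poly} it computes $Z_m(m-s,\wt{W}^\ess_\sh, \wt{W}^0_\sh(\ub{x}))$, compares it with $Z_m(m-s, W'_\sh, \wt{W}^0_\sh(\ub{x}))$ for $W'_\sh(g) = W^\ess_\sh(g\cdot\iota(\varpi^{c_\pi}\1_{(n-1)m}))$, and invokes Lemma \ref{varphi=0} and Theorem \ref{inner} to conclude $\wt{W}^\ess_\sh = C W'_\sh$, from which right ${}^tV''(\pp^{c_\pi})$-invariance follows. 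Your proof needs to incorporate this (or an equivalent) functional equation argument; without it, the lower bound remains unestablished.
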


The proof of Proposition \ref{dim=1} is analogous to that of \cite[(5.1) Th{\'e}or{\`e}me]{JPSS2}. 
Suppose that $L(s, \pi) = 1$. 
\par

For $d \in \Z$ and $W_\sh \in \WW^\psi_\sh(\sigma)$, we consider 
\[
Z_{m,d}(W_\sh; \ub{x}) 
= \int_{V' \bs \{g \in G' \;|\; |\det g| = q^{-d}\}} 
W_\sh(\iota(g)) W_\sh^0(g; \ub{x})|\det g|^{-\half{m}} dg. 
\]
Note that 
\[
Z_{m,d}(W_\sh; x\ub{x})
= x^d Z_{m,d}(W_\sh; \ub{x}),  
\]
where $x\ub{x} = (xx_{i,j})$ if $\ub{x} = (x_{i,j})$.

\begin{lem}\label{d(W)}
There is an integer $d(W_\sh)$ such that $Z_{m,d}(W_\sh; \ub{x}) = 0$ 
for any $d < d(W_\sh)$ and $\ub{x} = (x_{i,j}) \in M_{m,n-1}(\C)$ with $x_{i,j} \in \C^\times$. 
\end{lem}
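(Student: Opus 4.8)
The plan is to show that the support of $W_\sh \circ \iota$ on $V' \backslash G'$ is bounded away from the "small determinant" direction, by transferring the question to the Zelevinsky model, where the analogous boundedness is visible from the structure of $\WW^\psi_\ze(\sigma)$ and the known behaviour of Whittaker functions of generic representations. Concretely, write $W_\sh = \TT^\psi W_\ze$ for a unique $W_\ze \in \WW^\psi_\ze(\sigma)$. By Proposition \ref{transition}, $W_\ze$ is compactly supported on $(V\cap N)\backslash V$, and by the restriction property $W_\ze|_L$ lies in $\WW^\psi(\pi|\cdot|^{\half{(m-1)(n-1)}}) \otimes \cdots \otimes \WW^\psi(\pi|\cdot|^{-\half{(m-1)(n-1)}})$. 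Using the Iwasawa-type decomposition behind Lemma \ref{iwasawa} together with Proposition \ref{I=Z}, the integral $Z_{m,d}(W_\sh;\ub{x})$ is a finite sum of products $\prod_{i=1}^m I_{1,d_i}(W_{i,\alpha}, W^0_{i,\beta})$ of "$d_i$-graded pieces" of ordinary Rankin--Selberg integrals for $G_n \times G_{n-1}$, with $d_1 + \cdots + d_m = d$ (up to a fixed shift). So it suffices to know that each such graded piece vanishes once $d_i$ is sufficiently negative.

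First I would establish that fact for $m=1$: for $W \in \WW^\psi(\pi)$ and $W^0 \in \WW^{\psi^{-1}}(I(s_1,\dots,s_{n-1}))$, the function $g \mapsto W(\iota(g))W^0(g)$, viewed on $N'\backslash G'$ via the Iwasawa decomposition, is supported on $|\det g|$ bounded above — equivalently, $I_{1,d}$ vanishes for $d$ small. This is the standard "Whittaker functions are supported on the negative cone after twisting by the modulus character" statement (it is implicit in \cite{JPSS}, and is exactly the mechanism by which the local $L$-factor, rather than a two-sided Laurent series, appears). I would reduce it to the fact that a smooth Whittaker function $W \in \WW^\psi(\pi)$, restricted to the diagonal torus of $G_n$, vanishes outside a region where the successive ratios of diagonal entries have bounded valuations on one side; combined with the compact support of $W_\ze$ along the unipotent directions of $V$, this pins down a lower bound $d_i \geq d_i^{\min}$ depending only on $W$ and on the conductor data of $W^0$.

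Then I would assemble: since $d = d_1 + \cdots + d_m$ (modulo the fixed shift coming from the $|\det|^{-m/2}$ normalisation and the $\delta_{P'}$ twists in Lemma \ref{iwasawa}) and each $d_i$ is bounded below by a constant depending only on the finitely many $W_{i,\alpha}$ occurring in the decomposition of $W_\ze|_L$ and on $\ub{x}$, the total $d$ is bounded below by a constant $d(W_\sh)$ independent of $\ub{x}$ (one can even take it uniform in $\ub{x}$ since the $W^0(\,\cdot\,;x_{i,j})$ are all supported, after Iwasawa decomposition, on the same region of the torus). Hence $Z_{m,d}(W_\sh;\ub{x}) = 0$ for $d < d(W_\sh)$, which is the claim.

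The main obstacle I anticipate is making the bound genuinely uniform in $\ub{x}$: the Whittaker functions $W^0(\,\cdot\,; x_{i,j})$ depend on the Satake parameter, and a priori their "support profile" on the torus could drift. The clean way around this is to note that the relevant support statement for $W^0_\ze(\ub{x})$ is that it is \emph{right $K'$-invariant and left $(N',\Psi)$-equivariant}, so by the Iwasawa decomposition $G' = N' T' K'$ its support on $T'$ is forced into the dominant cone (the values on the anti-dominant part being controlled by the Jacquet-module / Casselman--Shalika type vanishing), uniformly in $\ub{x}$; only the \emph{values}, not the \emph{support}, move with $\ub{x}$. I would isolate this as a short lemma before invoking it. A secondary, purely bookkeeping, difficulty is tracking the exact determinant shift through $\iota$, $\delta_{P'}$, and the normalisation $|\det g|^{s - m/2}$; this is routine but must be done carefully so that "$d_1 + \cdots + d_m = d + (\text{const})$" is exact.
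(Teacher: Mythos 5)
Your proposal is correct, but it takes a genuinely different route from the paper's. The paper argues directly on $G_{nm}$: by the Iwasawa decomposition $g = u' \,\mathrm{diag}(\varpi^{a_1},\ldots,\varpi^{a_{(n-1)m}})\, k'$ in $G'$, the vanishing region of $W^0_\ze(\,\cdot\,;\ub{x})$ forces each block $a_{(n-1)(j-1)+1} \geq \cdots \geq a_{(n-1)j}$ into the dominant cone (so $d \geq \sum_j (n-1)a_{(n-1)j}$), and then right $V''(\pp^l)$-invariance of $W_\ze$ — exploiting that $\iota(K')$ normalizes $V''(\pp^l)$ and that commutation of a $V''$-element past $\iota(t)$ produces the phase $\psi(\varpi^{a_{(n-1)j}}z_j)$ — forces $a_{(n-1)j} \geq -l$, whence the explicit bound $d(W_\sh) = -(n-1)ml$. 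That explicit constant is reused verbatim in the remark following the lemma and in Proposition \ref{poly} and the proof of Proposition \ref{dim=1}. You instead factor $Z_{m,d}$ through Lemma \ref{iwasawa} into a finite sum of products of $m=1$ graded pieces and invoke the standard $m=1$ support bound from \cite{JPSS}. This is valid — both proofs rest on the same two ingredients (dominance of the unramified torus support, and a one-sided bound from smoothness in the last-column direction) — but recovering the sharp constant $-(n-1)ml$ from your blockwise reduction takes extra work: you must track the smoothness levels of the finitely many $W_{i,\alpha}$ arising from the $K'$-translates $W_\ze(\iota(\,\cdot\,k))|_L$, which the paper's direct calculation sidesteps. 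Two small corrections to your bookkeeping. There is no constant shift: the outer integral of Lemma \ref{iwasawa} may be taken over $K'$, where $|\det g| = 1$, so the $q^{-s}$-grading of $I_m$ lives entirely in the inner integral and the substitution $s \mapsto s - m + i$ only affects the scalar factors $q^{(m-i)d_i}$, giving $\sum_i d_i = d$ exactly. And the compact support of $W_\ze$ on $(V\cap N)\bs V$ from Proposition \ref{transition} is vacuous when $m=1$ (there $V = N$), so it should not appear in your base-case argument, which — as you correctly describe in the very next sentence — rests only on $N$-equivariance and right-smoothness.
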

\begin{proof}
By (the proof of) Proposition \ref{I=Z}, 
it is enough to show an analogous assertion for $I_m(s,W_\ze, W^0_\ze(\ub{x}))$ 
with  $W_\ze \in \WW^\psi_\ze(\sigma)$. 
Let $g \in G'$ with $|\det g| = q^{-d}$ such that $W_\ze(\iota(g)) W^0_\ze(g; \ub{x}) \not= 0$.
We take $k' \in K'$, $u' \in N'$ and $a_1, \dots, a_{(n-1)m} \in \Z$ such that
\[
g = u'
\begin{pmatrix}
\varpi^{a_1} && \\ 
& \ddots & \\
&& \varpi^{a_{(n-1)m}}
\end{pmatrix}
k'.
\]
Since 
\[
W^0_\ze(g; \ub{x}) = C \prod_{j=1}^mW^0\left(
\begin{pmatrix}
\varpi^{a_{(n-1)(j-1)+1}} && \\ 
& \ddots & \\
&& \varpi^{a_{(n-1)j}}
\end{pmatrix}; x_{i,1}, \dots, x_{i,n-1}\right)
\]
for some $C \not= 0$ (depending on $a_1, \dots, a_{(n-1)m}$), 
we must have $a_{(n-1)(j-1)+1} \geq \dots \geq a_{(n-1)j}$ for any $1 \leq j \leq m$.
In particular, $d = \sum_{i=1}^{(n-1)m}a_i \geq \sum_{j=1}^{m}(n-1)a_{(n-1)j}$.
\par

For $l \geq 0$, 
let $V''(\pp^l)$ be the subgroup of $K$ 
consisting of $(k_{i,j})_{1 \leq i,j \leq m}$ with $k_{i,j} \in M_n(\oo)$ 
such that $k_{i,j}$ is of the form 
\[
k_{i,j} = \begin{pmatrix}
\delta_{i,j}\1_{n-1} & u_{i,j} \\ 0 & \delta_{i,j}
\end{pmatrix}
\]
for $u_{i,j} \in (\pp^l)^{n-1}$. 
Since $W_\ze$ is smooth, 
one can take sufficiently large $l$ 
such that $W_\ze$ is right $V''(\pp^l)$-invariant. 
Note that $\iota(K')$ acts on $V''(\pp^l)$. 
In particular, for any $z_1, \dots, z_m \in \pp^l$, 
we can take $u \in V''(\pp^l)$ such that 
$(\iota(k') \cdot u \cdot \iota(k')^{-1})_{nj-1,nj} = z_j$ for $1 \leq j \leq m$. 
Then we have 
\begin{align*}
W_\ze(\iota(g)) &= W_\ze\left(\iota(u')
\iota\begin{pmatrix}
\varpi^{a_1} && \\ 
& \ddots & \\
&& \varpi^{a_{n-1}}
\end{pmatrix}
\iota(k')u\right)
\\&= \left(\prod_{j=1}^m\psi(\varpi^{a_{(n-1)j}}z_j)\right) W_\ze(\iota(g)).
\end{align*}
Since $z_1, \dots, z_m \in \pp^l$ are arbitrary, 
if $W_\ze(\iota(g)) \not= 0$, then we must have $a_{(n-1)j} \geq -l$ for $1 \leq j \leq m$.
In conclusion, we have $d \geq \sum_{j=1}^{m}(n-1)a_{(n-1)j} \geq -(n-1)ml$.
This completes the proof of the lemma.
\end{proof}

By the proof of this lemma, one can take $d(W_\sh) = -(n-1)m l$ 
if $W_\sh$ is right $V''(\pp^l)$-invariant. 
In particular, if $W_\sh \in \WW^\psi_\sh(\sigma)^{K(a)}$, 
then we can take $d(W_\sh) = 0$ and $d(\wt{W}_\sh) = -(n-1)ma$.
\par

Now, if we set $x = q^{-s}$, we have 
\[
Z_m(s,W_\sh,W^0_\sh(\ub{x})) 
= \sum_{d \in \Z}x^d Z_{m,d}(W_\sh; \ub{x})
= \sum_{d \geq d(W_\sh)}x^d Z_{m,d}(W_\sh; \ub{x}).
\]
If we replace $\pi$, $W_\sh$ and $\psi$ with $\wt\pi$, $\wt{W}_\sh$ and $\psi^{-1}$, respectively, 
since $\wt{W}^0_\sh(\ub{x}) = W^0_\sh(\ub{x}^{-1})$ (with respect to $\psi$),
we have
\[
Z_m(s,\wt{W}_\sh,\wt{W}^0_\sh(\ub{x})) 
= \sum_{d \geq d(\wt{W}_\sh)}x^d Z_{m,d}(\wt{W}_\sh; \ub{x}^{-1}), 
\]
hence
\[
Z_m(m-s,\wt{W}_\sh,\wt{W}^0_m(\ub{x})) 
= \sum_{d \geq d(\wt{W}_\sh)}q^{-m}x^{-d} Z_{m,d}(\wt{W}_\sh; \ub{x}^{-1}). 
\]
By the functional equation (Theorem \ref{RS_unram} (3)), 
using the assumption that $L(s,\pi) = 1$,
we have
\begin{align*}
&\sum_{d \geq d(\wt{W}_\sh)}q^{-m}x^{-d} Z_{m,d}(\wt{W}_\sh; \ub{x}^{-1})
\\&= \left(\prod_{i=1}^m\prod_{j=1}^{n-1} \ep\left( s+s_{i,j}-\half{m-1}, \pi, \psi \right) \right)
\sum_{d \geq d(W_\sh)}x^d Z_{m,d}(W_\sh; \ub{x})
\end{align*}
as a formal power series of $x$, 
where $s_{i,j}$ is a complex number such that $x_{i,j} = q^{-s_{i,j}}$. 
If we write $\ep(s,\pi, \psi) = \ep_0 q^{-c_\pi s} = \ep_0 x^{c_\pi}$, we have
\[
\prod_{i=1}^m\prod_{j=1}^{n-1} \ep \left(s+s_{i,j}-\half{m-1}, \pi, \psi \right)
= 
\ep_0^{m(n-1)} q^{\half{c_\pi m(m-1)(n-1)}} x^{c_\pi m(n-1)} 
\prod_{i=1}^m \prod_{j=1}^{n-1}x_{i,j}^{c_\pi}.
\]
In particular, if $d > d'(W_\sh) \coloneqq -c_\pi m(n-1)-d(\wt{W}_\sh)$, 
we must have $Z_{m,d}(W_\sh; \ub{x}) = 0$. 
Hence we obtain the following. 

\begin{prop}\label{poly}
Assume that $L(s,\pi) = 1$. 
Write $\ep(s,\pi, \psi) = \ep_0 q^{-c_\pi s}$. 
For $W_\sh \in \WW^\psi_\sh(\sigma)$, 
let $d(W_\sh)$ and $d(\wt{W}_\sh)$ be the constants in Lemma \ref{d(W)}, 
and set $d'(W_\sh) = -c_\pi m(n-1)-d(\wt{W}_\sh)$. 
Then 
\[
Z_m(s,W_\sh,W^0_\sh(\ub{x})) 
= \sum_{d(W_\sh) \leq d \leq d'(W_\sh)} x^d Z_{m,d}(W_\sh; \ub{x})
\]
is a finite sum.
Moreover, we have a functional equation
\begin{align*}
&\sum_{d(\wt{W}_\sh) \leq d \leq d'(\wt{W}_\sh)}q^{-m}x^{-d} Z_{m,d}(\wt{W}_\sh; \ub{x}^{-1})
\\&= 
\ep_0^{m(n-1)} q^{\half{c_\pi m(m-1)(n-1)}} x^{c_\pi m(n-1)} 
\left(\prod_{i=1}^m \prod_{j=1}^{n-1}x_{i,j}^{c_\pi}\right)
\sum_{d(W_\sh) \leq d \leq d'(W_\sh)}x^d Z_{m,d}(W_\sh; \ub{x}). 
\end{align*}
\end{prop}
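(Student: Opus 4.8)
The statement is in essence a repackaging of the manipulations carried out in the two paragraphs immediately preceding it, so the plan is to organize those into a clean deduction, paying attention only to the one genuinely delicate point: the passage from formal identities to honest finite sums.

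First I would fix $W_\sh \in \WW^\psi_\sh(\sigma)$ and $\ub{x} = (x_{i,j})$ with $x_{i,j} \in \C^\times$, put $x = q^{-s}$, and recall that for $\Re(s) \gg 0$ one has
\[
Z_m(s,W_\sh,W^0_\sh(\ub{x})) = \sum_{d \in \Z} x^d Z_{m,d}(W_\sh; \ub{x}),
\]
each coefficient $Z_{m,d}$ being the integral over the slice $\{g \in G' \mid |\det g| = q^{-d}\}$; by Lemma \ref{d(W)} the nonzero coefficients occur only for $d \geq d(W_\sh)$, so the right-hand side is a power series in $x$ convergent for $|x|$ small. Applying the same to $\wt{W}_\sh$ with $\psi$ replaced by $\psi^{-1}$, and using the identity $\wt{W}^0_\sh(\ub{x}) = W^0_\sh(\ub{x}^{-1})$, I obtain
\[
Z_m(m-s, \wt{W}_\sh, \wt{W}^0_\sh(\ub{x})) = \sum_{d \geq d(\wt{W}_\sh)} q^{-m} x^{-d} Z_{m,d}(\wt{W}_\sh; \ub{x}^{-1}),
\]
a power series in $x^{-1}$, convergent for $|x|$ large, whose nonzero terms lie in $x$-degrees $\leq -d(\wt{W}_\sh)$. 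Next I would invoke the functional equation of Theorem \ref{RS_unram} (3) (valid in the Shalika models by part (4) of the same theorem, or by Proposition \ref{I=Z}) together with the hypothesis $L(s,\pi) = 1$, so that the $\gamma$-factors reduce to $\ep$-factors, and substitute $\ep(s,\pi,\psi) = \ep_0 q^{-c_\pi s}$ to get
\[
\prod_{i=1}^m \prod_{j=1}^{n-1} \ep(s+s_{i,j}-\half{m-1}, \pi, \psi) = \ep_0^{m(n-1)} q^{\half{c_\pi m(m-1)(n-1)}} x^{c_\pi m(n-1)} \prod_{i=1}^m \prod_{j=1}^{n-1} x_{i,j}^{c_\pi}.
\]
Thus the functional equation reads
\[
\sum_{d \geq d(\wt{W}_\sh)} q^{-m} x^{-d} Z_{m,d}(\wt{W}_\sh; \ub{x}^{-1}) = \ep_0^{m(n-1)} q^{\half{c_\pi m(m-1)(n-1)}} x^{c_\pi m(n-1)} \left(\prod_{i,j} x_{i,j}^{c_\pi}\right) \sum_{d \geq d(W_\sh)} x^d Z_{m,d}(W_\sh; \ub{x}).
\]

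Now the two sides of this identity agree as meromorphic functions of $s$: the left side is a power series in $x^{-1}$ with $x$-degrees bounded above by $-d(\wt{W}_\sh)$, while the right side is $x^{c_\pi m(n-1)}$ times a power series in $x$. Comparing exponents forces both sides to be Laurent polynomials and, precisely, forces $Z_{m,d}(W_\sh; \ub{x}) = 0$ whenever $c_\pi m(n-1) + d > -d(\wt{W}_\sh)$, i.e.\ whenever $d > d'(W_\sh) = -c_\pi m(n-1) - d(\wt{W}_\sh)$. Together with the lower bound from Lemma \ref{d(W)} this yields the first assertion, that $Z_m(s, W_\sh, W^0_\sh(\ub{x})) = \sum_{d(W_\sh) \leq d \leq d'(W_\sh)} x^d Z_{m,d}(W_\sh; \ub{x})$ is a finite sum. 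The displayed functional equation of the proposition is then exactly the identity above with the infinite summations replaced by the now-established finite ranges; the range $d(\wt{W}_\sh) \leq d \leq d'(\wt{W}_\sh)$ on the left is obtained by applying the finiteness just proved to $\wt{W}_\sh$ (with $\pi$, $\psi$ replaced by $\wt{\pi}$, $\psi^{-1}$), using $c_{\wt{\pi}} = c_\pi$ and $\wt{\wt{W}}_\sh = W_\sh$.

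I do not anticipate a serious obstacle, since all the analytic input — absolute convergence of $Z_m$ for $\Re(s) \gg 0$, meromorphic continuation, the explicit functional equation, and the vanishing of Lemma \ref{d(W)} — is already available. The only point that needs a careful word is the ``both sides are Laurent polynomials'' comparison: one side is a priori a Taylor series in $x$ and the other a Taylor series in $x^{-1}$, and it is their coincidence as meromorphic functions that simultaneously pins down the vanishing ranges and produces the finiteness.
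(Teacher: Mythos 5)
Your proposal is correct and follows the same route as the paper: the paper likewise proves Proposition \ref{poly} by exactly the manipulations in the two preceding paragraphs — expanding $Z_m$ as a power series in $x$, using Lemma \ref{d(W)} to bound the support below, applying the functional equation of Theorem \ref{RS_unram} with $L(s,\pi)=1$ and the explicit $\ep$-factor, and then comparing the two sides (one a power series in $x$, the other in $x^{-1}$) to force the upper truncation $d \leq d'(W_\sh)$. The careful remark you add about why the formal comparison yields actual Laurent polynomials, and the observation that $c_{\wt\pi}=c_\pi$ is needed to give the stated range on the left-hand side, are both correct and match what the paper leaves implicit.
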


Now we prove Proposition \ref{dim=1}. 
\begin{proof}[Proof of Proposition \ref{dim=1}]
First, we show that the essential vector $W^\ess_\sh$ is $K(c_\pi)$-invariant. 
By Proposition \ref{poly}, we have
\[
Z_m(m-s, \wt{W}^\ess_\sh, \wt{W}^0_\sh(\ub{x}))
= \ep_0^{m(n-1)} q^{\half{c_\pi m(m-1)(n-1)}} x^{c_\pi m(n-1)} 
\left(\prod_{i=1}^m \prod_{j=1}^{n-1}x_{i,j}^{c_\pi}\right)
\] 
where $x = q^{-s}$. 
Set $a = \varpi^{c_\pi}\1_{(n-1)m}$ which is in the center of $G'$.
We notice that $|\det a^{-1}|^{\half{m}-s} = q^{\half{c_\pi m^2(n-1)}}x^{c_\pi m(n-1)}$ and 
\[
\wt{W}^0_\sh(ga^{-1}; \ub{x}) 
= \left(\prod_{i=1}^m\prod_{j=1}^{n-1}x_{i,j}^{c_\pi}\right) \wt{W}^0_\sh(g; \ub{x}). 
\]
If we define $W'_\sh \in \WW^{\psi^{-1}}_\sh(\wt{\sigma})$ by
\[
W'_\sh(g) = W_\sh^\ess(g \cdot \iota(a)), 
\]
then it is right $\iota(K')$-invariant and 
\[
Z_m(m-s, W'_\sh, \wt{W}^0_\sh(\ub{x})) 
= q^{\half{c_\pi m^2(n-1)}}x^{c_\pi m(n-1)} \left(\prod_{i=1}^m\prod_{j=1}^{n-1}x_{i,j}^{c_\pi}\right). 
\]
By Lemma \ref{varphi=0} and Theorem \ref{inner},
we see that $\wt{W}^\ess_\sh = C W'_\sh$ for some constant $C$. 
Hence 
\[
W^\ess_\sh(g) 
= C \wt{W}'_\sh(g) 
= C W^\ess_\sh(w_{nm}{}^tg^{-1}w'_n \cdot \iota(\varpi^{c_\pi} \1_{(n-1)m})).
\]
Since $W^\ess_\sh$ is right $V''(\oo)$-invariant, 
it follows that $W^\ess_\sh$ is right ${}^tV''(\pp^{c_\pi})$-invariant.
Therefore, we conclude that $W^\ess_\sh$ is right $K(c_\pi)$-invariant.
\par

Next, we show that $\dim(\sigma^{K(c_\pi)}) = 1$. 
If $\WW_\ze \in \WW^\psi_\ze(\sigma)^{K(c_\pi)}$, 
we have
\[
W_\ze|_L \in 
\left(\bigotimes_{i=1}^m\WW^\psi(\pi|\cdot|^{\half{(m+1-2i)(n-1)}}) \right)^{K(c_\pi) \cap L}, 
\]
where the right-hand side is one-dimensional, 
and is spanned by the tensor product of essential vectors. 
Hence $Z_m(s,\TT^{\psi} W_\ze,W^0_\sh(\ub{x})) = I_m(s,W_\ze,W^0_\ze(\ub{x}))$
does not depend on $s \in \C$ and $\ub{x} = (x_{i,j}) \in M_{m,n-1}(\C)$ with $x_{i,j} \in \C^\times$. 
Using the uniqueness statement in Theorem \ref{essential}, 
we conclude that $\TT^{\psi} W_\ze$ is a constant multiple of $W^\ess_\sh$. 
\end{proof}

Since $\K_{nm,\lambda_\sigma}$ is conjugate to $K(c_\pi)$, 
by Propositions \ref{dim=0} and \ref{dim=1}, 
we complete the proof of Theorem \ref{main1} for $\sigma = \Sp(\pi,m)$ such that $L(s,\pi) = 1$.
As explained in Section \ref{sec.reduction}, 
this together with results in Sections \ref{sec.pf_ladder}, \ref{sec.pf_conj_unip}
and Lemma \ref{thm.reduction}
completes Theorem \ref{main1} in all cases. 
\par

To prove Theorem \ref{essential} in Section \ref{sec.existence}, 
we use the following special case of Theorem \ref{main1}.

\begin{cor}\label{cor.speh}
Let $\pi$ be an irreducible tempered representation of $G_n$, 
and set $\sigma = \Sp(\pi,m)$. 
Then we have
\[
\dim(\sigma^{\K_{nm,\lambda}}) = 
\left\{
\begin{aligned}
&1 \iif \lambda = \lambda_\sigma,\\
&0 \iif \lambda < \lambda_\sigma.
\end{aligned}
\right. 
\]
\end{cor}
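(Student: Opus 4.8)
The plan is to obtain Corollary~\ref{cor.speh} as an immediate specialization of Theorem~\ref{main1}. By the time this corollary is stated, Theorem~\ref{main1} has been proven for every irreducible representation of $G_N$ for all $N\ge 1$: the unipotent case is handled in Section~\ref{sec.proofs1}, the case $L(s,\pi)=1$ is reduced by Lemma~\ref{thm.reduction} to the case $\pi=Z(\Delta)$, which is a (twist of a) Speh representation $\Sp(\pi_\temp,m)$ with $L(s,\pi_\temp)=1$ and hence is treated via Propositions~\ref{dim=0} and~\ref{dim=1}, and the general case follows from the reduction step of Section~\ref{sec.reduction}. Since $\sigma=\Sp(\pi,m)$ is by construction an irreducible representation of $G_{nm}$, Theorem~\ref{main1} applies to it verbatim and yields $\dim(\sigma^{\K_{nm,\lambda_\sigma}})=1$ together with $\sigma^{\K_{nm,\lambda}}=0$ for all $\lambda\in\Lambda_{nm}$ with $\lambda<\lambda_\sigma$, where $\lambda_\sigma$ is the element of $\Lambda_{nm}$ attached to $\sigma$ in Section~\ref{sec.intro.main}.

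What remains is to identify this $\lambda_\sigma$ with the explicit vector $(\underbrace{0,\dots,0}_{(n-1)m},\underbrace{c_\pi,\dots,c_\pi}_{m})\in\Lambda_{nm}$ appearing in the statement. This is exactly the computation recorded in Example~\ref{ex_lambda}~(4): one writes $\sigma$ as the unique irreducible subrepresentation of $\pi|\cdot|^{-\half{m-1}}\times\cdots\times\pi|\cdot|^{\half{m-1}}$ and combines the multiplicativity of $\lambda$ under irreducible parabolic induction (Example~\ref{ex_lambda}~(3)) with the value of $\lambda$ on a single tempered representation; here Proposition~\ref{prop_lambda} is what licenses passing between the definition of $\lambda_\pi$ via highest derivatives and its description in terms of the multisegment of $\pi$, so that these manipulations are legitimate. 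Since in general $L(s,\sigma)\neq 1$ (for instance when $\pi$ is unramified), it is the full strength of Theorem~\ref{main1}, and not merely the $L(s,\pi)=1$ case proven in Section~\ref{s.ess_speh}, that one invokes here.

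Accordingly there is no real obstacle to overcome; the single point that genuinely requires attention is to confirm the absence of circular reasoning, since Corollary~\ref{cor.speh} will be used in Section~\ref{sec.existence} to prove the existence of the essential vector $W^\ess_\sh$ for arbitrary tempered $\pi$. This causes no problem: the proof of Theorem~\ref{main1} for Speh representations $\Sp(\pi,m)$ with $L(s,\pi)=1$ uses only the \emph{existence of the essential vector in that special case}, which was established above by a direct construction from \cite[Corollary~3.15]{LM}, rather than the general existence statement of Theorem~\ref{essential}. Hence Theorem~\ref{main1}, and therefore Corollary~\ref{cor.speh}, is fully available before the general case of Theorem~\ref{essential} is addressed.
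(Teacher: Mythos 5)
Your proposal is correct and takes exactly the approach the paper does: Corollary~\ref{cor.speh} is stated in the paper with no separate proof, precisely because it is nothing more than the special case $\pi = \sigma$ of Theorem~\ref{main1}, which by that point in Section~\ref{s.ess_speh} has been established in full generality. Your careful verification that no circularity arises (the Speh case with $L(s,\pi)=1$ only needs the directly-constructed essential vector, not the general existence statement of Theorem~\ref{essential}) is a worthwhile clarification, though not spelled out in the paper. One small slip: you write that the explicit vector $(0,\dots,0,c_\pi,\dots,c_\pi)$ appears in the statement of Corollary~\ref{cor.speh}; it does not (the corollary is phrased purely in terms of $\lambda_\sigma$), so the appeal to Example~\ref{ex_lambda}~(4) and Proposition~\ref{prop_lambda} is not logically required here, though it is the computation used earlier in Proposition~\ref{dim=0} and is harmless to recall.
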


\subsection{Proof of Theorem \ref{essential}: the case where $L(s,\pi) \neq 1$}
\label{sec.existence}
Finally, we prove the existence statement in Theorem \ref{essential} in general. 
Before doing it, we state the following consequence of Corollary \ref{cor.speh}. 

\begin{cor}\label{prop:ess}
Let $\pi$ be an irreducible tempered representation of $G_n$, 
and set $\sigma = \Sp(\pi,m)$. 
Then the restriction map $W_\ze \mapsto W_\ze|_L$ gives an isomorphism of 
one-dimensional vector spaces
\[
\WW_\ze^\psi(\sigma)^{K(c_\pi)} \xrightarrow{\cong} 
\WW^{\psi}(\pi|\cdot|^{\half{(m-1)(n-1)}})^{K(c_\pi)} 
\otimes \dots \otimes 
\WW^{\psi}(\pi|\cdot|^{-\half{(m-1)(n-1)}})^{K(c_\pi)}.
\]
\end{cor}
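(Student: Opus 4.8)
The plan is to reduce the statement to a dimension count, using the injectivity already proved in Corollary~\ref{W|L=0} as the one substantive input. First I would pin down the dimension of the source: since $K(c_\pi)$ is conjugate in $K$ to $\K_{nm,\lambda_\sigma}$ and $\WW_\ze^\psi(\sigma) \cong \sigma$ as representations of $G$, Corollary~\ref{cor.speh} gives $\dim \WW_\ze^\psi(\sigma)^{K(c_\pi)} = \dim \sigma^{\K_{nm,\lambda_\sigma}} = 1$.

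Next I would treat the target. The key group-theoretic observation is that $K(c_\pi) \cap L = \K_{n,\lambda'} \times \cdots \times \K_{n,\lambda'}$ ($m$ factors), where $\lambda' = (0,\dots,0,c_\pi) \in \Lambda_n$: intersecting the defining congruence of $K(c_\pi)$ with the block-diagonal subgroup $L$ forces the off-diagonal blocks to vanish and leaves precisely the requirement that the last row of each diagonal block lie in $(0,\dots,0,1) + \pp^{c_\pi}\oo^n$, which is the defining condition of $\K_{n,\lambda'}$. Each representation $\pi|\cdot|^a$ occurring as a tensor factor is irreducible and generic (an unramified twist of a tempered representation), and an unramified twist does not change the conductor, so $c_{\pi|\cdot|^a} = c_\pi$; hence by \cite[(5.1) Th{\'e}or{\`e}me]{JPSS2} the space $\WW^\psi(\pi|\cdot|^a)^{\K_{n,\lambda'}}$ is one-dimensional. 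Since on each tensor factor the superscript $K(c_\pi)$ is read as invariance under the corresponding copy of $K(c_\pi) \cap G_n = \K_{n,\lambda'}$ inside $L$, the target is one-dimensional.

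I would then check that restriction really maps the source into the target. That $W_\ze|_L$ lies in $\WW^\psi(\pi|\cdot|^{\half{(m-1)(n-1)}}) \otimes \cdots \otimes \WW^\psi(\pi|\cdot|^{-\half{(m-1)(n-1)}})$ for every $W_\ze \in \WW_\ze^\psi(\sigma)$ is recorded in Section~\ref{s.RS} following \cite[\S 3.1]{LM}; and if $W_\ze$ is in addition $K(c_\pi)$-invariant, then $W_\ze|_L$ is invariant under $K(c_\pi) \cap L = \K_{n,\lambda'} \times \cdots \times \K_{n,\lambda'}$, which acts on the tensor factors separately, so $W_\ze|_L$ lands in the asserted tensor product of invariant subspaces. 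For injectivity I would use that $\iota(K') \subset K(c_\pi)$ (immediate from the definitions, since $\iota(k')$ has $(0,\dots,0,\delta_{i,j})$ as the last row of its $(i,j)$ block), so any $W_\ze \in \WW_\ze^\psi(\sigma)^{K(c_\pi)}$ is in particular right $\iota(K')$-invariant; Corollary~\ref{W|L=0} then gives that $W_\ze|_L = 0$ implies $W_\ze = 0$.

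Putting these together, the restriction map is an injective linear map between two one-dimensional spaces, hence an isomorphism, which is the assertion. There is no serious obstacle here: the only points I would spell out carefully are the identification $K(c_\pi) \cap L = \K_{n,\lambda'} \times \cdots \times \K_{n,\lambda'}$ and the inclusion $\iota(K') \subset K(c_\pi)$, since these are exactly what guarantee that the restriction map has the stated target and that Corollary~\ref{W|L=0} applies; the genuine work (the injectivity) was already carried out in the proof of Corollary~\ref{W|L=0}, which ultimately rests on the uniqueness part of Theorem~\ref{essential}.
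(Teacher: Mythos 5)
Your proposal is correct and takes essentially the same route as the paper: one-dimensionality of the source via Corollary \ref{cor.speh}, injectivity of restriction via Corollary \ref{W|L=0}, and one-dimensionality of the target via the local newform theorem for generic representations. You simply make explicit what the paper leaves implicit, namely the identification $K(c_\pi)\cap L = \K_{n,(0,\dots,0,c_\pi)}\times\cdots\times\K_{n,(0,\dots,0,c_\pi)}$, the inclusion $\iota(K')\subset K(c_\pi)$, and the appeal to \cite[(5.1) Th\'eor\`eme]{JPSS2}.
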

\begin{proof}
Since the compact open subgroup $K(c_\pi)$ is conjugate to 
$\K_{nm,\lambda_\sigma}$, 
we conclude that $\sigma^{K(c_\pi)}$ is one-dimensional. 
By Lemma \ref{W|L=0}, 
the restriction map $W_\ze \mapsto W_\ze|_L$ is injective on $\WW_\ze^\psi(\sigma)^{K(c_\pi)}$. 
Since the image is in 
\[
\left(\bigotimes_{i=1}^m \WW^{\psi}(\pi|\cdot|^{\half{(m+1-2i)(n-1)}})\right)^{K(c_\pi) \cap L}
\]
which is one-dimensional, 
we obtain the desired isomorphism. 
\end{proof}

\begin{proof}[Proof of the existence statement in Theorem \ref{essential}]
By Lemma \ref{iwasawa} and Corollary \ref{prop:ess} 
together with \cite[(4.1) Th{\'e}or{\`e}me]{JPSS2}, 
we can find $W_\ze^\ess \in \WW_\ze^\psi(\sigma)^{K(c_\pi)}$ such that
\[
I_m(s,W_\ze,W^0_\ze(\ub{x}))
= \prod_{i=1}^m\prod_{j = 1}^{n-1} L\left( s+s_{i,j}-\half{m-1}, \pi \right).
\]
Then $W_\sh^\ess = \TT^\psi \WW_\ze^\ess$ satisfies the conditions in Theorem \ref{essential}. 
\end{proof}


\end{document}